\DeclareMathOperator*{\argmax}{arg\,max}
\DeclareMathOperator{\rank}{rk} 
\newcommand{\R}{{{\mathbb{R}}}}
\newcommand{\Q}{{{\mathbb{Q}}}}
\newcommand{\N}{{{\mathbb{N}}}}
\newcommand{\Z}{{{\mathbb{Z}}}}
\newcommand{\K}{{{\mathbb{K}}}}
\newcommand{\1}{{{\mathds{1}}}}
\newcommand{\supp}{{{\mathrm{supp}}}}
\newcommand{\nat}{{{$\sp{\natural}$}}}
\newcommand\SetOf[2]{\left\{\left.#1\vphantom{#2}\ \right|\ #2\vphantom{#1}\right\}}
\newcommand{\cl}{\operatorname{cl}}
\newcommand{\dom}{{{\mathrm{dom}}}}
\newcommand{\Trop}{\R \cup \{-\infty\}}
\newcommand{\tempinducednew}[3]{\Phi(#1,#2,#3)}
\newcommand{\networkinducednew}[4]{\Phi_{#4}(#1,#2,#3)}
\newcommand{\temptrafonew}[3]{\Psi(#1,#2,#3)}
\newcommand{\free}[1]{{\rm fr}_{#1}}
\newcommand{\layer}[2]{\ell^{#1}\left({#2}\right)}
\newcommand{\Lor}[2]{{\rm{L}}^{#1}_{#2}}
\newcommand{\Mco}[2]{\text{M}^{#1}_{#2}}
\newcommand{\MAP}{{\rm{MAP}}}
\newcommand{\pseries}[2]{{#1}\{\!\{#2\}\!\}}
\newcommand{\actson}{\!\curvearrowright\!}
\newcommand{\cG}{{\mathcal{G}}} 
\newcommand{\E}{E} 
\newcommand{\M}{{\mathcal{M}}} 
\newcommand{\cN}{\mathcal{N}} 
\newcommand{\I}{\mathcal{I}} 
\newcommand{\J}{\mathcal{J}} 
\newcommand{\B}{\mathcal{B}} 
\newcommand{\cP}{\mathcal{P}} 
\newcommand{\cH}{\mathcal{H}} 
\newcommand{\cF}{\mathcal{F}} 
\newcommand{\Xs}{X^*} 
\newcommand{\co}{c} 
\newcommand{\Rnat}{\text{R}^{\natural}} 
\newcommand\restr[2]{{
  \left.\kern-\nulldelimiterspace 
  #1 
  \vphantom{\big|} 
  \right|_{#2} 
}}
\newcommand{\trop}{{\rm{trop}}} 
\newcommand{\myitem}[1]{%
\item[#1]\protected@edef\@currentlabel{#1}%
}
\newcommand{\RestateRemark}[1]{{\normalfont\bfseries #1}}
\newcommand{\RestateInit}[1]{\newcommand{#1}{}}
\newcommand{\RestateGo}[1]{\renewcommand{#1}{(Restated)}}
\title{On complete classes of valuated matroids}
\begin{document}
\maketitle

\begin{abstract}
We characterize a rich class of valuated matroids, called \emph{R-minor valuated matroids} that includes the indicator functions of matroids, and is closed under operations such as taking minors, duality, and induction by network. 
We exhibit a family of valuated matroids that are not R-minor based on sparse paving matroids.

Valuated matroids are inherently related to gross substitute valuations in mathematical economics. By the same token we refute the Matroid Based Valuation Conjecture by Ostrovsky and Paes Leme (Theoretical Economics 2015) asserting that every gross substitute valuation arises from weighted matroid rank functions by repeated applications of merge and endowment operations. Our result also has implications in the context of Lorentzian polynomials: it reveals the limitations of known construction operations.
\end{abstract}

\section{Introduction}

Valuated (generalized) matroids capture a quantitative version of the exchange axiom(s) for matroids, first introduced by Dress and Wenzel~\cite{DressWenzel:1992}.
Later, Murota~\cite{Murotabook} identified them as a fundamental concept in discrete convex analysis. 
They play important roles across different areas of mathematics and computer science, with several applications in algorithmic game theory.

Valuated matroids and valuated generalized matroids
can be defined in many equivalent ways: in tropical geometry~\cite[Theorem 4.1.3]{Frenk:2013}, via the interplay of price and demands in economics~\cite{Leme2017}, or with various exchange properties~\cite{Murotabook}.
We follow~\cite{FujishigeHirai:2020,MurotaShioura:2018}, 
and say that a function $f \colon 2^V \to \Trop$ is a \emph{valuated generalized matroid} if two properties hold:
\begin{subequations}
  \begin{equation} \label{eq:Mnat-concave}
    \begin{aligned}
      &\forall X,Y \subseteq V \text{ with } |X| < |Y|:  \\
      &f(X) + f(Y) \leq \max_{j \in Y \setminus X} \{f(X + j) + f(Y - j)\}
    \end{aligned}
\end{equation}
  \begin{equation} \label{eq:M-concave}
    \begin{aligned}
      &\forall X,Y \subseteq V \text{ with } |X| = |Y| \text{ and } \forall i \in X \setminus Y: \\
      &f(X) + f(Y) \leq \max_{j \in Y \setminus X} \{f(X - i + j) + f(Y + i - j)\} .
    \end{aligned}
\end{equation}
\end{subequations}
For fixed $r \leq |V|$, those set functions $\binom{V}{r} \to \R\cup\{-\infty\}$ fulfilling~\eqref{eq:M-concave} are \emph{valuated matroids}.
This means that each layer of a valuated generalized matroid is a valuated matroid.
Conversely, one can represent all valuated generalized matroids by valuated matroids, see Appendix~\ref{sec:vgm-to-vm}.

The axiom~\eqref{eq:M-concave} can be seen as a quantitative version of the strong basis exchange property of matroids. 
Valuated matroids with codomain $\{0, -\infty\}$ correspond to matroids: 
the sets taking value $0$ form the bases of a matroid, and conversely, every matroid gives rise to such a valuated matroid. We call these \emph{trivially valuated matroids}.

\paragraph{R-minor valuated matroids} 
We are interested in the following classes of valuated matroids arising from independent matchings in bipartite graphs.
The term pays tribute to Richard Rado, who introduced the induction of matroids through bipartite graphs in \cite{Rado1942}.

\begin{definition}[R-minor, R-induced] \label{def:intro-R-minor}
Let $G = (V\cup W, U;E)$ be a bipartite graph with disjoint vertex sets $V\cup W$ and $U$, edges $E$ with edge weights $\co\in \R^E$, and let $\M$ be a matroid on $U$ of rank $d + |W|$ for some $d\in \mathbb{N}$.
Such a graph is displayed in Figure~\ref{fig:RminorRepresentation}.
We define an \emph{R-minor} valuated matroid $f \colon {V\choose d}\to \R$ for $X\in {V\choose d}$ as follows. 

The value $f(X)$ is the maximum weight of a matching in $G$ whose endpoints in $V\cup W$ are $X\cup W$, 
and the endpoints in $U$ form a basis in $\M$.
For $W= \emptyset$, the function $f$ is called an \emph{R-induced} valuated matroid.

Observe that every R-minor function $f$ on $V$ arises as a (valuated) \emph{contraction} of an R-induced function $g$ on $V \cup W$, i.e. $f(X) = g(X \cup W)$.
\end{definition}
\begin{figure}
\centering
\includegraphics[scale=1]{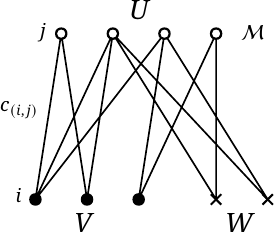}
\caption{A bipartite graph $G = (V\cup W, U;E)$ with edge weights $\co\in \R^E$ and matroid $\M$ on vertex set $U$.
This gives rise to an R-minor valuated matroid on $V$, as described in Definition~\ref{def:intro-R-minor}.}
\label{fig:RminorRepresentation}
\end{figure}

This concept naturally extends to valuated generalized matroids: the endpoints in $U$ should not form a basis but a set in a \emph{generalized matroid.}\footnote{These are defined as the effective domain of a $\{0,-\infty\}$-valued valuated generalized matroid, see Section~\ref{sec:valuated+generalized+matroids}. The canonical examples are independent sets of matroids.}
In 2003, Frank~\cite{lecture,problemSheet} asked if all valuated matroids arise as \emph{R-induced} valuated matroids. 
The corresponding version of this conjecture for valuated generalized matroids has been recently disproved by Garg et al.~\cite{garg2020approximatingNSW,nswRado}, 
in which they observed that valuated (generalized) matroids are closed under contraction, whereas R-induced valuated (generalized) matroids are not.

Noting that R-minor valuated matroids are precisely the contractions of R-induced valuated matroids, 
this suggests a natural refinement of the original conjecture:
\begin{quote}\emph{Do all valuated matroids arise as  R-minor valuated matroids?}
\end{quote}
A variant of this conjecture on valuated generalized matroids was proposed in \cite{garg2020approximatingNSW}. 
The main contributions of this paper are {\em (i)} showing that R-minor valuated matroids form a \emph{complete class} of valuated matroids, a family closed under several fundamental operations, yet {\em (ii)} not all valuated matroids arise in this form, disproving the above conjecture. We then derive implications for gross substitute valuations and for Lorentzian polynomials.

\paragraph{Complete classes of matroids}
Let us consider R-induced and R-minor valuated matroids where $\M$ is the free matroid, the matroid all of whose subsets are independent, and $c$ is weight zero on all edges.
The trivially valuated matroids arising this way are \emph{transversal matroids} and \emph{gammoids}, respectively.
In 1977, Ingleton~\cite{Ingleton:1977} studied representations of transversal matroids and gammoids.
He observed that gammoids arise via this simple construction yet form a rich class closed under several fundamental matroid operations.
This motivated the definition of a \emph{complete class} of matroids by requiring closure under the operations \emph{restriction, dual, direct sum, principal extension}, four key operations that gammoids are closed under.
Ingleton showed that gammoids arise as the smallest complete class by taking the closure of the matroid on a single element under these four operations.
Moreover, complete classes are closed under a number of other natural matroid operations, including contraction, matroid union and truncation.
An important example of such an operation is \emph{induction by bipartite graph}, given by fixing a matroid on one node set of a bipartite graph and inducing a matroid structure on the other node set via matchings to independent sets.
Induction encompasses many other natural matroid operations, and
closure under this operation is what creates the rich structure of
complete classes.\newpage 

The theory of complete classes was further developed in Bonin and Savitsky~\cite{BoninSavitsky:2016} who also collected the necessary properties to define a complete class. 
Brualdi~\cite{Brualdi:1971} revealed that if a matroid is \emph{base orderable}\footnote{A matroid is base orderable, if, for any two bases $B_1$ and $B_2$, there is a bijection $\sigma \colon B_1 \to B_2$ such that $B_1 - x + \sigma(x)$ and $B_2 - \sigma(x) + x$ are bases for every $x \in B_1$. }, then so is each matroid induced from it.
As gammoids are base orderable but the graphical matroid of $K_4$ is not, one can see that not all matroids are gammoids and that there are larger complete classes strictly containing gammoids.

\paragraph{Gross substitutes} 
A somewhat surprising application of valuated generalized matroids arises in mathematical economics. 
Gross substitutability captures the following type of interaction between prices and demands for goods.
At given prices, an agent would like to buy a certain amount of goods.
If the price of a single good increases then we expect that demand for this good decreases.
Gross substitutability dictates that previously desirable goods whose price is unaffected should remain desirable.
This concept is crucial for
equilibrium existence and computation~\cite{arrow1959stability,arrow1958stability,DBLP:conf/stoc/CodenottiMV05}, 
auction algorithms~\cite{garg2019auction,gul1999walrasian,DBLP:journals/geb/LehmannLN06}, 
and mechanism design~\cite{ausubel2002ascending,HatfieldMilgrom:2005}.

In the case of discrete (indivisible) goods, 
an agent determines her demand by maximizing a \emph{valuation function}: a monotone set function taking value $0$ on the empty set.
Hence, gross substitutability is a property of a function. 
It turns out that the functions with the gross substitute property (GS functions) are exactly valuated generalized matroids~\cite{MurotaShioura:2018}.

For indivisible goods, the property was first formalized by Kelso and Crawford~\cite{kelso1982job} to show that a natural auction-like price adjustment process converges to an equilibrium. 
We also point out that Gul and Stacchetti~\cite{gul1999walrasian} showed that the so-called Walrasian equilibrium exists whenever agents' valuations satisfy the gross substitute property and that, in a sense, the converse also holds. 
For further results on gross substitutability, we refer to~\cite[Chapter 11]{nisan2007algorithmic} and a survey by Paes Leme~\cite{Leme2017}. 

A classical example of GS functions ($=$ valuated generalized matroids) are \emph{assignment (OXS) valuations} introduced by Shapley~\cite{shapley1962optimal}. 
For a graph $G = (V, U; E)$ with edge-weights $c\in\R_{\geq 0}^E$, 
the value $v(X)$ for $X\subseteq V$ is defined as the maximum weight matching with endpoints in $X$.\footnote{
   Shapley introduces the valuations as follows. 
   Assume that $V$ are workers and $U$ is the set of jobs within a company. 
   The edge set represents the possibilities (willingness) of assigning workers to jobs, 
   and the weight $c_{ij}$ is the value the company gets by assigning worker $i$ to job $j$. 
   Then the value of a subset $X\subseteq V$ of workers for the company is the maximum
  possible value the company gets by assigning workers $X$ to jobs $U$.
}

\paragraph{Constructions of substitutes}
By the equivalence with valuated generalized matroids, 
functions with the gross substitute property can be described in many different ways. 
Balkanski and Paes Leme~\cite{BalkanskiLeme:2020} mention eight characterizations of GS functions.
Nevertheless, finding a constructive description of all GS functions/valuations remained elusive.

The first attempt to ``construct'' all GS valuations was by Hatfield and Milgrom~\cite{HatfieldMilgrom:2005}.
After observing that most examples of GS valuations arising in applications are built from assignment valuations 
and the endowment operation, they asked if this is true for all GS valuations.
Ostrovsky and Paes Leme~\cite{OstrovskyPaesLeme:2015} showed that this is not the case: 
some matroid rank functions cannot be constructed as endowed assignment valuations 
while all (weighted) matroid rank functions are GS valuations.
Instead, Ostrovsky and Paes Leme proposed the 
\emph{matroid based valuations (MBV)}  conjecture.
Matroid based valuations are those that arise from weighted matroid rank functions by repeatedly applying the operations of \emph{merge} and \emph{endowment}. Tran~\cite{Tran:2019} showed that using only merge but no endowment operations does not suffice, but the conjecture remained open.

Originally, interest for such conjectures stemmed from auction design. 
They are an attempt at designing a language in which
agents can represent their valuations in a compact and expressible way~\cite{Leme2017}.
Moreover, valuations with a constructive description facilitate more algorithmic techniques, especially linear programming (see Section~\ref{section:LPs} and e.g.,~\cite{nswRado}).
The quest for succinct representations (of matroids) is also intimately connected to questions in parametrized complexity, 
see e.g.~\cite{KratschWahlstroem:2020}.
In this paper, we analyze and disprove the MBV conjecture through the lens of complete classes.

\smallskip
\paragraph{Sparse paving matroids} A crucial tool for our counterexamples to the conjectures are valuated matroids arising from the well-known class of sparse paving matroids. A matroid of rank $d$ is \emph{paving} if all circuits are of size $d$ or $d+1$, and \emph{sparse paving} if in addition the intersection of any two $d$-element circuits is of size at most $d-2$.
Knuth~\cite{Knuth:1974} gave an elegant construction of a doubly exponentially large family of sparse paving matroids; this is essentially  the strongest lower bound on the number of matroids on $n$ elements.
In fact,  it was conjectured in \cite{Mayhew2011} that asymptotically almost all matroids are sparse paving; weaker versions were proved in  \cite{BansalPendavinghVanderPol:2015} and \cite{Pendavingh2015number}. Our main  valuated matroid construction is based on sparse paving matroids that arise from  Knuth's construction.

\smallskip
\paragraph{Lorentzian polynomials}
Br{\"a}nd\'en and Huh~\cite{BraendenHuh:2020} recently introduced \emph{Lorentzian polynomials} generalizing stable polynomials in optimization theory and volume polynomials in algebraic geometry.
They act as a bridge between discrete and continuous convexity.
In particular, their domains form discrete convex sets, generalizing earlier work connecting matroids and polynomials, e.g., by Choe et al.~\cite{ChoeOxleySokalWagner:2004}.
Their connection to continuous convexity is via their equivalence to strongly log-concave polynomials discovered by Gurvits~\cite{Gurvits:2010} and completely log-concave polynomials which were used by Anari et al.~\cite{AnariGharanVinzant:2018} in their breakthrough work for efficiently sampling bases of matroids. 
This connection has lead to applications in numerous areas such as combinatorial optimization~\cite{AnariLiuGharanVinzant:2019,AnariMaiGharanVazirani:2018}.
Furthermore, they are intimately connected to valuated matroids via tropical geometry: Br{\"a}nd\'en and Huh showed that the space of valuated matroids arises as the tropicalization of squarefree Lorentzian polynomials. 

There is ongoing research regarding the space of Lorentzian polynomials~\cite{Branden:2020}.
They are closed under many natural operations analogous to valuated matroids, therefore a natural question is whether one can construct the space of Lorentzian polynomials from certain ``building block'' functions closed under these operations.
We use our techniques to deduce limitations of such constructions.

\subsection{Our contributions}
\paragraph{Complete classes of valuated matroids}
We introduce the notion of complete classes of valuated matroids. 
These are classes of valuated matroids closed under the valuated generalizations of the fundamental operations \emph{restriction, dual, direct sum}, and \emph{principal extension}.
The crucial ingredient going beyond the basic operations already introduced in~\cite{DressWenzel:1992} is (valuated) principal extension. 
This is a special case of \emph{transformation by networks}~\cite[Theorem~9.27]{Murotabook}.
These operations appeared as `linear maps' and `linear extensions' in tropical geometry~\cite{Frenk:2013,Mundinger:2018}. 
Right from the definition, valuated gammoids are seen to form the smallest complete class of valuated matroids (Theorem~\ref{thm:complete+class}). 

The valuated matroids arising as building blocks in both Frank's question and the MBV conjecture arise as R-minor valuated matroids.
As both discuss closure under fundamental operations, the study of their complete classes gives rise to a common framework.
We can also consider existing results from different fields in a unified manner: 
The proof of Ostrovsky and Paes Leme~\cite{OstrovskyPaesLeme:2015} that endowed assignment valuations do not exhaust all GS functions is based on a valuated analogue of (strongly) base orderable matroids.
Also the work on \emph{Stiefel tropical linear spaces} in tropical geometry~\cite{FinkOlarte:2021,FinkRincon:2015} can be considered as the study of representations in the complete class of valuated gammoids.

\paragraph{Complete class containing trivially valuated matroids}
After introducing complete classes, an immediate question arises: 
does the smallest complete class that contains all trivially valuated matroids cover all valuated matroids?
Or in other words, does the smallest class that contains all trivially valuated matroids and is closed for deletion, contraction, duality, truncation, and principal extension exhaust all valuated matroids? 

We show that the smallest class of valuated matroids that contains all trivially valuated matroids 
and is closed for the above operations is exactly the class of \emph{R-minor} valuated matroids. 
Thus, the above question asks if every valuated matroid is an R-minor valuated matroid. 
We can use an information-theoretic argument to show that not all valuated matroids are \emph{R-induced} by constructing valuated matroids with many independent values (Appendix~\ref{sec:size-bound-R-rep}).
However, such an argument does not seem to extend to R-minor valuated matroids, since the size of the contracted set $W$ may be arbitrarily large.
Thus, the construction disproving the more general claim relies on a well-chosen family of valuated matroids.

\paragraph{Non-R-minor valuated matroids}
The most challenging part of our paper is proving that there are valuated matroids that are not R-minor valuated matroids.
In particular, we show that none of the valuated matroids in the following family is R-minor.

\RestateInit{\restatefamily}
\begin{restatable}{definition}{family}\label{def:class}\RestateRemark{\restatefamily}
For $n\ge 3$, we define $\cF_n$ as the following family of functions $\binom{[2n]}{4}\to\R$.
Let $V = [2n]$, $P_i = \{2i - 1, 2i\}$ for $i\in [n]$, and let
\begin{equation}\label{eq:parity-pairs}
\cH = \SetOf{P_i \cup P_j}{ij \equiv 0 \mod 2}
\tag{$\mathcal{H}$-def}
\end{equation}
i.e. we take pairs such that at least one of $i$ and $j$ are even.
Let $\Xs=P_1\cup P_2=\{1,2,3,4\}$.
A function $h:{V \choose 4}\to \R\cup\{-\infty\}$ is in the family $\cF_n$ if and only if the following hold:
\begin{itemize}
  \item $h(X)=0$ if $X\in {V \choose 4}\setminus  {\mathcal{H}}$,
  \item $h(X)<0$ if $X\in {\mathcal{H}}$, and
  \item $h(\Xs)$ is the unique largest nonzero value of the function.
\end{itemize}
\end{restatable}
\RestateGo{\restatefamily}

\begin{theorem}[Main]
\label{thm:non-r-minor-main}
If $n\ge 3$, then all functions in $\cF_n$ are valuated matroids.
If $n\ge 16$, then no function in $\cF_n$ arises as an  $R$-minor function.
\end{theorem}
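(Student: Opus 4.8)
The plan is to prove the two halves separately. First, I would verify that every $h \in \cF_n$ satisfies the $M$-concavity axiom~\eqref{eq:M-concave} on $\binom{[2n]}{4}$. Since $h$ takes value $0$ outside $\cH$ and strictly negative values on $\cH$, the underlying matroid (the support of $h$, i.e.\ the sets where $h > -\infty$) is the uniform matroid $U_{4,2n}$, so we only need to check the quantitative exchange inequality. The inequality $h(X) + h(Y) \le \max_{j\in Y\setminus X}\{h(X-i+j) + h(Y+i-j)\}$ is automatic when the left side is $\le 0$ and we can exhibit an exchange landing in $\binom{[2n]}{4}\setminus\cH$ on both sides. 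The only danger is when $X$ or $Y$ lies in $\cH$ (making the left side negative) or when every exchange forces a set into $\cH$. The combinatorial heart here is that $\cH$ consists of unions $P_i\cup P_j$ of the fixed blocks $P_i=\{2i-1,2i\}$ with $ij$ even; I would argue that the "parity" structure of $\cH$ is loose enough that for any $X,Y$ and any $i\in X\setminus Y$ there is a valid swap avoiding $\cH$ on at least one side, or that when a swap is forced into $\cH$ the defining negativity is dominated. This is a finite case analysis on how $X,Y$ intersect the blocks $P_k$; it is routine but slightly delicate, and it needs $n\ge 2$ so that $\cH$ is nonempty and $\Xs=\{1,2,3,4\}=P_1\cup P_2\in\cH$ (note $1\cdot 2$ is even).

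**The second half** — that for $n\ge 16$ no $h\in\cF_n$ is $R$-minor — is the main content and the main obstacle. I would proceed by contradiction: suppose $h=f$ arises from a bipartite graph $G=(V\cup W, U; E)$, weights $c$, and a matroid $\M$ on $U$ of rank $d+|W|=4+|W|$, via the independent-matching formula of Definition~\ref{def:intro-R-minor}. The strategy is to extract, from the very rigid value pattern of $h$ (almost all values $0$, a sparse negative set $\cH$ with a unique minimizer $\Xs$), strong structural constraints on $(G,c,\M)$ and then derive a contradiction from a counting/parity argument — this mirrors how Brualdi and Ingleton used base-orderability to show $K_4$ is not a gammoid, and how Ostrovsky--Paes Leme used a valuated analogue of strong base orderability. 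Concretely: since $h$ is $\{0,\text{neg}\}$-valued with $0$ generic, after normalizing one shows the optimal matchings for the many $0$-valued sets pin down the edge weights $c$ on a large subgraph to be consistent with $0$, so the negativity on $\cH$ must come from a combinatorial obstruction in $\M$ (some quadruples cannot be completed to a basis without "paying"), i.e.\ $\M$ together with $G$ must realize exactly the sparse-paving-type pattern encoded by $\cH$. Then I would invoke (a valuated strengthening of) the fact that matroids induced through bipartite graphs from a base-orderable matroid are themselves base-orderable, or more precisely exploit that $R$-minor valuated matroids inherit a strong-base-orderability / "no forbidden minor" property that the sparse-paving pattern $\cH$ violates once $n$ is large enough.

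**The key steps**, in order, would be: (1) reduce to a normalized $R$-minor representation and use the abundance of $0$-values to show the contribution of $c$ and of matchings is "tight" almost everywhere, isolating $\cH$ as a matroidal obstruction inside $\M$; (2) translate "$h(X)=0$ for $X\notin\cH$" into: every such quadruple extends to an $\M$-basis via a max-weight independent matching of weight exactly $0$, forcing $\M$ restricted to the relevant image to behave like a uniform/transversal pattern on most quadruples; (3) use "$h(X)<0$ for $X\in\cH$" plus "$h(\Xs)$ uniquely smallest" to pin a single pair of blocks $P_1,P_2$ as the worst obstruction, and then run a pigeonhole argument over the $n$ blocks $P_1,\dots,P_n$ — since $|W|$ and $|U|$ can be arbitrarily large, the counting must be against the *structure* of $\cH$ (its parity pattern across $\binom{[n]}{2}$ with one even index), not merely against sizes; (4) derive a contradiction by finding, for $n\ge 16$, a configuration of four or five blocks on which the induced restriction of $h$ would have to be both an $R$-minor valuated matroid and the indicator-type function of the $K_4$-like / sparse-paving obstruction, which is impossible.

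**The hard part** is unquestionably step (3)–(4): controlling an $R$-minor representation when the contracted set $W$ and the ground matroid $\M$ are of unbounded size, so no naive information-theoretic or size bound applies (as the paper explicitly notes). The resolution must be a robust structural invariant of $R$-minor valuated matroids — some form of valuated strong base orderability or an excluded-minor property stable under restriction, contraction, duality, truncation, and principal extension — which I would formulate and prove to hold for the whole complete class generated by trivially valuated matroids (hence for all $R$-minor valuated matroids by the paper's characterization), and then show fails for every $h\in\cF_n$ when $n\ge 16$ by exhibiting an explicit forbidden sub-pattern inside the parity structure $\cH$. The threshold $n\ge 16$ presumably emerges as the size needed to guarantee that such a sub-pattern always appears, via a Ramsey-type or counting argument over $\binom{[n]}{2}$.
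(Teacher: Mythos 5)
Your treatment of the first half (that each $h\in\cF_n$ satisfies~\eqref{eq:M-concave}) is essentially right and matches the paper: $\B_0=\binom{V}{4}\setminus\cH$ is the base set of a sparse paving matroid, and for any sparse paving matroid one may assign arbitrary strictly negative (possibly $-\infty$) values to the rank-$d$ circuits and zero to the bases to obtain a valuated matroid; the only fact used is that two distinct members of $\cH$ meet in at most two points. One small correction: the effective domain of $h$ need not be the uniform matroid $U_{4,2n}$, since $h$ is allowed to take the value $-\infty$ on $\cH\setminus\{\Xs\}$ — but this does not affect the exchange verification.

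For the second half there is a genuine gap that I do not think can be patched by the route you propose. Your plan hinges on finding an invariant of $R$-minor valuated matroids (``valuated strong base-orderability'' or an excluded-minor property) preserved by the complete-class operations yet violated by $h$. No such matroid-level invariant can exist: \emph{every} matroid, trivially valuated, is $R$-induced (take $G$ to be a perfect matching from $V$ to a copy $U$ of $V$ and let $\M$ be the given matroid on $U$). In particular the underlying matroid $\dom(h)$ — uniform or sparse paving — is always $R$-induced, so nothing about $\dom(h)$ obstructs a representation; the obstruction, if any, must live entirely in the valuation. Moreover, the preservation of base-orderability under induction, which powers Brualdi's argument for gammoids and Ostrovsky--Paes Leme's argument for endowed assignment valuations, is unavailable here: an $R$-minor representation induces from an \emph{arbitrary} matroid $\M$, not a base-orderable one.

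The paper's actual argument does not produce any such invariant. It encodes the independent-matching problem as an LP, takes a dual optimum $(\pi,\tau)$, and observes that the maximizer matroid $\B_0$ has a Rado-minor representation $(G_0,\M_\tau,W)$ using only the tight edges $E_0$ and the matroid $\M_\tau$ of maximum-$\tau$-weight bases, whereas $\B_1=\dom(h)$ has the representation $(G,\M,W)$. It then fixes a minimal counterexample along three nested criteria ($|\B_1|$, then $|W|$, then the number of non-tight edges), splits into two cases (a single non-tight edge with $\M_\tau=\M$, or all edges tight with $\M_\tau\neq\M$), and reaches a contradiction by a chain of uncrossing arguments on the submodular function $\rho(Z)=r(\Gamma(Z))-|Z|$, exploiting the pair structure $P_i$ of the ``robust matroid'' $\B_0$ to force a unique disjoint decomposition of $V\cup W$ into blocks $Z_i$ on which the two near-identical Rado representations of $\B_0$ and $\B_1$ cannot both hold. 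The bound $n\geq 16$ arises from needing enough pairs $P_i$ of prescribed parity to carry out the reducibility and uncrossing arguments, not from a Ramsey-type appearance of a forbidden sub-pattern. Your steps (1)--(2) capture a correct intuition — tightness pins down structure and the obstruction lives in the interaction of two near-identical representations — but steps (3)--(4) would have to be replaced wholesale by this LP-dual-plus-extremal-counterexample machinery; the excluded-minor route has no instantiation that survives the observation above.
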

The functions in $\cF_n$ are derived from sparse paving matroids; our construction was inspired by Knuth's~\cite{Knuth:1974} work. The construction also resembles the \emph{V\'amos matroid} which is an example of a matroid that is not representable over any field, see~\cite[Proposition 2.2.26]{oxley}. 
 We note that if $\B$ is the family of bases of a sparse paving matroid of rank $d$, then any function $h:{V \choose d}\to \R\cup\{-\infty\}$ with $h(X)=0$ if $X\in {\mathcal{B}}$ and $h(X)<0$ otherwise gives a valuated matroid, see Appendix~\ref{section:functionH}. This implies in particular that all functions in $\cF_n$ are valuated matroids.

As our family allows still for quite some flexibility and it is conjectured that almost all matroids are sparse paving~\cite{Mayhew2011}, one could guess that even \emph{almost all} valuated matroids might not be R-minor.
But the development of the framework for making such a statement goes beyond the scope of this paper.

\paragraph{Refuting the Matroid Based Valuation Conjecture}
Building on Theorem~\ref{thm:non-r-minor-main}, 
we also refute the MBV conjecture by Ostrovsky and Paes Leme~\cite{OstrovskyPaesLeme:2015}.
This is done by considering valuated generalized matroids corresponding to R-minor valuated matroids and reduce to Theorem~\ref{thm:non-r-minor-main} by considering their layers. 

First, we show that every function that can be obtained from weighted matroid rank functions by repeatedly applying merge and endowment 
is an R\nat-minor valuated generalized matroid --- the class of valuated generalized matroids arising by contraction and induction from generalized matroids. Garg et al.~~\cite{garg2020approximatingNSW} proposed the conjecture that all valuated generalized matroids have an R\nat-minor representation.

Then, we show that the function $h^\natural \colon 2^V \to \R_{\geq 0}$ defined as follows is a valuated generalized matroid but not R\nat-minor (Theorem~\ref{thm:mbv+counterexample}). 
This disproves the conjecture in \cite{garg2020approximatingNSW}, as well as the MBV conjecture.
For an arbitrary valuated matroid $h \in \cF_n$ taking values only in $(-1,0]$, we define 
\begin{equation*}
  h^\natural (X) := \begin{cases}
    |X| & \text{ for } |X| \leq 3, \\ 
    4 + h(X) & \text{ for } |X| = 4,\\
    4 & \text{ for } |X| \geq 5. \\
  \end{cases}
\end{equation*}
We achieve this by focusing on the function restricted to all $4$-subsets of $V$.
This is an R-minor valuated matroid and therefore allows us to apply Theorem~\ref{thm:non-r-minor-main}. 
Note that the function $h^\natural$ has the additional structure of being \emph{monotone} and only taking \emph{non-negative finite} values, as the MBV conjecture refers to valuations. 
Finally, we note that while matroid based valuations form a subset of R\nat-minor valuated generalized matroids $f$ that are monotone and $f(\emptyset)=0$, it is unclear whether the containment is strict or if these two classes coincide.

\paragraph{Lorentzian polynomials}

A fundamental operation which preserves Lorentzian polynomials is by the multiplicative action of non-negative matrices in the argument~\cite[Theorem~2.10]{BraendenHuh:2020}. 
This means that, given a Lorentzian polynomial $p$ in $n$ variables, a non-negative matrix $A \in \R^{n \times k}$ and a variable vector $(y_1,\dots,y_k)$, the polynomial given by $p(A\cdot y) \in \R[y_1,\dots,y_k]$ is also Lorentzian.

We demonstrate that several basic operations for Lorentzian polynomials translate to the basic operations considered for valuated matroids via \emph{tropicalization}, where one replaces the polynomial by a map from the exponents to the respective coefficients.  Most notably, the multiplicative action of non-negative matrices on Lorentzian polynomials translates to induction via bipartite graphs for valuated matroids (Theorem~\ref{thm:Lorentzian-transformation}). 

Taking polynomials which correspond to our family of counterexamples given in Definition~\ref{def:class} via tropicalization, we can deduce limitations on constructions of Lorentzian polynomials.  
Explicitly, we show that not all Lorentzian polynomials over the Puiseux series can be realized by the action of non-negative matrices on \emph{generating functions of matroids} (Theorem~\ref{thm:matroid+induced+subclass}).
We then show a weaker restriction for Lorentzian polynomials over the reals, namely that there exist Lorentzian polynomials that would require arbitrarily large matroids to be realized by this construction.
The proof is based on the relation between polynomials over real-closed fields via Tarski's principle.

\subsection{Organization of the paper}

In Section~\ref{sec:preliminaries}, we define the operations on valuated matroids: restriction (deletion), contraction, dual, principal extension, induction by network, and induction by bipartite graph. 
Complete classes of valuated matroids are introduced in Section~\ref{sec:completeClasses}, 
where we also prove that R-minor valuated matroids form a complete class. 

Theorem~\ref{thm:non-r-minor-main} is proved in two parts. 
The proof that functions in $\cF_n$ are valuated matroids follows from simple case analysis given in Appendix~\ref{section:functionH}.
We prove that no function in $\cF_n$ arises as an R-induced minor function in Section~\ref{sec:R-minor+not+cover}; the proof uses several lemmas on Rado representations of matroids given in Section~\ref{section:structureOfMatroids}, and lemmas on the LP representation of R-minor valuated matroids given in Section~\ref{section:LPs}.
A roadmap to the main proof is given in Section~\ref{sec:roadmap-new}.

Valuated generalized matroids and the MBV conjecture are treated in Section~\ref{sec:valuated+generalized+matroids}.
Section~\ref{sec:lorentzian+polynomials} presents implications of our work for constructions of Lorentzian polynomials.

\subsection{Notation}

We denote a matroid $\mathcal{M}$ by $\mathcal{M} = (U,r)$ where $U$ is the ground set of the matroid and $r = r_{\mathcal{M}}$ is the rank function of the matroid.
We assume familiarity with the basic concepts in matroid theory; we refer the reader to Oxley's book~\cite{oxley} for an introduction to matroids.
Our notation of the major operations on matroids follows the notation of valuated matroids introduced in Section~\ref{sec:preliminaries}, as these are special cases of the valuated operations.

A function $\rho: 2^V \to \R$ is \emph{submodular} if for every $A, B\in 2^V$ it holds $\rho(A) + \rho(B) \ge \rho(A\cap B) + \rho(A\cup B)$.
The rank function of a matroid is well-known to be submodular.

Given a set $V$, we denote its set of subsets of cardinality $d$ by $\binom{V}{d}$.
Given two sets $X,Y$, we denote their disjoint union by $X \dot{\cup} Y$.

We denote a bipartite graph $G$ by $G = (V, U; E)$, where $V$ and $U$ are the two parts of the node set and $E$ is the edge set.
For a subset of nodes $Y\subseteq U\cup V$, we denote the set of neighbours of $Y$ by $\Gamma_G(Y)$ or $\Gamma_E(Y)$. 
When the graph is clear from the context, we drop the subscript.
Given a set of edges $\mu\subseteq E$ and a subset of nodes $Y$, 
we let $\partial_Y(\mu)$ denote the nodes in $Y$ incident to the subgraph induced by $\mu$.
For a cost function $c\in\R^E$, we let $c(\mu):=\sum_{e\in \mu} c_e$ denote the cost of the edge set $\mu$.
By a \emph{network} or \emph{directed network}, we will refer to directed graphs.

\section{Operations on valuated matroids}
\label{sec:preliminaries}

For a valuated matroid $f$, its \emph{(effective) domain} $\dom(f)$ is formed by those sets $X$ on which $f(X) > -\infty$. 
The exchange property implies that it forms the set of bases of a matroid.
The \emph{rank} $\rank(f)$ of a valuated matroid $f$ is the rank of the underlying matroid $\dom(f)$. 

\begin{definition}\label{def:M+operations}
Let $f \colon \binom{V}{d} \to \Trop$ be a valuated matroid with $d = \rank(f)$, and $Y\subset V$ some subset of $V$.
\begin{enumerate}[label=(\roman*)]
\item If $V \setminus Y$ has full rank in $\dom(f)$ then the \emph{deletion} of $f$ by $Y$ is the function $f\setminus Y \colon \binom{V\setminus Y}{d} \to \Trop $ defined as
  \[
  (f \setminus Y)(X) = f(X), \quad \forall X \in \binom{V\setminus Y}{d}. 
  \]
  This is also called the \emph{restriction} to $V \setminus Y$ and denoted by $f| (V \setminus Y)$.
  If $V \setminus Y$ does not have full rank in $\dom(f)$, the deletion is the function attaining only $-\infty$. 
\item If $Y$ is independent in $\dom(f)$, the \emph{contraction} of $f$ by $Y$ is the function $f/Y \colon \binom{V\setminus Y}{d-|Y|} \to \Trop$ defined as
  \[
  (f/Y)(X) = f(X \cup Y), \quad \forall X \in \binom{V\setminus Y}{d-|Y|}.
  \]
  If $Y$ is not independent in $\dom(f)$, the contraction is the function attaining only $-\infty$. 
\item The \emph{dual} of $f$ is the function  $f^* \colon \binom{V}{|V| - d} \to \Trop$ defined as
  \[
  f^*(X) = f(V \setminus X), \quad \forall X\in \binom{V}{|V| - d} .
  \]
\item The \emph{truncation} of $f$ is the function $f^{(1)} \colon \binom{V}{d -1} \to \Trop$ defined as
   \[
  f^{(1)}(X) = \max_{v \in V \setminus X}f(X \cup v), \quad \forall X\in \binom{V}{d-1} .
  \]
  The iterated truncation for $1 \leq r \leq d-1$ is given by $f^{(r+1)} = (f^{(r)})^{(1)}$. 
\item For a weight function $w \in (\Trop)^V$, the \emph{principal extension} $f^w$ of $f$ with respect to $w$ is the valuated matroid on $V \cup p$ of rank $d$, for an additional element $p$, with $f^w|V = f$ and
\begin{equation*}
  f^w(X \cup p) = \max_{v \in V \setminus X} \left(f(X \cup v) + w_v\right) \quad \text{ for all } \quad X \in \binom{V}{d-1} \enspace .
\end{equation*}

\end{enumerate}
\end{definition}

\begin{remark}
Our definition of deletion and contraction differs from the usual definition, e.g. in~\cite{DressWenzel:1992}, in that we impose these rank conditions.  The usual definition of deletion (and dually contraction) for matroids could equally be formulated by first performing a truncation (to the rank of the remaining set) and then a deletion.   While for unvaluated matroids this is the same, for valuated matroids the naive deletion, where the remaining set does not have full rank, would result in a function only taking $-\infty$ as value. 
  Our reason to be more restrictive with deletion and contraction is that these definitions allow for simple `layer-wise' extensions to valuated generalized matroids in Section~\ref{sec:valuated+generalized+matroids} and they tie in more naturally with operations on polynomials as we demonstrate in Section~\ref{sec:lorentzian+polynomials}.
\end{remark}

\begin{example} \label{ex:trivially-valuated-matroids}
  The most basic examples of valuated matroids are those with \emph{trivial valuation}, 
  where only the values $0$ and $-\infty$ are attained (following naming as in~\cite{FinkOlarte:2021}).
  Such valuated matroids can be identified with the underlying matroid.
   Each of the operations listed in Definition~\ref{def:M+operations} are valuated analogues of matroid operations, see~\cite{oxley} for formal definitions.
These operations can also be recovered by restricting the valuated operations to trivially valuated matroids.
\end{example}

\begin{example}\label{ex:transversal+VM}
  Valuated matroids corresponding to the layers of the assignment valuations are \emph{transversally valuated matroids}.
  For a bipartite graph $G= (V, U; E)$ with edge weights $c\in\R^E$, we define a transversally valuated matroid $f\colon{V\choose |U|} \rightarrow \Trop$ by setting $f(X)$ to the maximum weight of a matching whose endpoints in $V$ are exactly $X$, for $X\in {V\choose |U|}$; if no such matching exists then we set $f(X) = - \infty$.    
  
  Let $V = [4]$ and consider the valuated matroid $f\colon{V\choose 2} \rightarrow \Trop$ defined as
  \[
  f(12) = -\infty \ , \ f(13) = 0 \ , \ f(14) = 0 \ , \ f(23) = 1 \ , \ f(24) = 1 \ , \ f(34) = 1 \, .
  \]
  This valuated matroid is transversally valuated as it can be realized via the weighted bipartite graph shown in Figure~\ref{fig:transversal+VM}.
\end{example}
\begin{figure}
\centering
\includegraphics[width=0.25\textwidth]{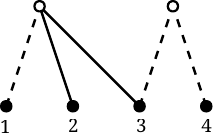}
\caption{The bipartite graph realising the transversally valuated matroid from Example~\ref{ex:transversal+VM}. The dashed edges have weight zero and the solid edges have weight one.}
\label{fig:transversal+VM}
\end{figure}

\begin{example}
  One source of valuated matroids arises from matrices with polynomial entries.
  Let $A$ be a matrix with $d$ rows and columns labelled by $V$, whose entries are univariate polynomials over a field.
  For $J \subseteq V$, we denote by $A[J]$ the submatrix formed by the columns labelled by $J$.
  The valuated matroid induced by $A$ is defined to be
  \[
  f(J) = \deg \det A[J] \, ,
  \]
  where $f(J) = -\infty$ if $\det A[J] = 0$ or $A[J]$ is non-square, see~\cite[Section 2.4.2]{Murotabook} for further details.
  
  Recall the valuated matroid from Example~\ref{ex:transversal+VM}.
  Observe that we can also represent this matrix via the polynomial matrix
  \[
  A = \begin{bmatrix}
  1 & t & t & 0 \\ 0 & 0 & 1 & 1
  \end{bmatrix} \, 
  \]
  e.g. $f(23) = \deg(t) = 1$.
\end{example}

Recall the following matroid operations for combining matroids.
Consider two matroids $\M_1$ and $\M_2$ on not-necessarily disjoint ground sets $U_1,U_2$ with sets of independent sets $\I_1, \I_2$.
Their \emph{matroid union} is the matroid $\M_1 \vee \M_2$ on the ground set $U_1 \cup U_2$ with independent sets given by $\I = \{I_1 \cup I_2 : i_i \in \I_i\}$.
If $U_1,U_2$ are disjoint, then we call $\M_1 \oplus \M_2 := \M_1 \vee \M_2$ their \emph{direct sum}.
These operations also have valuated analogues.

\begin{definition}[Valuated matroid union]
\label{def:valuatedSumUnion}
  Let $f_1$ and $f_2$ be valuated matroids on ground sets $V_1$ and $V_2$ with ranks $d_1$ and $d_2$, and let $V=V_1\cup V_2$. The \emph{(valuated) matroid union} of $f_1$ and $f_2$ is $(f_1 \vee f_2) \colon \binom{V}{d_1 + d_2} \to \Trop$, where
    \[
    (f_1 \vee f_2)(X) = \max\SetOf{f_1(Y) + f_2(X \setminus Y)}{Y \subseteq X \, , \, Y \in \binom{V_1}{d_1} \, , \, X\setminus Y \in \binom{V_2}{d_2}} .
    \]
  Undefined sets get the value $-\infty$. 
\end{definition}
Note that for the special case when $V_1$ and $V_2$ are disjoint, the only sets with finite value are $X$ such that $|X\cap V_1|=d_1$ and $|X\cap V_2|=d_2$. For this case, we can write a simpler formula. This will also be called the  \emph{direct sum} of $f_1$ and $f_2$ and denoted as $(f_1 \oplus f_2) \colon \binom{V}{d_1 + d_2} \to \Trop$. Thus,
    \[
    (f_1 \oplus f_2)(X) = f_1(X\cap V_1) + f_2(X\cap V_2) 
    \]
with value $-\infty$ unless $|X\cap V_1|=d_1$ and $|X\cap V_2|=d_2$. 

\begin{example}\label{ex:extension-by-coloops}
  Given a matroid on some ground set, it is often useful to extend that ground set to a larger ground set by adding coloops, elements contained in all bases.
  The same construction can be generalized to valuated matroids in the following way.
  
  Let $f$ be a valuated matroid on ground set $V$, and $W$ a disjoint set from $V$.
  We define the \emph{free valuated matroid} $\free{W}$ on $W$ to take the value 0 on $W$ (and all sets of smaller size get value $-\infty$); so the ground set itself is the only basis of the underlying matroid. 
  Then the direct sum of $f$ with $\free{W}$ is given by
  \[
  (f \oplus \free{W})(X) = \begin{cases} f(Y) & X = Y \cup W \\ -\infty & \text{otherwise} \end{cases} \, .
  \]
  In particular, note that $f = (f \oplus \free{W})/W$.
  This construction of adding coloops to a valuated matroid will be useful throughout.
\end{example}

\subsection{Induction by networks} The next operation is very powerful 
and can be seen as a vast generalization of Rado's theorem (Theorem~\ref{thm:RadosTheorem}). 
Somewhat surprisingly, we show that it can be modelled by the basic operations defined so far. This can be seen as a generalization 
of transversal valuated matroids (Example~\ref{ex:transversal+VM}). Instead of finding a maximum weight matching in a bipartite graph, we embed $V$ and $U$ into a directed network, with a valuated matroid $g$ on $U$. For a subset $X$ of $V$, we consider the maximum weight set of node-disjoint paths from $X$ to a subset $Y\subseteq U$, plus $g(Y)$. As a special case when $g$ is the $0$/$-\infty$ indicator function of a matroid, this means that we need to find node-disjoint paths to an independent set.

\begin{definition} \label{def:induction-by-networks}
  Let $N=(T, A)$ be a directed network with a weight function $\co \in \R^A$. Let $V, U\subseteq T$ be two non-empty subsets of nodes of $N$.
  Let $g$ be a valuated matroid on $U$ of rank $d$.
  Then the \emph{induction of $g$ by $N$} is a function $\networkinducednew{N}{g}{\co}{V} \colon \binom{V}{d} \to \Trop$.
  For $X \in \binom{V}{d}$, one sets
\[
\networkinducednew{N}{g}{\co}{V}(X) = \max\SetOf{\sum_{a \in \mathcal{P}} \co(a) + g(Y)}{\begin{aligned}&\text{node-disjoint paths } \mathcal{P} \text{ in } N \colon \\ &\partial_V({\cP}) = X \wedge \partial_U({\cP}) = Y \end{aligned}} .
\]
Note that the maximization can also result in $-\infty$ if  no node-disjoint paths exist from $X$ to a set with finite value. It is even possible that $\dom(\networkinducednew{N}{g}{\co}{V})=\emptyset$.

In the special case that the directed network is bipartite with the edges directed from $V$ to $U$, we can also consider this as an undirected weighted bipartite graph $G$ and call the corresponding operation \emph{induction by bipartite graphs}.
In this case, we drop the dependence on $V$ and just use the notation $\tempinducednew{G}{g}{\co}$.
\end{definition}
\begin{theorem}[{\cite[Theorem~9.27]{Murotabook}}]\label{thm:induction-network}
  Let $N, g, \co$ and $V$ as in Definition~\ref{def:induction-by-networks}.
  If $\dom(\networkinducednew{N}{g}{\co}{V})$ is non-empty, the induced function is a valuated matroid.
\end{theorem}

\begin{figure}
\centering\includegraphics[width=0.25\textwidth]{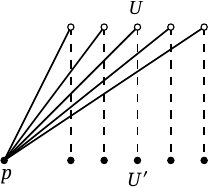}
\caption{Given a valuated matroid $g$ on $U$ and $w \in (\Trop)^U$, the principal extension $g^w$ is realized as the induction of $g$ via the above bipartite graph, as given in Remark~\ref{rem:principle+extension}.
The dashed edges are weighted zero, while the solid edges $(p,u)$ are weighted $w_u$.}
\label{fig:principal-ext-graph}
\end{figure}

While it is a special case of induction by networks, induction by bipartite graphs is an extremely powerful operation.
Many of the operations introduced so far can be modelled using induction by bipartite graphs, which is a key observation in the proof of Theorem~\ref{thm:VM+closed+operations}.

\begin{remark} \label{rem:principle+extension}
Recall from Definition~\ref{def:M+operations} the principal extension $g^w$ of a valuated matroid $g$ on ground set $U$ with respect to a weight vector $w \in \Trop^U$.
  We write the ground set of $g^w$ as $U' \cup \{p\}$ where $p$ is a new element and $U'$ a copy of $U$, with a copy $u'\in U'$ for each $u\in U$.
  Let $G = (V, U; E)$ be the bipartite graph with $V = U' \cup \{p\}$ where the edge set $E$ consists of $(u',u)$ and $(p,u)$ for all $u \in U$, and the weight function $\co$ takes the value zero on $(u',u)$ and $w_u$ on $(p,u)$; this graph is displayed in Figure~\ref{fig:principal-ext-graph}.
  Then the principal extension $g^w$ is precisely $\tempinducednew{G}{g}{\co}$, the induction of $g$ through $G$ onto $U' \cup \{p\}$.
  More details why this holds are provided in Lemma~\ref{lem:induction-principle+extension}.
\end{remark}

Furthermore, the following lemma shows we can realize induction by a network as induction by a bipartite graph followed by a contraction.
Given the power of this operation, it shall be a key construction throughout.

\RestateInit{\restatenetworkbipartite}
\begin{restatable}{lemma}{networkbipartite} \label{lem:network-bipartite+contraction}\RestateRemark{\restatenetworkbipartite}
  Let $N$ be a directed network with weight function $d$ and $g$ a valuated matroid on $U$ such that $f = \networkinducednew{N}{g}{d}{V}$ is again a valuated matroid. 
  Then there is a bipartite graph $G = (V \cup W, U \cup W';E)$ with weight function $\co \in \R^{V \cup W}$ and a valuated matroid $h$ on $U \cup W'$ such that $f = (\tempinducednew{G}{h}{\co}) / W$. 
\end{restatable}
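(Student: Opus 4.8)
The plan is to simulate each directed path in $N$ by a single edge in a new bipartite graph, and to absorb the internal vertices of $N$ into a contraction. Concretely, let $N = (T,A)$ with weights $d \in \R^A$, and let $V, U \subseteq T$ be the terminal sets with $g$ a valuated matroid on $U$ of rank $d_0$. Take a disjoint copy $\widetilde V$ of $V$, with $\tilde v$ the copy of $v \in V$. I would build a bipartite graph $G$ on node classes $\widetilde V \cup W$ on one side and $U$ on the other, where $W$ is an auxiliary set of ``dummy'' nodes to be contracted; the matroid to be induced is $h := g \oplus \free{W}$ in the sense of Example~\ref{ex:extension-by-coloops}. The edges from $\widetilde V$ to $U$ will encode maximum-weight paths in $N$, while the coloops in $W$ force exactly those edges to be used.

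First I would make this precise. For each ordered pair $(v,u) \in V \times U$, let $\co_{\tilde v u}$ be the maximum $d$-weight of a directed $v$--$u$ path in $N$ (set to $-\infty$, i.e.\ omit the edge, if none exists); put an edge $(\tilde v, u)$ in $G$ with this weight. Now the subtlety: node-disjoint path \emph{systems} in $N$ need not decompose into independently chosen maximum-weight paths, because distinct optimal $v$--$u$ paths may share internal nodes. This is exactly the obstacle, and it is why the lemma contracts rather than merely induces. The standard fix (this is the content of the classical reduction from Menger/Rado's theorem to bipartite matching, cf.\ the discussion around Theorem~\ref{thm:RadosTheorem}) is to split every internal node $t \in T \setminus (V \cup U)$ into an ``in-copy'' $t^{\mathrm{in}}$ and ``out-copy'' $t^{\mathrm{out}}$ joined by a zero-weight arc, so that node-disjointness becomes edge-disjointness through a gadget, and then to realize this as a bipartite structure by taking $W$ to contain one node per internal gadget. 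One obtains a bipartite graph $G$ and weight $\co$ such that a collection of node-disjoint $V$--$U$ paths in $N$ with total weight $s$ corresponds bijectively to a matching in $G$ of the appropriate size covering $W$, of weight $s$, whose $U$-endpoints are the same set; conversely any such matching in $G$ yields node-disjoint paths of the same weight. I would verify that $|W|$ equals the number of gadget nodes and that $h = g \oplus \free{W}$ has rank $d_0 + |W|$, so the contraction $f' := (\tempinducednew{G}{h}{\co})/W$ is a valuated matroid of rank $d_0$ on $\widetilde V$ (equivalently on $V$).

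Finally I would check the equality $f = f'$ directly from the definitions. For $X \in \binom{V}{d_0}$, by Definition~\ref{def:induction-by-networks} applied to $G$ and $h$, together with $h = g \oplus \free{W}$ (so the matching must cover all of $W$), $(\tempinducednew{G}{h}{\co})(X \cup W)$ is the maximum over matchings in $G$ covering $\widetilde X \cup W$ with $U$-endpoints $Y$ of $\co(\text{matching}) + g(Y)$; by the path/matching correspondence above this equals $\max\{\sum_{a \in \mathcal P}\co(a) + g(Y)\}$ over node-disjoint path systems $\mathcal P$ in $N$ with $\partial_V(\mathcal P) = X$, $\partial_U(\mathcal P) = Y$, which is $\tempinducednew{N}{g}{d}(X) = f(X)$. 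The boundary cases ($X$ with no path system, or $\dom(f) = \emptyset$) match on both sides since the max is $-\infty$ in exactly the same situations. The only genuine work is the bookkeeping for the node-splitting gadget and confirming it is expressible with a single contraction of a free-extended valuated matroid; the weight accounting and the rank count are then routine. I would cite Theorem~\ref{thm:induction-network} to guarantee $\tempinducednew{G}{h}{\co}$ is a valuated matroid wherever it is not identically $-\infty$, and note that $f \not\equiv -\infty$ is assumed, so the contraction is well-defined.
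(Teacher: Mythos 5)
You correctly identify both the right target construction (absorb internal nodes into a set $W$ to contract, extend $g$ by coloops as in Example~\ref{ex:extension-by-coloops}) and the fatal flaw in the naive first attempt: edges $(\tilde v,u)$ weighted by the maximum $v$--$u$ path weight cannot certify \emph{systems} of node-disjoint paths, since optimal paths share internal nodes. Your fix (``split each internal node'') is the right idea and is indeed what the paper does. However, you never actually write down the bipartite graph, and in the place you gesture at it the bookkeeping does not typecheck: you put $W$ in the left color class $\widetilde V \cup W$ and simultaneously declare the matroid to be induced to be $h = g \oplus \free{W}$, which lives on the \emph{right} ground set. A single set $W$ cannot sit on both sides of the bipartition.

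The missing ingredient is that each internal node of $N$ must appear \emph{twice} in the bipartite graph, once in each color class. Concretely, with $W = T \setminus (V \cup U)$ and a disjoint copy $W'$, the paper takes $G = (V \cup W,\, U \cup W';\, E)$, where each arc $(a,b)$ of $N$ becomes an edge $(a,b)$ if $b \in U$ and $(a,b')$ if $b \in W$, plus zero-weight edges $(w,w')$ for each $w \in W$; the induced valuated matroid is $h = g \oplus \free{W'}$ on $U \cup W'$, and one contracts $W \subseteq V \cup W$. The edge $(w,w')$ encodes that $w$ is \emph{not} used on any path, while matching $w$ and $w'$ to other nodes encodes the in-arc and out-arc of a path through $w$ --- this is exactly the in-copy/out-copy gadget you describe, but the in-copy must land in the $h$-side and be made a coloop via $\free{W'}$, whereas the out-copy is the one that gets contracted. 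Once this doubled-copy structure is in place, your verification strategy (weight-preserving bijection between node-disjoint path systems and matchings covering $W$ and $W'$, followed by rank accounting) goes through as you sketch.
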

\RestateGo{\restatenetworkbipartite}

We end this section by stating that valuated matroids are closed under all the operations introduced so far.
We defer the proof of this and the previous lemma to Appendix~\ref{sec:VM-operations}.

\RestateInit{\restateoperations}
\begin{restatable}{theorem}{operations} \label{thm:VM+closed+operations}\RestateRemark{\restateoperations}
  The class of valuated matroids is closed under the operations deletion, contraction, dualization, truncation, principal extension, direct sum, matroid union. 
\end{restatable}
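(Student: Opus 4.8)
\textbf{Proof plan for Theorem~\ref{thm:VM+closed+operations}.}
The plan is to verify closure operation by operation, reducing as much as possible to the two facts that are doing the real work: Theorem~\ref{thm:induction-network} (induction by a network preserves valuated matroids) and the observation, already made in Remark~\ref{rem:principle+extension} and Example~\ref{ex:extension-by-coloops}, that many of the elementary operations are themselves special cases of induction by a bipartite graph (possibly followed by a contraction, via Lemma~\ref{lem:network-bipartite+contraction}). So the strategy is: first dispatch the operations that must be checked directly from the axioms \eqref{eq:Mnat-concave}--\eqref{eq:M-concave}, then obtain the remaining ones as corollaries of induction.

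First I would handle \emph{dualization}. One checks that $f^*$ satisfies \eqref{eq:M-concave}: given $X, Y \in \binom{V}{|V|-d}$ with $i \in X \setminus Y$, set $X' = V \setminus X$, $Y' = V \setminus Y$, so $|X'| = |Y'| = d$ and $i \in Y' \setminus X'$; applying \eqref{eq:M-concave} for $f$ with the roles of the two sets swapped (legitimate since \eqref{eq:M-concave} is symmetric in $X,Y$ once we pick the distinguished element on the correct side) and translating $j \in Y \setminus X = X' \setminus Y'$ back through complementation yields the desired inequality for $f^*$. This is a routine but genuinely necessary computation, since duality is not an instance of induction. Next, \emph{direct sum and matroid union}: here I would note that matroid union $f_1 \vee f_2$ can be realized by induction — take the disjoint union $f_1 \oplus f_2$ (which is immediate from the axioms applied coordinatewise, or one may also realize it via induction) and then apply induction by the bipartite graph that identifies the two copies of $V_1 \cap V_2$; alternatively one invokes the standard fact that $f_1 \vee f_2$ is the image of $f_1 \oplus f_2'$ under a network, where $f_2'$ lives on a disjoint copy of $V_2$. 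Closedness then follows from Theorem~\ref{thm:induction-network}, modulo checking the induced function is not identically $-\infty$ — but that is part of the definition of when these operations are applied (undefined sets get $-\infty$, and if everything is $-\infty$ there is nothing to prove since the all-$-\infty$ function is vacuously a valuated matroid on the empty domain, or we simply restrict attention to the case where the operation produces something).

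For the remaining operations I would argue as follows. \emph{Principal extension}: this is exactly $\tempinducednew{G}{f}{\co}$ for the bipartite graph of Remark~\ref{rem:principle+extension} (the full justification is Lemma~\ref{lem:induction-principle+extension}), so Theorem~\ref{thm:induction-network} applies — one only needs that the induced function is not identically $-\infty$, which holds because $f^w|V = f \not\equiv -\infty$. \emph{Truncation}: the $r$-fold truncation $f^{(r)}$ is obtained from $f$ by $r$ successive principal extensions (adding new elements $p_1, \dots, p_r$ with weight vector $\equiv 0$) followed by contracting the set $\{p_1, \dots, p_r\}$ — concretely, $f^{(1)}(X) = \max_{v} f(X\cup v) = (f^{\mathbf 0})/\{p\}\,(X)$; so truncation reduces to principal extension plus contraction. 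Hence it suffices to establish closure under \emph{deletion} and \emph{contraction}, and since $f/Y = (f^*\setminus Y)^*$ up to the rank bookkeeping in Definition~\ref{def:M+operations}(i)--(iii), and we have already shown duality preserves the class, it is enough to do \emph{deletion}: when $V \setminus Y$ has full rank in $\dom(f)$, one verifies \eqref{eq:M-concave} for $f \setminus Y$ directly — the inequality for $X, Y' \in \binom{V\setminus Y}{d}$ is literally the inequality for $f$, one only needs that the maximizing $j$ in \eqref{eq:M-concave} for $f$ can be taken in $Y' \setminus X \subseteq (V\setminus Y) \setminus X$, which follows because any $j$ achieving a finite right-hand side must keep both sets inside $\dom(f \setminus Y)$'s ground set; when $V\setminus Y$ does not have full rank the function is $\equiv -\infty$ and there is nothing to check.

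The main obstacle I expect is the bookkeeping around the \emph{rank conditions} in Definition~\ref{def:M+operations} and around the ``not identically $-\infty$'' hypothesis of Theorem~\ref{thm:induction-network}: the elegant ``everything is an induction'' reductions are clean in spirit, but one has to be careful that the induced function lands on the correct rank and correct ground set, that contractions are performed along genuinely independent sets, and that the realizations (principal extension as induction, truncation as iterated principal extension plus contraction, matroid union as a network image) are set up so that the domain comes out right rather than degenerating. Once those compatibility checks are in place, each closure statement is either a direct axiom verification (duality, deletion, direct sum) or an immediate consequence of Theorem~\ref{thm:induction-network} together with Lemma~\ref{lem:network-bipartite+contraction}.
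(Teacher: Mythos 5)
Your proposal is correct and mirrors the paper's proof in its essential structure: the paper too obtains contraction as $(f^*\setminus Y)^*$ (Lemma~\ref{lem:contraction-dual+deletion}), truncation as principal extension followed by contraction (Lemma~\ref{lem:truncation-extension+contraction}), and principal extension and matroid union as instances of induction (Remark~\ref{rem:principle+extension}, Lemma~\ref{lem:union-induction+sum}, Theorem~\ref{thm:induction-network}). The only difference is at the base of the reduction: you verify deletion, dualization, and direct sum directly from the exchange axiom~\eqref{eq:M-concave}, while the paper simply cites \cite[Theorem~6.13]{Murotabook} for those three.
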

\RestateGo{\restateoperations}

\section{Classes of valuated matroids} 
\label{sec:completeClasses}

In the following, we consider certain classes of valuated matroids that arise naturally in combinatorial optimization.

\begin{enumerate}[label=(\roman*)]
\item The class of \emph{transversally valuated matroids} are those valuated matroids arising from trivially valuated free matroids by induction via a bipartite graph.
\item The class of \emph{valuated gammoids} are contractions of those valuated matroids arising from trivially valuated free matroids by induction via a bipartite graph. 
\item The class of \emph{R-induced valuated matroids} are those valuated matroids arising from trivially valuated matroids by induction via a bipartite graph. 
\item The class of \emph{N-induced valuated matroids} are those valuated matroids arising from trivially valuated matroids by induction via a network.
\item The class of \emph{R-minor valuated matroids} are those valuated matroids arising as contractions of R-induced valuated matroids. 
\end{enumerate}

\begin{figure}
\centering
\includegraphics[width=0.7\textwidth]{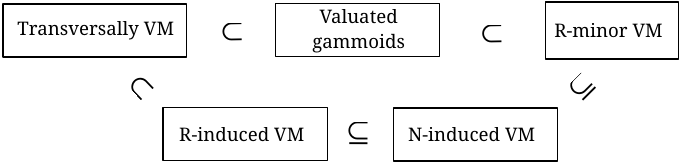}
\caption{The inclusion relationship between classes of valuated matroids.}
\label{fig:class+containment}
\end{figure}

Transversally valuated matroids are essentially the layers of assignment valuations, see also Example~\ref{ex:transversal+VM}. 
They were extensively studied in~\cite{FinkOlarte:2021}, which also considered the class of valuated strict gammoids, a subclass of valuated gammoids, 
from the perspective of tropical geometry. 

The inclusion relationship between these classes is shown in Figure~\ref{fig:class+containment}.
These are laid out in the following example and lemmas.

\begin{example}\label{ex:snowflake}
Consider the valuated matroid on six elements of rank two that takes the value $-\infty$ on $\{12,34,56\}$, and $0$ on all other pairs of elements.
This valuated matroid, referred to as the ``Snowflake'', has been studied in tropical geometry: in particular it is not a transversally valuated matroid as shown in \cite[Example 3.10]{FinkOlarte:2021}.
However, it is both a valuated gammoid and an R-induced valuated matroid, as given by the representations in Figure~\ref{fig:snowflake}.
\end{example}
\begin{figure}
\centering
\includegraphics[width=0.7\textwidth]{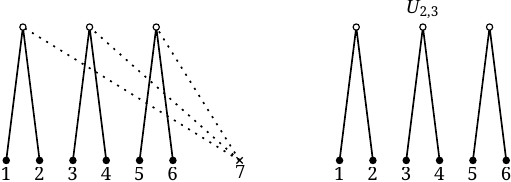}
\caption{Two representations of the Snowflake, defined in Example~\ref{ex:snowflake}.
The left is a valuated gammoid representation, where the element $7$ is contracted.
The right is an R-induced representation with induced matroid $U_{2,3}$.
All edges are weighted zero.}
\label{fig:snowflake}
\end{figure}

\begin{lemma}
  The class of valuated gammoids forms a strict subclass of R-minor valuated matroids.
\end{lemma}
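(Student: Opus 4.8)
The plan is to establish two things: that every valuated gammoid is an R-minor valuated matroid (i.e.\ the inclusion holds), and that the inclusion is strict. For the inclusion, note that a valuated gammoid is by definition a contraction of a valuated matroid induced by a bipartite graph from a \emph{trivially valuated free matroid}, while an R-induced valuated matroid is induced by a bipartite graph from an arbitrary trivially valuated matroid, and an R-minor valuated matroid is a contraction of such. Since the free matroid is in particular a trivially valuated matroid, any valuated gammoid representation is already an R-minor representation; hence the containment is immediate from the definitions. (This is really the content of the inclusion diagram in Figure~\ref{fig:class+containment}, and one can also cite Theorem~\ref{thm:complete+class} if it records this.)

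For strictness, I would exhibit a concrete R-minor (indeed R-induced) valuated matroid that is not a valuated gammoid, and the natural candidate is a \emph{trivially valuated matroid} coming from a matroid that is not a gammoid. Specifically, take $\M$ to be the graphic matroid $M(K_4)$, which Brualdi showed is not base-orderable and in particular is not a gammoid~\cite{Brualdi:1971}. View it as a trivially valuated matroid $f$ with values in $\{0,-\infty\}$. On the one hand, $f$ is R-induced: take the bipartite graph on $V \cup U$ with $V$ a disjoint copy of the ground set of $\M$, a perfect matching of zero-weight edges between the copies, and $\M$ as the induced matroid (i.e.\ the identity R-induction), so $f = \induced{G}{\M}$ up to relabeling. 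On the other hand, if $f$ were a valuated gammoid, then its effective domain $\dom(f) = \M$ would be a gammoid, because contracting a valuated matroid induced from a free matroid, when restricted to the $\{0,-\infty\}$ case, is exactly the operation of contracting a transversal matroid, whose result is a gammoid; this contradicts the fact that $M(K_4)$ is not a gammoid.

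The one point requiring a little care — and the main obstacle — is making the last implication fully rigorous: I need that the underlying matroid $\dom(f)$ of a \emph{valuated} gammoid is an (ordinary) gammoid. This follows by "trivializing" the valuation: given a valuated gammoid representation of $f$ as a contraction by $W$ of $\induced{G}{\free{U}}$ with weights $\co$, replace all edge weights by $0$; the resulting trivially valuated matroid has effective domain equal to $\dom(f)$ (the set of $X$ admitting the relevant node-disjoint matching does not depend on the weights), and it is by construction the contraction of a transversal matroid, hence a gammoid by Ingleton's classical result. Thus $\dom(f)$ is a gammoid for any valuated gammoid $f$, while the trivially valuated $M(K_4)$ is an R-induced valuated matroid whose domain is not a gammoid; this separates the two classes. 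I would present this as: (i) inclusion from the definitions; (ii) strictness via $M(K_4)$, using the weight-trivialization observation to reduce to the unvaluated statement that not every gammoid-contraction-closed class contains $M(K_4)$.
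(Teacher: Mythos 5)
Your proof is correct, and it takes a genuinely different route for the strictness part. The paper's proof cites \cite[Lemma 1]{OstrovskyPaesLeme:2015} to say that valuated gammoids are \emph{strictly base-orderable} (a valuated property), and then notes that a trivially valuated matroid which is not strictly base-orderable gives a separating example. You instead reduce all the way to the unvaluated statement: you observe that the effective domain $\dom(f)$ of a valuated gammoid $f$ is an (ordinary) gammoid, because zeroing out the edge weights in a valuated-gammoid representation does not change which sets $X$ admit a perfect matching from $X\cup W$ to $U$, so $\dom(f)$ inherits a gammoid representation. Then the trivially valuated $M(K_4)$ — R-induced by the identity bipartite graph, but not a gammoid by Brualdi — separates the classes. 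This weight-trivialization step is a clean general observation (it works for any R-minor representation, not just the free-matroid case) and lets you avoid introducing strict base-orderability at all; the paper's route is shorter on the page because it can cite an existing valuated lemma. For the containment direction you argue directly from the definitions (free matroids are trivially valuated matroids), whereas the paper routes through Theorem~\ref{thm:complete+class}; both are fine, and yours is more self-contained. One small presentational nit: when you say the domain "does not depend on the weights," it is worth spelling out that this is because $f(X)=-\infty$ iff no perfect matching of $X\cup W$ into $U$ exists, which is a purely combinatorial condition — you do say essentially this, so the argument is sound.
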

\begin{proof}
  Containment is given by Theorem~\ref{thm:complete+class}.
  By \cite[Lemma 1]{OstrovskyPaesLeme:2015}, valuated gammoids are \emph{(strongly) base orderable}.
  However, any trivially valuated matroid that is not base orderable is an R-induced valuated matroid, giving strict containment.
\end{proof}

\begin{lemma}
  The class of R-induced valuated matroids forms a subclass of N-induced valuated matroids and a subclass of R-minor valuated matroids.
  Furthermore, N-induced valuated matroids form a subclass of R-minor valuated matroids. 
\end{lemma}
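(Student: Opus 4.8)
The plan is to establish the three containments separately, in each case by exhibiting an explicit construction that realizes the smaller class inside the larger one using only the permitted building operations. Recall the key definitions: R-induced valuated matroids are obtained from \emph{trivially valuated matroids} by induction via a bipartite graph; N-induced ones allow induction via a \emph{network}; and R-minor valuated matroids are \emph{contractions} of R-induced ones.

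\medskip

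\textbf{R-induced $\subseteq$ N-induced.} This is essentially immediate from the definitions: a bipartite graph with all edges directed from $V$ to $U$ is a special case of a directed network (as already noted in Definition~\ref{def:induction-by-networks}). So if $f = \tempinducednew{G}{g}{\co}$ for a bipartite graph $G$ and a trivially valuated matroid $g$, then the same data, read as a network, witnesses $f$ as an N-induced valuated matroid. The only point to check is that induction-by-bipartite-graph and induction-by-network agree on this common input, which holds because node-disjoint paths in a bipartite graph directed from $V$ to $U$ are exactly matchings between $V$ and $U$.

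\medskip

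\textbf{R-induced $\subseteq$ R-minor.} By definition an R-minor valuated matroid is a contraction of an R-induced one, so it suffices to observe that every R-induced valuated matroid $f$ is the contraction of itself by the empty set, i.e.\ $f = f/\emptyset$, which is trivially again R-induced. (Alternatively one may invoke Example~\ref{ex:extension-by-coloops}: $f = (f \oplus \free{W})/W$, and if $f$ is R-induced then so is $f \oplus \free{W}$, since a direct sum with a free valuated matroid on $W$ corresponds to adding $|W|$ coloop-like elements to the induced matroid and the matching graph accordingly.) Either way, no work beyond unwinding the definition is needed.

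\medskip

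\textbf{N-induced $\subseteq$ R-minor.} This is the substantive step and the only place the earlier machinery is genuinely used. Let $f = \tempinducednew{N}{g}{d}$ be an N-induced valuated matroid, so $g$ is a trivially valuated matroid and $N$ a network. By Lemma~\ref{lem:network-bipartite+contraction}, there exist a bipartite graph $G$ with weight function $\co$, a valuated matroid $h$, and a node subset $W$ such that $f = (\tempinducednew{G}{h}{\co})/W$. The one thing to verify is that the lemma's construction, when applied to a network whose ``base'' valuated matroid $g$ is \emph{trivially valuated}, produces an $h$ that is again trivially valuated — so that $\tempinducednew{G}{h}{\co}$ is genuinely R-induced and $f$ is genuinely R-minor. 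Inspecting the proof of Lemma~\ref{lem:network-bipartite+contraction} (deferred to Appendix~\ref{sec:VM-operations}), the bipartite graph $G$ is built from the network $N$ by splitting each internal node into an in-copy and an out-copy and adding appropriate coloop elements, while the base matroid is $g$ itself together with a free part; since $g$ is trivially valued and $\free{W}$ is trivially valued, so is $h = g \oplus \free{W'}$ (up to relabelling). Hence $\tempinducednew{G}{h}{\co}$ is R-induced and $f = (\tempinducednew{G}{h}{\co})/W$ is R-minor. I expect this verification — tracking that triviality of the valuation is preserved through the network-to-bipartite reduction — to be the main (though still routine) obstacle; everything else is definitional.
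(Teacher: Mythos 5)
Your proof follows exactly the paper's route: the first two inclusions are definitional, and the third invokes Lemma~\ref{lem:network-bipartite+contraction}. The one thing you add — checking that the construction of that lemma yields $h = g \oplus \free{W'}$, which remains trivially valuated when $g$ is, so that the resulting induction is genuinely R-induced — is a point the paper leaves implicit, and you are right that it is the (small) thing that actually needs to be verified.
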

\begin{proof}
  The inclusion of R-induced within N-induced and R-minor are immediate from definition.
  Furthermore, Lemma~\ref{lem:network-bipartite+contraction} shows how to represent an N-induced valuated matroid as an R-minor valuated matroid.
\end{proof}

The strictness of the inclusion between N-induced valuated matroids and R-minor valuated matroids remains unresolved. 
From an algorithmic point of view, it would be desirable for N-induced valuated matroids to exhibit concise representations in the spirit of the small representation of gammoids in~\cite{KratschWahlstroem:2020}; see~\cite[Section~39.4a]{schrijver2003combinatorial} for more on transversal matroids and their contractions, the gammoids. 
\begin{conjecture}
  Let $N=(T, A)$ be a directed network with a weight function $\co\in \R^A$. 
  Let $V, U\subseteq T$ be two non-empty subsets of nodes of $N$.
  Let $g$ be a valuated matroid on $U$ of rank $d$.

  Then there is a directed network $N' = (T',A')$ with $U, V \subseteq T'$, and arc weights $\co' \in \R^{A'}$ 
  such that $\networkinducednew{N}{g}{\co}{V} = \networkinducednew{N'}{g}{\co'}{V}$ and such that $|T'|$ is polynomial in $|V|$ and $|U|$.
\end{conjecture}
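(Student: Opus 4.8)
The plan is to exploit that $g$ is reused verbatim, so the only part of $N$ that must be shrunk is the ``routing layer'', i.e.\ the weighted linking that $N$ induces between $V$ and $U$; I would isolate this layer as a valuated gammoid on $V\dot{\cup}U$, compress it by a weighted analogue of the polynomial-size representations of gammoids of Kratsch and Wahlstr\"om~\cite{KratschWahlstroem:2020}, and then read a small network off the compressed representation. First I would put $N$ into a normal form: prune every node not lying on a $V$-to-$U$ path, split each remaining node into an in-copy and an out-copy joined by a zero-weight arc (so that node-disjointness becomes an arc-capacity condition), and orient all arcs along the $V\to\cdots\to U$ direction. None of this changes $\tempinducednew{N}{g}{\co}$ and each step changes $|V(N)|$ by at most a constant factor, so it suffices to compress $N$ in this form.

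For $X\in\binom{V}{d}$ and $Z\in\binom{U}{d}$, let $\lambda(X,Z)$ be the maximum $\co$-weight of $d$ node-disjoint $X$-to-$Z$ paths in $N$, and $-\infty$ if no such linkage exists. Unwinding Definition~\ref{def:induction-by-networks} gives $\tempinducednew{N}{g}{\co}(X)=\max_{Z\in\binom{U}{d}}\bigl(\lambda(X,Z)+g(Z)\bigr)$, so it is enough to build a network $N'$ with terminal sets $V$ and $U$ and weights $\co'$ realising the \emph{same} weighted linking $\lambda$: for such an $N'$ one has $\tempinducednew{N'}{g}{\co'}=\tempinducednew{N}{g}{\co}$, and this holds for every $g$ simultaneously. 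Now $\lambda$ is exactly the weighted linking realised by $N$; inducing a free valuated matroid through $N$ and applying Theorem~\ref{thm:induction-network} shows that $\lambda$ is encoded by a valuated matroid $L$ on $V\dot{\cup}U$, and $L$ is a valuated gammoid since it is induced through a network from a free valuated matroid (equivalently, by Lemma~\ref{lem:network-bipartite+contraction}, it has a representation of bipartite-graph-plus-contraction shape).

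The heart of the proof is to show that $L$ has a network representation with terminal set $V\dot{\cup}U$ and only $\mathrm{poly}(|V|,|U|)$ nodes. For unvaluated gammoids this is precisely~\cite{KratschWahlstroem:2020}, obtained through representative families / matroid sparsification over a field. I would run that machinery over the field of formal Puiseux series in one variable, where the order of a minor records the corresponding weight, the point being to upgrade ``representative family'' to a \emph{degree-aware} representative family that preserves not only which $d$-subsets are bases but also the order of the associated maximal minor. Since the Kratsch--Wahlstr\"om bound is polynomial in the ground-set size and the rank and is independent of the coefficient field, the sparsified representation has polynomially many rows and columns; converting it back into a network --- legitimate because sparsification preserves the gammoid structure on the terminal set --- yields the desired $N'$, and hence $\tempinducednew{N'}{g}{\co'}=\tempinducednew{N}{g}{\co}$ with $|V(N')|=\mathrm{poly}(|V|,|U|)$.

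\textbf{The main obstacle} is exactly this valuated sparsification step. The representative-family arguments for gammoids rest on exterior-algebra and determinant identities over a field, and while it is immediate that passing to a sparsified submatrix over Puiseux series keeps all the relevant maximal minors nonzero, it is not clear that it keeps their \emph{orders} unchanged, nor that the sparsified matrix still arises from a network rather than being merely linear. Making ``degree-aware representative families'' work, and converting them back to a bounded-size network, is where essentially all of the difficulty lies, and it is conceivable that new ideas --- or a restriction on the admissible weights --- are needed, which is why the statement is recorded only as a conjecture.
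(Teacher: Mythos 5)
The statement you are proving is recorded in the paper only as a \emph{conjecture}; the paper contains no proof of it, so there is nothing to compare your argument against. The authors themselves point at~\cite{KratschWahlstroem:2020} in the surrounding text as the natural source of hope, and your sketch is precisely the program they are gesturing at: reduce to compressing the weighted routing layer $\lambda$, observe that $\lambda$ is a valuated gammoid on $V\,\dot\cup\,U$, and try to sparsify it by a weight-aware analogue of representative families. You are right that the $g$-independence of $\lambda$ is the useful observation, and you are right that the valuated sparsification step is the real obstruction; that you flag it yourself is exactly what keeps this a conjecture rather than a theorem.

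Two points worth sharpening in your account of the gap. First, the obstacle is not only that the orders of the maximal minors might drift under the exterior-algebra manipulations of~\cite{KratschWahlstroem:2020}: to make the argument go through you would need the sparsification to preserve the valuation of \emph{every} relevant $d\times d$ minor simultaneously, i.e.\ to produce a representation whose tropicalization is the same valuated gammoid, not merely one whose support matroid is the same gammoid. That is a much stronger requirement than the unvaluated statement, and it is not clear representative families are even the right tool for it. Second, even granting a small weight-preserving \emph{matrix} representation, the conjecture as stated asks for a small \emph{network} $N'$ with $V,U\subseteq V(N')$; Kratsch--Wahlstr\"om compress gammoids into linear representations, not into digraphs, and a small linear representation of a gammoid does not automatically yield a digraph of comparable size realizing it with the prescribed terminals. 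So ``converting it back into a network'' is a second genuine step that your sketch treats as routine but that would need its own argument. Identifying these two gaps cleanly is valuable; closing either of them would already be a contribution.
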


As we show in Appendix~\ref{sec:size-bound-R-rep}, R-induced valuated matroids have a polynomial size representation.
However, the information-theoretic argument given does not extend to N-induced and R-minor valuated matroids as it cannot control the size of the contracted set.
This suggests that several of the inclusions in Figure~\ref{fig:class+containment} should indeed be strict.

\subsection{Complete classes}

\begin{definition}[Complete class]
  Let $\mathcal{V}$ be a subset of the set of valuated matroids. We call $\mathcal{V}$ a \emph{complete class} if it is closed under taking restriction, duals, direct sum and principal extension. 
\end{definition}

The original definition of a complete class of matroids included many other operations including contraction, truncation and induction.
We extend~\cite[Theorem 6.1]{BoninSavitsky:2016} stating that these four operations suffice to define a complete class for valuated matroids.

\begin{theorem} \label{thm:complete+class}
  A complete class of valuated matroids is closed under taking contraction, truncation, induction by bipartite graphs, induction by directed graph and valuated union.
  
  Furthermore, valuated gammoids forms the smallest complete class.
  Hence, they are contained in all complete classes. 
\end{theorem}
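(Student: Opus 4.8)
The plan is to prove the three assertions in order: (1) closure under the listed derived operations, (2) that valuated gammoids form a complete class, and (3) that they are the smallest one. For (1), the key observation — already flagged in Remark~\ref{rem:principle+extension} and Lemma~\ref{lem:network-bipartite+contraction} — is that almost everything reduces to principal extension and restriction. First I would handle \emph{contraction}: given a complete class $\mathcal V$ and $f\in\mathcal V$ of rank $d$, and $Y$ independent in $\dom(f)$, I would show $f/Y$ is obtained by a bounded sequence of principal extensions followed by restrictions (and possibly a dual). Concretely, contracting a single independent element $e$ can be realized by taking the principal extension $f^{w}$ with $w$ chosen to have a single finite coordinate (forcing the new element $p$ to ``play the role'' of $e$ in a controlled way), then restricting away $e$; iterating gives contraction by $Y$. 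An alternative, cleaner route is via duality: $f/Y = (f^*\setminus Y)^*$ up to the rank conventions of Definition~\ref{def:M+operations}, and $\mathcal V$ is closed under both dual and restriction by hypothesis — I would check the rank conditions line up and use this. \emph{Truncation} then follows since $f^{(1)}$ can be obtained by principal extension $f^{w}$ with $w\equiv 0$ followed by contracting the new element $p$ (this is the standard ``free extension then contract'' identity, and both steps are now available). \emph{Induction by bipartite graphs} is handled exactly as in Remark~\ref{rem:principle+extension}: an induction along a bipartite graph $G=(V,U;E)$ with weights $c$ decomposes into repeated principal extensions (one per edge, or one per vertex of $V$ built up incrementally) interleaved with restrictions to discard spent copies; I would make this induction-on-$|E|$ (or on $|V|$) argument precise, keeping track that at each stage we remain inside $\mathcal V$. \emph{Induction by directed networks} then follows from Lemma~\ref{lem:network-bipartite+contraction}, which expresses it as bipartite induction followed by a contraction — both now known to preserve $\mathcal V$. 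Finally \emph{valuated union} $f_1\vee f_2$: I would first form the direct sum $f_1\oplus f_2$ (a defining closure property), which lives on the disjoint union $V_1\dot\cup V_2$, then apply induction by the bipartite graph that identifies the two copies of each element of $V_1\cap V_2$ (sending both copies to a single new vertex with zero-weight edges); checking that this induced function is exactly $f_1\vee f_2$ is a direct unwinding of Definition~\ref{def:valuatedSumUnion}.

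For (2), I must verify that the class of valuated gammoids — contractions of bipartite-inductions of trivially valuated free matroids — is itself closed under restriction, dual, direct sum, and principal extension. Restriction and direct sum are straightforward from the definition (a restriction of a contraction of an induction is again of that form after adjusting the graph; a direct sum of two such is realized by the disjoint union of the two representing graphs). For principal extension, I would take a valuated gammoid $f = (\Phi(G,\free{W'},c))/W$ and extend the bipartite graph $G$ by the principal-extension gadget of Figure~\ref{fig:principal-ext-graph} on the $V$-side, checking that contracting the same $W$ still yields $f^w$; this is essentially composing two inductions, and since inductions compose (again reducible to the graph level), the result is again a valuated gammoid. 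The dual is the subtle case: I would invoke that the dual of an $R$-induced valuated matroid on a free matroid, after a contraction, is again expressible in this form — using that $\dom$ of a gammoid is an (unvaluated) gammoid, which is closed under duality by the classical theory of gammoids, together with a valuated refinement. Here I expect to lean on the framework of \cite{BoninSavitsky:2016} for the combinatorial backbone and supply the weight bookkeeping.

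For (3) — minimality — let $\mathcal V$ be any complete class. By (1), $\mathcal V$ is also closed under contraction and induction by bipartite graphs. The matroid on a single element with trivial valuation is the restriction (to that element) of... more carefully: every valuated gammoid is, by definition, a contraction of a bipartite induction of a trivially valuated \emph{free} matroid $\free{U}$; and $\free{U}$ is the direct sum of $|U|$ copies of the single-element free valuated matroid. So it suffices to show that the one-element trivially valuated (free) matroid lies in every complete class $\mathcal V$ — but any complete class is nonempty, contains some $f$, and by restricting $f$ to a single element of its domain (or contracting down) one obtains this one-element matroid; then direct sums, bipartite induction, and contraction (all available by (1)) generate every valuated gammoid inside $\mathcal V$. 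Hence valuated gammoids $\subseteq \mathcal V$, and combined with (2) they are the smallest complete class.

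The main obstacle will be (2), specifically closure of valuated gammoids under \emph{duality}: unlike the other operations, there is no transparent ``extend the graph'' construction, and one must argue at the level of representations that the dual of a contracted bipartite induction of a free matroid is again of that shape. I anticipate this requires combining the classical self-duality of the class of gammoids (for the underlying $\dom$) with a careful choice of edge weights witnessing the valuated structure — possibly passing through the identity $f/Y=(f^*\setminus Y)^*$ in the reverse direction, or through a tropical-geometry description of valuated gammoids as in \cite{FinkOlarte:2021}. The closure-under-derived-operations part (1) is more routine but tedious: the real work is uniform bookkeeping of the Definition~\ref{def:M+operations} rank conditions through each reduction, so that ``obtained by principal extensions and restrictions'' is literally true and not merely true up to functions taking only $-\infty$.
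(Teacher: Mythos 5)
Your plan follows the paper's proof almost step for step: for the closure assertions, the paper invokes precisely the reductions you list (the identity $f/Y=(f^*\setminus Y)^*$ for contraction, free principal extension followed by contraction for truncation, a vertex-by-vertex decomposition of bipartite induction into principal extensions capped by a deletion, the ``bipartite plus contraction'' reduction of Lemma~\ref{lem:network-bipartite+contraction} for network induction, and the gluing of a direct sum by a bipartite gadget for union), and the minimality argument is the same: extract the one-element free matroid from a nonempty complete class, take iterated direct sums to get all free matroids, and then apply bipartite induction and minors. Two remarks, though. First, your alternative ``spike-weight'' route to contraction is wrong: if $w$ is supported only on $e$ with $w_e=0$, then $f^w(X\cup p)=f(X\cup e)$ for $e\notin X$ and $-\infty$ for $e\in X$, so deleting $e$ from $f^w$ merely renames $e$ to $p$ and returns a function of the same rank $d$ — it does not contract. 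You cannot drop rank with principal extensions and deletions alone, and truncation is itself derived from contraction, so the dual identity $f/Y=(f^*\setminus Y)^*$ is not just ``cleaner'' but the only one of your two routes that actually works (and it is the one the paper uses, via Lemma~\ref{lem:contraction-dual+deletion}). Second, you are more careful than the paper on the point you call ``(2)'': the theorem asserts that valuated gammoids \emph{are} the smallest complete class, which strictly speaking requires verifying that they form a complete class, and the paper's proof only establishes containment in every nonempty complete class while treating closure ``right from the definition.'' You correctly flag duality as the non-trivial closure to check, and indeed the paper's duality construction for R-minor matroids (Lemma~\ref{lem:R-dual}) does not specialize to gammoids — if $\M$ is free then $\M^*$ has rank zero, so the resulting $\M'=\M^*\oplus\free{V'\cup W'}$ has loops on $U'$ and is not free — so a separate argument (a valuated version of the classical self-duality of gammoids) would be needed, as you anticipate.
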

\begin{proof}
  The first claim follows from Lemma~\ref{lem:contraction-dual+deletion}, Lemma~\ref{lem:truncation-extension+contraction}, Lemma~\ref{lem:induction-principle+extension}, Lemma~\ref{lem:union-induction+sum} and Lemma~\ref{lem:network-bipartite+contraction}. 

  A non-empty complete class must contain the free matroid on one element.
  By taking iterated direct sum, this yields all free matroids. 
  Then closure under induction by bipartite graphs and minors yields valuated gammoids.
\end{proof}

\subsection{R-minor valuated matroids} \label{sec:R-induced}
The classes of valuated matroids discussed in the beginning of this section arising from induction through a network may only be induced by trivially valuated matroids.
As discussed in Example~\ref{ex:trivially-valuated-matroids}, a trivially valuated matroid $g$ can be identified with its underlying matroid $\M$, where $g(X)$ takes the value zero on bases of $\M$ and $-\infty$ otherwise.
Working with this underlying matroid shall be more convenient much of the time, therefore we extend the notation of Definition~\ref{def:induction-by-networks} to define $\networkinducednew{N}{\M}{\co}{V} := \networkinducednew{N}{g}{\co}{V}$.

Let $f$ be an R-minor valuated matroid on $V$.
By definition, there exists an R-induced valuated matroid $\tilde{f}$ on $V \cup W$ such that $f = \tilde{f}/W$.
By definition, there exists some bipartite graph $G = (V \cup W, U; E)$ with edge weights $\co \in \R^E$ and matroid $\M = (U, r)$ such that $\tilde{f} = \tempinducednew{G}{\M}{\co}$; we say $\tilde{f}$ has an \emph{R-induced representation} $(G, \M, \co)$.
As $f = \tempinducednew{G}{\M}{\co}/W$, we extend this notation to say that $f$ has an \emph{R-minor representation} $(G,\M, \co, W)$, where $W$ is the set to be contracted.

In the following, we show that R-minor valuated matroids are closed under deletion, principal extension, duality and direct sum, making them a complete class.
In the following we shall assume $f$ is an R-minor matroid with representation $(G, \M, \co, W)$ as above.

\begin{lemma}\label{lem:R-deletion}
  For a subset $X \subseteq V$, let $G \setminus X$ be the graph obtained from $G$ by deleting the nodes $X$ and all edges adjacent.
  The deletion $f \setminus X$ is represented by $(G\setminus X, \M, \co, W)$.
\end{lemma}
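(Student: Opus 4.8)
The claim is that if $f$ has R-minor representation $(G,\M,\co,W)$, then $f\setminus X$ has R-minor representation $(G\setminus X,\M,\co,W)$. The plan is to unwind both sides through the definitions of induction by bipartite graphs (Definition~\ref{def:induction-by-networks}), contraction (Definition~\ref{def:M+operations}(ii)), and deletion (Definition~\ref{def:M+operations}(i)), and check that the two computations produce the same function value on every set. Write $d=\rank(f)$ and let $\tilde f=\tempinducednew{G}{\M}{\co}$, so $f=\tilde f/W$ and $f$ is a valuated matroid on $V$ of rank $d$. After deleting $X$ from $G$, the only change is that matchings are no longer allowed to touch nodes of $X$; so for any $X'\subseteq (V\cup W)\setminus X$ with $|X'|=\rank(\tilde f)$, the value $\tempinducednew{G\setminus X}{\M}{\co}(X')$ is exactly $\tilde f(X')$, since a matching in $G$ saturating $X'$ in $V\cup W$ and a basis of $\M$ in $U$ automatically avoids $X$ (its endpoints in $V\cup W$ are precisely $X'$), and conversely every matching in $G\setminus X$ is a matching in $G$. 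In other words, $\tempinducednew{G\setminus X}{\M}{\co}=\tilde f\setminus X$ as valuated matroids on $(V\cup W)\setminus X$ — here one should note $\tilde f\setminus X$ is the honest deletion, which lands in $-\infty$ exactly when $(V\cup W)\setminus X$ fails to have full rank in $\dom(\tilde f)$, and this matches the induced-function definition returning $-\infty$ when no node-disjoint system exists.

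Next I would commute deletion past contraction: I claim $(\tilde f\setminus X)/W=(\tilde f/W)\setminus X=f\setminus X$, using that $X\cap W=\emptyset$ (as $X\subseteq V$). This is a small lemma about the order of operations on valuated matroids; concretely, for $Y\subseteq V\setminus X$ with $|Y|=d$ one has $\big((\tilde f\setminus X)/W\big)(Y)=(\tilde f\setminus X)(Y\cup W)=\tilde f(Y\cup W)=(\tilde f/W)(Y)=f(Y)=(f\setminus X)(Y)$, provided both sides are simultaneously finite or $-\infty$; the rank side-conditions (independence of $W$, full rank of the complement of $X$) transfer because $\dom(\tilde f\setminus X)=\dom(\tilde f)\setminus X$ and $\dom(f\setminus X)=\dom(f)\setminus X$, and $\dom(\tilde f/W)=\dom(\tilde f)/W=\dom(f)$. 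Chaining the two identities gives $f\setminus X=(\tilde f/W)\setminus X=(\tilde f\setminus X)/W=\big(\tempinducednew{G\setminus X}{\M}{\co}\big)/W$, which is precisely the assertion that $(G\setminus X,\M,\co,W)$ is an R-minor representation of $f\setminus X$.

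The routine-but-only-mild obstacle is bookkeeping the degenerate cases where rank drops: one must make sure that "deletion" in the sense of Definition~\ref{def:M+operations}(i) (which forces the all-$-\infty$ function when $V\setminus X$ is not of full rank in $\dom(f)$) agrees with what the graph $G\setminus X$ produces, and that this degeneracy is unaffected by the contraction by $W$. This is where it matters that $\dom(\tilde f)$ is a matroid on $V\cup W$ in which $W$ is independent and its contraction by $W$ is $\dom(f)$, so "$V\setminus X$ has full rank in $\dom(f)$" is equivalent to "$(V\setminus X)\cup W$ has full rank in $\dom(\tilde f)$" — exactly the condition governing $\tilde f\setminus X$. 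Once that equivalence is recorded, the rest is a direct substitution, and no genuinely hard step arises; the lemma is essentially a definitional unfolding, included because it is the first of the four closure properties (deletion, principal extension, duality, direct sum) needed to conclude that R-minor valuated matroids form a complete class.
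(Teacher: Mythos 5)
Your proof is correct and takes the same route as the paper, which simply states that the lemma follows from the definition of deletion; you have filled in the definitional unfolding (commuting deletion through induction and then through contraction, and checking the rank side-conditions) that the paper leaves implicit.
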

\begin{proof}
  This follows by the definition of deletion. 
\end{proof}

Let $w \in (\Trop)^V$ and consider $f^w$.
Let $V', W'$ denote copies of $V,W$, and define a network $N = (T,A)$ on the node set $T = (V'\cup W' \cup \{p\}) \cup (V \cup W) \cup U$, where $p$ is a new node.
We write $\widehat{V} = V'\cup W' \cup \{p\}$ to ease notation.
The arc set $A$ weighted by $\co' \in \R^A$ consists of arcs $(v',v)$ with weight $0$, where $v' \in V'\cup W'$ denotes the copy of $v \in V\cup W$.
We also add arcs $(p,v)$ with weight $w_{v}$ for all $v \in V$, and arcs $(v,u)$ for all edges $E$ of $G$ with weight inherited by $\co$.
The constructed network $N$ is displayed in Figure~\ref{fig:R-principal-extension}.
This network can intuitively be thought of as the ``concatenation'' of $G$ with the graph from Remark~\ref{rem:principle+extension}.
\begin{figure}
\centering
\includegraphics[height=4cm]{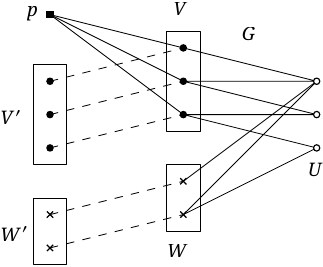}
\caption{The network $N$ constructed from a graph $G$ inducing the principal extension of an R-minor valuated matroid, as described before and within Lemma~\ref{lem:R-principal-extension}.}
\label{fig:R-principal-extension}
\end{figure}
\begin{lemma}\label{lem:R-principal-extension}
  The principal extension $f^w$ arises as the contraction of $\networkinducednew{N}{\M}{\co}{\widehat{V}}$ by $W'$.
  In particular, it can be represented as an R-minor valuated matroid. 
\end{lemma}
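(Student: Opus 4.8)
The plan is to verify directly that $\tempinducednew{N}{\M}{\co}/W'$ equals the principal extension $f^w$, by unwinding both sides via the definitions of induction by networks (Definition~\ref{def:induction-by-networks}) and of contraction (Definition~\ref{def:M+operations}(ii)). Recall that $f$ has R-minor representation $(G,\M,\co,W)$, so $f = (\tempinducednew{G}{\M}{\co})/W$, and that $N$ is the ``concatenation'' of the principal-extension gadget of Remark~\ref{rem:principle+extension} (on the fresh copies $V'\cup W'\cup\{p\}$) with the graph $G$. The target ground set of $\tempinducednew{N}{\M}{\co}$ is $V'\cup W'\cup\{p\}$, it has rank $d = \rank(\M) - |W| = \rank(f)$, and we contract the independent set $W'$; the resulting function lives on $\binom{V'\cup\{p\}}{d-|W|}$, which we identify with $\binom{V\cup p}{d-|W|}$, matching the domain of $f^w$ (noting $f$ has rank $d-|W|$ on $V$, so $f^w$ has the same rank on $V\cup p$).

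First I would treat the part of the statement that does not involve $p$. For $X\subseteq V$, a system of node-disjoint paths in $N$ with $\partial(\cdot)\cap(V'\cup W') = X\cup W'$ and not using $p$ must, by construction of $N$, begin with the zero-weight arcs $(v',v)$ for $v\in X\cup W$, and then proceed through $G$ to $U$; the arcs $(p,v)$ are the only other arcs leaving the ``source side'' and they are unused. Hence the maximum over such path systems reproduces exactly $\tempinducednew{G}{\M}{\co}(X\cup W)$. Contracting $W'$ then identifies $(\tempinducednew{N}{\M}{\co}/W')(X) = \tempinducednew{G}{\M}{\co}((X\cup W)\setminus W \text{ rejoined with }W)\dots$; cleaned up, this is $(\tempinducednew{G}{\M}{\co})((X)\cup W) = ((\tempinducednew{G}{\M}{\co})/W)(X) = f(X)$. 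So $(\tempinducednew{N}{\M}{\co}/W')|V = f$, establishing the first defining property of $f^w$.

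Next I would handle the sets containing $p$. Fix $X\in\binom{V}{d-|W|-1}$; I must show $(\tempinducednew{N}{\M}{\co}/W')(X\cup p) = \max_{v\in V\setminus X}\bigl(f(X\cup v)+w_v\bigr)$. A node-disjoint path system in $N$ covering $X\cup W'\cup\{p\}$ on the source side necessarily routes $p$ through a unique arc $(p,v)$ with $v\in V\setminus X$ (cost $w_v$), then through $G$; the nodes $X$ are routed via their zero-cost arcs $(v',v)$, and $W'$ via $(w',w)$. Thus the portion of the path system inside $G\cup U$ is exactly a node-disjoint path system covering $(X\cup v)\cup W$ on the $V\cup W$ side — and conversely any such system in $G$, together with the choice of $v$ and the obvious lead-in arcs, extends to a valid system in $N$ of the same total weight plus $w_v$. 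Taking maxima, $(\tempinducednew{N}{\M}{\co})(X\cup W'\cup p) = \max_{v\in V\setminus X}\bigl(w_v + \tempinducednew{G}{\M}{\co}((X\cup v)\cup W)\bigr) = \max_{v\in V\setminus X}\bigl(w_v + f(X\cup v)\bigr)$, and contracting $W'$ gives the claim. One should also check the edge cases where the maxima are $-\infty$ (no feasible path system / $v$ dependent in $\dom(f)$), which is automatic since the correspondence between path systems is exact.

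The only genuinely delicate point — and the one I would write out most carefully — is the bijection between node-disjoint path systems in $N$ and the pair (choice of $v$, node-disjoint path system in $G$): one must argue that node-disjointness is respected in both directions, in particular that the fresh source-copy nodes $V'\cup W'\cup\{p\}$ and the internal structure of $G$ do not interfere, and that no path system can ``skip'' the gadget layer or reuse a $V\cup W$ node. This is exactly the reasoning behind Remark~\ref{rem:principle+extension} and Lemma~\ref{lem:induction-principle+extension}, applied with $G$ in place of the trivial copy-graph; invoking those, the bijection is routine. Everything else is bookkeeping with the definitions, and the ``In particular'' clause is immediate: $N$ is a directed network, so by Lemma~\ref{lem:network-bipartite+contraction} the function $\tempinducednew{N}{\M}{\co}$ — hence its contraction by $W'$ — is an R-minor valuated matroid, whence $f^w$ is as well.
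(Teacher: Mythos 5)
Your proof is correct and follows essentially the same route as the paper: unwind node-disjoint path systems in $N$ by their first arc, match them to independent matchings in $G$ (picking up an extra $w_v$ when $p$ is routed via $(p,v)$ to some $v\in V\setminus X$), and then contract $W'$; the ``in particular'' clause follows from Lemma~\ref{lem:network-bipartite+contraction} exactly as you say. One bookkeeping slip worth fixing: $\tempinducednew{N}{\M}{\co}$ has rank $\rank(\M)=\rank(f)+|W|$ on $V'\cup W'\cup\{p\}$ (not $\rank(\M)-|W|$), so contracting the $|W|$ elements of $W'$ yields rank $\rank(f)$ on $V'\cup\{p\}$, matching $f^w$ — your stated ranks effectively subtract $|W|$ twice, though this does not affect the path-system argument itself.
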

\begin{proof}
  Consider a subset $X \subseteq V \cup \{p\}$, the principal extension $f^w$ is defined as
  \[
  f^w(X) = (\tilde{f}/W)^w(X) = 
  \begin{cases}
  \max_{v \in V \setminus Y}\left(\tilde{f}(Y \cup v \cup W) + w_v\right) & \, X = Y \cup \{p\} \\
  \tilde{f}(X \cup W) & \, p \notin X \\
  \end{cases} \, .
  \]
  We claim that $\networkinducednew{N}{\M}{\co}{\widehat{V}}(X' \cup W') = f^w(X)$ for $X' \subseteq V' \cup \{p\}$.
  
  If $p \notin X'$, then the value of $\networkinducednew{N}{\M}{\co}{\widehat{V}}(X' \cup W')$ is simply the maximal independent matching in $G$ to $X\cup W$ with no contribution from the zero edges, i.e. $\networkinducednew{N}{\M}{\co}{\widehat{V}}(X') = \tilde{f}(X\cup W)$.
  If $X' = Y' \cup \{p\}$, then the value of $\networkinducednew{N}{\M}{\co}{\widehat{V}}(X'\cup W')$ is the maximal independent matching in $G$ to $Y \cup v \cup W$ for some $v \in V \setminus Y$, plus $w_v$ picked up from the arc $(p,v)$, i.e.
  \[
  \networkinducednew{N}{\M}{\co}{\widehat{V}}(X'\cup W') = \max_{v \in V \setminus Y}\left(\tilde{f}(Y \cup v \cup W) + w_v\right) \, ,
  \]
  which is precisely the value of $f^w$.
  Therefore $f^w = \networkinducednew{N}{\M}{\co}{\widehat{V}}/W'$.
  Applying Lemma~\ref{lem:network-bipartite+contraction}, we can represent $\networkinducednew{N}{\M}{\co}{\widehat{V}}$ as an R-minor valuated matroid, and therefore also $f^w$. 
\end{proof}

Consider the dual valuated matroid $f^*$, we claim it can be represented in the following way.
Let $U', V', W'$ be copies of $U, V, W$ respectively.
Let $G' = (U \cup V \cup W, U' \cup V' \cup W', E')$ whose edge set $E'$ consists of edges
\[
E' = \SetOf{(v,v')}{v \in U \cup V \cup W} \cup \SetOf{(u, v')}{(v,u) \in E} \, .
\]
The edge weights are given by $\co' \in  \R^{E'}$ where $\co'(v,v') = 0$ and $\co'(u,v') = \co(v,u)$.
This graph is displayed in Figure~\ref{figure:dualRado}.
We also use in our representation the matroid $\M' = \M^* \oplus \free{V' \cup W'}$, the direct sum of the dual matroid $\M^* = (U', r^*)$ and the free matroid on $V' \cup W'$.

\begin{lemma}\label{lem:R-dual}
  The dual $f^*$ is an R-minor valuated matroid.
\end{lemma}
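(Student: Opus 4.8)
**Proof proposal for Lemma (the dual $f^*$ is an R-minor valuated matroid):**

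The plan is to verify that the explicitly constructed data $(G', \M', \co', V' \cup W')$ is indeed an R-minor representation of $f^*$, by unwinding both sides of the claimed identity. First I would recall that $f^*$ has ground set $V$ and rank $|V| - d$ (where $d = \rank(f) = \rank(\M) - |W|$), so we need an R-induced valuated matroid on $V \cup (V' \cup W')$ that restricts correctly after contracting $V' \cup W'$. The natural target is to show $f^* = \tempinducednew{G'}{\M'}{\co'} / (V' \cup W')$, where by the conventions of Section~\ref{sec:R-induced} this means: for $X \in \binom{V}{|V|-d}$, the value $f^*(X) = f(V \setminus X)$ equals the maximum weight of a system of node-disjoint paths (here: a matching, since $G'$ is bipartite) with $V$-side endpoints exactly $X$, with $(V' \cup W')$-side endpoints exactly $V' \cup W'$, and with $U'$-side endpoints forming a basis of $\M^*$.

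The key combinatorial step is the following duality dictionary for matchings. A matching $\mu$ in $G'$ whose $V$-endpoints are $X$ and whose $(V' \cup W')$-endpoints are all of $V' \cup W'$ must use, for each $v \in (V \cup W) \setminus X$, the ``vertical'' edge $(v, v')$ of weight $0$; the remaining vertices to be covered on the $V' \cup W'$ side are exactly $\{v' : v \in X\} \cup W'$, wait — more carefully, the vertices $v'$ with $v \in X$ (together with all of $W'$ if $W' \subseteq V' \cup W'$ is covered by vertical edges from $W$) must be matched across ``diagonal'' edges $(u, v')$ coming from edges $(v,u) \in E$. Thus the diagonal part of $\mu$ is precisely (the mirror image of) a matching in $G$ saturating $X \cup (\text{some subset of } U)$; combined with the vertical edges on $W$, this corresponds to a matching in $G$ saturating a set of the form $X \cup W$ wait, we want the complement. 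The clean statement to prove is: matchings of the required type in $G'$ correspond bijectively, preserving weight, to independent matchings in $G$ (with respect to $\M$) whose $(V \cup W)$-endpoints equal $(V \setminus X) \cup W$ — using the standard fact that a set $B' \subseteq U'$ is a basis of $\M^* $ iff $U \setminus B'$ is a basis of $\M$, and that the diagonal edges at a vertex $u$ not covered by $\mu$ force $u$ into $U \setminus B'$. Carrying this out gives $\tempinducednew{G'}{\M'}{\co'}(X \cup V' \cup W') = \tempinducednew{G}{\M}{\co}((V\setminus X) \cup W) = \tilde f((V \setminus X) \cup W) = f(V \setminus X) = f^*(X)$, as desired; the weight bookkeeping is immediate since the only nonzero edges used are the diagonal ones, which inherit $\co$.

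Once the identity $f^* = \tempinducednew{G'}{\M'}{\co'}/(V' \cup W')$ is established, the lemma follows: $\M'$ is a matroid on $U' \cup V' \cup W'$, so $(G', \M', \co', V' \cup W')$ is literally an R-minor representation in the sense defined in Section~\ref{sec:R-induced}, and hence $f^*$ is an R-minor valuated matroid. I expect the main obstacle to be purely bookkeeping: matching up the covering constraints on the three node classes $U, V, W$ of $G'$ with the independence-and-endpoint constraints in $G$, in particular being careful that the free summand $\free{V' \cup W'}$ in $\M'$ is exactly what allows (indeed forces) all of $V' \cup W'$ to be saturated while imposing no extra rank condition there, and that the ``flip'' $B' \mapsto U \setminus B'$ between bases of $\M^*$ and $\M$ interacts correctly with which $u \in U$ happen to be matched. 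None of this is deep, but it must be written with care to avoid off-by-one or complement errors; a small figure (Figure~\ref{figure:dualRado}) already accompanies the construction and should make the correspondence transparent.
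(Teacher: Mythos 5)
Your construction is not well-typed, and this is a real gap rather than a presentational slip. In the paper's $G' = (U \cup V \cup W,\ U' \cup V' \cup W';\ E')$ the matroid $\M' = \M^* \oplus \free{V'\cup W'}$ lives on $U' \cup V' \cup W'$, so $V' \cup W'$ sits on the \emph{matroid side} of the bipartition. In an R-minor representation $(G,\M,c,W)$ the contracted set $W$ must lie on the $V$-side of the bipartite graph (Definition~\ref{def:intro-R-minor}); you cannot contract a subset of the side the matroid lives on. Consequently the quantity $\tempinducednew{G'}{\M'}{\co'}(X \cup V' \cup W')$ does not make sense with this $G'$: the argument of the induced function must be a subset of $U \cup V \cup W$, and $V'\cup W'$ is not contained there. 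The data you wrote, $(G',\M',\co', V'\cup W')$, is therefore not an R-minor representation.

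The fix is the route the paper actually takes: first pass through $(\tilde f)^*$ using the identity $(\tilde f / W)^* = (\tilde f)^* \setminus W$ (Lemma~\ref{lem:contraction-dual+deletion}). One then checks that $(\tilde f)^*$, which lives on $V\cup W$, is represented by $(G',\M',\co', U)$ --- so the contracted set is $U$, which really is on the $V$-side of $G'$ --- and finally deletes $W$ (Lemma~\ref{lem:R-deletion}), giving $(G'\setminus W,\M',\co',U)$ as the representation of $f^*$. You do hit the correct combinatorial heart of the argument (bases of $\M^*$ are complements of bases of $\M$, and vertical versus diagonal edges record which $u \in U$ are used by the matching in $G$), but your description of the vertical edges is also reversed: with the correct contraction $X\cup U$ is to be covered on the $V$-side, so the vertical edge $(v,v')$ for $v \in V\cup W$ is used precisely when $v \in X$, while the nodes $v'$ with $v \in (V\cup W)\setminus X$ are the ones that must be reached by diagonal edges $(u,v')$. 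As written, your proposal asserts the opposite, which is one more symptom of the swapped sides.
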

\begin{proof}
Let $f = \tilde{f}/W$, then its dual is $f^* = (\tilde{f}/W)^* = (\tilde{f})^* \setminus W$ by Lemma~\ref{lem:contraction-dual+deletion}.
As R-minor valuated matroids are closed under deletion by Lemma~\ref{lem:R-deletion}, we are done if we can show $(\tilde{f})^*$ is an R-minor valuated matroid.
We claim that $(\tilde{f})^*$ is represented by $(G',\M', \co', U)$.

\begin{figure}
  \centering
  \includegraphics[width = 0.9\textwidth]{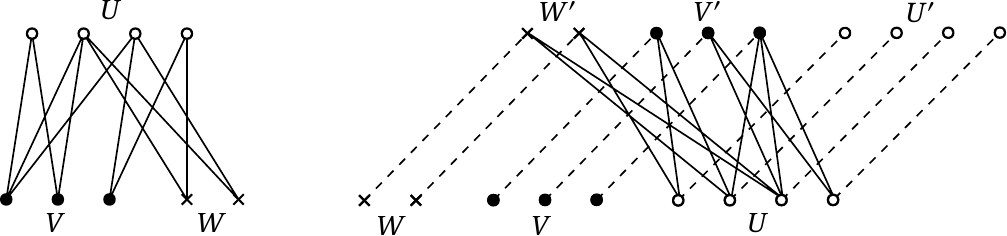}
  \caption{Construction of R-minor representation for $f^*$. }
  \label{figure:dualRado}
\end{figure}

Fix some $X \subseteq V$, we shall compute $\tempinducednew{G'}{\M'}{\co'}(X\cup U)$.
First observe that $v \in X$ can only be matched to $v' \in X'$ with weight zero, and that there are no matroid constraints on these edges.
Therefore the rest of the matching is an independent matching from $U$ to $(U' \cup V' \cup W') \setminus X'$.
For any independent matching, $Y \subseteq U$ matches to $(V' \cup W') \setminus X'$ if and only if $U' \setminus Y'$ is independent in $\M*$, which by matroid duality only occurs when $Y$ is independent in $\M$.
Therefore all independent matchings are of the form
\[
\SetOf{(u,v')}{(v,u) \in \mu} \cup \SetOf{(v,v')}{v \in X \cup (U \setminus Y)}
\]
where $\mu$ is an independent matching in $G$ from $(V \cup W) \setminus X$ to $Y \subseteq U$.
As the weights of these edges are either $0$ or inherited from $G$, we have
\[
\tempinducednew{G'}{\M'}{\co'}(X \cup U) = \tilde{f}((V \cup W)\setminus X) = (\tilde{f})^*(X) \, ,
\]
implying that $(\tilde{f})^* = \tempinducednew{G'}{\M'}{\co'}/U$ as claimed.
As $U, W$ are disjoint, contracting and/or deleting them commute and so $f^*$ has the representation $(G' \setminus W, \M', \co', U)$; the same representation as $(\tilde{f})^*$, but with $W$ deleted from $G'$.
\end{proof}

\begin{lemma}\label{lem:R-sum}
  Let $f_1$ and $f_2$ be two R-minor valuated matroids represented by $(G_1,\M_1, \co_1, W_1)$ and $(G_2,\M_2, \co_2, W_2)$.
  Then $f_1 \oplus f_2$ is represented by $(G',\M_1 \oplus \M_2, \co', W_1 \cup W_2)$, where $G'$ and its weight function $\co'$ arises by taking the union of the weighted graphs $G_1$ and $G_2$. 
\end{lemma}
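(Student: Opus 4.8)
The statement is that the direct sum of two R-minor valuated matroids is again R-minor, with the explicit representation obtained by taking disjoint unions of all the data. The proof I would give is essentially a verification that the proposed representation $(G', \M_1 \oplus \M_2, \co', W_1 \cup W_2)$ computes the right function, using the fact that everything decomposes over the disjoint ground sets.

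First I would set up notation carefully: write $G_i = (V_i \cup W_i, U_i; E_i)$ with weights $\co_i$ and matroid $\M_i = (U_i, r_i)$, and let $\tilde f_i = \tempinducednew{G_i}{\M_i}{\co_i}$ be the R-induced valuated matroid with $f_i = \tilde f_i / W_i$. Since $V_1, V_2$ (and $W_1, W_2$, and $U_1, U_2$) are taken to be disjoint, the graph $G'$ is the disjoint union $G_1 \,\dot\cup\, G_2$ with edge set $E_1 \,\dot\cup\, E_2$, and $\co'$ restricts to $\co_i$ on $E_i$. The key point is that the direct sum $\M_1 \oplus \M_2$ has as its bases exactly the sets $B_1 \,\dot\cup\, B_2$ with $B_i$ a basis of $\M_i$, and its rank is $r_1 + r_2 = (d_1 + |W_1|) + (d_2 + |W_2|)$, which matches the requirement for a valid R-induced representation of rank $d_1 + d_2$ after contracting $W_1 \cup W_2$.

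The main computation is then to show $\tempinducednew{G'}{\M_1 \oplus \M_2}{\co'}(X \cup W_1 \cup W_2) = (f_1 \oplus f_2)(X)$ for $X \in \binom{V_1 \cup V_2}{d_1 + d_2}$. Any matching $\mu$ in $G'$ with no common vertices across the two sides splits uniquely as $\mu = \mu_1 \,\dot\cup\, \mu_2$ with $\mu_i \subseteq E_i$, simply because every edge of $G'$ lies in exactly one $E_i$ and the vertex sets are disjoint; its $U$-endpoints form $\partial_{U_1}(\mu_1) \,\dot\cup\, \partial_{U_2}(\mu_2)$, which is a basis of $\M_1 \oplus \M_2$ if and only if $\partial_{U_i}(\mu_i)$ is a basis of $\M_i$ for each $i$; and its $V \cup W$-endpoints being $X \cup W_1 \cup W_2$ forces $\partial_{V_i \cup W_i}(\mu_i) = (X \cap V_i) \cup W_i$. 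Hence the maximization over matchings in $G'$ factors as the product (sum, in weights) of two independent maximizations, giving $\tilde f_1((X \cap V_1) \cup W_1) + \tilde f_2((X \cap V_2) \cup W_2)$, which by definition of contraction equals $f_1(X \cap V_1) + f_2(X \cap V_2) = (f_1 \oplus f_2)(X)$. If $|X \cap V_i| \neq d_i$ for some $i$ then no such matching exists and the value is $-\infty$, matching the convention in Definition~\ref{def:valuatedSumUnion}. I should also note that $W_1 \cup W_2$ is independent in $\M_1 \oplus \M_2$'s pullback in the appropriate sense, so the contraction is well-defined; this is immediate since $W_i$ is independent for the representation of $f_i$.

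I do not expect a serious obstacle here — the lemma is a bookkeeping statement and the disjointness hypotheses make the decomposition clean. The one place to be slightly careful is the bijection between matchings in $G'$ and pairs of matchings in $G_1, G_2$ respecting the prescribed endpoint sets, and making sure the $-\infty$ cases (both from the rank/cardinality mismatch and from one of the $\tilde f_i$ being $-\infty$ on the relevant set) are handled consistently with the conventions of Definition~\ref{def:valuatedSumUnion} and Definition~\ref{def:induction-by-networks}. Everything else is forced.
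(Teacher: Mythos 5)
Your proof is correct and is exactly the verification the paper has in mind; the paper's own proof is the single line ``This follows from the definitions,'' and you have simply filled in the routine details (decomposition of a matching over the disjoint components, factorization of the maximization, and consistency of the $-\infty$ conventions).
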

\begin{proof}
  This follows from the definitions. 
\end{proof}

\begin{theorem}
  The set of R-minor valuated matroids forms a complete class of valuated matroids. 
\end{theorem}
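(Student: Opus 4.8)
The plan is to verify that the set of R-minor valuated matroids satisfies the four closure properties required by the definition of a complete class: closure under restriction (deletion), duality, direct sum, and principal extension. Each of these has already been established in the preceding lemmas, so the proof is essentially an assembly of those results. First I would invoke Lemma~\ref{lem:R-deletion} for closure under deletion/restriction: given an R-minor valuated matroid $f$ with representation $(G,\M,\co,W)$ and a subset $X\subseteq V$, the function $f\setminus X$ is represented by $(G\setminus X,\M,\co,W)$, so it is again R-minor.

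Next I would invoke Lemma~\ref{lem:R-principal-extension} for closure under principal extension: for any weight vector $w\in(\Trop)^V$, the principal extension $f^w$ arises as a contraction of an induction by a network $\tempinducednew{N}{\M}{\co'}$, and applying Lemma~\ref{lem:network-bipartite+contraction} rewrites this as an R-minor valuated matroid. Then Lemma~\ref{lem:R-dual} gives closure under duality: writing $f=\tilde f/W$, one has $f^*=(\tilde f)^*\setminus W$ by Lemma~\ref{lem:contraction-dual+deletion}, and $(\tilde f)^*$ is shown to have the R-minor representation $(G',\M',\co',U)$, so closure under deletion finishes the argument. Finally Lemma~\ref{lem:R-sum} gives closure under direct sum by taking the disjoint union of the two representations and the direct sum of the two matroids on the ground set.

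Having checked all four defining closure properties, I would conclude that R-minor valuated matroids form a complete class. I would also remark, for context, that since valuated gammoids are the smallest complete class by Theorem~\ref{thm:complete+class} and are in particular R-minor (a free matroid is a trivially valuated matroid, so inductions of free matroids are R-induced, and their contractions are R-minor), the class of R-minor valuated matroids indeed sits between valuated gammoids and all valuated matroids, consistent with Figure~\ref{fig:class+containment}.

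There is no real obstacle here: the statement is a corollary packaging the four preceding lemmas, each of which does the genuine work by exhibiting the explicit bipartite-graph-plus-matroid-plus-contraction representation. The only point requiring slight care is that the operations of contracting $W$ and deleting (or dualizing over) disjoint sets commute, which is exactly the bookkeeping already carried out inside the proofs of Lemma~\ref{lem:R-dual} and the application of Lemma~\ref{lem:network-bipartite+contraction} inside Lemma~\ref{lem:R-principal-extension}; so the proof can simply cite those four lemmas in sequence.
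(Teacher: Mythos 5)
Your proof is correct and takes essentially the same approach as the paper: the paper's proof simply cites Lemmas~\ref{lem:R-deletion}, \ref{lem:R-principal-extension}, \ref{lem:R-dual}, and \ref{lem:R-sum}, which is exactly the assembly you carry out.
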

\begin{proof}
  This follows directly from Lemmas~\ref{lem:R-deletion},~\ref{lem:R-principal-extension},~\ref{lem:R-dual} and \ref{lem:R-sum}.
\end{proof}

\section{Rado representation of matroids}
\label{section:structureOfMatroids}
Our proof of Theorem~\ref{thm:non-r-minor-main} 
  relies heavily on the properties of (unvaluated) matroids arising from the valuated matroids in $\cF_n$.
More precisely, the proof relies on the properties of the R-induced and R-minor representations of the underlying matroids.
In this section, we specialize R-representations to matroids (without valuation) and give several lemmas that will be used later in Section~\ref{sec:R-minor+not+cover}.
For a representation of a matroid,
we do not care about the weights of the edges in the bipartite graph but only the existence of the independent matchings.
Then the representation boils down to well-known results in matroid theory. 
This allows us to deduce useful structural statements and uncrossing properties.

\begin{definition}[Rado representation]\label{def:RadoRepresentation}
Let $G=(V, U; E)$ be a bipartite graph and $\M=(U, r_{\M})$ be a matroid.
We define a matroid $\cN$ on $V$ as follows. 
A set $X\subseteq V$ is independent in $\cN$ if there exists $S\subseteq U$ such that there is a perfect matching in the subgraph induced by $(X, S)$ and $S$ is independent in $\M$. 
We say that $(G, \M)$ is a \emph{Rado representation} of $\cN$.
\end{definition}

The following theorem verifies that this construction indeed defines a matroid, and characterizes its rank function. 

\begin{theorem}[Rado's theorem~\cite{oxley,Rado1942}]\label{thm:RadosTheorem}
Let $\cN$ be as in Definition~\ref{def:RadoRepresentation}.
Then $\cN$ is a matroid.
Moreover, a set $X\subseteq V$ is independent in $\cN$ if and only if $r_{\M}(\Gamma(Y)) \ge |Y|$ for all $Y\subseteq X$. If a set $X\subseteq V$ is a circuit in $\cN$, then $r_{\M}(\Gamma(X))=|X|-1$.
\end{theorem}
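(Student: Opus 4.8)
The plan is to derive everything from the defect form of König's/Hall's theorem, which computes the maximum size of a matching in a bipartite graph whose $V$-endpoints must be matched, but where on the $U$-side we are only allowed to use an independent set of $\M$. The key quantity is, for $X\subseteq V$,
\[
\nu(X) \;=\; \max\SetOf{|S|}{S\subseteq U,\ S \text{ independent in }\M,\ (X,S)\text{ has a perfect matching on }X}.
\]
I claim $\nu(X) = \min_{Y\subseteq X}\bigl(|X\setminus Y| + r_{\M}(\Gamma(Y))\bigr)$. The $\le$ direction is immediate: for any $Y\subseteq X$, a feasible $S$ must contain an independent transversal of $\Gamma(Y)$ for the matched part of $Y$, contributing at most $r_{\M}(\Gamma(Y))$, plus at most $|X\setminus Y|$ elements for the rest of $X$. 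For the $\ge$ direction, the cleanest route is to invoke the matroid intersection theorem, or equivalently the matroid union / Rado–Hall deficiency formula: the maximum is attained, and the min–max identity above is exactly the Rado deficiency version of Hall's theorem (see~\cite{oxley}). Concretely, one can realize $\nu(X)$ as the rank of the transversal-type matroid induced on $X$ by $G$ intersected with the constraint coming from $\M$, and apply matroid intersection; I would cite the standard statement rather than reprove it.

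Granting the formula for $\nu$, define $\cal N$ by declaring $X\subseteq V$ \emph{independent} iff $\nu(X)=|X|$, i.e.\ iff $r_{\M}(\Gamma(Y))\ge |Y|$ for all $Y\subseteq X$; this is equivalent to the perfect-matching-to-an-independent-set condition in Definition~\ref{def:RadoRepresentation}. To see that $\cal N$ is a matroid with rank function $r_{\cal N}(X)=\nu(X)$, note first that $\nu$ is monotone and that $\nu(X)=|X|$ for all $X$ iff the Hall-type condition holds for $X$, so the independent sets are downward closed and $\emptyset$ is independent. For the exchange axiom one argues directly: if $X,X'$ are independent with $|X|<|X'|$, consider the feasible witnessing sets $S,S'\subseteq U$ and the associated matchings $\mu,\mu'$; by a standard augmenting-path argument in the symmetric difference $\mu\triangle\mu'$, together with the basis-exchange property of $\M$ applied to $S,S'$, one produces an element $e\in X'\setminus X$ such that $X+e$ is still independent. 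Equivalently, and more economically, one shows $\nu$ is submodular (it is a min of submodular-plus-modular functions, and $r_{\M}\circ\Gamma$ is submodular since $\Gamma$ is a union map and $r_{\M}$ submodular) and that $\nu(X+e)-\nu(X)\in\{0,1\}$; a function with these properties whose unit increments govern a downward-closed family is a matroid rank function. Either way this gives that $\cal N$ is a matroid and $r_{\cal N}=\nu$.

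Finally, the circuit statement. If $X$ is a circuit of $\cal N$, then $X$ is dependent but every proper subset is independent, so $r_{\cal N}(X)=\nu(X)\le |X|-1$, while for each $e\in X$ we have $\nu(X-e)=|X|-1$, hence $\nu(X)\ge |X|-1$; thus $\nu(X)=|X|-1$. Now use the min–max formula: pick $Y\subseteq X$ attaining $\nu(X)=|X\setminus Y|+r_{\M}(\Gamma(Y))=|X|-1$. If $Y\neq X$ then $|X\setminus Y|\ge 1$ forces $r_{\M}(\Gamma(Y))\le |Y|-1<|Y|$, contradicting independence of the proper subset $Y$; hence $Y=X$ and $r_{\M}(\Gamma(X))=|X|-1$. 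I expect the main obstacle to be pinning down the $\ge$ direction of the min–max formula for $\nu$ cleanly — it is the one genuinely nontrivial ingredient — and the safest path is to quote the Rado/matroid-intersection deficiency theorem from~\cite{oxley} rather than redeveloping augmenting paths from scratch; the matroid axioms and the circuit claim then follow by the elementary bookkeeping sketched above.
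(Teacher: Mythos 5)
The paper does not prove this theorem itself; it cites it to Oxley~\cite{oxley} and Rado~\cite{Rado1942}, so there is no in-paper proof to compare against. Evaluating your sketch on its own merits: the route through the defect/min--max formula $r_{\cal N}(X)=\min_{Y\subseteq X}\bigl(|X\setminus Y|+r_{\M}(\Gamma(Y))\bigr)$ is the standard one, and your derivations of the Hall-type independence characterization and of the circuit identity from it are sound. The submodularity argument in particular does go through cleanly: $Y\mapsto r_{\M}(\Gamma(Y))-|Y|$ is submodular, $h(X)=\min_{Y\subseteq X}\bigl(r_{\M}(\Gamma(Y))-|Y|\bigr)$ is again submodular because minimizers uncross, $\nu=|\cdot|+h$ is then submodular, nonnegative, bounded by $|X|$, and increases by $0$ or $1$ per element -- which are exactly the rank axioms. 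The circuit argument is correct: any $Y$ attaining the minimum for a circuit $X$ must have $r_{\M}(\Gamma(Y))=|Y|-1$, which is impossible for a proper (hence independent) subset of $X$, so the minimizer is $X$ itself.

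One technical remark: your opening definition of $\nu(X)$ is not quite the quantity you use afterward. Requiring ``$(X,S)$ has a perfect matching on $X$'' saturates all of $X$, so $|S|\ge|X|$ whenever $\nu(X)$ is defined at all, and maximizing $|S|$ over such $S$ does not measure the partial matchability of $X$. What the rest of the argument actually uses (and what the defect formula computes) is the largest $k$ for which some $k$-subset of $X$ has a perfect matching to an $\M$-independent set -- i.e.\ the maximum size of an independent matching with $V$-side inside $X$. With that corrected reading of $\nu$, your $\le$ direction, the matroid-axiom argument, and the circuit corollary all line up correctly, and deferring the $\ge$ direction of the defect formula to matroid intersection or to the cited sources is reasonable and consistent with how the paper itself treats the theorem.
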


A more general representation can be obtained as minors of the above.
\begin{definition}[Rado-minor representation]\label{def:RadoMinorRepresentation}
Let $G=(V\cup W, U; E)$ be a bipartite graph and $\M=(U, r_{\M})$ be a matroid.
We define a matroid $\cN$ on $V$ as follows. 
A set $X\subseteq V$ is independent in $\cN$ if there exists $S\subseteq U$ such that there is a perfect matching in the subgraph induced by $(X\cup W, S)$ and $S$ is independent in $\M$. 
We say that $(G, \M, W)$ is a \emph{Rado-minor representation} of~$\cN$.
\end{definition}

\begin{proposition}\label{prop:RadoMinorRepresentation}
Let $\cN$ be as in Definition~\ref{def:RadoMinorRepresentation}.
Then $\cN$ is a matroid. 
Moreover, $X\subseteq V$ is independent in $\cN$ if and only if for all $Z\subseteq X\cup W$ it holds that $r_{\M}(\Gamma (Z)) \ge |Z|$.
If a set $X\subseteq V$ is a circuit in $\cN$, then there exists $Z' \subseteq X\cup W$ with $X = Z' \cap V$ such that $r_{\M}(\Gamma(Z'))=|Z'|-1$.
\end{proposition}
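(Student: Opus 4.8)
The plan is to reduce the entire statement to Rado's theorem (Theorem~\ref{thm:RadosTheorem}) applied to the bipartite graph $G$ with ground set $V\cup W$ rather than just $V$. Let $\cal N'$ be the matroid on $V\cup W$ Rado-represented by $(G,\M)$ in the sense of Definition~\ref{def:RadoRepresentation}. Straight from the definitions, for $X\subseteq V$ the set $X$ is independent in $\cal N$ (Definition~\ref{def:RadoMinorRepresentation}) if and only if $X\cup W$ is independent in $\cal N'$. Taking $X=\emptyset$ shows this forces $W$ to be independent in $\cal N'$, which we may assume---otherwise no subset of $V$, not even $\emptyset$, is independent in $\cal N$ and the definition is degenerate. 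Granting it, $\cal N$ is exactly the contraction $\cal N'/W$, hence a matroid, and the independence characterization follows by substituting the independence criterion of Rado's theorem for $\cal N'$ into the equivalence above: $X$ is independent in $\cal N$ iff $r_{\M}(\Gamma(Z))\ge |Z|$ for every $Z\subseteq X\cup W$.

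For the circuit statement, suppose $X\subseteq V$ is a circuit of $\cal N=\cal N'/W$. Then $X\cup W$ is dependent in $\cal N'$, while $(X-e)\cup W$ is independent in $\cal N'$ for each $e\in X$ (using minimality of $X$ and independence of $W$). Introduce $\phi(Z):=r_{\M}(\Gamma(Z))-|Z|$ for $Z\subseteq X\cup W$. The key observations are: (i) dependence of $X\cup W$ together with Rado's criterion yields some $Z$ with $\phi(Z)\le -1$; and (ii) if $\phi(Z)<0$ then $Z$ cannot lie inside any $(X-e)\cup W$ (those sets are independent, so $\phi\ge 0$ on their subsets by Rado), which forces $e\in Z$ for every $e\in X$, i.e.\ $X\subseteq Z$. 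Conversely, for any $Z=X\cup W'$ with $W'\subseteq W$, picking $e\in X$ and using monotonicity of $Z\mapsto r_{\M}(\Gamma(Z))$ gives $\phi(Z)\ge \phi((X-e)\cup W')-1\ge -1$.

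Now take $Z^*\subseteq X\cup W$ minimizing $\phi$. By (i), $\phi(Z^*)\le -1<0$, so by (ii) $X\subseteq Z^*$; and then the converse bound gives $\phi(Z^*)\ge -1$, so $\phi(Z^*)=-1$, i.e.\ $r_{\M}(\Gamma(Z^*))=|Z^*|-1$. Since $Z^*\subseteq X\cup W$ and $X\subseteq Z^*$, we get $Z^*\cap V=X$, and $Z'=Z^*$ is the set we want. I expect the only delicate points to be observation (ii)---the one genuinely using that \emph{every} proper subset of the circuit is independent---and the bookkeeping relating circuits and independent sets of $\cal N$ to those of $\cal N'$ under contraction by the independent set $W$; notably, the submodularity of $Z\mapsto r_{\M}(\Gamma(Z))$, although central elsewhere in the paper, is not needed here, since an arbitrary minimizer of $\phi$ suffices.
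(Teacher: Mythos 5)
Your proposal is correct. For the first two parts (that $\cal N$ is a matroid and the independence criterion) you take essentially the same route as the paper: introduce $\cal N'$ on $V\cup W$ with Rado representation $(G,\M)$, observe that $\cal N$ is the contraction $\cal N'/W$, and translate Rado's criterion for $\cal N'$ across the contraction. You also explicitly flag the degenerate case where $W$ fails to be independent in $\cal N'$ (so that $\cal N$ would have no independent sets at all); the paper's proof glosses over this, so your remark is a small clarification rather than a discrepancy.

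Where you genuinely diverge is the circuit claim. The paper cites Oxley's Proposition 3.1.10 (circuits of a contraction lift to circuits of the original matroid) and then applies Rado's theorem to the lifted circuit $Z'$. You instead run a self-contained minimizer argument on $\phi(Z)=r_\M(\Gamma(Z))-|Z|$: dependence of $X\cup W$ produces a $Z$ with $\phi(Z)\le -1$; any such $Z$ must contain all of $X$ since it cannot sit inside the independent set $(X-e)\cup W$ for any $e\in X$; and for sets of the form $X\cup W'$ the monotonicity of $r(\Gamma(\cdot))$ plus independence of $(X-e)\cup W'$ caps $\phi$ below at $-1$. A minimizer $Z^*$ therefore achieves exactly $-1$ and satisfies $Z^*\cap V=X$. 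This avoids invoking the black-box fact about circuits under contraction, at the cost of a few extra lines, and as you note it uses only monotonicity (not submodularity) of $Z\mapsto r_\M(\Gamma(Z))$. Both arguments are valid; yours is more elementary and self-contained, while the paper's is shorter given the textbook citation.
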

\begin{proof}
  Consider $G, W$ and $\M$ as in Definition~\ref{def:RadoMinorRepresentation}. 
  Then, let ${\cN}'$ be the matroid on $V\cup W$ with Rado representation $(G, \M)$. 
  By definition of contraction, $\cN$ can be obtained by contracting $W$ in~${\cN}'$.
  The first two parts of the proposition follow immediately from Theorem~\ref{thm:RadosTheorem}.
  
  For the final statement, note that if $X$ is a circuit of $\cN$, there exists a circuit $Z'$ of $\cN'$ such that $X = Z' \cap V$~\cite[Proposition 3.1.10]{oxley}.
  Applying Theorem~\ref{thm:RadosTheorem} gives the final part of the proposition.
\end{proof}

Any matroid that has no independent sets other than the empty set is said to be an \emph{empty matroid}. 
We next introduce some basic matroidal notions, and present their properties in the context of Rado representations.
Recall the notions of matroid union and direct sum motivating Definition~\ref{def:valuatedSumUnion}.
We restate them here for clarity, along with some additional related notions for decomposing matroids.

\begin{definition}[Direct sum, Disconnected]
\label{def:matroidSum}
Let $U_1, \dots, U_k$ be disjoint sets. 
For $i \in [k]$ let $\B_i$ be the bases of matroid $\M_i$ on $U_i$. 
The \emph{direct sum} $\M_1 \oplus \dots \oplus \M_k$ is a matroid $\M$ on $U = \dot{\cup}_{i = 1}^k U_i$ with bases $\B = \{\dot{\cup}_{i=1}^{k} B_i : B_i \in \B_i\}$.
We say that a matroid $\M$ is \emph{disconnected} if it is a \emph{direct sum} of at least two non-empty matroids. 
A matroid is \emph{connected} if it is not \emph{disconnected}.
\end{definition}

\begin{definition}[Matroid union, Fully reducible]
For $i \in [k]$, let $\I_i$ be the set of independent sets of the matroid $\M_i$ on $U$. 
The \emph{matroid union} $\M_1 \vee \dots \vee \M_k$ is a matroid $\M$ on $U$ with
independent sets given by 
$\I = \{\cup_{i=1}^{k} I_i : I_i \in \I_i\}$.
If additionally $r(\M)=\sum_{i=1}^k r(\M_i)$, then $\M$ is the \emph{full-rank matroid union} of $\M_1, \dots ,\M_k$~(see~\cite{ChoeOxleySokalWagner:2004}). 

We say that a matroid $\M$ is \emph{reducible} if it is a \emph{matroid union} of at least two non-empty matroids. 
Further, $\M$ is \emph{fully reducible} if it is a \emph{full-rank matroid union} of at least two non-empty matroids.
\end{definition}

We will use the rank formula of matroid union, see e.g. \cite[Theorem 13.3.1]{frankbook}.
\begin{theorem}[Edmonds and Fulkerson, 1965]\label{thm:matroid-union}
Consider the matroid union
$\M=\M_1 \vee \dots \vee \M_k$ for matroids $\M_1,\ldots,\M_k$, $k\ge 2$ on the ground set $U$, and let $r_i$ denote the rank function of the $i$-th matroid. Then for any $X\subseteq U$, the rank $r(X)$ in $\M$ equals
\[
r(X)=\min\left\{\sum_{i=1}^k r_i(Z)+|X\setminus Z|:\, Z\subseteq X\right\}\, .
\]
Consequently, if $X$ is a circuit in $\M$ then $\sum_{i=1}^k r_i(X)=|X|-1$.
\end{theorem}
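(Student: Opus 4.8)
The plan is to establish the min--max formula by two matching inequalities and then read off the circuit statement from it.

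\emph{Upper bound.} Take a basis $I$ of $X$ in $\M$, so $I = I_1 \cup \dots \cup I_k$ with $I_i$ independent in $\M_i$. Deleting from $I_j$ an element that also lies in some $I_i$ with $i \neq j$ leaves $I_1 \cup \dots \cup I_k$ unchanged, so we may assume the $I_i$ are pairwise disjoint; then $|I| = \sum_i |I_i|$. For any $Z \subseteq X$, using $I \subseteq X$ and the fact that each $I_i \cap Z$ is independent in $\M_i$,
\[ r(X) = |I| = |I \cap Z| + |I \setminus Z| = \sum_{i=1}^k |I_i \cap Z| + |I \setminus Z| \le \sum_{i=1}^k r_i(Z) + |X \setminus Z| . \]

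\emph{Lower bound.} Here I would reduce to the matroid intersection min--max theorem. On the ground set $N := X \times [k]$, place the direct sum $\widehat{\M} := \M_1 \oplus \dots \oplus \M_k$, with $\M_i$ living on the copy $X \times \{i\}$, so its rank function is $\widehat r(S) = \sum_i r_i(\{u : (u,i) \in S\})$; and place the partition matroid $\mathcal{P}$ in which $S$ is independent exactly when $S$ meets each fibre $\{u\} \times [k]$ in at most one point, with rank function $p(S) = |\{u : (u,i) \in S \text{ for some } i\}|$. A set independent in both $\widehat{\M}$ and $\mathcal{P}$ projects injectively onto a subset of $X$ that is a union of one independent set from each $\M_i$, hence is independent in $\M$; conversely any pairwise disjoint family $I_1, \dots, I_k$ as above lifts to $\bigcup_i (I_i \times \{i\})$, independent in both matroids. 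Thus the maximum size of a common independent set equals $r(X)$, and by matroid intersection this equals $\min_{S \subseteq N} (\widehat r(S) + p(N \setminus S))$. The last step is a cylinder reduction: if for some $u$ the fibre $\{u\} \times [k]$ is not entirely inside $S$, then moving all of that fibre out of $S$ leaves $p(N \setminus S)$ unchanged and cannot increase $\widehat r(S)$; so an optimal $S$ may be taken of the form $Z \times [k]$ for some $Z \subseteq X$, where $\widehat r(S) = \sum_i r_i(Z)$ and $p(N \setminus S) = |X \setminus Z|$. This yields $r(X) \ge \min_{Z \subseteq X}(\sum_{i=1}^k r_i(Z) + |X \setminus Z|)$, completing the formula. (The lower bound can instead be proved directly, without matroid intersection, by the Nash-Williams--Edmonds exchange-graph argument on a pairwise disjoint family $I_1, \dots, I_k$ of maximum total size; in either route the main obstacle is precisely this lower bound --- the cylinder reduction, or the verification that maximality rules out augmenting paths.)

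\emph{Circuit consequence.} Suppose $X$ is a circuit of $\M$. Then $r(X) = |X| - 1$ and $r(X - e) = |X| - 1$ for every $e \in X$. Applying the formula to $X - e$ shows that every $Z \subseteq X \setminus \{e\}$ satisfies $\sum_i r_i(Z) + |(X - e) \setminus Z| \ge |X| - 1$, i.e. $\sum_i r_i(Z) + |X \setminus Z| \ge |X|$; as $e$ ranges over $X \setminus Z$, this holds for every proper subset $Z \subsetneq X$. Hence in the formula for $r(X)$ the minimum value $|X| - 1$ is not attained at any proper subset, so it is attained at $Z = X$, giving $\sum_{i=1}^k r_i(X) = |X| - 1$.
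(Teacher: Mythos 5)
The paper states this result without proof, citing Frank's textbook; it does not supply its own argument, and the ``consequently'' clause about circuits is also left unjustified. Your proof is correct. The upper bound and the reduction to matroid intersection via the lifted ground set $X \times [k]$ (direct sum $\widehat{\M}$ against the partition matroid, followed by the cylinder reduction on optimal dual sets $S$) is the classical Nash-Williams--Edmonds argument, and you execute it cleanly, including the observation that removing a partial fibre from $S$ does not change $p(N \setminus S)$ and cannot increase $\widehat r(S)$. Your derivation of the circuit consequence is a nice, self-contained deduction from the rank formula: applying the formula to $X - e$ for each $e$ rules out every proper $Z \subsetneq X$ as a minimizer for $r(X)$, forcing the minimum to be attained at $Z = X$. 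One small omission worth noting: to invoke the formula with the numerical value $|X|-1$ on the right, you implicitly use that $X - e$ is independent in $\M$ (so $r(X-e)=|X|-1$); this is exactly the definition of a circuit, but saying so would close the loop. Since the paper provides no proof at all, your argument is a genuine addition rather than a rederivation.
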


\begin{lemma}\label{lemma:disconnectedGivesIrreducible}
  Let $\cN$ be a matroid with a Rado representation $(G,\M)$, where $G=(V,U; E)$ and $\M=(U, r)$.
  Assume that $\M=\M_1 \oplus \dots \oplus \M_k$, with $k \geq 2$, for matroids $\M_i =(U_i, r_i)$, and  $\Gamma(V)\cap U_i\neq\emptyset$ for every component of $\M$. Then, $\cN$ is reducible.
 Furthermore, if $r_{\cN}(V)=r_{\M}(U)$, then $\cN$ is fully reducible.  
\end{lemma}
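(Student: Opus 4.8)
The plan is to decompose the matroid $\cal N$ along the direct sum decomposition of $\M$ by restricting the bipartite graph $G$ to the edges going into each component $U_i$, and then show that $\cal N$ is the matroid union of the resulting pieces. Concretely, for each $i \in [k]$ let $G_i = (V, U_i; E_i)$ be the subgraph of $G$ induced by keeping only edges with an endpoint in $U_i$, and let ${\cal N}_i$ be the matroid on $V$ with Rado representation $(G_i, \M_i)$. Since $\Gamma(V) \cap U_i \neq \emptyset$, each $\M_i$ contributes a nonzero rank, so each ${\cal N}_i$ is non-empty; this is exactly what we need to conclude that $\cal N$ is a union of at least two non-empty matroids.

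First I would prove ${\cal N} = {\cal N}_1 \vee \dots \vee {\cal N}_k$ by comparing independent sets, or more cleanly, bases. A set $X \subseteq V$ is independent in $\cal N$ iff there is an independent matching from $X$ into a set $S \subseteq U$ independent in $\M$; because $\M = \bigoplus_i \M_i$, independence of $S$ in $\M$ is equivalent to independence of each $S \cap U_i$ in $\M_i$, and a matching of $X$ into $U$ splits uniquely as matchings of disjoint pieces $X_i$ into $U_i$. Thus $X$ is independent in $\cal N$ iff $X = \bigcup_i X_i$ with the $X_i$ pairwise disjoint and each $X_i$ independent in ${\cal N}_i$ — which is precisely the statement that $X$ is independent in ${\cal N}_1 \vee \dots \vee {\cal N}_k$. (Here I would lean on the standard fact that in a matroid union, $X$ is independent iff it partitions into independent sets of the constituents.) This already gives that $\cal N$ is reducible.

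For the "furthermore" part, suppose $r_{\cal N}(V) = r_{\M}(U)$. We always have $r_{\cal N}(V) \le \sum_i r_{{\cal N}_i}(V)$ from the union, and $r_{{\cal N}_i}(V) \le r_{\M_i}(U_i)$ since a matching of an independent-in-${\cal N}_i$ set lands in $U_i$ whose independent subsets have rank at most $r_{\M_i}(U_i)$; summing, $r_{\cal N}(V) \le \sum_i r_{{\cal N}_i}(V) \le \sum_i r_{\M_i}(U_i) = r_{\M}(U)$. The hypothesis forces all inequalities to be equalities, in particular $r_{\cal N}(V) = \sum_i r_{{\cal N}_i}(V)$, which is exactly the definition of full-rank matroid union; hence $\cal N$ is fully reducible. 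I would take a moment to check the edge case $k=2$ versus general $k$ causes no trouble, and that each ${\cal N}_i$ is genuinely non-empty: $\Gamma(V) \cap U_i \ne \emptyset$ gives a vertex $v \in V$ with a neighbour $u \in U_i$, and unless $u$ is a loop of $\M_i$ the singleton $\{v\}$ is independent in ${\cal N}_i$; if $u$ happened to be a loop one picks a non-loop neighbour, and if $U_i$ consists only of loops then $\M_i$ is the empty matroid, contradicting $r_i \ge 1$ implicit in "$\Gamma(V)\cap U_i \neq \emptyset$ for every component" (a component is a non-empty matroid by the direct-sum hypothesis, but a matroid of all loops has rank zero — so I would state the hypothesis carefully, or simply note $\M_i$ non-empty as a component means $r_i(U_i) \ge 1$).

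The main obstacle is the bookkeeping in the first step: making the correspondence between independent matchings of $X$ into $U$ and tuples of independent matchings into the $U_i$ fully rigorous, in particular that the decomposition of $X$ into the $X_i$ need not be a partition chosen in advance but emerges from the matching, and conversely that disjoint matchings into disjoint $U_i$ glue to a single independent matching into $U$ (using that independence in $\M$ respects the direct sum). None of this is deep, but one must be careful that "$X = \bigcup X_i$" in the union definition allows the $X_i$ to overlap in general, whereas here the matching structure makes them automatically disjoint — so the union over these particular pieces coincides with the abstract matroid union. I do not expect any genuinely hard point beyond this.
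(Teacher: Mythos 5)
Your proposal is correct and follows essentially the same route as the paper: decompose $G$ into the subgraphs $G_i = (V, U_i; E_i)$, observe that ${\cal N} = {\cal N}_1 \vee \dots \vee {\cal N}_k$ because every independent matching into $U$ splits across the $U_i$, and for the full-rank statement use the chain $r_{\cal N}(V) \leq \sum_i r_{{\cal N}_i}(V) \leq \sum_i r_{\M_i}(U_i) = r_{\M}(U)$. Your extra care about loops (ensuring each ${\cal N}_i$ is genuinely non-empty) addresses a point the paper leaves implicit, but does not change the argument.
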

\begin{proof}
Let ${\cN}_i$   be the matroid with Rado representation $(G_i, \M_i)$,
 where $G_i=(V,U_i; E_i)$ and $E_i$ is the set of edges between $V$ and $U_i$.
 Then each independent set in $\cN$ arises from a matching that is composed of matchings on the $G_i$.
 Hence, the definition of matroid union yields ${\cN} = {\cN}_1 \vee\ldots \vee {\cN}_k$. 
 By the assumption, $\Gamma(V)\cap U_i\neq\emptyset$ for each $i\in [k]$, hence each ${\cN}_i$ is a non-empty matroid.
 Using $k \geq 2$, then $\cN$ is reducible. 
 The second part follows from
 \[
 r_{\cN}(V) \leq r_{{\cN}_1}(V) + \dots + r_{{\cN}_k}(V) \leq r_{{\M}_1}(U_1) + \dots + r_{{\M}_k}(U_k) = r_{\M}(U) \enspace . \qedhere
 \]
\end{proof}

\subsection{Uncrossing properties for Rado-minor representation}\label{sec:uncrossing}

We now present some technical statements for Rado-minor representations that will be used in the proof of Theorem~\ref{thm:non-r-minor-main}.
Consider a matroid $\cN$ on ground set $V$ with Rado-minor representation $(G, \M, W)$ where  $G=(V\cup W, U; E)$ and $\M=(U,r)$.

For a subset $X$ of the ground set $V$ of $\cN$, 
we say that $Z\subseteq V\cup W$ is an $X$-set if $Z\cap V=X$.
For $Z\subseteq V\cup W$, let
\[
\rho(Z) := r(\Gamma(Z)) - |Z| \, .
\]
For an $X$-set $Z$, we give lower bounds on $\rho(Z)$ depending on the independence of $X$ in $\cN$. 
Throughout, we will use $X, Y$ for subsets of $V$;
and $Z, I, J$ for subsets of $V\cup W$, i.e., 
$X$-sets for some $X\subseteq V$ are denoted with letters $Z, I, J$.

\begin{lemma}\label{lem:rho-values}
The function $\rho : 2^{V\cup W}\to \Z$ defined above is submodular. 
Let $X\subseteq V$ and consider any $X$-set $Z$.
\begin{enumerate}
\myitem{(ind)}\label{i:rho-three} If $X$ is independent in $\cN$, then  $\rho(Z)\ge 0$. 
\myitem{(cir)}\label{i:rho-four} If $X$ is a circuit in $\cN$, then $\rho(Z) \ge -1$. 
Moreover, in this case there is an $X$-set $Z$ such that $\rho(Z) = -1$. 
\myitem{(span)}\label{i:rho-more} If $X$  contains a basis of $\cN$,
         then $\rho(Z)\ge r({\cN})-|X|$.
\end{enumerate}
\end{lemma}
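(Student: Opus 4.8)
The plan is to verify submodularity of $\rho$ first, and then handle the three lower bounds, all via Rado's theorem (Theorem~\ref{thm:RadosTheorem}) applied to the Rado representation $(G,\M)$ of the matroid $\cal N'$ on $V\cup W$ whose contraction by $W$ is $\cal N$ (as in Proposition~\ref{prop:RadoMinorRepresentation}). For submodularity, note $Z\mapsto|Z|$ is modular, $Z\mapsto\Gamma(Z)$ satisfies $\Gamma(Z_1\cup Z_2)=\Gamma(Z_1)\cup\Gamma(Z_2)$ and $\Gamma(Z_1\cap Z_2)\subseteq\Gamma(Z_1)\cap\Gamma(Z_2)$, and $r$ is monotone and submodular; chaining these gives $r(\Gamma(Z_1))+r(\Gamma(Z_2))\ge r(\Gamma(Z_1)\cup\Gamma(Z_2))+r(\Gamma(Z_1)\cap\Gamma(Z_2))\ge r(\Gamma(Z_1\cup Z_2))+r(\Gamma(Z_1\cap Z_2))$, so $\rho(Z_1)+\rho(Z_2)\ge\rho(Z_1\cup Z_2)+\rho(Z_1\cap Z_2)$.

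For \ref{i:rho-three}: if $X$ is independent in $\cal N$, then $X$ is independent in the contraction $\cal N=\cal N'/W$, which means $X\cup W$ is independent in $\cal N'$ (using that $W$ is independent in $\cal N'$, which holds since $\dom$ considerations force it; more directly, the definition of Rado-minor representation gives a perfect matching from $X\cup W$ onto an independent set of $\M$). By Proposition~\ref{prop:RadoMinorRepresentation}, $X$ independent in $\cal N$ is equivalent to $r(\Gamma(Z))\ge|Z|$ for every $Z\subseteq X\cup W$; since any $X$-set $Z$ satisfies $Z\subseteq X\cup W$, we get $\rho(Z)=r(\Gamma(Z))-|Z|\ge0$. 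For \ref{i:rho-four}: if $X$ is a circuit, then for every proper subset the rank inequality holds, and any $X$-set $Z$ decomposes as $Z=(Z\cap W)\cup X$; I would argue $\rho(Z)\ge-1$ by taking $X'\subsetneq X$ a single element removed so $X'$ is independent, applying \ref{i:rho-three} to the $X'$-set $Z-v$, and noting $\rho(Z)\ge\rho(Z-v)-1\ge-1$ (since adding one element drops $\rho$ by at most $1$: $|Z|$ grows by $1$, $r(\Gamma(\cdot))$ is monotone). The "moreover" part is exactly the circuit statement of Proposition~\ref{prop:RadoMinorRepresentation}: there is $Z'\subseteq X\cup W$ with $Z'\cap V=X$ and $r(\Gamma(Z'))=|Z'|-1$, i.e. an $X$-set with $\rho(Z')=-1$. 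For \ref{i:rho-more}: if $X$ contains a basis $B$ of $\cal N$ with $|B|=r(\cal N)$, then $B$ is independent, so for the $B$-set $Z\cap(B\cup W)$ we have $\rho\ge0$ by \ref{i:rho-three}; passing from this $B$-set to the $X$-set $Z$ adds $|X|-|B|=|X|-r(\cal N)$ elements, dropping $\rho$ by at most that amount, hence $\rho(Z)\ge -( |X|-r(\cal N))=r(\cal N)-|X|$.

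The only mild subtlety — and the step I would be most careful about — is the passage between $\cal N$ and $\cal N'$: I must make sure that "$X$ independent in $\cal N$" is correctly translated through the contraction by $W$ so that the rank inequality $r(\Gamma(Z))\ge|Z|$ really does apply to \emph{all} $X$-sets $Z$ (equivalently all $Z\subseteq X\cup W$), which is precisely the content of Proposition~\ref{prop:RadoMinorRepresentation}. The rest is the elementary "adding one element to $Z$ changes $\rho$ by at least $-1$" observation, applied repeatedly, together with direct quotation of the circuit clause of that proposition. No genuine obstacle is expected.
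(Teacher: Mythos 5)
Your proposal is correct and follows essentially the same route as the paper's proof: submodularity via $\rho$ being the difference of the submodular $r(\Gamma(\cdot))$ and the modular $|\cdot|$, (ind) directly from Proposition~\ref{prop:RadoMinorRepresentation}, (cir) by removing one element of $X$ and using the ``$\rho$ drops by at most one per added element'' observation together with the circuit clause of that proposition, and (spn) by comparing $Z$ to the $B$-set $B\cup(Z\cap W)$ and applying (ind). The only difference is a few extra lines of elementary detail on the submodularity of $r(\Gamma(\cdot))$ and on translating independence through the contraction, neither of which changes the argument.
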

\begin{proof}
The function $\rho$ is the difference of a submodular function $r(\Gamma(.))$ 
and a modular function~$|.|$, and thus it is submodular.
(For the submodularity of $r(\Gamma(.))$, see~\cite[Lemma 11.2.13]{oxley}.) 

\ref{i:rho-three} follows immediately from Proposition~\ref{prop:RadoMinorRepresentation}. 
Let us show~\ref{i:rho-four}.
Using again Proposition~\ref{prop:RadoMinorRepresentation}, we have $\rho(Z\setminus \{i\}) \ge 0$ for an arbitrary $i\in Z\cap V = X$.  
By adding an element to $Z\setminus \{i\}$, the $\rho$-value decreases by at most $1$
(the rank function is monotonic so cannot decrease, while the size increases by one).
It follows that $\rho(Z) \ge -1$.
Moreover, Proposition~\ref{prop:RadoMinorRepresentation} implies there exists an $X$-set where equality is attained.

For~\ref{i:rho-more}, let $B \subseteq X= Z \cap V$ be a basis. Then, using the monotonicity of $r(\Gamma(.))$ we have
\begin{eqnarray*}
    \rho(Z) = r(\Gamma(Z)) - |Z|  &=& r(\Gamma(Z)) - |B \cup (Z \cap W)| - |X \setminus B| \\
     &\geq&  r(\Gamma(B \cup (Z \cap W) )) - |B \cup (Z \cap W)| +|B|-|X| \\
&=&\rho(B \cup (Z \cap W)) +r({\cN}) - |X|\\
&\ge& r({\cN}) - |X|\,.
\end{eqnarray*}
Where the last inequality follows by applying \ref{i:rho-three} for the basis $B$ and $B$-set $B\cup (Z \cap W)$.
\end{proof}
Recall that for a matroid $\M = (U,r)$ and a subset $X \subseteq U$, the closure $\cl_\M[X]$ of $X$ is the maximal set containing $X$ whose rank equals the rank of $X$, i.e.
\[
\cl_{\M}[X] := \left\{x \in U : r(X \cup\{x\})= r(X) \right\} \, .
\] 
If $\M$ is clear from the context, we simply denote as $\cl[X]$. A set $X$ with $\cl[X]=X$ is called a \emph{closed set} or \emph{flat}.
\begin{lemma}\label{lem:rho-prop}
\label{rho-prop3} If $\rho(I)+\rho(J)=\rho(I\cup J)+\rho(I\cap J)$, then $\cl[\Gamma(I)]\cap \cl[\Gamma(J)]=\cl[\Gamma(I\cap J)]$.
\end{lemma}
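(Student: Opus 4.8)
The statement relates the equality case of submodularity of $\rho$ to the closures of the neighbourhood sets. Recall $\rho(Z) = r(\Gamma(Z)) - |Z|$, and that $\rho$ is submodular because $r(\Gamma(\cdot))$ is submodular (\cite[Lemma 11.2.13]{oxley}) and $|\cdot|$ is modular. The hypothesis $\rho(I)+\rho(J)=\rho(I\cup J)+\rho(I\cap J)$, combined with the modularity of $|\cdot|$ (so that $|I|+|J|=|I\cup J|+|I\cap J|$), immediately yields that $r(\Gamma(\cdot))$ is also tight on the pair $(I,J)$, i.e. $r(\Gamma(I))+r(\Gamma(J))=r(\Gamma(I\cup J))+r(\Gamma(I\cap J))$. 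So I would first reduce to a purely matroidal statement about the rank function of $\M$ applied to the four sets $\Gamma(I\cap J)\subseteq \Gamma(I),\Gamma(J)\subseteq \Gamma(I)\cup\Gamma(J)=\Gamma(I\cup J)$ (the last equality holds since $\Gamma$ distributes over unions).

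\textbf{Key steps.} Set $A=\Gamma(I)$, $B=\Gamma(J)$, so $A\cap B\supseteq \Gamma(I\cap J)=:C$ and $A\cup B=\Gamma(I\cup J)$. The tightness reads $r(A)+r(B)=r(A\cup B)+r(C)$. First, from submodularity $r(A)+r(B)\ge r(A\cup B)+r(A\cap B)\ge r(A\cup B)+r(C)$, so both inequalities are equalities; in particular $r(A\cap B)=r(C)$, which since $C\subseteq A\cap B$ gives $\cl(A\cap B)=\cl(C)$. Thus it suffices to prove $\cl(A)\cap \cl(B)=\cl(A\cap B)$. The inclusion $\cl(A\cap B)\subseteq \cl(A)\cap\cl(B)$ is monotonicity of closure. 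For the reverse inclusion, I would use the standard fact that in a matroid the rank function satisfies $r(\cl(A))=r(A)$ and that $\cl(A)\cap\cl(B)$ is a flat; the essential input is that tightness $r(A)+r(B)=r(A\cup B)+r(A\cap B)$ is equivalent (by submodularity applied to flats and the fact that adding closure-elements doesn't change rank) to $r(\cl(A))+r(\cl(B))=r(\cl(A)\cup\cl(B))+r(\cl(A)\cap\cl(B))$, i.e. the same tightness holds for the flats $\cl(A),\cl(B)$. Then, writing $F=\cl(A)\cap\cl(B)$, one shows $r(F)=r(A\cap B)$: indeed $r(F)\ge r(A\cap B)=r(C)$ trivially, while from the flat version of tightness and submodularity one extracts $r(F)\le r(\cl(A))+r(\cl(B))-r(\cl(A)\cup\cl(B)) = r(A)+r(B)-r(A\cup B)=r(A\cap B)$. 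Hence $r(F)=r(A\cap B)=r(\cl(A\cap B))$, and since $A\cap B\subseteq F$ and $F$ is a flat, $F=\cl(A\cap B)$, which is what we wanted.

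\textbf{Main obstacle.} The crux is the passage from tightness of $r$ on $(A,B)$ to tightness on the flats $(\cl(A),\cl(B))$ together with the computation $r(\cl(A)\cap\cl(B))=r(A\cap B)$ — one must be careful that $\cl(A)\cup\cl(B)$ need not be a flat and that $r(\cl(A)\cup\cl(B))$ could a priori exceed $r(A\cup B)$; but since $\cl(A)\cup\cl(B)\subseteq\cl(A\cup B)$ and closure preserves rank, in fact $r(\cl(A)\cup\cl(B))=r(A\cup B)$, which rescues the argument. Everything else is routine matroid closure bookkeeping. I expect the whole proof to be short once this reduction to flats is set up cleanly; the place to be most careful is verifying each of the rank equalities $r(\cl(X))=r(X)$ is being applied to the correct set and direction of inclusion.
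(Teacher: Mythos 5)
Your proof is correct and follows essentially the same route as the paper's: you reduce tightness of $\rho$ to tightness of $r\circ\Gamma$, pass to closures using $r(\cl(X))=r(X)$ and $r(\cl(A)\cup\cl(B))=r(\cl(A\cup B))$, and conclude by observing that $\cl(A)\cap\cl(B)$ is a flat containing $\cl(\Gamma(I\cap J))$ of the same rank. The only cosmetic difference is that you insert the intermediate observation $r(A\cap B)=r(C)$ before the closure computation, whereas the paper runs the chain of inequalities directly with closures; the substance is identical.
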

\begin{proof}
As $\rho(I)+\rho(J)=\rho(I\cup J)+\rho(I\cap J)$, we have 
$r(\,\Gamma(I) \,)+r(\,\Gamma(J) \,)=r( \,\Gamma(I\cup J) \,)+r( \, \Gamma(I\cap J) \,)$.
Then trivially, 
\begin{equation}\label{eq:lemma6iii}
r(\,\cl[\Gamma(I)] \,)+r(\,\cl[\Gamma(J)] \,)=r(\,\cl[\Gamma(I\cup J)] \,)+r(\, \cl[\Gamma(I\cap J)] \,)\,. 
\end{equation}
On the other hand, we have
\begin{eqnarray*}
r(\,\cl[\Gamma(I)] \,)+r( \, \cl[\Gamma(J)]\,) &\ge& r(\, \cl[\Gamma(I)]\cup \cl[\Gamma(J)] \,)
        +r( \,\cl[\Gamma(I)]\cap \cl[\Gamma(J)]\,) \\
&\ge& r(\,\cl[\Gamma(I) \cup \Gamma(J)]\, )+r(\, \cl[\Gamma(I)\cap \Gamma(J)]\,) \\
&\ge& r(\, \cl[\Gamma(I \cup J)]\, )+r(\, \cl[\Gamma(I \cap J)] \,) \,. 
\end{eqnarray*}
\begin{sloppypar}
\noindent The first inequality follows by submodularity of $r$. 
The second inequality follows from the properties of the closure operator $\cl[\Gamma(I) \cap \Gamma(J)] \subseteq \cl[\Gamma(I)]\cap \cl[\Gamma(J)]$ and since $r(\,\cl[\Gamma(I)]\cup \cl[\Gamma(J)] \,) = r(\,\cl[\Gamma(I)\cup \Gamma(J)]\,)$ 
(here we used $\cl[\, \cl[\Gamma(I)]\cup \cl[\Gamma(J)]\, ] = \cl[\Gamma(I)\cup \Gamma(J)]$ ).
The third inequality follows from the properties of the neighbourhood function $\Gamma(I)\cup \Gamma(J) = \Gamma(I\cup J)$ and $\Gamma(I\cap J) \subseteq \Gamma(I) \cap \Gamma(J)$.
\end{sloppypar}

Thus, by~\eqref{eq:lemma6iii}, we have $r(\,\cl[\Gamma(I)]\cap \cl[\Gamma(J)]\,) = r(\,\cl[\Gamma(I\cap J)]\,)$.
Hence, $\cl[\Gamma(I \cap J)]$ is a closed set that is a subset of the closed set $\cl[\Gamma(I)] \cap \cl[\Gamma(J)]$, and both
$\cl[\Gamma(I \cap J)]$ and $\cl[\Gamma(I)] \cap \cl[\Gamma(J)]$ have the same rank. 
Thus, $\cl[\Gamma(I)]\cap \cl[\Gamma(J)]=\cl[\Gamma(I\cap J)]$.
\end{proof}

We will also use the following simple consequence of the definition of $\rho$.
\begin{lemma}\label{lem:rho-0-fully-matched}
Let $Z\subseteq V\cup W$ such that $\rho(Z)=0$.
Let
$\mu$ be any matching in $G$ such that $\partial_U(\mu)$ is independent in $\M$, and  $Z\subseteq \partial_{V\cup W}(\mu)$. 
Then, for every $(i,j)\in \mu$, $i\in Z$ if and only if $j\in \cl_\M[\Gamma(Z)]$.
\end{lemma}
\begin{proof}
It is immediate that if $i\in Z$ for $(i,j)\in \mu$, then $j\in \Gamma(Z)\subseteq \cl_\M[\Gamma(Z)]$. For the other direction, note that 
 there are $|Z|$ edges in $\mu$ between $Z$ and $\Gamma(Z)$. Since $0=\rho(Z)=r(\Gamma(Z))-|Z|$, these endpoints already have full rank $r(\Gamma(Z))$. 
Using that $\partial_\mu(U)$ is independent in $\M$,  there may not be any edges $(i,j)\in \mu$ with $i\in (V\cup W)\setminus Z$ and $j\in\cl_\M[\Gamma(Z)]$, completing the proof.
\end{proof}
Throughout we shall refer to the following uncrossing lemmas liberally. The first statement is the standard uncrossing argument for tight sets prevalent in combinatorial optimization. However, in our case one needs to require that $X\cup Y$ is also independent.

\begin{lemma}[Uncrossing I]
\label{lem:uncrossing}
    For $X, Y\subseteq V$ let $I, J \subseteq V\cup W$ be any $X$-set and any $Y$-set respectively, 
    and assume $\rho(I) = \rho(J) = 0$.
    If $X\cup Y$ is independent in $\cN$ then, 
    \[\rho(I\cap J), \rho(I\cup J) = 0 \,. \]
    In particular, if $X=Y$ for an independent set $X$ in $\cN$, and $\rho(I) = 0$ for some $X$-set $I$, 
    then there exists a unique largest maximal $X$-set $I$ with $\rho(I) = 0$.
\end{lemma}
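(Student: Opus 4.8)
The whole statement is a textbook ``uncrossing'' argument powered by the submodularity of $\rho$ (Lemma~\ref{lem:rho-values}) and its nonnegativity bound \ref{i:rho-three}. First I would record two bookkeeping facts about $X$-sets: if $I$ is an $X$-set and $J$ is a $Y$-set, then $(I\cup J)\cap V=(I\cap V)\cup(J\cap V)=X\cup Y$ and $(I\cap J)\cap V=X\cap Y$, so $I\cup J$ is an $(X\cup Y)$-set and $I\cap J$ is an $(X\cap Y)$-set. Next, since $X\cup Y$ is independent in $\cal N$ and independence is inherited by subsets, both $X\cup Y$ and $X\cap Y$ (the latter being contained in $X\cup Y$) are independent in $\cal N$. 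Applying Lemma~\ref{lem:rho-values}\ref{i:rho-three} to the $(X\cup Y)$-set $I\cup J$ and to the $(X\cap Y)$-set $I\cap J$ therefore gives $\rho(I\cup J)\ge 0$ and $\rho(I\cap J)\ge 0$.

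Now combine these lower bounds with the submodularity inequality $\rho(I)+\rho(J)\ge\rho(I\cup J)+\rho(I\cap J)$ from Lemma~\ref{lem:rho-values}. Using the hypothesis $\rho(I)=\rho(J)=0$, we get $0=\rho(I)+\rho(J)\ge\rho(I\cup J)+\rho(I\cap J)\ge 0$, so every term in the chain is forced to be tight. In particular $\rho(I\cup J)=\rho(I\cap J)=0$, which is the first assertion.

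For the ``in particular'' statement, fix an independent set $X$ in $\cal N$ and let $\mathcal{Z}$ be the family of $X$-sets $Z$ with $\rho(Z)=0$; by hypothesis $\mathcal{Z}\neq\emptyset$, and it is finite since $V\cup W$ is finite. Applying the first part with $Y=X$ (so that $X\cup Y=X$ is independent) shows that for $I,J\in\mathcal{Z}$ the union $I\cup J$ is again an $X$-set with $\rho(I\cup J)=0$, i.e. $\mathcal{Z}$ is closed under pairwise union; iterating over the finitely many members of $\mathcal{Z}$ shows $I^\ast:=\bigcup_{Z\in\mathcal{Z}}Z$ itself lies in $\mathcal{Z}$. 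Since $I^\ast$ contains every element of $\mathcal{Z}$, it is the unique maximal (hence unique largest) $X$-set with $\rho$-value $0$. I do not expect a genuine obstacle here: once the $X$-set bookkeeping and the subset-closedness of independence are noted, the main claim is a one-line submodularity squeeze; the only place needing a word of care is the uniqueness part, where one must observe that the union of the entire (finite) family $\mathcal{Z}$ stays in $\mathcal{Z}$, which follows by induction from the binary union closure.
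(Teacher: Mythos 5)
Your proof is correct and follows essentially the same route as the paper's: identify $I\cap J$, $I\cup J$ as an $(X\cap Y)$-set and $(X\cup Y)$-set respectively, invoke nonnegativity of $\rho$ on sets whose $V$-trace is independent (Lemma~\ref{lem:rho-values}\ref{i:rho-three}), and squeeze via submodularity; the uniqueness statement then follows from closure of the family under union. (Incidentally, the paper's own proof cites Lemma~\ref{lem:rho-prop} for the nonnegativity bound, which appears to be a typo for Lemma~\ref{lem:rho-values}\ref{i:rho-three}; you reference the correct lemma.)
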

\begin{proof}
    By submodularity, we have
    $
    0 = \rho(I) + \rho(J) \ge \rho(I\cap J) + \rho(I\cup J)\,.
    $
    By definition, $I\cap J$ is an $(X\cap Y)$-set and $I\cup J$ is an $(X\cup Y)$-set.
    Since both $X\cap Y$ and $X\cup Y$ are independent, 
    we have $\rho(I\cap J), \rho(I\cup J) \ge 0$ by Lemma~\ref{lem:rho-prop}.
    The first part follows.

    By the first part, the family of sets $I$ that are $X$-sets with $\rho(I) = 0$ is closed under intersection and union.
    If this family is non-empty then there exists a unique largest $X$-set $I$ with $\rho(I) = 0$.
\end{proof}

In other words, the above lemma states that the set of $X$-sets $I$ where $X$ is independent in $\cN$ and with $\rho$-value $0$ is a lattice over $V\cup W$ with respect to union and intersection (as are minimizers of a submodular function). 

Using $\rho(\emptyset) = 0$, we apply the uncrossing lemma to $X = Y = \emptyset$.
This yields the following corollary.
\begin{corollary}\label{i:W-0} 
There exists a unique largest set $Q \subseteq W$ such that $\rho(Q)=0$.
\end{corollary}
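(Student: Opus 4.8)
The final statement (Corollary~\ref{i:W-0}) asks for a unique largest $Q\subseteq W$ with $\rho(Q)=0$. The plan is to derive this directly from Uncrossing~I (Lemma~\ref{lem:uncrossing}) applied in the degenerate case $X=Y=\emptyset$.

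\medskip

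\textbf{Step 1: Observe that $\emptyset$ is independent and $\rho(\emptyset)=0$.} Since the empty set is independent in any matroid $\cal N$, and $\Gamma(\emptyset)=\emptyset$ with $r(\emptyset)=0$, we have $\rho(\emptyset)=r(\Gamma(\emptyset))-|\emptyset|=0$. Thus $\emptyset$ is an $\emptyset$-set with $\rho$-value $0$, so the relevant family of $\emptyset$-sets with $\rho=0$ is non-empty.

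\medskip

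\textbf{Step 2: Invoke Uncrossing~I with $X=Y=\emptyset$.} The $\emptyset$-sets are exactly the subsets $Z\subseteq W$ (since $Z\cap V=\emptyset$ forces $Z\subseteq W$). Applying Lemma~\ref{lem:uncrossing} with the independent set $X=\emptyset$ (so $X\cup Y=\emptyset$ is independent) and noting $\rho(\emptyset)=0$, the "in particular" clause gives that among all $\emptyset$-sets $Z$ with $\rho(Z)=0$ — i.e.\ among all $Q\subseteq W$ with $\rho(Q)=0$ — there is a unique largest one. This is precisely the claimed $Q$. I expect this to be essentially immediate; the only thing to check carefully is that the hypotheses of the "in particular" clause are met, namely that $X=Y=\emptyset$ is independent (trivially true) and that some $\emptyset$-set has $\rho=0$ (Step 1).

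\medskip

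There is no real obstacle here: the corollary is a direct specialization of the uncrossing lemma. The one subtlety worth a sentence in the writeup is making explicit that "$\emptyset$-set" means "subset of $W$", so that the lattice closure statement from Lemma~\ref{lem:uncrossing} translates into a statement about subsets of $W$ as stated in the corollary.
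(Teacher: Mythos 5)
Your proposal is correct and matches the paper's own (one-line) derivation exactly: the paper also notes $\rho(\emptyset)=0$ and applies the uncrossing lemma with $X=Y=\emptyset$, whose ``in particular'' clause gives the unique largest $\emptyset$-set, i.e.\ the unique largest $Q\subseteq W$ with $\rho(Q)=0$.
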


\begin{lemma}[Uncrossing II]\label{lem:uncrossing2}
    Let $X, Y \subseteq V$ be two different circuits in the matroid $\cN$ such that $|X \cup Y| = r(\cN) +2$ and $X \cup Y$ contains a basis.
    Consider an $X$-set $I$ and a $Y$-set $J$ with $\rho(I) = \rho(J) = -1$.
    Then, we have $\rho(I\cap J)=0$ and $\rho(I\cup J)=-2$.
\end{lemma}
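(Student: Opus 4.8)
The plan is to combine submodularity of $\rho$ with the lower bounds from Lemma~\ref{lem:rho-values}. First I would observe that $I \cap J$ is an $(X\cap Y)$-set and $I \cup J$ is an $(X\cup Y)$-set. Since $X$ and $Y$ are distinct circuits with $|X\cup Y| = r(\cN) + 2$, the set $X \cap Y$ has size $|X| + |Y| - |X\cup Y|$; because circuits in a matroid of rank $r(\cN)$ have size at most $r(\cN)+1$, and since $X \neq Y$ are both circuits (hence neither contains the other), $X \cap Y$ is a proper subset of a circuit and therefore independent in $\cN$. So Lemma~\ref{lem:rho-values}\ref{i:rho-three} gives $\rho(I\cap J) \ge 0$. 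Meanwhile $X \cup Y$ contains a basis by hypothesis and $|X\cup Y| = r(\cN)+2$, so Lemma~\ref{lem:rho-values}\ref{i:rho-more} gives $\rho(I\cup J) \ge r(\cN) - |X\cup Y| = -2$.

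Next I would write the submodularity inequality: $\rho(I) + \rho(J) \ge \rho(I\cap J) + \rho(I\cup J)$, i.e. $-2 \ge \rho(I\cap J) + \rho(I\cup J)$. Combining this with the two lower bounds $\rho(I\cap J) \ge 0$ and $\rho(I\cup J) \ge -2$ forces both inequalities to be tight: $\rho(I\cap J) = 0$ and $\rho(I\cup J) = -2$. That completes the proof.

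The only genuinely delicate point — and the step I would be most careful about — is justifying that $X \cap Y$ is independent in $\cN$. This uses that both $X$ and $Y$ are circuits (so any proper subset is independent) together with $X \neq Y$ (so $X \cap Y \subsetneq X$), which is exactly the hypothesis "two different circuits." Everything else is a direct invocation of the submodularity of $\rho$ established in Lemma~\ref{lem:rho-values} and the rank bounds \ref{i:rho-three} and \ref{i:rho-more} from that same lemma; no uncrossing of neighbourhoods or closure arguments (as in Lemma~\ref{lem:rho-prop}) are needed here, since we only need the numerical values of $\rho$, not the structure of the neighbourhoods attaining them.
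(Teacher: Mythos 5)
Your proof is correct and follows the same route as the paper: bound $\rho(I\cap J)\ge 0$ via independence of $X\cap Y$, bound $\rho(I\cup J)\ge -2$ via \ref{i:rho-more}, and squeeze with submodularity. The only difference is that you spell out why $X\cap Y$ is independent (proper subset of a circuit, using that distinct circuits form an antichain), whereas the paper takes this for granted; your parenthetical appeal to the circuit-size bound $\le r(\cN)+1$ is unnecessary but harmless.
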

\begin{proof}
Since $I\cap J$ is an $(X\cap Y)$-set and $X\cap Y$ is an independent set, we have $\rho(I\cap J) \ge 0$. 
Since $I\cup J$ is an $(X\cup Y)$-set and $X\cup Y$ contains a basis, applying \ref{i:rho-more} gives us $\rho(I\cup J) \ge -2$.
By submodularity we get
$-2=\rho(I)+\rho(J)\ge \rho(I\cap J)+\rho(I\cup J)\ge 0-2\, .$
Hence, the equalities $\rho(I\cap J)=0$ and $\rho(I\cup J)=-2$ hold.
\end{proof}

\subsection{Lov\'asz extension and the matroid of maximum weight bases}
We close this section by recalling some results on Lov\'asz extension and maximum weight bases of a matroid that will be useful when dealing with R-minor valuated matroids.
\begin{definition}[Lov\'asz extension]
Let $\M=(U,r)$ be a matroid. The \emph{Lov\'asz extension} $\hat r : \R^U \to \R$ of the rank function $r$ is defined for $\tau\in\R^U$ as the maximum $\tau$-weight of a basis of $\M$, i.e.
\[
\hat r(\tau) := \max \SetOf{\sum_{i \in B} \tau_i}{B \in \B(M)} \, .
\]
\end{definition}
For a given $\tau\in\R^U$, the value $\hat r (\tau)$ can be calculated by the following well-known characterization, 
see e.g., \cite[Theorem 5.5.5]{frankbook}.
\begin{lemma}
Let $\M=(U,r)$ be a matroid. 
For $\tau\in\R^U$, the \emph{Lov\'asz extension} $\hat r(\tau)$ equals
\[
\hat r(\tau)=r(U)\tau_{u_m}+\sum_{i=1}^{m-1} r(U_i)(\tau_{u_i}-\tau_{u_{i+1}})\, ,
\]
where we reordered $U=\{u_1,u_2,\ldots,u_m\}$ such that $\tau_{u_1}\ge\tau_{u_2}\ge\ldots\ge\tau_{u_m}$, and $U_i=\{u_1,\ldots,u_i\}$ for all $i\in [m]$.
\end{lemma}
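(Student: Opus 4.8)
The plan is to evaluate a maximum $\tau$-weight basis of $\M$ explicitly via the greedy algorithm and then rewrite the resulting sum by summation by parts. First I would invoke the classical fact (see, e.g.,~\cite[Chapter 5]{frankbook}) that a maximum $\tau$-weight basis is obtained greedily: process the elements $u_1,u_2,\ldots,u_m$ in the fixed order of non-increasing weight and add $u_i$ to the current set whenever this keeps it independent in $\M$; call the output $B$, so that $\hat r(\tau)=\sum_{u\in B}\tau_u$. The only structural input I actually need is the \emph{prefix-rank identity} $|B\cap U_i|=r(U_i)$ for every $i\in[m]$, which holds because $B\cap U_i$ is exactly the set accumulated by the greedy procedure after its $i$-th step and the matroid augmentation axiom forces that set to be a maximal independent subset of $U_i$.

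With the conventions $U_0=\emptyset$ and $r(U_0)=0$, I would then telescope the membership indicator and use the prefix-rank identity to obtain
\[
\hat r(\tau)=\sum_{u\in B}\tau_u=\sum_{i=1}^m \tau_{u_i}\bigl(|B\cap U_i|-|B\cap U_{i-1}|\bigr)=\sum_{i=1}^m \tau_{u_i}\bigl(r(U_i)-r(U_{i-1})\bigr).
\]
A single Abel summation, collecting the coefficient of each $r(U_i)$, then rewrites the right-hand side as
\[
\hat r(\tau)=r(U)\,\tau_{u_m}+\sum_{i=1}^{m-1} r(U_i)\,\bigl(\tau_{u_i}-\tau_{u_{i+1}}\bigr),
\]
which is the asserted formula (matching the statement up to the evident indexing normalisation $n=m$).

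It remains to observe that the statement is independent of the chosen ordering of $U$: when the $\tau$-values have ties, admissible orderings differ only by permutations within blocks of equal weight, and within such a block one has $\tau_{u_i}-\tau_{u_{i+1}}=0$, so the corresponding summands $r(U_i)(\tau_{u_i}-\tau_{u_{i+1}})$ vanish no matter the value of $r(U_i)$. The only step carrying genuine content is the prefix-rank identity; everything else is bookkeeping. If a self-contained argument is preferred, this identity — together with the optimality of the greedy basis — can be re-derived from the matroid exchange axiom by the usual exchange argument, and that is the one place where I would be careful.
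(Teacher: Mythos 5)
Your proof is correct and is precisely the standard argument that the paper does not reproduce but only references (it cites Frank's book, Theorem~5.5.5, in lieu of a proof): greedy optimality, the prefix-rank identity $|B\cap U_i|=r(U_i)$, and Abel summation. One thing worth flagging: the formula as printed in the paper contains a typo --- it reads $\tau_{u_i}-\tau_{u_{i-1}}$, which is $\le 0$ and is moreover undefined at $i=1$ (there is no $u_0$); the correct expression, which is what your Abel summation actually produces, is $\tau_{u_i}-\tau_{u_{i+1}}$. You also correctly note the $n$-versus-$m$ indexing slip. Your remark that the ordering ambiguity under ties is harmless because the corresponding difference $\tau_{u_i}-\tau_{u_{i+1}}$ vanishes, and that the only substantive step is the prefix-rank identity, are both accurate.
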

In this context, we say that $S\subseteq U$ is a \emph{level set} of $\tau$ if $S=\emptyset$, $S=U$, or $S=U_i$ for some $i\in [m]$ with $\tau_{u_i}>\tau_{u_{i+1}}$.
Thus, the level sets of $\tau$ form a chain.
Using these level sets we can nicely capture all maximum weight bases in a matroid.
The following lemma follows from the greedy algorithm for finding maximum weight bases in a matroid.

\begin{lemma}[{\cite[Proposition 2]{ArdilaKlivans:2006}}]\label{lem:maxweight}
 For a matroid $\M=(U,r)$ and $\tau\in\R^U$, let
 $\emptyset=S_0\subsetneq S_1\subsetneq S_2\subsetneq \ldots S_t\subsetneq S_{t+1}=U$ denote the level sets of $\tau$. 
 Let us define the matroid
 \[
 \M_{\tau} := \bigoplus_{\ell = 1}^{t+1} \left( \restr{\M}{S_{\ell}} \right)/S_{\ell-1}\, .
 \]
 This is the matroid formed by the maximum $\tau$-weight bases of $\M$. That is, a basis $B$ in $\M$ maximizes $\sum_{i\in B}\tau_i$ if and only if $B$ is a basis in $\M_{\tau}$.
\end{lemma}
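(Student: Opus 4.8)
The plan is to verify two things: first that $\M_\tau$ as defined is a matroid, and second that its bases are exactly the maximum $\tau$-weight bases of $\M$. The first point is automatic: $\M_\tau$ is built from $\M$ by restrictions, contractions, and a direct sum, and the class of matroids is closed under all of these operations (restriction and contraction of $\M$ to/by subsets of $U$, and direct sum of matroids on disjoint ground sets, since the $S_\ell \setminus S_{\ell-1}$ partition $U$). So the content is entirely in identifying the bases.

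First I would recall the greedy algorithm for a maximum $\tau$-weight basis: order $U = \{u_1, \dots, u_m\}$ with $\tau_{u_1} \ge \tau_{u_2} \ge \dots \ge \tau_{u_m}$, scan the elements in this order, and add $u_i$ to the current set whenever it keeps the set independent in $\M$. A standard exchange argument shows $B$ is a maximum $\tau$-weight basis of $\M$ if and only if $|B \cap U_i| = r(U_i)$ for every $i \in [m]$ — equivalently, since $\tau$ is constant on each block $S_\ell \setminus S_{\ell-1}$, if and only if $|B \cap S_\ell| = r(S_\ell)$ for every level set $S_\ell$. (One direction: if $|B \cap S_\ell| < r(S_\ell)$ for some $\ell$, one can swap in a higher-or-equal weight element and strictly improve or stay; combined with the matroid exchange axiom this pins down the "only if". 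The "if" direction follows because the greedy output satisfies these equalities and all bases satisfying them have the same weight.)

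Next I would match this characterization to membership in $\M_\tau$. A basis of the direct sum $\bigoplus_{\ell=1}^{t+1} (\restr{\M}{S_\ell})/S_{\ell-1}$ is a set $B = \dot\bigcup_{\ell=1}^{t+1} B_\ell$ with $B_\ell \subseteq S_\ell \setminus S_{\ell-1}$ a basis of $(\restr{\M}{S_\ell})/S_{\ell-1}$, i.e. $B_\ell$ is independent in $\restr{\M}{S_\ell}$ relative to a fixed basis of $S_{\ell-1}$ and has size $r(S_\ell) - r(S_{\ell-1})$. An induction on $\ell$ (using that contraction by $S_{\ell-1}$ has rank function $r(\cdot \cup S_{\ell-1}) - r(S_{\ell-1})$ and that $B_1 \cup \dots \cup B_{\ell-1}$ is a basis of $\restr{\M}{S_{\ell-1}}$) shows that $B_1 \cup \dots \cup B_\ell$ is a basis of $\restr{\M}{S_\ell}$, hence $|B \cap S_\ell| = r(S_\ell)$ for all $\ell$; conversely any $B$ with $|B \cap S_\ell| = r(S_\ell)$ for all $\ell$ decomposes this way. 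So membership in $\M_\tau$ is precisely the combinatorial condition $|B \cap S_\ell| = r(S_\ell)$ for all level sets, which we already identified with being a maximum $\tau$-weight basis.

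The only mildly delicate point — and the step I'd expect to need the most care — is the "only if" direction of the greedy characterization, namely that a maximum $\tau$-weight basis must meet every level set $S_\ell$ in exactly $r(S_\ell)$ elements; this is where one invokes the matroid exchange axiom to perform a weight-improving swap when the equality fails. Everything else is bookkeeping with restrictions, contractions, and direct sums. I would cite the standard greedy result (e.g. \cite[Theorem 5.5.5]{frankbook}, already referenced) for the characterization and then just assemble the decomposition.
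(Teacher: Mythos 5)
Your approach matches the paper's, which merely appeals to the greedy algorithm, and the assembly via the level-set condition $|B\cap S_\ell|=r(S_\ell)$ and the decomposition of a basis of $\M_\tau$ into blocks $B_\ell\subseteq S_\ell\setminus S_{\ell-1}$ is correct. One small misstatement worth flagging: the claimed equivalence with ``$|B\cap U_i|=r(U_i)$ for every $i\in[m]$'' fails when $\tau$ has ties --- for instance, a rank-$1$ matroid on $\{u_1,u_2\}$ with $\tau\equiv 0$ has $\{u_2\}$ as a maximum-weight basis, yet $|\{u_2\}\cap U_1|=0\ne 1=r(U_1)$ --- so only the level-set formulation characterizes maximum-weight bases; since that is what you actually use in the second half, the argument as assembled is unaffected.
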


\section{Linear programming representation of R-minor functions}
\label{section:LPs}
R-induced valuated matroids are defined via independent matchings. 
Thus, the function value of a set can be naturally captured by a linear program.
Similarly, the set of all maximizers of an R-induced valuated matroid corresponds to the integral solutions of a linear program. 
Below, we obtain a description of all such integral solutions using the dual linear program and complementary slackness.

Throughout this section, unless stated otherwise, $f$ is an R-minor valuated matroid with representation $(G,\M,\co,W)$ given by a bipartite graph $G = (V\cup W, U; E)$, edge weights $\co \in \R^E$ and a matroid $\M=(U, r)$.
\begin{lemma}\label{lemma:LP1}
 For $X\subseteq V$, $f(X)$ is the objective value of the linear program 
\begin{equation}\label{prog:lp}
\begin{aligned}
   &\quad \max \quad \textstyle\sum_{(i,j)\in \E}    \co_{ij} x_{ij}   \\
   &\begin{aligned}
   \text{s.t.: } &&  \textstyle\sum_{j \in U} x_{ij}  &= \1_{i \in X \cup W}   && \forall i \in V\cup W \\
   &&  \textstyle\sum_{i \in V\cup W, j \in S}   x_{ij} &\le r(S)      && \forall S \subset U \\
   &&  \textstyle\sum_{i \in V\cup W, j \in U}   x_{ij} & = r(U)      && \\
   &&                x_{ij} &\ge 0        && \forall i \in V\cup W, \forall j \in U \,.
   \end{aligned} 
\end{aligned}
\end{equation}
Here, $\1_{i\in Z}$ is the indicator function of the set $Z$, taking value $1$ if $i \in Z$ and $0$ otherwise. 
\end{lemma}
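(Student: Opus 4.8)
The plan is to show the linear program~\eqref{prog:lp} is an exact formulation of the matroid intersection problem defining $f(X)$, so that its optimal value coincides with the maximum weight of an independent matching covering $X \cup W$ on the $V \cup W$ side and mapping into an independent (hence basis) set of $\M$. Recall that by definition $f(X) = \tempinducednew{G}{\M}{\co}(X)$ is the maximum of $\co(\mu)$ over matchings $\mu$ with $\partial_{V\cup W}(\mu) = X \cup W$ and $\partial_U(\mu)$ independent in $\M$; since $|X \cup W| = d + |W| = r(U)$ and an independent set of that size is a basis, the side constraint ``$\partial_U(\mu)$ independent'' is equivalent to ``$\partial_U(\mu)$ a basis.'' First I would observe that the integral points of the feasible region are exactly the incidence vectors of such matchings: the degree constraints $\sum_{j} x_{ij} = \1_{i \in X\cup W}$ force a partial matching saturating precisely $X \cup W$, and the constraints $\sum_{i, j\in S} x_{ij} \le r(S)$ together with $\sum_{i,j\in U} x_{ij} = r(U)$ force the set $\partial_U(\mu)$ to have full cardinality $r(U)$ while respecting all rank bounds, i.e.\ to be a basis of $\M$ (using that a set $S'$ with $|S'| = r(U)$ and $|S' \cap S| \le r(S)$ for all $S$ is independent, by the rank/independence characterization of matroids).

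Next I would invoke total dual integrality: the constraint matrix is that of the \emph{matroid intersection polytope} for the transversal matroid on $U$ induced by $G$ restricted to covering $X \cup W$, intersected with $\M$. By Edmonds' matroid intersection theorem (or, equivalently, by the fact that the intersection of two matroid polytopes is integral), every vertex of the polytope defined by the constraints of~\eqref{prog:lp} is integral; hence the LP optimum is attained at an incidence vector of an independent matching of the required type, and equals $f(X)$. One should also handle the degenerate cases cleanly: if there is no matching saturating $X \cup W$ into a basis, the polytope is empty and the LP is infeasible, matching the convention $f(X) = -\infty$; this can be mentioned but need not be belaboured, or the lemma can be read as a statement about those $X$ for which a feasible matching exists.

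The main obstacle — really the only nontrivial point — is justifying integrality of the polytope~\eqref{prog:lp}, i.e.\ that this particular encoding (degree equalities on $V\cup W$, rank inequalities on subsets of $U$, the total-size equality $\sum x_{ij} = r(U)$, nonnegativity) is exactly the standard matroid-intersection formulation whose vertices are integral. I would spell out the correspondence: matchings in $G$ saturating $X\cup W$ correspond to common independent sets of size $r(U)$ of (a) the transversal matroid on $U$ with presentation $G$ restricted to the side $X\cup W$, and (b) $\M$; the polytope of common independent sets of two matroids is described by the rank inequalities of each, and by Edmonds' theorem these inequalities form a TDI system, so integrality and LP duality both hold. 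Since the paper only needs the value statement here (duality and complementary slackness are developed afterwards), it suffices to cite matroid intersection for integrality and then note the LP optimum over an integral polytope is an integral point, i.e.\ an independent matching of the prescribed form, whose weight is by definition $f(X)$.
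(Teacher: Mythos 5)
Your proof is essentially the paper's proof: interpret \eqref{prog:lp} as a (poly)matroid intersection LP, invoke integrality/TDI of matroid intersection to deduce an integral optimum exists, and observe that integral feasible points are exactly incidence vectors of independent matchings covering $X\cup W$ whose endpoints in $U$ form a basis. The one place where you are imprecise is the identification of the two matroids. The LP~\eqref{prog:lp} has variables $x_{ij}$ indexed by \emph{edges} of $G$, so the relevant polytope is the common independent set polytope of two matroids on the ground set $E$: the partition matroid given by the nodes of $V\cup W$ (each class being the star at a node) and the matroid obtained from $\M$ by replacing each $u\in U$ by parallel copies indexed by the edges at $u$. You instead describe it as the common independent set polytope of a transversal matroid on $U$ intersected with $\M$, which is a polytope in $\R^U$, not $\R^E$, and hence not literally the feasible region of \eqref{prog:lp}. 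These two intersections compute the same optimal value, but to assert integrality of \eqref{prog:lp} itself you need the edge-set framing (or the more general polymatroid intersection TDI, which the paper cites). Once the matroids are identified correctly on $E$, the rest of your argument goes through exactly as written and matches the paper's proof.
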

\begin{proof} The formulation clearly gives a relaxation of the integer program defining the value of $f(X)$.
Using the total-dual integrality of polymatroid intersection, see~\cite[Theorem 46.1 and Corollary~41.12b]{schrijver2003combinatorial}, the existence of an integer optimal solution $x\in \Z^E$ is guaranteed; see the proof of Lemma~\ref{lemma:LP2} for more details. 
By the first set of constraints and since $\sum_{i\in V} x_{ij} \le r(\{j\}) \le 1$ for all $j\in U$, it is clear that $x=\chi_{\mu}$ for a matching $\mu$. Moreover, it holds $\partial_{V\cup W}(\mu) = X \cup W$ and $\partial_{U}(\mu)$ is a basis in $\M$. The lemma follows.
\end{proof}

We next characterize the set of maximizers of an R-minor valuated matroid. 

\begin{lemma}[Dual LP]\label{lemma:LP2}
Let $\B$ be the set of maximizers of $f$. 
Then $\B$ corresponds to the set of integral optimal solutions of 
\begin{equation}\label{prog:Fp}
\begin{aligned}
   &\quad \max \quad \textstyle\sum_{(i,j)\in \E}    \co_{ij} x_{ij}   \\
   &\begin{aligned}
   \text{s.t.: } &&  \textstyle\sum_{j \in U} x_{ij}  &\le 1   && \forall i \in V \\
   &&  \textstyle\sum_{j \in U} x_{ij}  &= 1   && \forall i \in W \\
   &&  \textstyle\sum_{i \in V\cup W, j \in S}   x_{ij} &\le r(S)      && \forall S \subset U \\
   &&  \textstyle\sum_{i \in V\cup W, j \in U}   x_{ij} & = r(U)      && \\
   &&                x_{ij} &\ge 0        && \forall i \in V\cup W, \forall j \in U \,.
   \end{aligned} 
\end{aligned}
\end{equation}
The dual of~\eqref{prog:Fp} is then 
\begin{equation}\label{prog:dualFp}
\begin{aligned}
   & \min \quad  \pi (V) + \pi(W)  + \hat r (\tau)  \\
   &\begin{aligned}
   \text{s.t.: } &&  \pi_i + \tau_j  &\ge \co_{ij}  && \forall (i,j) \in E \\
   &&         \pi_i&\ge 0  && \forall i\in V \\
   &&         \pi_i&-\text{free}  && \forall i\in W \\
   &&                \tau &-\text{free.}  && 
   \end{aligned} 
\end{aligned}
\end{equation}
Above, $\hat r$ is the Lov\' asz extension of the matroid rank function $r$. 
Let $(\pi, \tau) \in \R^{V\cup W \cup U}$ be an optimal dual solution. 
Let $E_0= \{(i,j) \in E : \pi_i + \tau_j  =  \co_{ij}\}$ be the set of tight edges, 
and $G_0 = (V\cup W, U; E_0)$ the tight subgraph.
Let $\emptyset=S_0\subsetneq S_1\subsetneq S_2\subsetneq \ldots S_t\subsetneq S_{t+1}=U$ be the level sets of $\tau$ in $U$, 
and denote with $\M_\tau$ the matroid of maximum weight bases. 
Let ${\cN}$ be the matroid on $V\cup W$ with bases $\{B\cup W:\, B\in \B\}$.
Then, $(G_0, \M_\tau)$ is a Rado representation of ${\cN}$.
We have $\pi_i = 0$ for all $i \in V$ for which there is a maximizer set $X\in\B$ with $i \not \in X$.

Further, the optimal solution $(\pi, \tau)$  can be chosen with  the following additional properties:
\begin{itemize}
  \item Every level set $S_\ell$, $\ell\in [t+1]$ is a flat in $\M$.
  \item For every $\ell\in [t+1]$, $(S_\ell\setminus S_{\ell-1})\cap \Gamma_{E_0}(V)\neq\emptyset$.
\end{itemize}
\end{lemma}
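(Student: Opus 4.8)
The plan is to prove each assertion of Lemma~\ref{lemma:LP2} in turn, starting with the LP formulation and working toward the structural consequences. First I would verify that~\eqref{prog:Fp} correctly captures $\B$: its feasible integral points are indicator vectors of independent matchings saturating $W$ and of matroid-rank $r(U)$, but with $V$-endpoints a subset (not necessarily all of size $|X|=d$); among these, the maximum-weight ones are exactly the matchings realizing $f(X) = \max_X f$, whose $V$-traces are the maximizers $\B$. The polyhedron is the intersection of a (poly)matroid polytope with a box, so total dual integrality of polymatroid intersection (\cite[Theorem 46.1, Corollary 41.12b]{schrijver2003combinatorial}) guarantees integral optima and, crucially, that~\eqref{prog:dualFp} has an integral-compatible optimum; I would spell out how the constraint $\sum x_{ij} = r(U)$ together with $\sum_{i,j\in S} x_{ij}\le r(S)$ dualizes to the Lov\'asz extension $\hat r(\tau)$ via the chain of level sets (this is the standard fact that the support function of the base polytope is $\hat r$).

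Next I would establish the Rado-representation claim. Fix a dual optimum $(\pi,\tau)$ and let $E_0$, $\M_\tau$ be as defined. By complementary slackness, an integral primal optimum uses only tight edges and only max-$\tau$-weight bases of $\M$, i.e.\ bases of $\M_\tau$ (here invoking Lemma~\ref{lem:maxweight}). Conversely, any independent matching in $(G_0,\M_\tau)$ saturating $W$ extends to a primal feasible point of the same optimal value, since tightness $\pi_i+\tau_j=\co_{ij}$ on its edges makes its weight equal to $\pi(W)+\pi(\partial_V(\mu)) + \tau(\text{basis})$, and one checks this matches the dual objective when $\partial_V(\mu)\in\B$ (using $\pi_i\ge 0$ and $\pi_i=0$ off the matching's support — see below). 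So $X\cup W$ has a perfect matching to an $\M_\tau$-independent set in $G_0$ iff $X\in\B$, which is precisely the statement that $(G_0,\M_\tau)$ is a Rado representation of $\cal N$. The claim that $\pi_i = 0$ whenever some maximizer omits $i$ follows by complementary slackness: if $X\in\B$ with $i\notin X$, the corresponding primal optimum has $\sum_{j}x_{ij}=0 < 1$, forcing the dual variable $\pi_i$ (dual to $\sum_j x_{ij}\le 1$) to vanish.

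For the two additional normalization properties, I would argue by a local-modification / minimality argument on the chosen dual optimum. To get every level set $S_\ell$ to be a flat in $\M$: if $S_\ell$ is not closed, replace it by $\cl_\M(S_\ell)$; concretely, for each $u\in\cl_\M(S_\ell)\setminus S_\ell$ we may decrease $\tau_u$ to bring it into the level set $S_\ell$ (or rather, merge the two $\tau$-values) — this does not change $\hat r(\tau)$ because $r(S_\ell)=r(\cl_\M(S_\ell))$, and one checks it keeps all edge constraints satisfiable after a compensating adjustment, possibly among several iterations; choosing $\tau$ so that the total number of distinct level values, or some lexicographic potential, is minimal forces each $S_\ell$ to be a flat. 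For the second property, if some ``layer'' $S_\ell\setminus S_{\ell-1}$ meets no $E_0$-neighbour of $V$, then collapsing that layer into the adjacent one (equalizing two consecutive $\tau$-levels) changes neither the primal optimum nor, after checking, the dual feasibility/optimality — so again minimality of the number of level sets rules it out.

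\textbf{The main obstacle} I anticipate is the careful bookkeeping in the two normalization steps: one must verify that perturbing $\tau$ to close up a level set or to delete an $E_0$-irrelevant layer can be done while (i) preserving dual feasibility $\pi_i+\tau_j\ge\co_{ij}$ on \emph{all} edges, not just tight ones, (ii) not increasing the dual objective $\pi(V)+\pi(W)+\hat r(\tau)$, and (iii) not destroying the Rado-representation property already established for the other clauses. The cleanest route is probably to fix once and for all a dual optimum minimizing a suitable potential (e.g.\ the number of level sets, then lexicographically the $\tau$-values), then show directly that any violation of either bullet would yield a strictly smaller potential via an explicit local move — turning the existence claim into a short extremality argument rather than an iterative construction.
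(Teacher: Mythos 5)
Your treatment of the LP, its dual, the Rado-representation via complementary slackness, and the $\pi_i=0$ claim follows essentially the same route as the paper (TDI of polymatroid intersection plus Frank's weight-splitting / the support-function formula for $\hat r$). The divergence is in the two normalization bullets, where your sketch has a couple of genuine bugs.

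\textbf{Flat normalization.} Since the level sets are \emph{upper}-level sets ($S_\ell = \{u : \tau_u \ge \lambda\}$), an element $u \in \cl_\M(S_\ell)\setminus S_\ell$ has $\tau_u < \lambda$; to bring it into $S_\ell$ you must \emph{increase} $\tau_u$ to $\lambda$, not decrease it. Once the sign is right, there is no need for any ``compensating adjustment'': increasing $\tau$ only slackens the constraints $\pi_i + \tau_j \ge \co_{ij}$, and $\hat r(\tau)$ is unchanged because $r(\cl(S_\ell)) = r(S_\ell)$. The paper simply iterates this (at most $|U|$ times, each time killing the smallest non-flat level set); no lexicographic potential is needed, and in fact ``minimize the number of level sets'' is not the right invariant here — the modification can both merge and create level boundaries, so minimality of that count does not directly force flatness.

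\textbf{Second normalization.} Your framing (``collapsing changes neither the primal optimum nor dual feasibility/optimality, so minimality of level sets rules it out'') is not quite what happens. After the flats normalization, $S_{\ell-1} \subsetneq S_\ell$ with $S_{\ell-1}$ a flat forces $r(S_\ell) > r(S_{\ell-1})$. If no vertex of $V$ has an $E_0$-edge into $S_\ell \setminus S_{\ell-1}$, then decreasing $\tau$ uniformly by a small $\varepsilon>0$ on $S_\ell\setminus S_{\ell-1}$ stays feasible (the nontight constraints have slack) and \emph{strictly} decreases $\hat r(\tau)$ by $\varepsilon\bigl(r(S_\ell)-r(S_{\ell-1})\bigr) > 0$. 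This is a direct contradiction with dual optimality — not a ``same-value collapse.'' Your version, as stated, asserts the modified dual is still optimal, which is false; the argument only goes through as a reductio against optimality. Note also that the order matters: the strict rank gap that powers this step is exactly what the prior flats normalization buys you, so the two bullets must be proved in that order.
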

\begin{proof}
Observe that the problem is a special case of matroid intersection. We can define two matroids on the edge set $E$: a partition matroid enforcing that only one edge can be selected incident to every node in $V \cup W$, and a second matroid enforcing that the set of endpoints in $U$ must be independent in $\M$; this can be obtained from $\M$ by replacing every node $u\in U$ by parallel copies corresponding to the edges incident to $u$. By the integrality of polymatroid intersection~\cite[Theorem 46.1 and Corollary~41.12b]{schrijver2003combinatorial}, 
the set $\argmax \{ f(X) : X\subseteq V\}$ corresponds to the set of integral solutions of~\eqref{prog:Fp}.

The dual LP formulation can be easily derived from Frank's weight splitting theorem \cite[Theorem 13.2.4]{frankbook}, interpreted in this bipartite setting. The Rado representation of ${\cN}$ and the condition on the $\pi_i=0$ values follow by complementary slackness.

Let us now show that the additional properties can be ensured. 
Consider the smallest  level set $S_\ell$ that is not a flat. Thus, $S_\ell=\{i\in U:\, \tau_i\ge \lambda\}$ for some $\lambda\in \R$. Let us increase $\tau_j$ to $\lambda$ for every $j\in \cl[S_\ell]\setminus S_\ell$. By definition of the Lov\'asz extension, this does not change the value $\hat r(\tau)$; and since we only increase $\tau$, the solution remains feasible. After the change, $\cl[S_\ell]$ replaces $S_\ell$ as a level set. Thus, after at most $|U|$ such changes, we can guarantee that all level sets are flats. 

We show that this also implies the final property, i.e., that for every $i\in [t+1]$, there exists a tight edge $(i,j)\in E_0$ with $j\in S_\ell\setminus S_{\ell-1}$. Indeed, if no such edge exists, then we can decrease $\tau_{k}$ by some positive $\varepsilon>0$ for every $k\in S_\ell\setminus S_{\ell-1}$ such that $(\pi,\tau)$ remains feasible, and $S_\ell$ remains a level set, i.e. $\tau_k>\tau_{k'}$ for any $k\in S_{\ell}$, $k'\in S_{\ell+1}$. This decreases $\hat r(\tau)$ by $\varepsilon \left( r(S_\ell)-r(S_{\ell-1}) \right)>0$, a contradiction to optimality.
\end{proof}
Note that as an immediate corollary, the set of maximizers $\B$ is a matroid with Rado-minor representation $(G_0, \M_\tau, W)$.

\begin{lemma}\label{lem:pi-reducible}
Let $f$ be an R-\emph{induced} valuated matroid represented by $(G,\M, c)$ and $\B$ be the set of maximizers of $f$.
Consider a dual optimal solution $(\pi,\tau)$ as in Lemma~\ref{lemma:LP2}. 
If $\tau_i\neq\tau_j$ for some $i,j\in U$, then the matroid on $V$ defined by the bases $\B$ is  fully reducible.
\end{lemma}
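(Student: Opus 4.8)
The plan is to read off from Lemma~\ref{lemma:LP2} a concrete Rado representation of the matroid $\cal N$ on $V$ whose bases are $\B$, observe that the hypothesis forces the matroid on $U$ in that representation to be \emph{disconnected}, and then invoke Lemma~\ref{lemma:disconnectedGivesIrreducible}.

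Concretely, I would fix a dual optimal solution $(\pi,\tau)$ of \eqref{prog:dualFp} that also satisfies the two normalization properties guaranteed by Lemma~\ref{lemma:LP2}: every level set $S_\ell$ of $\tau$ is a flat of $\M$, and $(S_\ell\setminus S_{\ell-1})\cap\Gamma_{E_0}(V)\neq\emptyset$ for each $\ell\in[t+1]$. Since the representation is R-induced ($W=\emptyset$), Lemma~\ref{lemma:LP2} tells us that $(G_0,\M_\tau)$ is a Rado representation of $\cal N$, where $G_0=(V,U;E_0)$ is the tight subgraph and $\M_\tau$ is the matroid of maximum $\tau$-weight bases. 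As $\tau$ is not constant, its chain of level sets $\emptyset=S_0\subsetneq S_1\subsetneq\dots\subsetneq S_{t+1}=U$ has $t+1\ge 2$ blocks $U_\ell:=S_\ell\setminus S_{\ell-1}$, and by Lemma~\ref{lem:maxweight} we get $\M_\tau=\bigoplus_{\ell=1}^{t+1}\M_\ell$ with $\M_\ell=(\restr{\M}{S_\ell})/S_{\ell-1}$ on ground set $U_\ell$; in particular $\M_\tau$ is a matroid sum of at least two matroids. Flatness of $S_{\ell-1}$ makes each $\M_\ell$ loop-free, and the second normalization property gives $\Gamma_{E_0}(V)\cap U_\ell\neq\emptyset$ for every $\ell$. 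Hence the hypotheses of Lemma~\ref{lemma:disconnectedGivesIrreducible} are met for $\cal N$ with representation $(G_0,\M_\tau)$, so $\cal N$ is reducible.

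To upgrade ``reducible'' to ``fully reducible'' I would check the extra condition $r_{\cal N}(V)=r_{\M_\tau}(U)$ of Lemma~\ref{lemma:disconnectedGivesIrreducible}. Both sides equal $d=\rank(f)$: every set in $\B$ has size $d$, so $\cal N$ has rank $d$, while $r_{\M_\tau}(U)=\sum_{\ell}\bigl(r_\M(S_\ell)-r_\M(S_{\ell-1})\bigr)=r_\M(U)=d$ since an R-induced representation has $r_\M(U)=d+|W|=d$. The ``furthermore'' clause of Lemma~\ref{lemma:disconnectedGivesIrreducible} then yields that $\cal N$ is fully reducible, as desired.

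I do not expect a genuine obstacle; the argument is essentially an assembly of Lemma~\ref{lemma:LP2}, Lemma~\ref{lem:maxweight} and Lemma~\ref{lemma:disconnectedGivesIrreducible}. The one point requiring care is that the hypothesis ``$\tau_i\neq\tau_j$'' must be applied to a dual optimum that already satisfies the normalization of Lemma~\ref{lemma:LP2}: for a non-normalized optimum one can have $\tau$ non-constant while $\cal N$ is not fully reducible (think of a single rank-one element with a weighted and an unweighted parallel copy), so it is essential to fix such a $(\pi,\tau)$ \emph{first}, and to observe that flatness of the level sets is exactly what guarantees that the blocks $U_\ell$ survive as bona fide loop-free components, each met by $\Gamma_{E_0}(V)$.
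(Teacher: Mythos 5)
Your proposal follows the paper's proof exactly: take the normalized dual $(\pi,\tau)$ from Lemma~\ref{lemma:LP2}, use the Rado representation $(G_0,\M_\tau)$ with the direct-sum decomposition of $\M_\tau$ from Lemma~\ref{lem:maxweight}, and invoke Lemma~\ref{lemma:disconnectedGivesIrreducible}. You are in fact a touch more careful than the published argument—you explicitly verify the rank equality $r_{\cal N}(V)=r_{\M_\tau}(U)$ needed for the ``fully reducible'' conclusion and point out that non-constancy of $\tau$ must be asserted for the normalized dual—but the approach and all the ingredients coincide.
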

\begin{proof}
By Lemma~\ref{lemma:LP2}, $(V,U;E_0)$ and $\M_\tau$ gives a Rado representation of the matroid with bases $\B$ 
(since $W=\emptyset$). 
For the flats $S_\ell$, $\M_\tau$ is the direct sum of the matroids
$\left(\restr{\M}{S_{\ell}} \right)/S_{\ell-1}$ (Lemma~\ref{lem:maxweight}).
Since all level sets $S_\ell$ are flats, each matroid  $\left(\restr{\M}{S_{\ell}} \right)/S_{\ell-1}$ is non-empty. If there are more than two terms, then Lemma~\ref{lemma:disconnectedGivesIrreducible} implies that $\B$ corresponds to a fully reducible matroid. Otherwise, the only flats can be $S_0=\emptyset$ and $S_1=U$; consequently, $\tau_i$ is the same for all $i\in U$.
\end{proof}

\section{R-minor functions do not cover valuated matroids}
\label{sec:R-minor+not+cover}
In this section we prove that no function in $\cF_n$ arises as an R-minor valuated matroid. 
Together with Appendix~\ref{section:functionH} this proves Theorem~\ref{thm:non-r-minor-main}.
The proof will be by contradiction, considering a carefully chosen minimal counterexample. We start by giving some definitions and the selection criteria for the minimal counterexample. We then outline a roadmap to the proof in Section~\ref{sec:roadmap-new}.

Recall that $\cF_n$ (Definition~\ref{def:class}) is a family of valuated matroids defined over the ground set $V = [2n]$,
using \emph{pairs} $P_i = \{2i - 1, 2i\}$ for $i\in [n]$. 
We let $\cH$ be the set of $P_i \cup P_j$ such that at least one of $i,j$ is even and we let $\Xs=P_1\cup P_2=\{1,2,3,4\} \in \cH$.
A function $h:{V \choose 4}\to \R\cup\{-\infty\}$ is in $\cF_n$ if and only if the following hold:
\begin{itemize}
  \item $h(X)=0$ if $X\in {V \choose 4}\setminus  {\cH}$,
  \item $h(X)<0$ if $X\in {\cH}$, and
  \item $h(\Xs)$ is the unique largest nonzero value of the function.
\end{itemize}

First, we introduce some notation and choose an appropriate minimal counterexample. 
Let us fix a value $n\ge 16$.
For a contradiction, let us assume there exists a valuated matroid $h\in\cF_n$ that is $R$-minor arising via a bipartite graph $G=(V\cup W,U;E)$, a matroid $\M=(U,r)$, and weights $\co\in \R^E$. 
Define
\[
\B_0\coloneqq \argmax(h) = {V\choose 4}\setminus \cH\, ,\quad\ \B_1\coloneqq\dom(h)\, .
\]
By Lemma~\ref{lemma:sparsePavingB} both $\B_0$ and $\B_1$ are (sparse) paving matroids. From the definition of $\cF_n$, we have $\B_0\cup\{\Xs\}\subseteq\B_1$.

\paragraph{Maximum weight matchings}
For every  $X\in \B_1$, (arbitrarily) fix a maximum weight independent matching $\mu^X$ with $\partial_{V\cup W}(\mu^X)=X\cup W$; let $\mathcal{L}$ be the set of all these matchings.
Thus, $c(\mu^X)=0$ if $X\in\B_0$ and $c(\mu^X)<0$ if $X\in \B_1\setminus \B_0$.
Define 
\[
E^*\coloneqq\{(i,j):\, (i,j)\in \mu^X \text{ for some }X\in\B_0\}
\]
as the union of all maximal independent matchings in $G$ of maximum weight.

\paragraph{Selection criteria for $h$}
As we assume that the set of valuated matroids in $\cF_n$ with an R-minor representation $(G, \M, c, W)$ is not empty, we can select an extremal one $h$ among them 
according to the following criteria:
\begin{enumerate}[label = (S\arabic*)]
  \item The function $h$ has minimal effective domain, that is, $|\B_1|$ is minimal.\label{crit:minsup}
  \item Subject to this, $|W|$ is minimal.\label{crit:minW}
  \item Subject to this, $|E\setminus E^*|$ is minimal.\label{crit:nontight}
\end{enumerate}
Note that \ref{crit:minsup} only depends on $h$, whereas 
\ref{crit:minW} and \ref{crit:nontight} also depend on the representation.

We will refer to this choice as the \emph{minimal counterexample}.
This choice is well-defined, since all criteria 
minimize over non-negative integers. 
For \ref{crit:minsup}, note that the extreme case is  $\B_1=\B_0\cup\{\Xs\}$; a key step in the proof is  to show that this must always be the case.
 
\begin{remark} To show that not all valuated matroids arise as R-minor valuated matroids, it would suffice to show Theorem~\ref{thm:non-r-minor-main} in a weaker form, only for functions $h\in \cF_n$ such that $h(X)=-\infty$ for $X\in \cH\setminus\{\Xs\}$. This would already postulate $\B_1=\B_0\cup\{\Xs\}$, enabling a slightly simpler proof. However, we need Theorem~\ref{thm:non-r-minor-main}  for the entire class $\cF_n$, because  in order to refute the MBV conjecture in Theorem~\ref{thm:mbv+counterexample}, we require a function in $\cF_n$ with finite values that is not R-minor.
\end{remark}

\paragraph{Dual solutions}
We will also select an optimal dual solution $(\pi,\tau)$ to \eqref{prog:dualFp} in Lemma~\ref{lemma:LP2}. Let us introduce some notation; the choice of the particular solution will be specified in Lemma~\ref{lem:E-E-0}.

Let $E_0=\{(i,j)\in E:\, \pi_i+\tau_j=\co_{ij}\}$ denote the set of tight edges. By complementarity, $E^*\subseteq E_0$ must hold for any optimal dual $(\pi,\tau)$. Note that  $\pi_i=0$ for every $i\in V$ by Lemma~\ref{lemma:LP2}, since for every $i\in V$ there is an optimal primal solution to \eqref{prog:Fp} matching a set $X\in {V\choose 4}\setminus \cH$ with $i\notin X$.

Recall that $\M_\tau$ denotes the matroid of the maximum $\tau$-weight bases as in Lemma~\ref{lem:maxweight}; we let $r_\tau$ denote its rank function. 
The bipartite graph $G=(V\cup W,U;E)$ and matroid $\M=(U,r)$ and $W$ give a Rado-minor representation of $\B_1$.
As $\B_0$ is the set of maximizers of $h$, it follows from Lemma~\ref{lemma:LP2} that 
$G_0=(V\cup W,U;E_0)$ and $\M_\tau=(U,r_\tau)$ and $W$ give a Rado-minor representation of $\B_0$.

For $Z\subseteq V\cup W$, we let $\Gamma(Z)$ and $\Gamma_0(Z)$ denote the set of neighbours of $Z$ in $U$ in the edge sets $E$ and $E_0$, respectively.
Furthermore, for $Z\subseteq V\cup W$ we define 
\begin{eqnarray*}
  \rho_0(Z) &\coloneqq& r_{\tau}(\Gamma_{0}(Z)) - |Z| \,,\\
  \rho_1(Z) &\coloneqq& r(\Gamma (Z)) - |Z| \,.
\end{eqnarray*}
Note that $\rho_1(Z)\ge \rho_0(Z)$ for every $Z\subseteq V\cup W$.
Finally, let $Q_0$ denote the unique largest subset of $W$ with $\rho_0(Q_0)=0$ as guaranteed by Corollary~\ref{i:W-0}.
Analogously, one could also define $Q_1$ as the unique largest subset of $W$ with $\rho_1(Q_1)=0$. However, using  selection criterion \ref{crit:minW}, it turns out that $Q_1=\emptyset$. The following lemma will be proved in Section~\ref{sec:two-cases}.
\begin{lemma}\label{lemma:noQ1}
  In the minimal representation, we have $\rho_1(Z) > 0$ for each non-empty $Z\subseteq W$.
\end{lemma}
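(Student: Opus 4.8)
The plan is to argue by contradiction using the minimality of $|W|$ (criterion \ref{crit:minW}). Suppose there exists a non-empty $Z\subseteq W$ with $\rho_1(Z)\le 0$; since $W$ is fully matched in every independent matching of size $|W|+4$ (indeed the constraints of \eqref{prog:Fp} force $\sum_j x_{ij}=1$ for $i\in W$), and any such matching restricted to $Z$ matches $Z$ to an independent set of $\M$ inside $\Gamma(Z)$, we must have $r(\Gamma(Z))\ge |Z|$, i.e. $\rho_1(Z)\ge 0$. Hence the only possibility is $\rho_1(Z)=0$. The idea is then to remove such a set $Z$ from $W$ while simultaneously shrinking the matroid $\M$ on $U$, producing a strictly smaller R-minor representation of the same function $h$, contradicting \ref{crit:minW}.

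Concretely, fix $Z\subseteq W$ non-empty with $\rho_1(Z)=0$, so $r(\Gamma(Z))=|Z|$. First I would observe that $\Gamma(Z)$, or rather a rank-$|Z|$ flat containing it, can be used to "split off" the part of $U$ used to match $Z$: in every independent matching $\mu$ covering $X\cup W$ for $X\in\B_1$, the edges incident to $Z$ match $Z$ perfectly to a basis of $\restr{\M}{\Gamma(Z)}$ (a set of size $|Z|=r(\Gamma(Z))$), and crucially the remaining edges of $\mu$ match $(V\cup W)\setminus Z$ to a set independent in $\M/\,(\text{that basis})$; by submodularity/closure arguments this is independent in $\M/\Gamma(Z)$ if $\Gamma(Z)$ is taken to be closed (replace $\Gamma(Z)$ by $\cl_\M(\Gamma(Z))$, which has the same rank). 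The cleanest route is to invoke the structure already set up: contracting $Z$ in $\mathcal N'$ (the Rado representation of Proposition~\ref{prop:RadoMinorRepresentation}) and using that $\rho_1(Z)=0$ means $Z$ is, in the matroid-union/Rado sense, matchable "for free", so the matroid induced on $V\cup(W\setminus Z)$ by the graph $G\setminus Z$ together with $\M/\cl_\M(\Gamma_E(Z))$ (suitably, via the edges $E$ with endpoints in $Z$ and $\cl_\M(\Gamma(Z))$ deleted) has exactly $\B_1$ as its family of spanning-in-$V$ sets after contracting $W\setminus Z$. In weighted terms: one checks that $h(X) = \tempinducednew{G\setminus Z}{\M'}{\co'}(X)$ after contracting $W\setminus Z$, where $\M'$ is the appropriate minor of $\M$ and $\co'$ the restriction of $\co$ (the edge weights incident to $Z$ contribute a fixed constant, which must be $0$ since $h$ attains value $0$, so can be absorbed). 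This yields an R-minor representation with contracted set $W\setminus Z$ strictly smaller than $W$, and with the same effective domain $\B_1$, contradicting the minimality \ref{crit:minW} (note \ref{crit:minsup} is unaffected since $h$ itself is unchanged).

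The main obstacle I anticipate is making the "split off $Z$" step fully rigorous on the weighted level rather than just for the underlying matroids: one must verify that for \emph{every} $X\in\B_1$ (not just $X\in\B_0$) an optimal independent matching can be chosen so that its restriction to $Z$ uses only edges into $\cl_\M(\Gamma(Z))$ and has a fixed weight, and that no weight is lost. For the matroid statement this is a clean consequence of $\rho_1(Z)=0$ plus the uncrossing Lemma~\ref{lem:uncrossing}-style arguments (in fact Corollary~\ref{i:W-0} already gives that such a $Z$, if it existed, would be contained in the largest $\rho_1$-zero subset of $W$, which we want to show is empty). The weighted refinement should follow by a standard exchange argument on maximum-weight independent matchings — if $\mu$ is optimal but its $Z$-part strays outside $\cl_\M(\Gamma(Z))$, an augmenting/exchange step within the matroid $\M$ reroutes it back without decreasing weight — but writing this carefully is where the real work lies. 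An alternative, possibly slicker, presentation is purely via the operations: $f = (\tempinducednew{G}{\M}{\co})/W$, and $\rho_1(Z)=0$ lets one write $\tempinducednew{G}{\M}{\co} = \tempinducednew{G}{\M'}{\co'}$ for a representation where $Z$ becomes a set of coloop-like elements that can be absorbed into the contraction $W\mapsto W\setminus Z$; then Theorem~\ref{thm:VM+closed+operations} and the closure of R-minor matroids under the relevant operations finish it. I would develop whichever of these two formulations interacts most smoothly with the notation already fixed for $\mathcal N$, $\Gamma$, $\rho_1$ in this section.
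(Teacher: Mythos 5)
Your approach matches the paper's: argue by contradiction from a nonempty $Z\subseteq W$ with $\rho_1(Z)=0$, note that in every optimal independent matching the nodes of $Z$ are matched into $T=\cl_\M(\Gamma(Z))$ (which has rank $|Z|$) and contribute a fixed weight $\delta$, then pass to $\M/T$, delete $Z$ from $W$, and contradict the minimality criterion \ref{crit:minW}. One slip in your write-up: you claim the constant $\delta$ contributed by the $Z$-edges ``must be $0$ since $h$ attains value $0$.'' That does not follow --- $h$ attaining $0$ only forces the \emph{total} optimal matching weight to be $0$, so the non-$Z$ edges may contribute $-\delta$ for an arbitrary $\delta$. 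If you simply restrict $\co$ to the smaller graph, the resulting function is $h-\delta$, not $h$. The paper fixes this by redistributing $\delta$ over the surviving edges, setting $\co'(i,j)=\co(i,j)+\delta/r(\M')$; your parenthetical ``so can be absorbed'' is the right instinct, but the justification you give for it is wrong and needs replacing by the redistribution argument.
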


\subsection{Roadmap to the proof}\label{sec:roadmap-new}
We derive a contradiction by a thorough combinatorial analysis of the structure and Rado representations of the matroids $\B_0$ and $\B_1$.
Our first key lemma shows that the primal and dual optimal solutions must be in either of two restricted configurations.
\begin{lemma}\label{lem:E-E-0}
The minimal counterexample can be selected to satisfy one of the following properties:
\begin{enumerate}[label = (C\Roman*)]
\item We can choose a dual optimal solution $(\pi, \tau)$ such that 
  $E=  E_0\cup \{(i',j')\}$ for an edge $(i',j')$ where $i' \in \Xs\cup W$, $\M_\tau=\M$, and $\B_1=\B_0\cup\{\Xs\}$.\label{case:samemat}
   \item $E=E_0 = E^*$ and $\M_\tau\neq\M$ for any dual optimal $(\pi, \tau)$.\label{case:alltight}
\end{enumerate}
\end{lemma}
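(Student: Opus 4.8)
The plan is to analyze the minimal counterexample through the three selection criteria \ref{crit:minsup}--\ref{crit:nontight}, and to show that whenever case~\ref{case:alltight} fails, case~\ref{case:samemat} must hold. First I would fix a dual optimal solution $(\pi,\tau)$ having the additional flat/level-set properties guaranteed by Lemma~\ref{lemma:LP2}, and recall that $(G_0,\M_\tau,W)$ is a Rado-minor representation of $\B_0$ while $(G,\M,W)$ is a Rado-minor representation of $\B_1$, with $\B_0\cup\{\Xs\}\subseteq\B_1$ and $E^*\subseteq E_0\subseteq E$. The dichotomy to establish is: either $\M_\tau\neq\M$ for \emph{every} dual optimum (which, together with an argument that all edges can be made tight, gives \ref{case:alltight}), or there is \emph{some} dual optimum with $\M_\tau=\M$, from which I want to squeeze out \ref{case:samemat}.

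In the branch where some dual optimum $(\pi,\tau)$ has $\M_\tau=\M$: then $G_0$ and $G$ represent $\B_0$ and $\B_1$ over the \emph{same} matroid $\M$, so the only difference between the representations is the edge set $E_0\subseteq E$. I would argue that $E\setminus E_0$ can be reduced to a single edge using criteria \ref{crit:minsup} and \ref{crit:nontight}. Concretely, since $\B_0\subseteq\B_1$, deleting an edge in $E\setminus E_0$ from $G$ cannot destroy any matching certifying a member of $\B_0$ (those use only $E_0$-edges); so each such deletion yields a representation of some matroid $\B_1'$ with $\B_0\subseteq\B_1'\subseteq\B_1$. If $\B_1'$ still equals $\B_1$, this contradicts \ref{crit:nontight} (we have $E^*\subseteq E_0$, so removing a non-$E^*$ edge strictly decreases $|E\setminus E^*|$ without changing $h$); hence every edge of $E\setminus E_0$ is "critical", i.e.\ needed to realize at least one member of $\B_1\setminus\B_0$. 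By minimality of $\B_1$ (\ref{crit:minsup}), $\B_1\setminus\B_0$ should collapse to the single forced element $\Xs$: if $\B_1$ properly contained $\B_0\cup\{\Xs\}$ one could delete all edges of $E\setminus E_0$ incident to certificates of the extra bases to obtain a smaller-domain counterexample still in $\cF_n$ (the value conditions of Definition~\ref{def:class} only constrain $h$ on $\cH$, and $\B_0\cup\{\Xs\}$ is a valid paving matroid by Lemma~\ref{lemma:sparsePavingB}). Once $\B_1=\B_0\cup\{\Xs\}$, only one edge of $E\setminus E_0$ can be critical — realizing $\Xs$ — so $E=E_0\cup\{(i',j')\}$; and since $\Xs\notin\B_0$ while $\Xs\in\B_1$, Rado's criterion (Proposition~\ref{prop:RadoMinorRepresentation}) forces the new edge to be incident to $\Xs\cup W$, i.e.\ $i'\in\Xs\cup W$. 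This gives \ref{case:samemat}.

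In the complementary branch, no dual optimum has $\M_\tau=\M$. Here I would first argue $E=E_0=E^*$: if some edge is non-tight for every dual optimum, it lies outside $E^*$ by complementary slackness, and — as above — removing it gives a representation of a matroid sandwiched between $\B_0$ and $\B_1$; combined with \ref{crit:nontight} (and \ref{crit:minsup} as in the previous paragraph to ensure we land back in $\cF_n$), this contradicts minimality, so $E=E^*=E_0$. Then $\M_\tau\neq\M$ for all dual optima, which is exactly \ref{case:alltight}.

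\textbf{Main obstacle.} The delicate point is the interplay between criteria \ref{crit:minsup} and \ref{crit:nontight} when reducing $\B_1$ to $\B_0\cup\{\Xs\}$: I must be sure that after deleting edges I still have a representation of a function lying in $\cF_n$ (in particular that the reduced matroid is paving with the right circuit structure and that the weight/uniqueness conditions on $h$ can still be met), and that the reduction genuinely decreases the relevant quantity rather than merely shifting between the three criteria. The cleanest route is probably to first prove the structural fact "$E\setminus E_0$-edges are all critical for $\B_1\setminus\B_0$" independently of which case we are in, then use \ref{crit:minsup} to force $\B_1\setminus\B_0=\{\Xs\}$ in the $\M_\tau=\M$ case and to force $E=E_0$ in the $\M_\tau\neq\M$ case; this is where I expect the bulk of the careful casework to sit.
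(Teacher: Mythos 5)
Your high-level plan — split on whether some dual optimum has $\M_\tau=\M$, then use the three minimality criteria to clean up the representation — is plausible, but it diverges from the paper's route and contains genuine gaps that the paper's argument is specifically engineered to avoid.

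The paper does \emph{not} branch on $\M_\tau$ versus $\M$; it first fixes a max-weight independent matching $\mu^{X^*}$ covering $\Xs\cup W$ and argues $E=E^*\cup\mu^{X^*}$ by deleting only edges in $E\setminus(E^*\cup\mu^{X^*})$. That restriction is essential: deleting such an edge leaves $h(\Xs)$ \emph{unchanged} while values on other sets can only drop, so the modified function is manifestly still in $\cF_n$. Your version instead proposes to delete arbitrary edges in $E\setminus E_0$; if such an edge happens to lie in $\mu^{X^*}$, $h(\Xs)$ can decrease, and there is no guarantee it remains the unique largest negative value — so your argument that the reduced representation still realizes a function in $\cF_n$ does not go through. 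You flag this as the delicate point, but the fix is not a careful check; it is to avoid touching $\mu^{X^*}$, which changes the whole structure of the reduction.

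A second and more serious gap is the step ``once $\B_1=\B_0\cup\{\Xs\}$, only one edge of $E\setminus E_0$ can be critical, so $E=E_0\cup\{(i',j')\}$.'' Nothing forces $\mu^{X^*}$ to cross $E\setminus E_0$ in exactly one edge. The paper obtains the single non-tight edge by a perturbation argument you do not propose: pick one edge $(i',j')\in\mu^{X^*}\setminus E^*$, raise its weight by $\varepsilon=-h(\Xs)$, take a dual optimum $(\pi,\tau)$ for the perturbed weights (under which \emph{all} of $E$ is tight), observe that $(\pi,\tau)$ remains optimal for the original weights, and then $E_0=E\setminus\{(i',j')\}$ by construction. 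Without this trick you cannot conclude a single non-tight edge, only that every non-$E_0$ edge must be ``useful'' for some set in $\B_1\setminus\B_0$.

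Finally, the claim $\M_\tau=\M$ in case~(CI) is not something you can simply ``squeeze out'' in the branch where some dual optimum has $\M_\tau=\M$: the paper derives it after the perturbation by noting $\partial_U(\mu^{X^*})$ is a maximum $\tau$-weight basis, replacing $\M$ by $\M_\tau$, and checking that this preserves $h$ on $\B_0\cup\{\Xs\}$ while only decreasing values elsewhere (improving \ref{crit:minsup}). This weak-map reduction is a further ingredient your proposal does not include, and without it you cannot assert $\M_\tau=\M$. Conversely, the paper's case~(CII) falls out immediately once $E=E^*$ is established: $\mu^{X^*}\subseteq E_0$ then forces $\partial_U(\mu^{X^*})$ not to be independent in $\M_\tau$, hence $\M\neq\M_\tau$, without any further edge-removal step.
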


Intuitively, the above lemma states that the difference between Rado-minor representations of $\B_0$ and $\B_1$ is either in the edge set only, or in the matroid on $U$ only.
In case \ref{case:samemat}, all bases in $\M$ have the same $\tau$-weight, and there is a single non-tight edge.
Further, $h(\Xs)$ is the only finite value outside $\B_0$.
In contrast, in case~\ref{case:alltight}, all edges are tight, but we need to work with two different matroids on $U$.

The proof in Section~\ref{sec:two-cases} exploits the selection criteria. First, using \ref{crit:nontight}, we argue that $E=E^*\cup \mu^{X^*}$, i.e., all edges are used in one of the optimal matchings or in the maximum weight independent matching on $X^*$. If $E=E^*$, we get a contradiction to case \ref{case:alltight} immediately. If $E\neq E^\star$, we can get \ref{case:samemat} by constructing another dual solution.

\paragraph{The base case $W=\emptyset$}
We first exclude the case that $W=\emptyset$,
that is, the case when $h$ is R-induced.
We tackle this case in Section~\ref{sec:W-0}. 
First, in case \ref{case:alltight},
 $\M_\tau\neq\M$ implies, using Lemma~\ref{lemma:disconnectedGivesIrreducible}, that $\B_0$ is \emph{fully reducible}, that is, it can be written as a full-rank matroid union of smaller matroids. An elementary argument in Lemma~\ref{lem:not-reducible} shows that  $\B_0$ is not fully reducible for $n\ge 10$, leading to a contradiction. The proof exploits the low rank of the matroid (4), and  the combinatorics of the pairs $P_i$ in the construction.

 Hence, \ref{case:samemat} must be the case.
We note that the set $\Xs=P_1\cup P_2$ does not have an independent matching in $E_0$ but has one in $E=E_0\cup \{(i',j')\}$.
The edge $(i',j')$ is incident to $\Xs$; say, $i'\in P_1$.
With an uncrossing argument using the submodularity of the rank of the neighbourhood function, we show that $(i',j')$ should create an independent matching also for another set $X=P_1\cup P_k\notin\B_0$. 
Since $\M=\M_\tau$ and this is the single non-tight edge, it follows that $0>h(Z)\ge h(\Xs)$, a contradiction that $h(\Xs)$ is the unique largest negative function value.

\paragraph{Robust matroids}
If $W\neq\emptyset$, we get a contradiction by showing the existence of a counterexample with smaller $W$.
Towards this goal, in Section~\ref{section:robustMatroids}, we define a common abstraction of $\B_0$ and $\B_1$.
We call them \emph{robust} matroids (Definition~\ref{def:robust}), and analyze their Rado representations. 
These are a class of sparse paving matroids of rank 4 with elements arranged in pairs $P_i$. 
All circuits are formed by the union of two pairs \ref{i:H-B}.

We call these matroids 
`robust' because the structure is robust against the `perturbation' from the set of bases $\B_0$ to $\B_1 = \B_0 \cup X^*$.
This is due to the fact that the non-bases of $\B_0$ have been carefully selected, ensuring that they are neither too sparse nor too uniform.
We define a robust matroid $\B$ by a similar selection process, ensuring that the set of non-bases $\cH$ have the correct density.
The crucial ingredients to this selection process are the properties \ref{i:H-graph2a} and \ref{i:H-graph2b}. 
They allow us to control the excess functions $\rho_0$ and $\rho_1$ (see Lemma \ref{lem:all-Z}).

We consider the Rado representation of such a robust matroid $\B$ and analyze the excess function $\rho(X)=r(\Gamma(X))-|X|$; statements derived in Section~\ref{section:robustMatroids}  will be valid for $\rho_1$ in the representation $G=(V\cup W,U;E)$ with $\M=(U,r)$ as well as for $\rho_0$ in $G_0=(V\cup W,U;E_0)$ with $\M_\tau=(U,r_\tau)$ (Lemma~\ref{lem:B-0-robust}).

The difficulty in the proof is to control the freedom in the set $W$.
While we are ultimately interested in the matroid on $V$, we also have to deal with its `preimage' before contraction in the bigger matroid in $V \cup W$.
As $\B$ is robust, the structure of the non-bases $\cH$ is sufficiently rigid that their `preimages' in $V \cup W$ also exhibit a similar structure.
As such, the analysis reveals that the structure of the pairs $P_i$ `forces itself' on the full representation.
This can be quantified with the excess function $\rho$.
  Let $Q\subseteq W$ be the unique largest set with $\rho(Q)=0$ (Corollary~\ref{i:W-0}).
In Lemma~\ref{lem:W-Z}, we show that for each pair $P_i$ there exists a unique largest `extension set' $Z_i\subseteq V\cup W$ with $\rho(Z_i)=0$ and  $Z_i\cap V=P_i$. 
 Moreover, for every $i\neq j$, we have $Z_i\cap Z_j=Q$, as well as 
$\rho(Z_i\cup Z_j)=0$ if $P_i\cup P_j\in\B$ and $\rho(Z_i\cup Z_j)=-1$ otherwise (Lemma~\ref{lemma:ZiZj}). The last property asserts that $Z_i\cup Z_j$ certifies $P_i\cup P_j\notin \B$ whenever this set is a circuit.

Let $Z^0_i$ and $Z^1_i$ denote the respective sets in the representations of the two matroids $\B_0$ and $\B_1$. For the rest, we argue that these two representations must be near-identical, while still corresponding to two different matroids.

In Section~\ref{sec:robust-to-rep}, we first show that $\supp(h)=\B_1=\B_0\cup\{\Xs\}$ always holds (Lemma~\ref{lem:B1-B0}). This is already known in \ref{case:samemat}; however, it is significantly more difficult to show in \ref{case:alltight}. We then argue that the $Z_i^1$ sets partition the ground set $V\cup W$ (Lemma~\ref{lem:unionZ}). Further, for each $i\in [n]$, $Z_i^0=Z_i^1\cup Q_0$ must hold; that is, the two extension sets may only differ in $Q_0$, the largest subset of $W$ with $\rho_0(Q_0)=0$. 

\paragraph{Completing the proof}
It remains to complete the analysis  for the two cases \ref{case:samemat}  and \ref{case:alltight}. In case~\ref{case:samemat} (Section~\ref{sec:samemat}), the set $Q_0$ plays a key role.
First, assume $Q_0=\emptyset$, i.e. $Z_i:=Z_i^0=Z_i^1$. Then, for each set $Z_i$, the neighbourhoods using edges in $E_0$ and the entire set $E$ should have the same rank; this basically leaves no room for the edge $(i',j')\in E\setminus E_0$.
 Hence, $Q_0\neq \emptyset$, and we must have  $Q_0\subseteq Z_q^1$ for a unique $q\in \{3,4,\ldots,n\}$. The final contradiction is reached by a submodular uncrossing argument.

Finally, in case~\ref{case:alltight} (Section~\ref{sec:alltight}), we can show $Q_0=\emptyset$ (Lemma~\ref{lem:no-Q0-CII}), and therefore $Z_i:=Z_i^0=Z_i^1$ for all $i\in [n]$. While the edge sets $E$ and $E_0$ are the same, the functions $\rho_1$ and $\rho_0$ may be still different as they correspond to  different matroids, $\M$ and $\M_\tau$.
We derive a contradiction to \ref{crit:minW} by showing that for some $Z_i$ with $|Z_i|>2$,  deleting an element of $W\cap Z_i$ and contracting a node in the matroid $\M$ leads to a smaller representation of $h$.

\subsection{Proofs of Lemma~\ref{lemma:noQ1} and Lemma~\ref{lem:E-E-0}}\label{sec:two-cases}
We now present two proofs deferred from the previous section. 

\begin{proof}[Proof of Lemma~\ref{lemma:noQ1}]
By~\ref{i:rho-three}, we must have $\rho_1(Z) \geq 0$ for all $Z\subseteq W$.
For a contradiction, let $Z\subseteq W$ be a non-empty set with $\rho_1(Z) = 0$.
Let $T = \cl[\Gamma(Z)]$, then $r(T) = |Z|$.
By Lemma~\ref{lem:rho-0-fully-matched}, 
for every edge $(i,j)$ in some maximal independent matching $\mu$, we have $i \in Z$ if and only if $j \in T$.
That is, the nodes in $Z$ can only be matched to nodes of $T$ and vice versa in maximal independent matchings.
Moreover, the weight of the edges covering $Z$ must be the same value $\delta$ for any maximal independent matching.
This follows since for any two maximal independent matchings $\mu,\mu'$, we can replace the set of edges covering $Z$ in $\mu$ by the set of edges covering $Z$ in $\mu'$ and obtain another independent matching covering $Z\cup X$. Let $\M' \coloneqq \M /T$ denote the matroid obtained from $\M$ by contracting $T$, and let $U'\coloneqq U\setminus T$.
Then, we obtain a smaller $R$-minor representation of $h$ by restricting to $W'\coloneqq W\setminus Z$, and using $\M'$ on $U'$.
Moreover, we define the new weight function on the edges as $\co'(i,j) \coloneqq \co(i,j) + \delta/r(\M')$ for each edge $(i,j)$ 
with $i\in V\cup W'$ and $j\in U'$ to obtain the same $h(X)$ values. 
This contradicts criterion \ref{crit:minW} whenever $Z \neq \emptyset$. 
\end{proof}

\begin{proof}[Proof of Lemma~\ref{lem:E-E-0}]
Let $\mu^{X^*}\in{\mathcal{L}}$ denote a maximum weight independent matching covering $\Xs{\cup}W$. 
First, we show that $E=E^* \cup \mu^{X^*}$. 
Indeed, removing an edge in $E\setminus (E^*\cup \mu^{X^*})$ does not affect $h(X)$ for $X\in \B_0\cup\{\Xs\}$ 
as all matchings $\mu^X$ for $X\in \B_0$ lie in $E^*$. 
For any other set, $h(X)$ may decrease (possibly to $-\infty$); 
but this would yield another function in $\cF_n$ that is the same or better on criterion \ref{crit:minsup},  
the same on \ref{crit:minW}, and strictly better on \ref{crit:nontight}.
Hence, $E=E^*\cup \mu^{X^*}$.

First, assume $E = E^*$. Then, $\mu^{X^*}\subseteq E^* \subseteq E_0$. Thus, $\partial_U(\mu^{X^*})$ cannot be independent in $\M_\tau$, as otherwise $h(\Xs)=0$ would follow by complementary slackness. 
Hence, $\M\neq\M_\tau$, giving case~\ref{case:alltight}.

Next, assume that $E \setminus E^* = \mu^{X^*}\setminus E^* \neq \emptyset$.
Let $(i',j')$ be an arbitrary edge in $\mu^{X^*}\setminus E^*$, i.e., $i' \in X^* \cup W$.
We start increasing $c$ to $c'$ for $\varepsilon \ge 0$ as follows
$$
c'_{ij} \coloneqq 
\begin{cases} 
c_{ij} + \varepsilon  \, \text{ for } (i,j) = (i', j') \\
c_{ij} \qquad \text{ otherwise.} 
\end{cases}
$$
Pick the largest $\varepsilon \ge 0$ such that the maximum weight of an independent matching in $G$, $\M$, $c$ remains $0$, i.e.,
such that the optimum value of the LP~\eqref{prog:Fp} does not change.

\begin{techclaim} 
 $\varepsilon = - h(X^*)$.
\end{techclaim}
\begin{subproof}
Suppose that $\varepsilon < -h(X^*)$.
By definition of $E^*$, 
we have stopped increasing $\varepsilon$ as the edge $(i',j')$ has now entered $E^*$
and increasing the value further would increase the optimal value via a set $X\in \B_0$. 
However, we still have $h(X^*)<0$, which contradicts~\ref{crit:nontight}. 
\end{subproof}
Next, we note that $\B_0 \cup \{X^*\}$ is the set of maximizers of LP~\eqref{prog:Fp} under the increased weights $c'$.
Indeed, by the choice of $\varepsilon$ all previous maximizers $\B_0$ remain maximizers and now $\mu^{X^*}$ achieves the same value thereby becoming a maximizer as well. Moreover, for $X\in \cH\setminus\{\Xs\}$, we have 
$c'(\mu^X)\le c(\mu^X)+\varepsilon<c(\mu^{\Xs})+\varepsilon=0$.

Let us pick an optimal  dual solution $(\pi, \tau)$ to~\eqref{prog:dualFp} under $c'$.
Recall that $E = E^*\cup \mu^{X^*}$ and therefore all edges $E$ are tight with respect to $c'$.
Since $c'\ge c$ and the optimum value is the same for the two cost functions, it follows that  $(\pi, \tau)$ is also optimal to~\eqref{prog:dualFp} with the original weights $c$.

Since $c$ and $c'$ differ only on $(i', j')$, 
all edges $E\setminus \{(i', j')\}$ are tight under $(\pi, \tau)$ for $c$; thus, $E_0 = E\setminus \{(i', j')\}$.

As $\partial_{U}(\mu^{X^*})$ is a maximum $\tau$-weight basis in $\M$, it follows that we can replace $\M$ by $\M_{\tau}$. 
This is because all $\mu^X \in {\mathcal{L}}$ for $X\in \B_0 \cup \{X^*\}$ remain independent matchings. 
The function value $h(X)$ might decrease for $X\not \in \B_0 \cup \{X^*\}$, but this may only lead to improvement in \ref{crit:minsup}, or otherwise we get another solution that is equally good on the selection criteria.

It is left to show $\B_1 = \B_0 \cup \{X^*\}$. Take any $X\in\B_1$. 
As we replaced $\M$ by $\M_\tau$ and every basis in $\M_\tau$ has maximum $\tau$-weight, the value of $c(\mu)$ is the optimum minus the sum of the slack values on the edges as given in the dual LP of Lemma~\ref{lemma:LP2}, that is,  
$h(X)=c(\mu^X)=-\sum_{(i,j)\in \mu} (\pi_i+\tau_j-c_{ij})$.
Since $(i',j')$ is the only edge with positive slack, this means that $h(X)=0$ if $(i',j')\notin \mu^X$ and $h(X)=h(\Xs)$ if $(i',j')\in \mu^X$. Since $\Xs$ is the unique set with the largest negative function value, this implies $\B_1 = \B_0 \cup \{X^*\}$.
This completes the analysis of the case $E\setminus E^*\neq\emptyset$, showing \ref{case:samemat} holds.
\end{proof}

\subsection{\texorpdfstring{$h$ is not R-induced  ($W=\emptyset$)}{h
    is not R-induced  (W =
    empty set)}}\label{sec:W-0}
\label{section:Rinduced}

We start by showing that $W=\emptyset$ is not possible; in other words, $h$ cannot have an $R$-induced representation.
We start with a structural claim on $\mathcal{B}_0$. 

\begin{lemma}\label{lem:not-reducible}
The matroid on $[2n]$ defined by bases $\mathcal{B}_0$ is not fully reducible for $n\ge 10$.
\end{lemma}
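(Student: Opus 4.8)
The plan is to argue by contradiction: suppose the matroid $\mathcal{M}_0$ on $[2n]$ with bases $\mathcal{B}_0 = \binom{[2n]}{4}\setminus\mathcal{H}$ were fully reducible, say $\mathcal{M}_0 = \mathcal{N}_1 \vee \cdots \vee \mathcal{N}_k$ with $k\ge 2$, all $\mathcal{N}_i$ nonempty, and $\sum_i r(\mathcal{N}_i) = 4$. First I would record the basic structure of $\mathcal{M}_0$: it is a sparse paving matroid of rank $4$ whose non-bases are exactly the pairs $P_a\cup P_b$ with $ab$ even; in particular every $4$-set that is \emph{not} a union of two pairs $P_a, P_b$ is a basis, and moreover $P_1\cup P_3$ (with $1\cdot 3$ odd) is also a basis. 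Since the matroid has no loops or coloops for $n\ge 2$ (every element lies in some basis and is missing from some basis), each $r(\mathcal{N}_i)\ge 1$, so the only possible rank splittings of $4$ into $k\ge 2$ parts are $(1,1,1,1)$, $(1,1,2)$, $(2,2)$, $(1,3)$.

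The key step is to extract a ground-set partition from a full-rank matroid union in rank $4$. Using the rank formula (Theorem~\ref{thm:matroid-union}): if $\mathcal{M}_0 = \mathcal{N}_1\vee\cdots\vee\mathcal{N}_k$ is full-rank, then for a basis $B$ of $\mathcal{M}_0$ we get a partition $B = B_1\dot\cup\cdots\dot\cup B_k$ with $B_i$ independent in $\mathcal{N}_i$ and $|B_i|=r(\mathcal{N}_i)$. I would argue this forces a rigid combinatorial obstruction: take two bases $B, B'$ that differ by a single exchange and track how the induced partitions interact, or — cleaner — use that in a full-rank matroid union the connected components of $\mathcal{M}_0$ refine the supports $\mathrm{cl}_{\mathcal{M}_0}(\{i : \exists B\ni i \text{ with } i\in B_j\})$ of the $\mathcal{N}_j$, so fully reducible of rank $<$ number of elements would, in particular, contradict connectivity unless the $\mathcal{N}_j$ overlap heavily. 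So the plan reduces to: (i) show $\mathcal{M}_0$ is \emph{connected} as a matroid (no nontrivial direct-sum decomposition), which already kills any decomposition where the $\mathcal{N}_j$ have disjoint ``active'' ground sets; and (ii) handle the genuinely overlapping full-rank unions by a direct circuit argument — in a full-rank union, every circuit $C$ of $\mathcal{M}_0$ satisfies $\sum_j r_{\mathcal{N}_j}(C) = |C|-1$, and $\mathcal{M}_0$ has the circuits $P_a\cup P_b$ (size $4$, for $ab$ even) and various size-$5$ circuits; I would pick a carefully chosen pair of $4$-circuits and a $5$-circuit and derive a numerical contradiction from the rank formula applied simultaneously.

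Concretely, for step (i): a sparse paving matroid of rank $d\ge 2$ on $\ge d+1$ elements is connected iff it is not the direct sum of uniform matroids in a trivial way; here I would just exhibit, for any proposed partition $[2n] = V_1\dot\cup V_2$ into nonempty parts, a circuit of $\mathcal{M}_0$ meeting both parts — e.g. if some $P_i$ is split, the pair together with any element forms a dependent set crossing the partition; if no $P_i$ is split, then since $n\ge 10$ there are pairs $P_a\subseteq V_1$, $P_b\subseteq V_2$ with $ab$ even, and $P_a\cup P_b$ is a circuit crossing the partition — so $\mathcal{M}_0$ is connected. For step (ii), the hard part, I expect the cleanest route is: in a full-rank union $\mathcal{M}_0 = \mathcal{N}_1\vee\cdots\vee\mathcal{N}_k$, the function $i\mapsto$ (the unique $j$ with $r_{\mathcal{N}_j}(\{i\})=1$ forced on ``most'' elements) is well-defined on the elements that are not loops of all but one $\mathcal{N}_j$; pushing this through the three candidate rank profiles and using that three mutually ``parallel-free'' pairs $P_a\cup P_b$, $P_b\cup P_c$, $P_a\cup P_c$ cannot all three be non-bases (by the $\mathcal{H}$-definition at most two of $ab, bc, ca$ are even — indeed if $a$ is odd then among the others at least one is odd, making one of the three products odd) yields a contradiction: a full-rank union would have to make $P_a\cup P_c$ dependent as well.

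The main obstacle, I anticipate, is step (ii): ruling out the overlapping full-rank unions — particularly the $(2,2)$ and $(1,3)$ profiles — without an explicit element-to-component assignment. I would overcome this by exploiting the specific arithmetic of $\mathcal{H}$ (``$ij$ even''), namely that the non-bases among pair-unions form a structure with no ``odd triangle'': there is no triple $a,b,c$ with all of $ab,bc,ca$ even (since the odd indices would have to pairwise multiply to something even, impossible), so $\mathcal{B}_0$ always contains a basis of the form $P_a\cup P_c$ for suitable $a,c$ sharing a common ``neighbour'' $b$ with $P_a\cup P_b, P_b\cup P_c\notin\mathcal{B}_0$ — and in a full-rank matroid union a basis obtained from two ``blocked'' exchanges cannot itself be blocked, giving the contradiction. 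This is precisely the ``combinatorics of the pairs $P_i$'' the guide (Section~\ref{sec:guide}) alludes to, and I would carry it out via the rank formula of Theorem~\ref{thm:matroid-union} applied to these three pair-unions.
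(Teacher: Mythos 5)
Your general toolbox is right (the rank formula of Theorem~\ref{thm:matroid-union} applied to circuits $P_a\cup P_b$, plus the arithmetic of $\cH$, plus submodularity), and you correctly identify that step~(i), matroid connectivity, cannot by itself rule out the overlapping full-rank unions. However, the concrete combinatorial claim you lean on in step~(ii) is false, and the remaining argument is left too vague to close the gap.

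The graph $H$ on $[n]$ with $\{a,b\}\in H$ iff $ab$ is even is \emph{not} triangle-free. Non-edges of $H$ are exactly pairs of \emph{odd} indices, so any triple $\{a,b,c\}$ containing at most one odd index is a triangle: for instance $a=2,b=4,c=6$ or $a=1,b=2,c=4$ have all of $ab,bc,ca$ even. Your parenthetical justification (``if $a$ is odd then among the others at least one is odd'') does not follow when $b,c$ are both even. So the statement ``at most two of $ab,bc,ca$ are even'' is wrong, and the claim that any two blocked exchanges forces a third blocked exchange (and hence a contradiction) cannot be derived along these lines. Relatedly, the assertion that ``in a full-rank matroid union a basis obtained from two `blocked' exchanges cannot itself be blocked'' is not a standard fact and is left unjustified; applying $\sum_j r_j(P_a\cup P_b)=3$ and $\sum_j r_j(P_b\cup P_c)=3$ gives no direct control over $\sum_j r_j(P_a\cup P_c)$. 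What actually works --- and what the paper does --- is a \emph{star}, not a path: three circuits $P_i\cup P_j$, $P_i\cup P_k$, $P_i\cup P_\ell$ sharing the same pair $P_i$, combined via submodularity of the constituent rank functions $r_1,r_2$, so that the rank of $P_i\cup P_j\cup P_k\cup P_\ell$ (or a sub-union of three pairs) in $\B_0$ is forced down to $3$, contradicting the fact that any three pairs contain a basis. You should also note the standard reduction to $k=2$ (group $\M_2\vee\cdots\vee\M_k$ into a single non-empty matroid of rank $4-r(\M_1)\ge 1$), which collapses your four rank profiles to the two the paper treats, $(1,3)$ and $(2,2)$; in each the choice of $i,j,k,\ell$ must additionally avoid the few pairs that are loops (Case I) or rank $0$ in one component (Case II), which is where the hypothesis $n\ge 10$ is used.
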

\begin{proof}
Let $r_0$ denote the rank function of the matroid with bases $\mathcal{B}_0$.
For a contradiction, assume this matroid is obtained as the union of two non-empty matroids $\M_1$ and $\M_2$ on $V=[2n]$ with rank functions $r_1$ and $r_2$, such that $r_1(V)+r_2(V)=4$. Recall that 
$r_0(X)=\min\{r_1(Z)+r_2(Z)+|X\setminus Z|:\, Z\subseteq X\}$ by Theorem~\ref{thm:matroid-union}. W.l.o.g. $r_1(V)\le r_2(V)$. We distinguish two cases.

\paragraph{Case I: $r_1(V)=1$, $r_2(V)=3$.} Let $T\coloneqq\{v\in V:\, r_1(\{v\})=0\}$ denote the set of loops in $\M_1$. 
We observe that $T$ may intersect at most three different pairs $P_i$. 
To see this, suppose $T$ intersects four distinct pairs $P_i, P_j, P_k, P_\ell$ and let $X = \{v_i, v_j, v_k, v_\ell\} \subseteq T$ be a subset of $T$ consisting of one element from each pair respectively.
Then $X\in \B_0$ but $r_0(X) \leq 3$ as each element of $X$ is a loop in $\M_1$, giving a contradiction.

Let us select four pairs $P_i$, $P_j$, $P_k$, $P_\ell$ that do not intersect $T$ such that $i$ and $j$ are odd and $k$ and $\ell$ are even; such selection is possible for $n\ge 10$ as the most restrictive scenario is when all three pairs intersecting $T$ have even index, requiring five pairs of even index in total. 
Since $P_i\cup P_j\in\B_0$, we must have $r_2(P_i\cup P_j)=3$; w.l.o.g. assume $r_2(P_i)=2$. 

Consider $P_i\cup P_k$ and recall that it forms a circuit in $\B_0$. 
By Theorem~\ref{thm:matroid-union}, $r_1(P_i\cup P_k)+r_2(P_i\cup P_k)=3$, implying $r_2(P_i\cup P_k)=2$. Similarly, $r_2(P_i\cup P_\ell)=2$. By submodularity, we have $r_2(P_i\cup P_k\cup P_\ell)=2$, and thus $r_0(P_i\cup P_k\cup P_\ell)=3$, a contradiction as the union of any three pairs contains a basis.

\paragraph{Case II: $r_1(V)=r_2(V)=2$.} Note that there can be at most one pair $P_t$ such that $r_1(P_t)=0$, and at most one pair $P_{t'}$ with $r_2(P_{t'})=0$.
Otherwise, if there existed $P_a, P_b$ such that $r_1(P_a)=r_1(P_b)= 0$, then $r_1(P_a \cup P_b) = 0$, contradicting that the union of any two pairs has rank at least 3 in $\B_0$.

Let us select $P_i$, $P_j$, $P_k$, $P_\ell$ such that $i$ is even, and $j$, $k$, and $\ell$ are odd, and all these pairs have rank $\ge 1$ in both matroids; again such sets can be selected for $n\ge 10$ as we require at most four pairs of odd index and two pairs of even index. 
Since $P_i\cup P_j$ is a circuit in $\B_0$, Theorem~\ref{thm:matroid-union} yields $r_1(P_i\cup P_j)+r_2(P_i\cup P_j)=3$. Similarly, $r_1(P_i\cup P_k)+r_2(P_i\cup P_k)=3$ and $r_1(P_i\cup P_\ell)+r_2(P_i\cup P_\ell)=3$. W.l.o.g. $r_1(P_i\cup P_j)=r_1(P_i\cup P_k)=1$. By the assumption $r_1(P_i)\ge 1$, submodularity gives $r_1(P_i\cup P_j\cup P_k)=1$. This again contradicts the fact that $r_0(P_i\cup P_j\cup P_k)=4$.
\end{proof}

\begin{lemma}\label{lem:M-M-tau}
If $W=\emptyset$, then we must have $\pi\equiv 0$ and $\tau\equiv0$ for the optimal dual $(\pi,\tau)$ in  \eqref{prog:dualFp}.
\end{lemma}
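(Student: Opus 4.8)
The strategy is to pin down $\pi$ first, using complementary slackness that was already recorded in Lemma~\ref{lemma:LP2}, and then to pin down $\tau$ by playing off the reducibility dichotomy of Lemma~\ref{lem:pi-reducible} against the non-reducibility of $\B_0$ proved in Lemma~\ref{lem:not-reducible}. Throughout, $(\pi,\tau)$ is an optimal dual solution to~\eqref{prog:dualFp} chosen as in Lemma~\ref{lemma:LP2}, and we use that the primal optimum of~\eqref{prog:Fp} equals $\max_{X} h(X) = 0$, so that $\pi(V) + \hat r(\tau) = 0$ (here $\pi(W)=0$ since $W=\emptyset$).

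\textbf{Step 1: $\pi\equiv 0$.} Since $W=\emptyset$ it suffices to show $\pi_i = 0$ for every $i\in V=[2n]$. By the last assertion of Lemma~\ref{lemma:LP2}, $\pi_i=0$ as soon as there is a maximizer $X\in\B_0$ with $i\notin X$. Given $i$, say $i\in P_m$, pick four elements lying in four distinct pairs different from $P_m$; this is possible because $n\ge 16$. The resulting set $X\in\binom{V}{4}$ meets four distinct pairs, hence is not of the form $P_a\cup P_b$ and so $X\in\binom{V}{4}\setminus\cH=\B_0$, while $i\notin X$. Therefore $\pi_i=0$ for all $i\in V$, i.e.\ $\pi\equiv 0$.

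\textbf{Step 2: $\tau$ is constant, and equal to $0$.} Suppose $\tau_i\neq\tau_j$ for some $i,j\in U$. Since $W=\emptyset$, Lemma~\ref{lem:pi-reducible} applies and yields that the matroid with bases $\B_0$ is fully reducible. But $n\ge 16\ge 10$, so Lemma~\ref{lem:not-reducible} says this matroid is \emph{not} fully reducible --- a contradiction. Hence $\tau\equiv t$ for a single scalar $t\in\R$. Then $\hat r(\tau)= t\cdot r(U)$, and $r(U)=\rank(\M)=d+|W|=4>0$; combined with $\pi(V)+\hat r(\tau)=0$ from Step~1 this forces $t=0$, so $\tau\equiv 0$.

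\textbf{Main obstacle.} There is no serious obstacle once Lemmas~\ref{lemma:LP2}, \ref{lem:pi-reducible} and~\ref{lem:not-reducible} are in hand; the argument is essentially a two-line deduction. The only points requiring a moment's care are the two elementary facts used above: that every element of $[2n]$ is avoided by some basis in $\B_0$ (which uses only $n$ being large and the combinatorial shape of $\cH$), and that $r(U)=4\neq 0$, so that the dual-objective identity genuinely pins the constant $t$ to $0$ rather than leaving it free.
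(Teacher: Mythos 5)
Your proof is correct and follows essentially the same route as the paper: $\pi\equiv 0$ via the complementary-slackness consequence in Lemma~\ref{lemma:LP2}, then $\tau$ constant via Lemma~\ref{lem:pi-reducible} (with Lemma~\ref{lem:not-reducible} supplying the contradiction, which the paper leaves implicit), and finally the dual objective forcing the constant to be $0$. The only difference is cosmetic: you make explicit the easy existence of an $X\in\B_0$ avoiding any given $i$ and the appeal to Lemma~\ref{lem:not-reducible}, which the paper states in shorthand.
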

\begin{proof}
By definition of $h\in\cF_n$,  the optimum value of the LP \eqref{prog:Fp} is 0. Recall that 
 $\pi_i=0$ for all $i\in V$.
From Lemma~\ref{lem:pi-reducible}, it follows that $\tau_i$ has the same value for all $i\in U$; let $\alpha$ be this common value. Then, the objective value of the dual program \eqref{prog:dualFp} is $0=\alpha \cdot r({\M})$. Consequently, $\alpha=0$, and therefore $\tau=0$.
\end{proof}

Therefore $\M_\tau=\M$, implying case~\ref{case:samemat} of
Lemma~\ref{lem:E-E-0}:
$E=E_0\cup \{(i^*,j^*)\}$ and $\B_1=\B_0\cup\{\Xs\}$. The rest of the analysis is covered by the argument in Section~\ref{sec:samemat} for \ref{case:samemat}. We include a simpler direct proof that also illustrates some key ideas of the more complex subsequent arguments.

 Let $\ell\in \{1,2\}$ such that  $i^*\in P_\ell$.
We note that the cases $\ell = 1$ and $\ell = 2$ are not symmetric, because of different parity.

\begin{claim}\label{cl:P-1-rk}
We have $r(\Gamma(P_\ell))=3$.
\end{claim}
\begin{proof}
We first show that  $j^*\not\in \cl[\Gamma_0(X^*)]$. Indeed, since $X^*$ is a circuit in $\B_0$, $r(\Gamma_0(X^*))=3$ by Theorem~\ref{thm:RadosTheorem}. If $j^*\in \cl[\Gamma_0(X^*)]$, then $r(\Gamma(X^*))=3$ since $\Gamma(X^*)=\Gamma_0(X^*)\cup\{j^*\}$; thus, $X^*\notin \B_1$, a contradiction.

As $P_\ell \subset X^*$, it follows that $j^*\not \in \cl[\Gamma_0(P_\ell)]$. 
Since $P_\ell$ is a subset of a basis in $B_0$ we have $r(\cl[\Gamma_0(P_\ell))]) \ge 2$. Thus, $r(\Gamma(P_\ell)) = r(\Gamma_0(P_\ell) \cup \{j^*\}) = r(\cl[\Gamma_0(P_\ell))] \cup \{j^*\})\ge 3$.

Let us show $r(\Gamma(P_\ell))\le 3$.
Recall that $P_\ell \cup P_4  \in \cH \setminus \{X^*\}$, i.e., $P_{\ell}\cup P_4\not \in \B_1=\B_0 \cup \{X^*\}$. Moreover, $P_{\ell}\cup P_4$ is a circuit in matroid $\B_1$. 
Consequently, 
$r(\Gamma(P_\ell))\le r(\Gamma(P_{\ell}\cup P_4))= 3$ by Theorem~\ref{thm:RadosTheorem}.
\end{proof}

\begin{proposition} \label{prop:h-not-R-induced}
$h$ is not an R-induced valuated matroid.
\end{proposition}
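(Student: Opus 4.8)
The plan is to finish off the $W=\emptyset$ case by a single uncrossing step, since everything else is already in place. Assuming $h$ has an $R$-induced representation, by minimality of $|W|$ (and since $\dom(h)=\B_1$ does not depend on the representation) we may take $W=\emptyset$; then Lemma~\ref{lem:M-M-tau} gives $\pi\equiv\tau\equiv0$, hence $\M_\tau=\M$, hence case~\ref{case:samemat} of Lemma~\ref{lem:E-E-0} applies: $E=E_0\cup\{(i^*,j^*)\}$, $\B_1=\B_0\cup\{\Xs\}$, $i^*\in P_\ell$ for some $\ell\in\{1,2\}$, and Claim~\ref{cl:P-1-rk} gives $r(\Gamma(P_\ell))=3$. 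It remains only to contradict this.

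I would argue as follows. Call $k\ne\ell$ \emph{admissible} if $P_\ell\cup P_k\in\cH\setminus\{\Xs\}$; since $n\ge16$ at least two admissible indices exist (every $k\notin\{1,2\}$ if $\ell=2$, every even $k\ne2$ if $\ell=1$), so fix distinct admissible $k,m$. For admissible $k$ the $4$-set $P_\ell\cup P_k$ lies outside $\B_1=\B_0\cup\{\Xs\}$, and as $\B_1$ is a paving matroid of rank $4$ (Lemma~\ref{lemma:sparsePavingB}) it is a circuit of $\B_1$; Rado's theorem (Theorem~\ref{thm:RadosTheorem}) then gives $r(\Gamma(P_\ell\cup P_k))=3=r(\Gamma(P_\ell))$, so $\Gamma(P_k)\subseteq\cl(\Gamma(P_\ell))$, and similarly $\Gamma(P_m)\subseteq\cl(\Gamma(P_\ell))$. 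Submodularity of $Z\mapsto r(\Gamma(Z))$ (Lemma~\ref{lem:rho-values}) applied to $P_\ell\cup P_k$ and $P_\ell\cup P_m$, whose intersection is exactly $P_\ell$ because distinct pairs are disjoint, then forces $r(\Gamma(P_\ell\cup P_k\cup P_m))\le3$. On the other hand, the union of any three distinct pairs contains a $4$-set $P_a\cup\{x,y\}$ with $x,y$ from the other two pairs, which is not a union of two pairs and hence a basis of $\B_0\subseteq\B_1$; so Rado's theorem gives $r(\Gamma(P_\ell\cup P_k\cup P_m))\ge4$, a contradiction. In the language of the overview this says: for some admissible $k$, $P_\ell\cup P_k$ has an independent matching in $E$, hence $P_\ell\cup P_k\in\B_1$; since $\M=\M_\tau$ and $(i^*,j^*)$ is the only non-tight edge this forces $h(P_\ell\cup P_k)=h(\Xs)$ with $P_\ell\cup P_k\ne\Xs$, contradicting uniqueness of the largest negative value.

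I do not expect a real obstacle here: the content is bookkeeping — checking that enough admissible indices exist (this is where $n$ large is used), that a non-basis $4$-set in the rank-$4$ paving matroid $\B_1$ is genuinely a circuit rather than merely dependent, and that three distinct pairs always span a basis of $\B_0$. The only delicate point, the reduction to case~\ref{case:samemat}, has already been handled above through Lemma~\ref{lem:M-M-tau}, which itself relies on $\B_0$ not being fully reducible (Lemma~\ref{lem:not-reducible}); Section~\ref{sec:samemat} extends this reasoning to $W\ne\emptyset$, but for $W=\emptyset$ this direct route is shorter.
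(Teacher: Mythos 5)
Your core argument is correct and essentially the same as the paper's: both proofs pick two 4-sets of the form $P_\ell\cup P_k, P_\ell\cup P_m\in\cH\setminus\{\Xs\}$ that are circuits of $\B_1$, apply Rado's theorem together with Claim~\ref{cl:P-1-rk} to pin down ranks, and derive $r(\Gamma(P_\ell\cup P_k\cup P_m))\le 3$ (the paper via closure containment $\Gamma(P_\ell\cup P_k),\Gamma(P_\ell\cup P_m)\subseteq\cl(\Gamma(P_\ell))$, you via one submodularity step — an immaterial difference), contradicting the fact that three distinct pairs contain a basis and so must have neighbourhood rank $\ge4$. You phrase the choice abstractly as ``admissible indices'' where the paper names $\{P_1\cup P_4,P_1\cup P_6\}$ or $\{P_2\cup P_3,P_2\cup P_4\}$, but that too is cosmetic. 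The only blemish is the closing remark reformulating the contradiction ``in the language of the overview'': your rank argument does not directly produce a specific $P_\ell\cup P_k$ admitting an independent matching in $E$, so that sentence reads as a separate, not-quite-established claim rather than a restatement — drop it or derive it properly.
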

\begin{proof}
Let $X, Y \in \cH\setminus\{\Xs\}$ be two  sets whose intersection is $P_\ell$. If $\ell=1$, we can select $X=P_1\cup P_4 = \{1,2,7,8\}$ and $Y=P_1\cup P_6 = \{1,2,11,12\}$, and if $\ell=2$, we can select $X,Y\in \cH\setminus\{\Xs\}$ intersecting in $P_2$, such as $X=P_2\cup P_3 = \{3,4,5,6\}$ and $Y= P_2\cup P_4 = \{3,4,7,8\}$. 

 Since $h(X),h(Y)=-\infty$ by $\B_1=\B_0\cup\{\Xs\}$, there is no independent matching in $E$ covering $X$ or $Y$, and $X$ and $Y$ are circuits in $\B_1$. By Theorem~\ref{thm:RadosTheorem}, we have
 $r(\Gamma(X))=r(\Gamma(Y))=3$. By Claim~\ref{cl:P-1-rk}, it follows that $\Gamma(X),\Gamma(Y)\subseteq \cl[\Gamma(P_\ell)]$. This further implies that $r(\Gamma(X\cup Y))\le r(\Gamma(P_\ell))=3$, a contradiction since $X\cup Y$ contains a set in $\B$. (For $\ell=1$, one such set is $\{1,2,7,11\}$, and for $\ell=2$, we can select $\{3,4,5,7\}$.)
\end{proof}

\subsection{Robust matroids and their Rado-minor representations}
\label{section:robustMatroids}
In this section we study some additional properties of Rado-minor representations of the matroid $\B_0$. We formulate the properties more generally, so that we can also use them whenever $\B_1=\B_0\cup\{\Xs\}$. 
This always holds in case~\ref{case:samemat}, and we will later show that it must also be true in case~\ref{case:alltight}.

\begin{definition}[Robust matroid]
\label{def:robust}
Let $V = [2n]$, and let $P_i = \{2i - 1, 2i\}$ for $i\in [n]$; these are called \emph{pairs}. 
We define a matroid by its set of bases $\mathcal{B} \subseteq \binom{V}{4}$ and let $\mathcal{H} \coloneqq  \binom{V}{4} \setminus \mathcal{B}$.
We say that $\B$ forms the bases of a \emph{robust} matroid if
\begin{enumerate}[label = (D\arabic*)]
\item\label{i:H-B} Every circuit in $\cH$ is the union of two pairs $P_i\cup P_j$,
\item\label{i:H-graph2} Consider a graph $([n], H)$ where $\{i,j\} \in H$ if and only if $P_i\cup P_j \in \cH$. 
  Then, we can partition $[n]$ into two sets $S$ and $K$ such that $|S|\ge 3$, $K$ is a clique in $H$
  with $|K|\ge 3$, and every node in $S$ is adjacent to every node in $K$.
  (A schematic view of $H$ is given in Figure~\ref{fig:robust}.)
  Moreover, for each $i\in S$
  \begin{enumerate}[label = (D2\alph*)]
    \item\label{i:H-graph2a} there is $j\in S$ such that $i$ is non-adjacent to $j$ in $H$, and
    \item\label{i:H-graph2b} $S\setminus \{i\}$ is not a clique.
  \end{enumerate} 
\end{enumerate}
\end{definition}
Note that this defines a sparse paving matroid of rank 4. 
\begin{figure}
\centering
\includegraphics[width=0.4\textwidth]{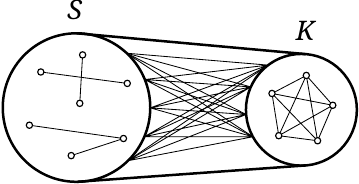}
\caption{The graph $H$ of a robust matroid defined in~\ref{i:H-graph2}.}
\label{fig:robust}
\end{figure}

\begin{lemma}\label{lem:B-0-robust}
Both $\B_0$ and $\B_0\cup\{\Xs\}$ are robust matroids for $n\ge 8$.
\end{lemma}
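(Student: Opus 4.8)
The plan is to verify directly that both $\B_0$ and $\B_0\cup\{\Xs\}$ satisfy the two defining conditions~\ref{i:H-B} and~\ref{i:H-graph2} of a robust matroid. The starting point is that each of these set systems is a sparse paving matroid of rank $4$: by Lemma~\ref{lemma:sparsePavingB}, or directly, since in each case the non-bases are $4$-element sets pairwise intersecting in at most $2$ elements (two distinct unions of pairs share at most one pair). Hence every dependent $4$-set is a circuit, so the circuits lying in the complement of the basis set are exactly the non-bases. For $\B_0$ these non-bases form $\cH$, while for $\B_0\cup\{\Xs\}$ they form $\cH\setminus\{\Xs\}$ (recall $\Xs=P_1\cup P_2\in\cH$, since $1\cdot 2$ is even). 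In both cases every non-basis is a union of two pairs $P_i\cup P_j$, so~\ref{i:H-B} holds.

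The substance is~\ref{i:H-graph2}, where I must exhibit the clique $K$ and the set $S$. Let $O\subseteq[n]$ be the set of odd indices, so $|O|=\lceil n/2\rceil$ and $|[n]\setminus O|=\lfloor n/2\rfloor$, both at least $4$ when $n\ge 8$. For $\B_0$ the auxiliary graph $H$ is precisely the graph in which $i$ and $j$ are adjacent iff they are not both odd; thus $[n]\setminus O$ is a clique, $O$ is an independent set, and the two sides are completely joined. I would take $K\coloneqq[n]\setminus O$ and $S\coloneqq O$. The size bounds $|S|,|K|\ge 3$, the clique property of $K$, and the complete join between $S$ and $K$ are then immediate; \ref{i:H-graph2a} and~\ref{i:H-graph2b} hold because any $i\in O$ has at least three further non-neighbours, all lying in $O\subseteq S$, so in particular $S\setminus\{i\}$ always contains two non-adjacent vertices.

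For $\B_0\cup\{\Xs\}$ the only change to $H$ is that the single edge $\{1,2\}$---joining the odd index $1$ to the even index $2$---is deleted. Here one cannot keep $K=[n]\setminus O$, since then $1\in S$ would fail to be adjacent to $2\in K$; the natural repair is to move $2$ into $S$, i.e.\ to take $S\coloneqq O\cup\{2\}$ and $K\coloneqq([n]\setminus O)\setminus\{2\}$. I would then re-run all the checks: $|S|\ge 3$ and $|K|=\lfloor n/2\rfloor-1\ge 3$ (this is exactly where the hypothesis $n\ge 8$ enters), $K$ is still a clique and is still completely joined to $S$ because the unique deleted edge $\{1,2\}$ has its even endpoint outside $K$; for~\ref{i:H-graph2a} one uses that $1$ and $2$ are now non-adjacent and both lie in $S$, while every odd $i$ still has a non-neighbour in $O\setminus\{i\}\subseteq S$; and for~\ref{i:H-graph2b} one notes that removing any single vertex from $S=O\cup\{2\}$ leaves a set still containing at least two odd---hence non-adjacent---vertices.

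The only point that needs genuine thought is this $\B_0\cup\{\Xs\}$ case: one has to see that the single missing edge $\{1,2\}$ forces $1$ and $2$ onto the same side of the partition, that this side has to be $S$ (since $K$ must remain a clique and $1\not\sim 2$), and then recheck the local conditions~\ref{i:H-graph2a},~\ref{i:H-graph2b} for the enlarged set $S=O\cup\{2\}$. This is the step where the argument for $\B_0$ does not transfer verbatim, and it is also the reason the hypothesis reads $n\ge 8$ rather than the weaker bound that $\B_0$ alone would require.
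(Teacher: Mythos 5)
Your proof is correct and follows essentially the same approach as the paper: you choose the identical partition into $K$ and $S$ (even/odd indices for $\B_0$, and for $\B_0\cup\{\Xs\}$ you move $2$ from $K$ to $S$), and the subsequent verifications of \ref{i:H-graph2a} and \ref{i:H-graph2b} match the paper's observation that $H[S]$ is a stable set (for $\B_0$) or a star centered at $2$ plus isolated node $1$ (for $\B_0\cup\{\Xs\}$). The paper's proof is just a terser version of yours.
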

\begin{proof}
The first property is immediate. 
For \ref{i:H-graph2}, in $\B_0$ (respectively $\B_0\cup\{\Xs\}$), 
it suffices to choose $K$ as the set of even indices (respectively the set of even indices different from $2$). 
In both cases, $S = [n]\setminus K$.
Note that $S$ is a stable set for $\B_0$. 
For $\B_0\cup\{\Xs\}$,  $H[S]$ is disjoint union of a star with center in node $2$ and isolated node $1$.
\end{proof}

Let $\B$ be a robust matroid on $V$. 
Consider a Rado-minor representation $(G,\M)$ with bipartite graph $G=(V\cup W,U;E)$ and $\mathcal{M}=(U,r)$. 
Recall that for $Z\subseteq V\cup W$, we define
$\rho(Z) \coloneqq  r(\Gamma(Z)) - |Z|$, and that there is a unique maximal subset $Q$ of $W$ such that $\rho(Q) = 0$ by Corollary~\ref{i:W-0}. 
We now derive strong structural properties for such a representation of the matroid $\B$ by making heavy use of the results in Section~\ref{sec:uncrossing}. 
In particular, we note that if $\{i,j\}, \{i,k\} \in H$ then the circuits $P_i \cup P_j, P_i \cup P_k \in \cH$ satisfy the conditions of Lemma~\ref{lem:uncrossing2}: their union has cardinality $6$, i.e., the rank of the matroid plus 2, and contains a basis, e.g., $P_i \cup \{2j, 2k\}$.

\begin{lemma}\label{lem:W-Z}\label{i:Z-i} 
For each pair $P_k$, there exists a unique largest $P_k$-set $Z_k$ with $\rho(Z_k)=0$; and $Q
\subseteq Z_k$.
\end{lemma}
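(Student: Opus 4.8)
The plan is to derive everything from the two uncrossing lemmas of Section~\ref{sec:uncrossing}, together with the combinatorics of the graph $H$ from Definition~\ref{def:robust}. First I would establish that for every $k$ there is \emph{some} $P_k$-set with $\rho$-value $0$; then the second part of Uncrossing~I (Lemma~\ref{lem:uncrossing}) immediately produces a unique largest one, which we name $Z_k$; finally a second application of Uncrossing~I, pairing $Z_k$ with $Q$, shows $Q\subseteq Z_k$, and properness is automatic since $\emptyset\neq P_k\subseteq Z_k$ while $P_k$ is disjoint from $W\supseteq Q$.

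For the existence step the key observation is that every pair $P_k$ lies in at least two distinct circuits of $\B$. Indeed, in the graph $H$ of Definition~\ref{def:robust}\ref{i:H-graph2}, each node of $K$ is adjacent to the other $|K|-1\ge 2$ nodes of the clique $K$, and each node of $S$ is adjacent to all $|K|\ge 3$ nodes of $K$; hence every $k\in[n]$ has degree at least $2$ in $H$, i.e.\ there are indices $j\neq j'$ with $P_k\cup P_j,\ P_k\cup P_{j'}\in\cH$. Both of these are circuits of $\B$ (a rank-$4$ paving matroid has no circuit of size $\le 3$, so a $4$-element non-basis is a circuit). As noted just before the lemma, $P_k\cup P_j$ and $P_k\cup P_{j'}$ satisfy the hypotheses of Uncrossing~II (Lemma~\ref{lem:uncrossing2}): their union $P_k\cup P_j\cup P_{j'}$ has cardinality $6$, which is the rank of $\B$ plus $2$, and contains the basis $P_k\cup\{2j,2j'\}$. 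By Lemma~\ref{lem:rho-values}\ref{i:rho-four} we may pick a $(P_k\cup P_j)$-set $I$ and a $(P_k\cup P_{j'})$-set $J$ with $\rho(I)=\rho(J)=-1$; since $P_k,P_j,P_{j'}$ are pairwise disjoint, $I\cap J$ is a $P_k$-set, and Uncrossing~II gives $\rho(I\cap J)=0$, which is the $P_k$-set of $\rho$-value $0$ we wanted.

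With existence in hand, I apply the second statement of Uncrossing~I to the independent set $X=P_k$ (it is independent, $\B$ being a rank-$4$ paving matroid): the family of $P_k$-sets with $\rho$-value $0$ is non-empty and, by the first statement of Uncrossing~I applied with $X=Y=P_k$, closed under union, so it has a unique largest element $Z_k$. For the last assertion, recall $\rho(Q)=0$ by Corollary~\ref{i:W-0}; apply the first statement of Uncrossing~I with $X=P_k$, $Y=\emptyset$, $I=Z_k$, $J=Q$ (so $X\cup Y=P_k$ is independent), obtaining $\rho(Z_k\cup Q)=0$. Since $Z_k\cup Q$ is again a $P_k$-set, maximality of $Z_k$ forces $Z_k\cup Q\subseteq Z_k$, i.e.\ $Q\subseteq Z_k$; and because $P_k\subseteq Z_k$ with $P_k\cap Q\subseteq V\cap W=\emptyset$ and $P_k\neq\emptyset$, the containment is proper.

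The only real content beyond bookkeeping is the existence step, and within it the single point requiring care is verifying that each $P_k$ genuinely meets at least two circuits — this is exactly where robust-matroid axiom~\ref{i:H-graph2} (the large clique $K$ together with full adjacency between $S$ and $K$) is essential, and it is precisely what would fail for a generic rank-$4$ paving matroid. Everything else is a direct invocation of Lemmas~\ref{lem:rho-values}, \ref{lem:uncrossing} and~\ref{lem:uncrossing2}, the only recurring subtlety being to keep track, at each application, of which restriction ($P_k$-set, $(P_k\cup P_j)$-set, $\emptyset$-set) the sets involved satisfy, so that the independence/circuit hypotheses of those lemmas are correctly met.
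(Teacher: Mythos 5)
Your proof is correct and follows essentially the same route as the paper's: produce a $P_k$-set of $\rho$-value $0$ by taking two circuits through $P_k$, picking for each a $\rho=-1$ set via Lemma~\ref{lem:rho-values}\ref{i:rho-four}, and applying Uncrossing~II; then get the unique largest $Z_k$ from Uncrossing~I with $X=Y=P_k$; then show $Q\subseteq Z_k$ by uncrossing $Z_k$ against $Q$. Your degree-$\ge 2$ argument in $H$ makes explicit (and in fact slightly tidies) a point the paper states tersely --- the paper's phrasing ``$P_i\cup P_j$ and $P_i\cup P_k$ are circuits'' should read ``$P_i\cup P_k$ and $P_j\cup P_k$'' to match the sets $I$, $J$ it then builds, which is exactly the two-circuits-through-$P_k$ configuration you derive. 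The extra remark on properness of $Q\subsetneq Z_k$ is a harmless addition.
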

\begin{proof}
By \ref{i:H-graph2},
there exist different indices $i, j \in [n]\setminus \{k\}$ 
such that $P_i\cup P_j$ and $P_i\cup P_k$ are circuits in $\cH$. 
By \ref{i:rho-four}, there exists a $(P_i\cup P_k)$-set $I$ and a $(P_j\cup P_k)$-set $J$ with $\rho(I)=\rho(J)=-1$. 
By the second uncrossing Lemma~\ref{lem:uncrossing2}, the intersection $I\cap J$ is a $P_k$-set with $\rho(I\cap J)=0$. 
This shows the existence of a $P_k$-set with $\rho$-value $0$.
The existence of a unique largest such set
follows by the first uncrossing Lemma~\ref{lem:uncrossing} by choosing $X = Y = P_k$ there. 

To see that $Q \subseteq Z_k$, we apply the first uncrossing Lemma~\ref{lem:uncrossing} for $X = \emptyset, I= Q$ and $Y = P_k,  J = Z_k$. 
Namely, $Q \cap Z_k$ is an $\emptyset$-set and $Q\cup Z_k$ is a $P_k$-set. 
Thus, $\rho(Q \cup Z_k) = 0$ and $Q \subseteq Z_k$.
\end{proof}

Let us interpret the above lemma. 
It states that for any pair $P_k$ there exists a unique largest set~$Z_k$ containing exactly $P_k$ in $V$ with $\rho(Z_k) = 0$.
Having $\rho(Z_k) = 0$ means that any independent matching $\mu$ in the Rado-minor representation with 
$\partial_{V\cup W}(\mu) = P_k \cup W$, 
must match the nodes in $Z_k$ to $\cl[\Gamma(Z_k)]$ and no other node is matched to a node in $\cl[\Gamma(Z_i)]$ (Lemma~\ref{lem:rho-0-fully-matched}).

Next we describe how the sets $Z_k$, given by Lemma~\ref{lem:W-Z}, interact with each other.
 
\begin{lemma}\label{lemma:ZiZj}
For any $i,j\in [n]$, $i\neq j$, we have 
\begin{itemize}
  \item If $P_i\cup P_j\in \mathcal{B}$ then  $\rho(Z_i \cup Z_j) = 0$;
  \item if $P_i\cup P_j\in\cH$ then $\rho(Z_i\cup Z_j)=-1$.
  \item For all $i, j\in [n]$, $i\neq j$ we have $Z_i\cap Z_j=Q$ and $\cl[\Gamma(Z_i)] \cap \cl[\Gamma(Z_j)] = \cl[\Gamma(Q)]$.
\end{itemize}
\end{lemma}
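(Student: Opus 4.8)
The plan is to prove the three items in order, using only the uncrossing toolkit of Section~\ref{sec:uncrossing} (Lemmas~\ref{lem:rho-values}, \ref{lem:rho-prop}, \ref{lem:uncrossing}, \ref{lem:uncrossing2}), the defining property of the sets $Z_k$ from Lemma~\ref{lem:W-Z}, and the robust structure \ref{i:H-B}--\ref{i:H-graph2}. Throughout I use that a robust matroid has rank $4$ and is sparse paving, so every $4$-set of $\cH$ is a circuit, that a $4$-set meeting three distinct pairs is a basis (by \ref{i:H-B}), and that for $Z\subseteq V\cup W$ with $Z\cap V=\emptyset$ one has $Z\subseteq W$ and $\rho(Z)\ge 0$ by \ref{i:rho-three}.

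\emph{First item.} If $P_i\cup P_j\in\B$ then it is independent, so I apply Uncrossing~I (Lemma~\ref{lem:uncrossing}) with $X=P_i$, $Y=P_j$, $I=Z_i$, $J=Z_j$: since $\rho(Z_i)=\rho(Z_j)=0$ and $X\cup Y$ is independent, the lemma gives $\rho(Z_i\cap Z_j)=\rho(Z_i\cup Z_j)=0$ at once.

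\emph{Second item.} Let $P_i\cup P_j\in\cH$, a circuit. Submodularity of $\rho$ together with $\rho(Z_i\cap Z_j)\ge 0$ (as $Z_i\cap Z_j\subseteq W$) gives $\rho(Z_i\cup Z_j)\le\rho(Z_i)+\rho(Z_j)-\rho(Z_i\cap Z_j)\le 0$, while \ref{i:rho-four} gives $\rho(Z_i\cup Z_j)\ge -1$. The remaining, and main, task is to exclude $\rho(Z_i\cup Z_j)=0$. Assume it holds. By \ref{i:rho-four} there is a $(P_i\cup P_j)$-set with $\rho=-1$; uncrossing it first against $Z_i$ and then against $Z_j$ (each step is a tight submodular inequality, since in each split both halves are a $P_i$-set, a $P_j$-set, or a $(P_i\cup P_j)$-set, which have $\rho\ge 0$, $\rho\ge 0$, $\rho\ge -1$ respectively) produces a $(P_i\cup P_j)$-set $\hat I\supseteq Z_i\cup Z_j$ with $\rho(\hat I)=-1$; as $\rho(Z_i\cup Z_j)=0\ne -1$, we get $\hat I\supsetneq Z_i\cup Z_j$. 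By \ref{i:H-graph2} pick $k\in K\setminus\{i,j\}$; then $\{i,k\},\{j,k\}\in H$, so $P_i\cup P_k$ and $P_j\cup P_k$ are circuits and $P_i\cup P_j\cup P_k$ is a $6$-element set containing a basis (e.g.\ $P_i\cup\{2j,2k\}$). Running Uncrossing~II (Lemma~\ref{lem:uncrossing2}) on the circuits $P_i\cup P_k$ and $P_j\cup P_k$, with the $\rho=-1$ sets enlarged to contain $Z_i,Z_j,Z_k$ as before, and combining this with submodularity across $Z_i\cup Z_j$ and $Z_k$ and with Lemma~\ref{lem:rho-prop} applied at each tight step, pins down $r(\Gamma(Z_i)),r(\Gamma(Z_j)),r(\Gamma(Z_k))$ and the pairwise closures $\cl[\Gamma(Z_i)]\cap\cl[\Gamma(Z_j)]$, etc. Feeding $\rho(Z_i\cup Z_j)=0$ into this forces the neighbourhood of every node of $\hat I\setminus(Z_i\cup Z_j)$ into $\cl[\Gamma(Z_i)\cup\Gamma(Z_j)]$, and then a direct check shows $\rho(Z)\ge 0$ for every $Z\subseteq(P_i\cup P_j)\cup W$, i.e.\ $P_i\cup P_j$ would be independent by Proposition~\ref{prop:RadoMinorRepresentation}, contradicting $P_i\cup P_j\in\cH$. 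Hence $\rho(Z_i\cup Z_j)=-1$.

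\emph{Third item.} For all $i\ne j$ we have $Q\subseteq Z_i\cap Z_j$ by Lemma~\ref{lem:W-Z}, and $Z_i\cap Z_j\subseteq W$ with $\rho(Z_i\cap Z_j)\ge 0$. If $P_i\cup P_j\in\B$, the first item already gave $\rho(Z_i\cap Z_j)=0$, so $Z_i\cap Z_j\subseteq Q$ by maximality of $Q$, whence $Z_i\cap Z_j=Q$; moreover $\rho(Z_i)+\rho(Z_j)=\rho(Z_i\cup Z_j)+\rho(Z_i\cap Z_j)$ holds with both sides equal to $0$, so Lemma~\ref{lem:rho-prop} gives $\cl[\Gamma(Z_i)]\cap\cl[\Gamma(Z_j)]=\cl[\Gamma(Z_i\cap Z_j)]=\cl[\Gamma(Q)]$. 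If $P_i\cup P_j\in\cH$, again pick $k\in K\setminus\{i,j\}$, so $\{i,k\},\{j,k\}\in H$; the second item gives $\rho(Z_i\cup Z_k)=\rho(Z_j\cup Z_k)=-1$, and a further uncrossing argument — Uncrossing~II on these circuits shows $Z_i\cap Z_k$, $Z_j\cap Z_k$ and $Z_i\cap Z_j\cap Z_k$ all have $\rho=0$ (hence equal $Q$), and submodularity among the $W$-subsets $Z_i\cap Z_j$, $Z_i\cap Z_k$, $Z_i\cap Z_j\cap Z_k$ together with Lemma~\ref{lem:rho-prop} then forces $\rho(Z_i\cap Z_j)=0$, i.e.\ $Z_i\cap Z_j=Q$, and the closure identity $\cl[\Gamma(Z_i)]\cap\cl[\Gamma(Z_j)]=\cl[\Gamma(Q)]$.

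The main obstacle is the ``no room'' step excluding $\rho(Z_i\cup Z_j)=0$ in the second item (and its parallel, $\rho(Z_i\cap Z_j)=0$ for circuit pairs in the third item): submodularity of $\rho$ alone leaves these values undetermined, so one must chain several uncrossings through a third pair $P_k$ and track the closures of the neighbourhood sets via Lemma~\ref{lem:rho-prop}, and only then reconstruct a Rado certificate for $P_i\cup P_j$ to reach the contradiction.
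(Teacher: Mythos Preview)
Your first item is fine and matches the paper. The trouble is in the second item, and it propagates to the circuit case of the third.

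In the second item you correctly narrow $\rho(Z_i\cup Z_j)\in\{-1,0\}$ and correctly enlarge a $(P_i\cup P_j)$-set $A$ with $\rho(A)=-1$ to $\hat I=A\cup Z_i\cup Z_j$ with $\rho(\hat I)=-1$. But your contradiction argument does not close. You claim that chaining uncrossings through a third pair $P_k$ and Lemma~\ref{lem:rho-prop} ``forces the neighbourhood of every node of $\hat I\setminus(Z_i\cup Z_j)$ into $\cl[\Gamma(Z_i)\cup\Gamma(Z_j)]$'', and that then ``a direct check shows $\rho(Z)\ge 0$ for every $Z\subseteq (P_i\cup P_j)\cup W$''. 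The first assertion is not actually derived (Lemma~\ref{lem:rho-prop} controls intersections of closures at tight uncrossings, not neighbourhoods of individual extra vertices), and the second is self-contradictory: your own set $\hat I$ is a subset of $(P_i\cup P_j)\cup W$ with $\rho(\hat I)=-1$. So the purported contradiction that $P_i\cup P_j$ becomes independent cannot follow from the premises you have. Concretely, even if $\Gamma(\hat I)\subseteq\cl[\Gamma(Z_i\cup Z_j)]$, that only gives $r(\Gamma(\hat I))=r(\Gamma(Z_i\cup Z_j))=|Z_i\cup Z_j|$ and hence $\rho(\hat I)=-|\hat I\setminus(Z_i\cup Z_j)|<0$, which is consistent with $P_i\cup P_j\in\cH$ and yields no contradiction.

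The paper avoids this dead end by not arguing by contradiction at all. It finds a $(P_i\cup P_j)$-set $C$ with $\rho(C)=-1$ that is \emph{contained in} $Z_i\cup Z_j$, then uncrosses $C$ with $Z_i$ and with $Z_j$ to force $\rho(Z_i\cup Z_j)=-1$ directly. The key move you are missing is to obtain such a $C$ \emph{inside} $Z_i\cup Z_j$: take $(P_i\cup P_k)$- and $(P_j\cup P_k)$-sets $I,J$ with $\rho=-1$, note $\rho(I\cup J)=-2$ by Uncrossing~II, uncross $A$ with $I$ and with $J$ to get $A\cap I\subseteq Z_i$, $A\cap J\subseteq Z_j$, and uncross $A$ with $I\cup J$ to get $\rho(C)=-1$ for $C=A\cap(I\cup J)=(A\cap I)\cup(A\cap J)\subseteq Z_i\cup Z_j$. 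Two further uncrossings then give $\rho(Z_i\cup Z_j)=-1$. Your $\hat I$ goes the wrong way (it \emph{enlarges} $A$ past $Z_i\cup Z_j$), which is why the argument stalls.

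For the third item in the circuit case, your sketch also has a gap: Uncrossing~II applied to $Z_i\cup Z_k$ and $Z_j\cup Z_k$ gives $\rho\big((Z_i\cup Z_k)\cap(Z_j\cup Z_k)\big)=0$, but that set is $Z_k\cup(Z_i\cap Z_j)$, a $P_k$-set, not the $W$-subset $Z_i\cap Z_k$. From there the paper uses maximality of $Z_k$ to conclude $Z_i\cap Z_j\subseteq Z_k$, then propagates $w\in Z_i\cap Z_j\setminus Q$ into every $Z_\ell$ via triangles in $H$, contradicting the already-established basis case $Z_a\cap Z_b=Q$. Your submodularity-among-$W$-subsets step does not produce $\rho(Z_i\cap Z_j)=0$ as claimed.
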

\begin{proof}
First, we show the lemma for pairs $P_i$ and $P_j$ such that $P_i\cup P_j$ is a basis in $\B$. 
We have that $Z_i\cap Z_j$ is an $\emptyset$-set and $Z_i\cup Z_j$ is a $(P_i\cup P_j)$-set. 
By the first uncrossing Lemma~\ref{lem:uncrossing}, as $P_i\cup P_j$ is an independent set, we have $\rho(Z_i\cap Z_j) = \rho(Z_i\cup Z_j)=0$. 
By the maximality of $Q$ and since $Q \subseteq Z_i, Z_j$, we have $Z_i\cap Z_j=Q$. 
Finally, Lemma~\ref{lem:rho-prop} implies  $\cl[\Gamma(Z_i)] \cap \cl[\Gamma(Z_j)] = \cl[\Gamma(Q)]$.
This proves the lemma for $i, j\in [n]$ with $P_i\cup P_j \in \B$. 

\smallskip
For the rest of the proof, consider pairs $P_i$ and $P_j$ such that $P_i\cup P_j$ is a circuit in $\cH$. 
We show that $\rho(Z_i \cup Z_j) = -1$.
By~\ref{i:rho-four}, there is a $(P_i\cup P_j)$-set $A$ with $\rho(A)=-1$. 
Let $k\in K\setminus \{i, j\}$ be such that $P_i\cup P_k$ and $P_j \cup P_k$ are circuits $\cH$;
such $k$ is guaranteed by~\ref{i:H-graph2}.
Again by~\ref{i:rho-four}, there exists a $(P_i\cup P_k)$-set $I$ and a $(P_j\cup P_k)$-set $J$ such that $\rho(I)=\rho(J)=-1$. Further, $|P_i\cup P_j\cup P_k|=6$.
By the second uncrossing Lemma~\ref{lem:uncrossing2}, we have $\rho(I\cup J) = -2$.

{Let us show that $A\cap I\subseteq Z_i$ and $A\cap J\subseteq Z_j$. By symmetry, it suffices to show the first claim. Uncrossing gives
\[
-2=\rho(A)+\rho(I)\ge \rho(A\cap I)+\rho(A\cup I)\ge 0-2\, ,
\]
where the second inequality follows by \ref{i:rho-three} and \ref{i:rho-more}, since $A\cap I$ is a $P_i$-set and $A\cup I$ is a $P_i\cup P_j\cup P_k$-set. Consequently, $\rho(A\cap I)=0$, and therefore $A\cap I\subseteq Z_i$ by maximality of $Z_i$.

Next, we uncross $A$ and $I\cup J$, using
$\rho(A) = -1$ and $\rho(I\cup J) = -2$:
\[
-3=\rho(A) + \rho(I\cup J)\ge \rho(A \cap (I\cup J))+\rho(A\cup I\cup J)\ge -1-2\, ,
\]
 by~\ref{i:rho-four} and~\ref{i:rho-more}, since $C\coloneqq A\cap (I\cup J)$ is a $(P_i\cup P_j)$-set and
$A\cup I\cup J$ is a $(P_i\cup P_j\cup P_k)$-set. 
Thus, $\rho(C)=-1$. 
We can rewrite $C=(A\cap I)\cup (A\cap J)$. 
As noted above, $A\cap I\subseteq Z_i$, $A\cap J\subseteq Z_j$, and consequently, $C\subseteq Z_i\cup Z_j$.

Uncrossing $C$ with $Z_i$ gives
\[
-1=\rho(C)+\rho(Z_i)\ge \rho(C\cap Z_i)+\rho(C\cup Z_i)\ge -1\, ,
\]
that is, $\rho(C\cup Z_i)=-1$. Similarly, $\rho(C\cup Z_j)=-1$. Finally, we uncross $C\cup Z_i$ and $C\cup Z_j$:
\[
-2=\rho(C\cup Z_i)+\rho(C\cup Z_j)\ge \rho(C\cup Z_i\cup Z_j)+\rho((C\cup Z_i)\cap (C\cup Z_j))\ge -2\, .
\]
Recalling that $C\subseteq Z_i\cup Z_j$, it follows that  $\rho(Z_i\cup Z_j)=-1$.}

\smallskip
Next, we show that $Z_i\cap Z_j=Q$.
For a contradiction, assume there exists $w\in (Z_i\cap Z_j)\setminus Q \subseteq W$.
Consider $k\in K\setminus \{i, j\}$ as before, i.e., $k\in K\setminus \{i, j\}$ such that $\{i, j, k\}$ is a triangle in the graph $H$.
By the second uncrossing Lemma~\ref{lem:uncrossing2} for $I=Z_i\cup Z_k$ and $J=Z_j\cup Z_k$, we see that $\rho(I\cap J)=0$.
Since $Z_k\subseteq I\cap J$ and $Z_k$ is the largest $P_k$-set with $\rho(Z_k)=0$, it follows that $I\cap J=Z_k$. 
Consequently, $Z_i\cap Z_j\subseteq Z_k$ and $w\in Z_k$ for all $k\in K$.

Let $k, k' \in K$, and consider any $\ell \in S$. 
These three indices again form a triangle in the graph $([n], H)$.
By the same argument as in the previous paragraph, we conclude $w\in Z_\ell$ for all $\ell \in S$.
Hence, $w\in Z_\ell$ for all $\ell\in [n]$.
This is a contradiction as we have already shown that $Z_a \cap Z_b = Q$ whenever $P_a\cup P_b$ is a basis in $\B$.

Finally, we show that $\cl[\Gamma(Z_i)] \cap \cl[\Gamma(Z_j)] = \cl[\Gamma(Q)]$. Similarly to the previous argument,
we assume for the contradiction that there exists $u\in (\cl[\Gamma(Z_i)] \cap \cl[\Gamma(Z_j)])\setminus \cl[\Gamma(Q)]$.
Again, by the second uncrossing Lemma~\ref{lem:uncrossing2} for $I=Z_i\cup Z_k$ and $J=Z_j\cup Z_k$, we have $\rho(I\cap J)=0$ and $I\cap J=Z_k$. 
Moreover, it holds $\rho(I)+\rho(J) = \rho(I\cap J) + \rho(I\cup J)$. 
Lemma~\ref{lem:rho-prop} implies that $\cl[\Gamma(I)] \cap \cl[\Gamma(J)]=\cl[\Gamma(Z_k)]$; 
consequently, $\cl[\Gamma(Z_i)]\cap \cl[\Gamma(Z_j)]\subseteq \cl[\Gamma(Z_k)]$.
As before, this implies that $u\in \cl[\Gamma(Z_{\ell})]$ for all $\ell \in [n]$.
This is a contradiction as we have already shown that $\cl[\Gamma(Z_a)] \cap \cl[\Gamma(Z_b)] = \cl[\Gamma(Q)]$ 
whenever $P_a\cup P_b$ is a basis in $\B$.
\end{proof}

\begin{lemma}\label{lem:all-Z}
We have $\rho(\cup_{i=1}^n Z_i)=4-2n$ and $\rho(\cup_{i\in[n]\setminus\{j\}} Z_i)=2-2n$ for every $j\in [n]$.
\end{lemma}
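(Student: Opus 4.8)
The plan is to squeeze $\rho$ between matching lower and upper bounds; the lower bounds are immediate, and the two upper bounds are where the real work lies.

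\emph{Lower bounds.} The set $\bigcup_{i=1}^n Z_i$ is a $V$-set and $V$ contains a basis of $\cN$, so \ref{i:rho-more} gives $\rho(\bigcup_{i=1}^n Z_i)\ge r(\cN)-|V|=4-2n$. Likewise $\bigcup_{i\in[n]\setminus\{j\}}Z_i$ is a $(V\setminus P_j)$-set; since $n\ge 8$ there are, by \ref{i:H-graph2a}--\ref{i:H-graph2b}, two indices $a,b\in[n]\setminus\{j\}$ that are non-adjacent in $H$, so $P_a\cup P_b$ is a basis contained in $V\setminus P_j$, and \ref{i:rho-more} again yields the claimed lower bound $r(\cN)-|V\setminus P_j|$.

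\emph{Upper bounds, via minimizers of $\rho$.} I would pass to the global picture. By the Rado-minor rank formula (Proposition~\ref{prop:RadoMinorRepresentation}) one has $\min_{Z\subseteq V\cup W}\rho(Z)=r(\cN)-|V|=4-2n$, and more generally $\min\{\rho(Z):Z\cap V=X\}=r_\cN(X)-|X|$ for each $X\subseteq V$. From this the minimizers of the submodular function $\rho$ are exactly the $V$-sets on which $\rho$ attains $4-2n$ (a minimizer $Z$ would need $|Z\cap V|-r_\cN(Z\cap V)\ge 2n-4$, which forces $Z\cap V=V$ because in a rank-$4$ paving matroid every proper flat is small). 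These minimizers form a lattice, with a unique smallest element $Z^{\min}\supseteq V$. The key point is that for \emph{any} minimizer $Z^0$ the set $Z^0\cup Z_i$ is again a minimizer: $Z^0\cap Z_i$ is a $P_i$-set, hence $\rho(Z^0\cap Z_i)\ge 0$ by \ref{i:rho-three}, while $Z^0\cup Z_i$ is a $V$-set with $\rho\ge 4-2n$, so submodularity $\rho(Z^0)+\rho(Z_i)\ge\rho(Z^0\cup Z_i)+\rho(Z^0\cap Z_i)$ forces $\rho(Z^0\cup Z_i)=4-2n$. Iterating, $Z^{\min}\cup\bigcup_i Z_i$ is a minimizer, and since $V\subseteq\bigcup_i Z_i$ it then suffices to prove $Z^{\min}\subseteq\bigcup_i Z_i$, i.e.\ $Z^{\min}\cap W\subseteq\bigcup_i(Z_i\cap W)$. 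The same uncrossing shows $Q\subseteq Z^{\min}$, so it is enough to prove $Z^{\min}\cap W\subseteq Q$, equivalently $\rho(Z^{\min}\cap W)=0$ (then $Z^{\min}\cap W\subseteq Q$ by maximality of $Q$, Corollary~\ref{i:W-0}, and $Q\subseteq Z_i$ by Lemma~\ref{lem:W-Z}). Running the identical argument with $V\setminus P_j$ in place of $V$ and $r(\cN)-|V\setminus P_j|$ in place of $4-2n$ reduces the second equality to the same kind of statement for the smallest minimizer among $(V\setminus P_j)$-sets.

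\emph{Main obstacle.} Everything thus comes down to showing that the smallest minimizer carries no $W$-excess beyond $Q$; I expect this to be the technical heart. It should be extracted from the robust structure of $\cH$ by a further uncrossing argument, exploiting $\rho(Z_i\cup Z_j)\in\{0,-1\}$ (Lemma~\ref{lemma:ZiZj}) and, crucially, the sharpened identity $\rho(Z_i\cup Z_j\cup Z_k)=-2$ whenever $\{i,j,k\}$ carries at least two $H$-edges --- which follows from Uncrossing~II (Lemma~\ref{lem:uncrossing2}) applied to the two circuits $P_i\cup P_j,\,P_i\cup P_k$ (their union has size $6=r(\cN)+2$ and contains a basis, and $Z_i\cup Z_j,\,Z_i\cup Z_k$ are $\rho=-1$ sets for them). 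An essentially equivalent, more hands-on route is to build $\bigcup_i Z_i$ up one pair at a time, starting from a triangle $\{a,b,c\}\subseteq K$: at each step one must exhibit two already-placed, mutually non-adjacent indices that are both $H$-adjacent to the new index, so that submodularity drops $\rho$ by exactly $2$; the clique $K$, the complete bipartite join $S$--$K$, and the extremality conditions \ref{i:H-graph2a}--\ref{i:H-graph2b} on the sparse part $S$ are exactly what guarantee such an elimination order exists, while the bookkeeping needed when $Q\neq\emptyset$ (so the $Z_i$ genuinely share the block $Q$) is the delicate part.
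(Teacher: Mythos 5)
Your \ref{i:rho-more} lower bounds are correct, and you have identified the right ingredients: the triangle identity $\rho(Z_i\cup Z_j\cup Z_k)=-2$ via Uncrossing~II, and the role of \ref{i:H-graph2a}--\ref{i:H-graph2b}. However, neither of your routes to the matching upper bound actually closes. The lattice route is circular: once you know $Z^{\min}\cup\bigcup_i Z_i$ is a $\rho$-minimizer, the assertion $Z^{\min}\subseteq\bigcup_i Z_i$ is \emph{equivalent} to $\rho(\bigcup_i Z_i)=4-2n$ (each follows from the other by minimality of $Z^{\min}$), so the ``reduction'' leaves the whole lemma still to prove. Moreover the intermediate claims $Q\subseteq Z^{\min}$ and $Z^{\min}\cap W\subseteq Q$ are not justified: the uncrossing you cite only yields $\rho(Z^{\min}\cup Q)=4-2n$, which says nothing about $Q\subseteq Z^{\min}$, and $Z^{\min}\cap W\subseteq Q$ is a strictly stronger statement than the lemma, not obviously true (compare Lemma~\ref{lem:unionZ}, where $\bigcup_i Z_i$ turns out to equal $V\cup W$ while $Q$ may be empty).

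Your ``hands-on'' sketch is closer to the paper's argument (Claim~\ref{claim:threeIndices} followed by Claim~\ref{claim:SK3}), but the per-step rule you propose --- exhibit two already-placed, mutually non-adjacent indices both $H$-adjacent to the new one --- breaks precisely when the new index $m$ lies in $S$. All of $m$'s $H$-neighbours in $K$ are pairwise adjacent (a clique), and $m$ need not have any $H$-neighbours inside $S$, so the required pair of witnesses may simply not exist; \ref{i:H-graph2a}--\ref{i:H-graph2b} do not rescue this, since they supply a \emph{non}-neighbour of $m$ in $S$, not two non-adjacent neighbours. The paper handles $S$-indices by a genuinely different argument: with $Y=Z_a\cup Z_b\cup Z_c$ for $a,b,c\in K\cap L$ and non-adjacent $i,j\in S\cap L$, it pins down $\rho(Y\cup Z_i\cup Z_j)$ and $\rho(Y\cup Z_i)$ by sandwiching between the iterated $-2$-drop upper bound and \ref{i:rho-more}, extracts the joint drop $\Delta_\rho(Z_i\cup Z_j\mid Y)=-4$, and splits it into two individual drops of exactly $-2$ using the \ref{i:rho-more} bounds on the intermediate sets. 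It is this splitting step, not any per-step choice of two witnesses, that makes every $i\in S$ contribute $-2$; \ref{i:H-graph2a} then supplies a partner $j$ for each $i\in S$, and \ref{i:H-graph2b} is what lets one apply the claim to $L=[n]\setminus\{j\}$. Your proposal leaves this core step entirely unexecuted, and also misattributes the delicacy to the case $Q\neq\emptyset$, which in fact needs no special treatment in Claim~\ref{claim:SK3}.
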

\begin{proof}
We rely on the following two claims.
\begin{techclaim}\label{claim:threeIndices}
Consider three different indices $i, j, k \in [n]$ such that at least two out of $\{i,j\}, \{i,k\},$ and  $\{j, k\}$ are edges in $H$.
Then, $\rho(Z_i\cup Z_j\cup Z_{k})=-2$. 
\end{techclaim}
\begin{subproof}
Consider the pairs $P_i$, $P_j$, and $P_{k}$ with indices as in the claim.
Without loss of generality assume that $\{i,k\}, \{j,k\}$ are edges in $H$.
Thus, $P_i \cup P_{k}$ and $P_j \cup P_{k}$ are circuits in $\cH$.
Then, we have $\rho(Z_i\cup Z_k)=\rho(Z_j\cup Z_k)=-1$ 
by the second part of Lemma~\ref{lemma:ZiZj}. Let us uncross these two sets.
By submodularity and Lemma~\ref{lem:rho-values}, we have 
\begin{equation*}
  -2 = \rho(Z_i\cup Z_{k}) + \rho(Z_j\cup Z_{k}) \ge 
  \rho(Z_{k}) + \rho(Z_i\cup Z_j\cup Z_{k})\ge 0 - 2\,.
\end{equation*}
Hence, $\rho(Z_i\cup Z_j\cup Z_{k})=-2$.
\end{subproof}

\begin{techclaim}\label{claim:SK3}
Let $L\subseteq [n]$ such that $|L\cap K| \ge 3$ and $L\cap S$ contains two non-adjacent indices $i$ and $j$. (Recall that $K$ and $S$ are the sets given by~\ref{i:H-graph2}.)
Then, $\rho(\cup_{i\in L} Z_i) = 4-2|L|$. 
\end{techclaim}
\begin{subproof}
As $\{i,j\}\not \in H$ then $P_i\cup P_j\in \B$ and thus $\rho(Z_i\cup Z_j)=0$ by the first part of Lemma~\ref{lemma:ZiZj}.
Consider any index $k \in K$.
By Claim~\ref{claim:threeIndices}, $\rho(Z_{k}\cup Z_i\cup Z_j) = -2$. 
Therefore, adding $Z_{k}$ to $Z_i\cup Z_j$ decreases the $\rho$ value by 2.
In other words, for any $k \in K$ we have 
\begin{equation}\label{equation:submodularityThreeZ}
  \Delta_{\rho} (Z_{k} | Z_i \cup Z_j) \coloneqq 
  \rho(Z_{k}\cup Z_i\cup Z_{j}) 
  - \rho(Z_i \cup Z_j) =  -2 - 0 = -2 \,.
\end{equation}

As the intersection of any pair $Z_{a} \cap Z_b = Q$ by Lemma~\ref{lemma:ZiZj}, submodularity implies that adding $\ell$ different sets $Z_{k}$ with $k \in K$ to $Z_i\cup Z_j$  
decreases $\rho$ by at least $2\ell$. We proceed to prove a similar statement for sets $Z_k$ with $k\in S$.

Consider three different indices $a, b, c \in K\cap L$.
Let $Y=Z_a\cup Z_b\cup Z_c$. 
We then have, $\rho(Y \cup Z_i \cup Z_j)\le 0 - 2\cdot3= 4-2\cdot 5$. 
Since $a,b,c\in K$, by Claim~\ref{claim:threeIndices}, we have $\rho(Y)=-2$. 
By Lemma~\ref{lem:rho-values}~\ref{i:rho-more}, we also have $\rho(Y \cup Z_i \cup Z_j) \ge 4 - 2\cdot 5$ 
and consequently  $\rho(Y \cup Z_i \cup Z_j) = 4 - 2\cdot 5$. 
(Which proves the claim if $L = \{a,b,c,i,j\}$.) 

\medskip
Rearranging the above we conclude that whenever $\{i,j\} \not \in H$, we have 
\begin{equation}\label{equation:submodularityFiveZ}
  \Delta_{\rho} (Z_i \cup Z_j | Y ) = -4 \,.
\end{equation}
In other words, adding $Z_i \cup Z_j$ to $Y$ leads to a decrease of $4$ in the $\rho$ value. 
By Lemma~\ref{lem:rho-values}~\ref{i:rho-more} we also have 
$\rho(Y \cup Z_i) \ge 4 - 2\cdot 4 = -4$, 
and $\rho(Y \cup Z_i \cup Z_j) \ge 4 - 2\cdot 5 = -6$.
Combining it with the previous paragraph, 
we have $\Delta_{\rho} (Z_i | Y ) \ge -2$, and $\Delta_{\rho} (Z_j | Y \cup Z_i) \ge -2$.
Using~\eqref{equation:submodularityFiveZ} and submodularity we conclude that the inequalities hold with equality. 
That is, we have
\begin{equation}\label{equation:submodularityOddZ}
  \Delta_{\rho} (Z_i | Y ) = -2 \,
\end{equation}
for every $i$ such that $\{i,j\} \not \in H$ for some $j\in S$, i.e., by~\ref{i:H-graph2a}, for every $i \in S$.
By submodularity, adding $\ell$ different sets $Z_i$ with $i\in S$ to $Y$ decreases the $\rho$ value by at least $2\cdot \ell$.

Thus, for our set $L$, by submodularity and combining~\eqref{equation:submodularityThreeZ} and~\eqref{equation:submodularityOddZ}
we have $\rho(\cup_{i\in L}Z_i) \le 4 - 2\cdot |L|$. The equality holds by Lemma~\ref{lem:rho-values}~\ref{i:rho-more}.
\end{subproof}
The lemma follows by~\ref{i:H-graph2b} and the last claim for $L = [n]$ and $L  = [n]\setminus \{j\}$.
\end{proof}


\subsection{Bounding the support of \texorpdfstring{$h$}{h}}\label{sec:robust-to-rep}
Since $\B_0$ is always a robust matroid, we can use the results in the previous section for $\B=\B_0$.
Let $Z^0_i$ denote the  $P_i$-set $Z_i$ for $\B=\B_0$, and $Q_0$ the unique largest subset of $W$ with $\rho_0(Q_0)=0$.

Our first goal is to show Lemma~\ref{lem:B1-B0} below, namely, that in both case \ref{case:samemat} and \ref{case:alltight}, we have that $\dom(h)=\B_1= \B_0 \cup \{X^*\}$. Thus, we get the smallest possible size according to the main selection criterion \ref{crit:minsup}. This will enable us to also use the robust matroid analysis on $\B=\B_1$.
The proof will rely on the following `reduction' of the matroid $\M$.

\paragraph{Reducing $\M$} 
We replace $\M$ on $U$ by the following matroid $\overline \M$: a set $T\in {U\choose |W|+4}$ is a basis in $\overline \M$ if and only if there is a matching in $E$ between $T$ and a basis in
\[
\overline \B\coloneqq \{X \cup W:\, X\in \B_0\cup\{\Xs\}\}\, .
\] 
These sets $T$ form the bases of a matroid by Rado's theorem. 
Since $h(X)$ is finite for all $X\in\B_0\cup\{\Xs\}$, this will be a `submatroid' (weak map) of $\M$, i.e., all bases of $\overline \M$ are bases in $\M$. 
Let $\bar h(X)$ be the function corresponding to the modified representation $(G, \overline \M, c, W)$. 
Clearly, $\bar h(X)=h(X)$ for every $X\in\B_0\cup\{\Xs\}$ and  $\bar h(X)\le h(X)$ otherwise.
As $\bar h$ has the same or better criteria~\ref{crit:minsup}--\ref{crit:nontight} than $h$,
 for the rest of the proof we shall assume $h = \bar h$ and $\M = \overline \M$.

Using this construction, we first show that $Q_0=\emptyset$ in \ref{case:alltight}. However, $Q_0\neq \emptyset$ may still be possible in case \ref{case:samemat}.
\begin{lemma}\label{lem:no-Q0-CII} 
In case \ref{case:alltight}, i.e., if $E=E^*$, then  $Q_0=\emptyset$ must hold. Thus, $\rho_1(S)\ge\rho_0(S)\ge 1$ for all $S\subseteq W, S\neq \emptyset$ in this case.
\end{lemma}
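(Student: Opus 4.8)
The plan is to assume $Q_0\neq\emptyset$ and derive a contradiction with Lemma~\ref{lemma:noQ1}, which gives $\rho_1(Q_0)>0$. Since we are in case~\ref{case:alltight}, $E=E_0=E^*$, so $\Gamma_0=\Gamma$ and the identity $\rho_0(Q_0)=0$ reads $r_\tau(\Gamma(Q_0))=|Q_0|$; put $T_0\coloneqq\cl_{\M_\tau}(\Gamma(Q_0))$, so that also $r_\tau(T_0)=|Q_0|$ and $\Gamma(Q_0)\subseteq T_0$. The heart of the argument is the following \emph{isolation property}: every edge of $E$ incident to a vertex of $T_0$ has its other endpoint in $Q_0$. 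Granting it, I would finish as follows. By the reduction made just before the lemma we may assume $\M=\overline\M$, so every basis $T$ of $\M$ admits a matching in $E$ to some $Y\cup W$ with $Y\in\B_0\cup\{\Xs\}$; this matching covers $Q_0$, say by a $|Q_0|$-element set $B\subseteq T\cap\Gamma(Q_0)\subseteq T\cap T_0$. For any further $u\in T\cap T_0$, the matching edge at $u$ has, by the isolation property, its $V\cup W$-endpoint in $Q_0$, hence $u\in B$. Thus $T\cap T_0=B$ and $|T\cap T_0|=|Q_0|$ for \emph{every} basis $T$; consequently $r_\M(T_0)=|Q_0|$ (it is $\geq|Q_0|$ since some basis supplies $|Q_0|$ independent elements of $T_0$, and $\leq|Q_0|$ since a $(|Q_0|+1)$-element independent subset of $T_0$ could be extended to a basis meeting $T_0$ in more than $|Q_0|$ elements). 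Since $\Gamma(Q_0)\subseteq T_0$ this yields $\rho_1(Q_0)=r_\M(\Gamma(Q_0))-|Q_0|\leq r_\M(T_0)-|Q_0|=0$, contradicting Lemma~\ref{lemma:noQ1}; hence $Q_0=\emptyset$.

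To prove the isolation property I would fix an edge $(v,u)\in E$ with $u\in T_0$ and use that $E=E^*$ to choose a maximum-weight independent matching $\mu$ with $(v,u)\in\mu$. Since every edge is tight and $\pi\equiv0$ on $V$ (every vertex of $V$ lies outside some basis of $\B_0$, so this follows from Lemma~\ref{lemma:LP2}), the optimality of $\mu$ forces $\partial_U(\mu)$ to be a maximum $\tau$-weight basis of $\M$, i.e.\ a basis of $\M_\tau$ by Lemma~\ref{lem:maxweight}, and $\partial_V(\mu)\in\B_0$; thus $\mu$ is a maximal independent matching for the Rado-minor representation $(G,\M_\tau,W)$ of $\B_0$. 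The $|Q_0|$ matching edges covering $Q_0$ land in $\Gamma(Q_0)\subseteq T_0$, so $\partial_U(\mu)\cap T_0$ contains these $|Q_0|$ vertices; on the other hand $\partial_U(\mu)$ is independent in $\M_\tau$, whence $|\partial_U(\mu)\cap T_0|\leq r_\tau(T_0)=|Q_0|$. Therefore $\partial_U(\mu)\cap T_0$ is exactly the set of $U$-vertices matched to $Q_0$ by $\mu$; since $u$ lies in it, its partner $v$ lies in $Q_0$, as claimed.

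Finally, the ``Thus'' in the statement is immediate: for non-empty $S\subseteq W$ we have $\rho_0(S)\geq0$ by~\ref{i:rho-three} (applied to the empty independent set and the $\emptyset$-set $S$), and $\rho_0(S)=0$ would force $S\subseteq Q_0=\emptyset$ by Corollary~\ref{i:W-0}; so $\rho_0(S)\geq1$, and $\rho_1(S)\geq\rho_0(S)\geq1$. The one genuinely delicate point is the isolation property: it is the only place where the hypothesis $E=E^*$ is used, and it is exactly what breaks in case~\ref{case:samemat}, where the single non-tight edge leaves room for a nonempty $Q_0$; everything else is routine bookkeeping with ranks of the neighbourhood function together with the already-established property $\rho_1\geq\rho_0$.
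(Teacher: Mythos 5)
Your proof is correct and follows essentially the same strategy as the paper's: set $T_0$ to (the closure of) $\Gamma(Q_0)$, use $E=E^*$ to prove an isolation claim showing no edge from outside $Q_0$ can reach $T_0$, then use $\M=\overline\M$ to conclude $r_\M(T_0)=|Q_0|$, contradicting Lemma~\ref{lemma:noQ1}. The only cosmetic difference is that you take $T_0=\cl_{\M_\tau}(\Gamma(Q_0))$ rather than $T_0=\Gamma(Q_0)$ as in the paper, which makes your bookkeeping in deriving $r_\M(T_0)=|Q_0|$ slightly longer but changes nothing essential.
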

\begin{proof}
Denote with $T_0 = \Gamma(Q_0)$.
By definition of $\rho_0$, $r_{\tau} (T_0) = |Q_0|$. 
We claim that also $r(T_0) = |Q_0|$. The next claim will be needed for this proof.
\begin{techclaim}\label{claim:noEdgeSomewhere}
    There is no edge $(i,j)\in E$ with $i \in (V\cup W)\setminus Q_0$ and $j \in T_0$.
\end{techclaim}
\begin{subproof}
  Suppose there is such an edge. 
  By definition of $E^*$ ($= E$), 
  there exists an independent matching $\mu$ containing $(i,j)$ with weight $0$ such that the endpoints $\partial_U(\mu)$ are independent in $\M_\tau$.
  Trivially, this matching also covers $Q_0$ as $Q_0 \subseteq W$. This  leads to a contradiction with Lemma~\ref{lem:rho-0-fully-matched}.
\end{subproof}

Suppose that $r(T_0) > |Q_0|$.
Then there is a basis $S$ of $\M$ such that $|S\cap T_0| > |Q_0|$.
As $\overline \M = \M$ there is an independent matching 
that matches $S\cap T_0$ to a subset of size $>|Q_0|$ in $V\cup W$.
This is impossible as the neighbourhood of $T_0$ in $V$ is $Q_0$ by Claim~\ref{claim:noEdgeSomewhere}.
Hence, $r(T_0) = |Q_0|$.
This contradicts Lemma~\ref{lemma:noQ1}.
\end{proof}

\begin{lemma}\label{lem:B1-B0}
 $\B_1= \B_0 \cup \{X^*\}$ must hold.
\end{lemma}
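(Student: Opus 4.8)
Nothing is needed in case~\ref{case:samemat}, since there Lemma~\ref{lem:E-E-0} already includes $\B_1=\B_0\cup\{\Xs\}$. So assume case~\ref{case:alltight}: $E=E_0=E^*$, $\M_\tau\ne\M$ for every dual optimal $(\pi,\tau)$, and (after the reduction above) $\M=\overline\M$; by Lemma~\ref{lem:no-Q0-CII} also $Q_0=\emptyset$. Since $\B_0\cup\{\Xs\}\subseteq\B_1$ always holds, it suffices to derive a contradiction from the existence of a set $Y\in\B_1\setminus(\B_0\cup\{\Xs\})$. As $\B_0$ is sparse paving of rank $4$, such a $Y$ is a $4$-circuit of $\B_0$, hence $Y=P_i\cup P_j\in\cH$ with $\{i,j\}\ne\{1,2\}$ by~\ref{i:H-B}; moreover $h(Y)<h(\Xs)$ by the definition of $\cF_n$.

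The first step is to extract a rank discrepancy between $\M$ and $\M_\tau$ on $U$. Since $\B_0$ is robust (Lemma~\ref{lem:B-0-robust}), the analysis of Section~\ref{section:robustMatroids} applies to its Rado-minor representation $(G_0,\M_\tau,W)=(G,\M_\tau,W)$ (recall $E_0=E$): it produces $P_k$-sets $Z^0_k$ with $\rho_0(Z^0_k)=0$, pairwise disjoint because $Q_0=\emptyset$, with $(Z^0_i\cup Z^0_j)\cap V=Y$ and $\rho_0(Z^0_i\cup Z^0_j)=-1$ by Lemma~\ref{lemma:ZiZj}. But $Y$ is a basis of $\dom(h)=\B_1$, which has Rado-minor representation $(G,\M,W)$, so by Proposition~\ref{prop:RadoMinorRepresentation} we get $\rho_1(Z^0_i\cup Z^0_j)\ge 0$. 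Using $\Gamma_E=\Gamma_0$ and $\rho_1\ge\rho_0$, and writing $T^*:=\Gamma_E(Z^0_i\cup Z^0_j)$, this yields
\[
  r(T^*)\ \ge\ r_\tau(T^*)+1\ =\ |Z^0_i\cup Z^0_j| .
\]

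The heart of the proof is to show that $\M=\overline\M$ forbids this. Here $r_\tau$ is governed by the chain of flats $\emptyset=S_0\subsetneq\dots\subsetneq S_{t+1}=U$ of level sets of $\tau$ (chosen by Lemma~\ref{lemma:LP2} so that each $S_\ell$ is a flat of $\M$ meeting $\Gamma_E(V)$) via $\M_\tau=\bigoplus_\ell(\M|S_\ell)/S_{\ell-1}$, with $t\ge 1$ since $\M_\tau\ne\M$; while $\M=\overline\M$ says every basis of $\M$ is matched, in $E$, to some $Y'\cup W$ with $Y'\in\B_0\cup\{\Xs\}$. The plan is to take a maximal independent subset $B^*\subseteq T^*$ in $\M$, so $|B^*|=r(T^*)\ge|Z^0_i\cup Z^0_j|$; extend it to a basis $B$ of $\M=\overline\M$, with a matching $\nu\colon Y'\cup W\to B$; combine $\nu$ with the matching $\mu\colon Y\cup W\to B$ coming from $Y\in\dom(h)$; and swap along sub-collections of the resulting alternating paths between $Y\setminus Y'$ and $Y'\setminus Y$ to obtain independent matchings, all with $U$-side $B$ and (since $\pi\equiv 0$ on $V$ and $\pi(W)=-\hat r(\tau)$) common weight $\tau(B)-\hat r(\tau)\le h(Y)<h(\Xs)$, covering $\bigl((Y\setminus A)\cup A'\bigr)\cup W$ for equal-size $A\subseteq Y\setminus Y'$, $A'\subseteq Y'\setminus Y$. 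One then locates a level $S_\ell$ on which $B^*$ oversaturates the local rank --- this is where the inequality $|B^*|\ge|Z^0_i\cup Z^0_j|$ is spent --- and, using that $S_\ell$ is a flat together with the closure and uncrossing identities (Lemmas~\ref{lem:uncrossing},~\ref{lem:rho-prop}) and the disjointness of the $Z^0_k$, forces one of the covered sets to be a $4$-circuit of $\B_0$ outside $\B_0\cup\{\Xs\}$ in a way incompatible with the sparse paving matroid $\B_1$, or else forces $\rho_1(S)=0$ for some nonempty $S\subseteq W$, contradicting Lemma~\ref{lemma:noQ1}.

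The main obstacle is precisely this last conversion: $r(T^*)\ge|Z^0_i\cup Z^0_j|$ is a global rank inequality, and turning it into a single local exchange that yields a forbidden matching requires a delicate joint analysis of the two matroids $\M$ and $\M_\tau$ on $U$ against the robust combinatorics of $\B_0$, with careful bookkeeping of which elements of $W$ and which flats $S_\ell$ occur. Once $\B_1=\B_0\cup\{\Xs\}$ is established, $\B_1$ is again robust by Lemma~\ref{lem:B-0-robust}, so the analysis of Section~\ref{section:robustMatroids} can also be run for $\B_1$, yielding the sets $Z^1_k$ used subsequently.
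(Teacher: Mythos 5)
You correctly reduce to case \ref{case:alltight}, invoke Lemma~\ref{lem:no-Q0-CII} to get $Q_0=\emptyset$, and identify the key rank discrepancy: for $S:=Z^0_i\cup Z^0_j$ with $Y=P_i\cup P_j\in\B_1\setminus(\B_0\cup\{\Xs\})$ one has $\rho_0(S)=-1$ (Lemma~\ref{lemma:ZiZj}) while $\rho_1(S)\ge 0$, so $r(\Gamma(S))\ge r_\tau(\Gamma(S))+1$. This is the same starting point as the paper. However, what you then offer is an unexecuted plan, not a proof, and you say as much (``The main obstacle is precisely this last conversion \dots requires a delicate joint analysis''). The chain-of-flats / alternating-path scheme you sketch is not carried out, and it is not what the paper does; as written, your argument has a genuine gap and does not establish the lemma.

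The ideas you are missing are structural, not path-theoretic, and they make the conclusion almost immediate once in place. The paper first proves a claim you do not consider: setting $Z^*=\cup_{k}Z^0_k$ and $T^*=\Gamma(Z^*)$, there are no edges between $W\setminus Z^*$ and $T^*$. This follows because $\rho_0(Z^*)=4-2n$ (Lemma~\ref{lem:all-Z} plus $Q_0=\emptyset$) forces $r_\tau(T^*)=4+|Z^*\cap W|$, so in any weight-$0$ independent matching the nodes of $Z^*$ must occupy a maximum independent subset of $T^*$, hence such edges never appear in $E^*$; and $E=E^*$ in case~\ref{case:alltight}. Combined with the pairwise disjointness of the $Z^0_k$ and of their closures $\cl[\Gamma(Z^0_k)]$ (Lemma~\ref{lemma:ZiZj} with $Q_0=\emptyset$), this yields $\Gamma(T)=S$ for $T:=\Gamma(S)$. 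The contradiction is then a short pigeonhole argument using $\M=\overline\M$: take an independent matching of $X\cup W$ to a basis $R$ of $\M$ (so $S$ maps injectively into $R\cap T$, giving $|R\cap T|\ge|S|$), and a second matching $\mu$ of $R$ to some $Y\cup W$ with $Y\in\B_0\cup\{\Xs\}$; then $\mu$ sends $R\cap T$ into $\Gamma(T)=S$, forcing $|R\cap T|=|S|$ and $\mu(R\cap T)=S$, but $S\cap V=P_i\cup P_j\not\subseteq Y$ since $Y\neq P_i\cup P_j$. No alternating-path swapping, level-set analysis, or weight accounting is needed; the proof never even has to compare $r(\Gamma(S))$ and $r_\tau(\Gamma(S))$ explicitly once $\Gamma(T)=S$ is known.
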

\begin{proof}
There is nothing to prove in \ref{case:samemat}, so let us assume we are in case~\ref{case:alltight}; thus, $E=E^*$. According to the previous lemma, we also have $Q_0=\emptyset$.
Let $Z^*=\cup_{i=1}^n Z^0_i$; in particular $V\subseteq Z^*$. 

\begin{techclaim}\label{cl:W-Zst}
There are no edges between $W\setminus Z^*$ and $\Gamma(Z^*)$. 
\end{techclaim}
\begin{subproof}
Let $F$ denote the edge set in the claim.
Let $T^*\coloneqq \Gamma(Z^*)=\Gamma_0(Z^*)$.
By Lemma~\ref{lem:all-Z} and Lemma~\ref{lem:no-Q0-CII}, we have that $\rho_0(Z^*)=4-2n$.
As $\rho_0 (Z^*) = r_{\tau}(\Gamma_0(Z^*)) - |Z^*| = r_{\tau}(T^*) - |Z^*| $ we have $r_\tau(T^*)=4+|Z^*\cap W|$. 
Consequently, an independent matching $\mu$ of weight $0$ cannot use any of the edges in $F$, 
since $|\partial_{Z^*}(\mu^X)| = 4 + |Z^* \cap W|$ and thus $\partial_{Z^*}(\mu^X)$ must be matched to a maximal independent set in $T^*$. 
Hence, $E^*\cap F = \emptyset$. Then $F=\emptyset$ as $E=E^*$.
\end{subproof}

Consider any $X\in \B_1\setminus(\B_0\cup\{\Xs\})$. 
We have $X=P_i\cup P_j$ for some $i,j\in [n]$, $\{i,j\}\neq\{1,2\}$ by the definition of $\cF_n$. 
Thus, there is a matching in $E$ between $X\cup W$ and a basis $R$ of $\mathcal{M}$. Since  $\M = \overline \M$, there is in turn another matching $\mu$ between $R$ and a set $Y\in \B_0\cup\{\Xs\}$.

Let $S\coloneqq Z^0_i\cup Z^0_j$ and $T\coloneqq \Gamma(S)$. Thus, $|R\cap T|=|S|$. Therefore, the matching $\mu$ must match $R\cap T$ to an independent subset $Y'\subseteq Y$ with $|Y'|=|S|$. Note that $Y'\neq S$, since $S$ is not a subset of any basis in $\B_0\cup\{\Xs\}$.  Therefore, $\mu$ must contain an edge between $T$
and $(V\cup W)\setminus S$. The following claim shows that this is not possible, giving rise to a contradiction.

\begin{techclaim}
$\Gamma(T)=S$.
\end{techclaim}
\begin{subproof}
In case~\ref{case:alltight},
Lemma~\ref{lem:no-Q0-CII} shows $Q_0 = \emptyset$. Thus, Lemma~\ref{lemma:ZiZj} implies that $T=\Gamma(Z^0_i)\cup \Gamma(Z^0_j)$ is disjoint from
 $\Gamma(Z^0_k)$ for all $k\notin \{i,j\}$. Thus, $\Gamma(T)\cap Z^*=S$. Further, $\Gamma(T)\cap ((V\cup W)\setminus Z^*)=\emptyset$ according to Claim~\ref{cl:W-Zst}. The claim follows.
\end{subproof}
\end{proof}

In light of the above Lemma, we can apply the techniques in Section~\ref{section:robustMatroids} 
to the robust matroid $\B_1=\B_0\cup \{X^*\}$. 
Let $Z^1_i$ denote the corresponding sets in Lemma~\ref{lemma:ZiZj}, and recall that $\rho_1(Z)=r(\Gamma(Z))-|Z|$. By Lemma~\ref{lemma:noQ1}, the largest subset $Q_1$ of $W$ with $\rho_1(Q_1)=0$ is $Q_1=\emptyset$.
Applying Lemma~\ref{lemma:ZiZj} then immediately gives:
\begin{lemma}\label{lem:Z1-disjoint}
For any $i,j \in [n], i\neq j$, we have $Z^1_i \cap Z^1_j = \emptyset$ and $\cl_{\M}[\Gamma(Z^1_i)] \cap \cl_{\M}[\Gamma(Z^1_j)] = \emptyset$.
\end{lemma}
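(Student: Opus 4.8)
The plan is to obtain this lemma as a direct corollary of Lemma~\ref{lemma:ZiZj}, applied to the robust matroid $\B_1$ rather than $\B_0$. All the necessary groundwork is already in place: Lemma~\ref{lem:B1-B0} shows $\B_1 = \B_0 \cup \{\Xs\}$, Lemma~\ref{lem:B-0-robust} shows that $\B_0 \cup \{\Xs\}$ is a robust matroid (we have $n \ge 16 \ge 8$), and by the setup of Section~\ref{sec:R-minor+not+cover} the triple $(G, \M, W)$ is a Rado-minor representation of $\B_1 = \dom(h)$. Hence the entire development of Section~\ref{section:robustMatroids} applies verbatim with $\B = \B_1$, where "$\rho$" is instantiated as $\rho_1$, "$\Gamma$" as $\Gamma_E$, "$r$" as $r_\M$, and "$Z_i$" as the sets $Z^1_i$ furnished by Lemma~\ref{lem:W-Z}.

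The one point that needs a word of care is the set $Q$ of Corollary~\ref{i:W-0} occurring in Lemma~\ref{lemma:ZiZj}: for the representation of $\B_1$ this is the unique largest $Q_1 \subseteq W$ with $\rho_1(Q_1) = 0$. By Lemma~\ref{lemma:noQ1}, no non-empty subset of $W$ has $\rho_1$-value $0$, so $Q_1 = \emptyset$; the first conclusion, $Z^1_i \cap Z^1_j = \emptyset$, is then immediate. For the second conclusion we have $\Gamma(Q_1) = \emptyset$, so $\cl_\M[\Gamma(Q_1)] = \cl_\M[\emptyset]$, which is the set of loops of $\M$. It is harmless to assume throughout that $\M$ has no loops — a loop of $\M$ lies in no basis, hence is matched by no independent matching, so deleting it from $U$ changes neither $h$ nor any of the selection criteria \ref{crit:minsup}--\ref{crit:nontight} — and so $\cl_\M[\emptyset] = \emptyset$.

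Substituting $Q = Q_1 = \emptyset$ into the third bullet of Lemma~\ref{lemma:ZiZj} then yields, for all distinct $i, j \in [n]$,
\[
Z^1_i \cap Z^1_j = Q_1 = \emptyset, \qquad \cl_\M[\Gamma(Z^1_i)] \cap \cl_\M[\Gamma(Z^1_j)] = \cl_\M[\Gamma(Q_1)] = \emptyset,
\]
which is exactly the assertion. There is essentially no obstacle here: the content is entirely front-loaded into Lemmas~\ref{lem:B1-B0}, \ref{lem:B-0-robust}, \ref{lemma:ZiZj} and \ref{lemma:noQ1}, and all that remains is to check that their hypotheses are in force for $\B_1$ and that $Q_1$ vanishes. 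The only mildly delicate bookkeeping is the loop-removal remark needed to turn $\cl_\M[\emptyset]$ into $\emptyset$.
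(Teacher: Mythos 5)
Your proof is correct and follows exactly the route the paper takes: Lemma~\ref{lem:B1-B0} makes $\B_1 = \B_0 \cup \{\Xs\}$ a robust matroid, Lemma~\ref{lemma:noQ1} forces $Q_1=\emptyset$, and the third bullet of Lemma~\ref{lemma:ZiZj} applied to $\B_1$ with $Q=Q_1=\emptyset$ gives both conclusions. Your remark about discarding loops of $\M$ is a worthwhile detail that the paper leaves implicit when it asserts $\cl_\M[\Gamma(\emptyset)]=\emptyset$.
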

\begin{lemma}\label{lem:unionZ} 
$\cup_{i=1}^n Z^1_i=V\cup W$.
\end{lemma}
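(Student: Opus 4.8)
The plan is to argue by contradiction using the minimality of $|W|$ (criterion~\ref{crit:minW}): assuming some element of $W$ lies in no $Z^1_i$, I will exhibit an R-minor representation of the very same $h$ with a strictly smaller contracted set, while leaving $\dom(h)=\B_1$ unchanged. Set $Z^*\coloneqq\bigcup_{i=1}^n Z^1_i$ and $W'\coloneqq W\setminus Z^*$. Since each $Z^1_i$ is a $P_i$-set we have $V=\bigcup_{i\in[n]}P_i\subseteq Z^*$, so $|Z^*|=2n+|Z^*\cap W|$ and the assertion of the lemma is exactly that $W'=\emptyset$. Suppose $W'\neq\emptyset$. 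Applying Lemma~\ref{lem:all-Z} to the robust matroid $\B_1$ gives $\rho_1(Z^*)=4-2n$, hence $r(\Gamma(Z^*))=4+|Z^*\cap W|$. After the reduction to $\M=\overline{\M}$, the matroid $\M$ has rank $|W|+4$, so the flat $T_0\coloneqq\cl_{\M}(\Gamma(Z^*))$ has corank exactly $|W'|$ in $\M$. The key local observation is that every node of $Z^*$ — in particular every node of $V$ and every node of $W\cap Z^*$ — has all of its neighbours inside $\Gamma(Z^*)\subseteq T_0$.

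I then show that for any $X\in\B_1$ every independent matching $\mu$ with $\partial_{V\cup W}(\mu)=X\cup W$ and $\partial_U(\mu)$ a basis of $\M$ splits cleanly along $T_0$. Indeed, the $4+|Z^*\cap W|$ nodes of $X\cup(W\cap Z^*)$ are matched into $\partial_U(\mu)\cap T_0$, which therefore has at least $r(T_0)$ elements and hence is a basis of $\M|T_0$; the remaining $|W'|$ edges of $\mu$ then match $W'$ to a set independent over $T_0$ of size equal to the corank of $T_0$, i.e. to a basis of $\M/T_0$. Conversely, any matching of $X\cup(W\cap Z^*)$ to a basis of $\M|T_0$ inside the subgraph $G[V\cup(W\cap Z^*),T_0]$, combined with any matching of $W'$ to a basis of $\M/T_0$, reassembles into a matching of $X\cup W$ to a basis of $\M$; and since $\M=\overline{\M}$ this forces $X\in\B_1$.

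The crucial point is that the condition ``$W'$ matches to a basis of $\M/T_0$'' does not mention $X$ at all, and it cannot fail for every $X$ because $\B_1\neq\emptyset$; hence it holds, and consequently $\B_1$ equals the set of those $X\in\binom{V}{4}$ for which $X\cup(W\cap Z^*)$ admits a matching to a basis of $\M|T_0$ in $G[V\cup(W\cap Z^*),T_0]$. To lift this to the valuated level, let $\delta$ be the maximum weight of a matching from $W'$ to a basis of $\M/T_0$; it is finite and independent of $X$. The clean split yields $h(X)=\delta+h'(X)$ for all $X$, where $h'$ is the R-minor function of $\bigl(G[V\cup(W\cap Z^*),T_0],\,\M|T_0,\,\co|,\,W\cap Z^*\bigr)$; since every matching to a basis of $\M|T_0$ uses exactly $r(T_0)$ edges, adding $\delta/r(T_0)$ to each edge weight of this representation turns it into an R-minor representation of $h$ itself, with contracted set of size $|W\cap Z^*|<|W|$ and effective domain still $\B_1$. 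This contradicts the minimality of $|W|$ under criterion~\ref{crit:minW} (criterion~\ref{crit:minsup} being unaffected), so $W'=\emptyset$ and $\bigcup_{i=1}^n Z^1_i=V\cup W$.

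The main obstacle is establishing the clean split of the matchings along $T_0$: it rests on combining the rank identity $\rho_1(Z^*)=4-2n$ from Lemma~\ref{lem:all-Z} with the structural property $\M=\overline{\M}$ (bases of $\M$ are exactly the sets matched to some $B\cup W$ with $B\in\B_1$), together with verifying that the ``$W'$-part'' of such a matching is automatically a basis of the quotient $\M/T_0$ and contributes a weight independent of $X$. Once the split is in place, the remaining weight bookkeeping in the last step is routine and mirrors the argument in Lemma~\ref{lemma:noQ1}.
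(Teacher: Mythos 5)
Your proof is correct and follows essentially the same route as the paper's: both establish $\rho_1(\bigcup_i Z^1_i)=4-2n$ via Lemma~\ref{lem:all-Z}, deduce that matchings split cleanly between $T_0=\cl_\M(\Gamma(Z^*))$ (covering $X\cup(W\cap Z^*)$) and its complement (covering $W\setminus Z^*$), observe that the $W\setminus Z^*$--part contributes a weight $\delta$ independent of $X$, and then contradict criterion~\ref{crit:minW} by dropping $W\setminus Z^*$ and shifting the edge weights by $\delta/r(T_0)$. The only cosmetic differences are that the paper packages the reduction argument as a stand-alone Claim~\ref{claim:W'=W} (stated for any $W'\subseteq W$ with $\rho_1(V\cup W')=4-|V|$ and then applied with $W'=W\cap Z^*$) and is terser about the split into $\M|T_0$ and $\M/T_0$, which you spell out explicitly; also, your invocation of $\M=\overline\M$ is not actually needed at the points where you cite it, though it is harmless since the representation has already been normalized that way.
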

\begin{proof}
We use the following claim stating that, in the minimal counterexample, for any $V$-set $Z$ with sufficiently large $\rho_1$-value it holds $V\cup W = Z$.
\begin{techclaim}\label{claim:W'=W}
Let $Z = V\cup W'$ for $W' \subseteq W$ such that $\rho_1(Z) = 4 - |V|$.
Then,  we must have $W' = W$ or equivalently, $Z = V\cup W$.
\end{techclaim}
\begin{subproof}
For a contradiction assume that $W' \neq W$.
Let $T=\cl[\Gamma(V\cup W')]$. 
By definition of $\rho_1$, having $\rho_1(V\cup W')= 4 - |V|$ means  $r(T) = |V\cup W'| + 4  - |V| = |W'| + 4$.
Thus, for any $X\in\B_1$ ($|X| = 4$) the corresponding matching $\mu^X\in {\mathcal{L}}$ matches exactly $r(T)$ nodes in $T$ to the nodes in 
$X\cup W'$. 
In other words, any matching $\mu^X \in {\mathcal{L}}$ matches nodes $W\setminus W'$ to $|W\setminus W'|$ nodes in $U\setminus T$.

Similarly to the proof of Lemma~\ref{lemma:noQ1}, it follows that in any $\mu^X\in {\mathcal{L}}$, the cost of the edges covering $W\setminus W'$ is the same.
Hence, we can get a smaller representation by restricting $W$ to $W'$ and $U$ to $U'$. This contradicts the selection criterion \ref{crit:minW}.
\end{subproof}
Lemma~\ref{lem:all-Z} for $\B_1$ gives $\rho_1(\cup_{i=1}^n Z^1_i)=4-2n$. Also noting that $V\subseteq \cup_{i=1}^n Z^1_i$,
the statement follows by Claim~\ref{claim:W'=W}. 
\end{proof}

\begin{lemma}\label{lem:Q0inZk}
We have $Z^0_i = Z^1_i \cup Q_0$ for $i\in [n]$.
In particular, in case \ref{case:alltight} we have $Z^0_i=Z^1_i$. 
\end{lemma}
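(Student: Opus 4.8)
The plan is to establish the two inclusions $Z^1_i\cup Q_0\subseteq Z^0_i$ and $Z^0_i\subseteq Z^1_i\cup Q_0$ separately. Both use the structural results of Section~\ref{section:robustMatroids}, which apply to $\B_0$ directly and, thanks to Lemma~\ref{lem:B1-B0}, also to the robust matroid $\B_1=\B_0\cup\{\Xs\}$; in particular we may invoke Lemma~\ref{lemma:ZiZj} for $\B_0$ and Lemmas~\ref{lem:Z1-disjoint}--\ref{lem:unionZ} for $\B_1$. The argument is uniform over cases~\ref{case:samemat} and~\ref{case:alltight}, and the ``in particular'' clause will drop out at the end.

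For the inclusion $Z^1_i\cup Q_0\subseteq Z^0_i$, I would first argue that $\rho_0(Z^1_i)=0$. Since $\rho_1(Z)\ge\rho_0(Z)$ for every $Z\subseteq V\cup W$, we have $\rho_0(Z^1_i)\le\rho_1(Z^1_i)=0$; on the other hand $Z^1_i$ is a $P_i$-set and $P_i$ is independent in $\B_0$, so \ref{i:rho-three} gives $\rho_0(Z^1_i)\ge0$. Hence $\rho_0(Z^1_i)=0$, and by the maximality of $Z^0_i$ among $P_i$-sets with vanishing $\rho_0$ (Lemma~\ref{lem:W-Z}, using the lattice property from the first uncrossing Lemma~\ref{lem:uncrossing}) we obtain $Z^1_i\subseteq Z^0_i$. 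Combined with $Q_0\subseteq Z^0_i$, which is the last assertion of Lemma~\ref{lem:W-Z} applied to $\B_0$, this yields $Z^1_i\cup Q_0\subseteq Z^0_i$.

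For the reverse inclusion, the key point is that the previous paragraph applied with index $j$ gives $Z^1_j\subseteq Z^0_j$ for every $j$, so for $j\ne i$
\[
Z^0_i\cap Z^1_j\ \subseteq\ Z^0_i\cap Z^0_j\ =\ Q_0 ,
\]
where the equality is the third bullet of Lemma~\ref{lemma:ZiZj} for $\B_0$. By Lemma~\ref{lem:unionZ} we have $\bigcup_{j=1}^n Z^1_j=V\cup W\supseteq Z^0_i$, hence
\[
Z^0_i\ =\ \bigcup_{j=1}^n\bigl(Z^0_i\cap Z^1_j\bigr)\ =\ \bigl(Z^0_i\cap Z^1_i\bigr)\cup\bigcup_{j\ne i}\bigl(Z^0_i\cap Z^1_j\bigr)\ \subseteq\ Z^1_i\cup Q_0 .
\]
Together with the first inclusion this proves $Z^0_i=Z^1_i\cup Q_0$, and the ``in particular'' clause follows since $Q_0=\emptyset$ in case~\ref{case:alltight} by Lemma~\ref{lem:no-Q0-CII}.

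I do not expect a genuine obstacle here: the statement is an assembly of already-established structural lemmas. The only place requiring care is the reverse inclusion, where one should route $Z^0_i\cap Z^1_j$ through the $\B_0$-intersection $Z^0_i\cap Z^0_j$ rather than attempting to uncross $Z^0_i$ and $Z^1_j$ directly --- when $P_i\cup P_j\in\cH$ a direct submodularity computation only bounds $\rho_0(Z^0_i\cap Z^1_j)\le 1$ --- and making sure the covering $\bigcup_j Z^1_j=V\cup W$ from Lemma~\ref{lem:unionZ} is in hand.
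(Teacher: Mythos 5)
Your proof is correct and follows essentially the same route as the paper: you establish $Z^1_i\subseteq Z^0_i$ via $\rho_0(Z^1_i)\le\rho_1(Z^1_i)=0$ plus \ref{i:rho-three} and maximality, use $Q_0\subseteq Z^0_i$ from Lemma~\ref{lem:W-Z}, and for the reverse inclusion combine $\bigcup_j Z^1_j=V\cup W$ (Lemma~\ref{lem:unionZ}) with $Z^0_i\cap Z^0_j=Q_0$ (Lemma~\ref{lemma:ZiZj}). The paper phrases the reverse inclusion as a contradiction with a chosen element $w$, while you present it as a direct set decomposition, but the underlying argument is identical.
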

\begin{proof}
Let us first show $Z^1_i\cup Q_0\subseteq Z^0_i$. By Lemma~\ref{i:Z-i}, $Q_0\subseteq Z^0_i$. Let us show $Z^1_i\subseteq Z^0_i$.
We have $\rho_0(Z^1_i)\ge 0$ by~\ref{i:rho-three} since $Z^1_i$ is a $P_i$-set, and also
$\rho_0(Z^1_i)\le \rho_1(Z^1_i)=0$. Thus, $\rho_0(Z^1_i)=0$. By the maximality of $Z^0_i$ (Lemma~\ref{i:Z-i}), 
it follows that $Z^1_i\subseteq Z^0_i$. 

We next show that equality holds. 
For the sake of contradiction, assume that we have $w\in Z^0_i\setminus (Z^1_i \cup Q_0)$ for some $i\in [n]$.
Lemma~\ref{lem:unionZ} shows that $\cup_{i=1}^n Z^1_i=V\cup W$, 
and hence we must have $w\in (Z^0_i\cap Z^1_j)\setminus Q_0$ for some $j\neq i$. Thus, we get a contradiction by
\[
w\in Z^0_i \cap Z^1_j\subseteq Z^0_i\cap Z^0_j =Q_0\, ,\]
where the last inequality follows by the third part of Lemma~\ref{lemma:ZiZj}.
\end{proof}

\subsection{The case \ref{case:samemat}}\label{sec:samemat}
We are ready to show that case~\ref{case:samemat} cannot occur. 
In this case, we have $\M_\tau=\M$, $E=E_0\cup\{(i^*,j^*)\}$, and  $\B_1=\B_0\cup\{\Xs\}$.

\begin{lemma}\label{lem:Z-q}
  Either $Q_0 = \emptyset$ or there exists a unique $q\in [n]$ such that $Q_0 \subseteq Z^1_q$.
\end{lemma}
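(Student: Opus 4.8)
The plan is to exploit that the sets $Z^1_1,\dots,Z^1_n$ partition $V\cup W$: they are pairwise disjoint by Lemma~\ref{lem:Z1-disjoint} and their union is all of $V\cup W$ by Lemma~\ref{lem:unionZ}. If $Q_0=\emptyset$ there is nothing to prove, so assume $Q_0\neq\emptyset$. Uniqueness of an index $q$ with $Q_0\subseteq Z^1_q$ is then immediate, since two distinct parts of the partition cannot both contain the nonempty set $Q_0$; so the real task is existence, i.e.\ that $Q_0$ is entirely contained in a single part. I would argue by contradiction, assuming $Q_0\not\subseteq Z^1_i$ for every $i\in[n]$, and derive a contradiction from the fact that in case~\ref{case:samemat} the Rado--minor representations of $\B_0$ and $\B_1=\B_0\cup\{\Xs\}$ differ by exactly one edge and nothing else ($\M_\tau=\M$ and $E=E_0\cup\{(i^*,j^*)\}$).

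The main tool is the following gap formula: since $\M_\tau=\M$, we have $\rho_1(Z)-\rho_0(Z)=r(\Gamma(Z))-r(\Gamma_0(Z))$ for every $Z\subseteq V\cup W$, and because $\Gamma(Z)=\Gamma_0(Z)$ when $i^*\notin Z$ while $\Gamma(Z)=\Gamma_0(Z)\cup\{j^*\}$ when $i^*\in Z$, this difference lies in $\{0,1\}$ and equals $1$ precisely when $i^*\in Z$ and $j^*\notin\cl_{\M}[\Gamma_0(Z)]$. Now let $q^*$ be the unique index with $i^*\in Z^1_{q^*}$ (well-defined because $i^*$ is an endpoint of an edge of $G$, hence lies in $V\cup W$). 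For $Z^1_{q^*}$ itself we have $\rho_1(Z^1_{q^*})=0$ by definition and $\rho_0(Z^1_{q^*})=0$ as well (it is a $P_{q^*}$-set, so $\rho_0\ge 0$ by~\ref{i:rho-three}, while $\rho_0\le\rho_1$); thus the gap at $Z^1_{q^*}$ is $0$, which forces $j^*\in\cl_{\M}[\Gamma_0(Z^1_{q^*})]$ because $i^*\in Z^1_{q^*}$. On the other hand $Z^0_{q^*}=Z^1_{q^*}\cup Q_0$ by Lemma~\ref{lem:Q0inZk}, and the contradiction hypothesis gives $Q_0\not\subseteq Z^1_{q^*}$, so $Z^0_{q^*}\supsetneq Z^1_{q^*}$; since $Z^1_{q^*}$ is the \emph{largest} $P_{q^*}$-set with $\rho_1$-value $0$ (Lemma~\ref{lem:W-Z}), we get $\rho_1(Z^0_{q^*})\ge 1$, whereas $\rho_0(Z^0_{q^*})=0$ by definition, so the gap at $Z^0_{q^*}$ equals $1$; since $i^*\in Z^1_{q^*}\subseteq Z^0_{q^*}$, this forces $j^*\notin\cl_{\M}[\Gamma_0(Z^0_{q^*})]$. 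But $\Gamma_0$ and $\cl_{\M}$ are monotone and $Z^1_{q^*}\subseteq Z^0_{q^*}$, so $j^*\in\cl_{\M}[\Gamma_0(Z^1_{q^*})]\subseteq\cl_{\M}[\Gamma_0(Z^0_{q^*})]$, contradicting the previous sentence and completing the argument.

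I expect the only delicate point to be the two implications linking the gap value of a set to whether $j^*$ belongs to the closure of its $E_0$-neighbourhood: informally, the single non-tight edge $(i^*,j^*)$ is already used up at $Z^1_{q^*}$ (the gap there is $0$, i.e.\ adjoining $j^*$ does not raise the rank of $\Gamma_0(Z^1_{q^*})$), hence it cannot increase the rank further when we pass to the strictly larger $P_{q^*}$-set $Z^0_{q^*}$ --- yet maximality of $Z^1_{q^*}$ among $P_{q^*}$-sets with $\rho_1=0$ demands that it must. Everything else is bookkeeping: the partition property of $\{Z^1_i\}$ and the identity $Z^0_i=Z^1_i\cup Q_0$ are exactly Lemmas~\ref{lem:Z1-disjoint}, \ref{lem:unionZ} and~\ref{lem:Q0inZk}, and the maximality of $Z^0_i$ and $Z^1_i$ is built into their definitions via Lemma~\ref{lem:W-Z} and Lemma~\ref{lemma:ZiZj}.
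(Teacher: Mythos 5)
Your proof is correct but takes a genuinely different route from the paper's. The paper uncrosses $Z^1_q$ and $Q_0$ under $\rho_1$ alone: from $\rho_1(Z^1_q)=0$, $\rho_1(Q_0)=1$, $\rho_1(Z^1_q\cap Q_0)\ge 1$, and $\rho_1(Z^1_q\cup Q_0)\ge 0$ (the lower bounds come from Lemma~\ref{lemma:noQ1} and~\ref{i:rho-three}), submodularity forces $\rho_1(Z^1_q\cup Q_0)=0$ and then maximality of $Z^1_q$ gives the containment; the upper bound $\rho_1(Q_0)\le 1$ used there is in effect the same ``gap'' observation $\rho_1-\rho_0\le 1$ from case~\ref{case:samemat} that you isolate, together with $\rho_0(Q_0)=0$. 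Your proof elevates that gap observation to the central tool: you locate $i^*$ in the unique $Z^1_{q^*}$ of the partition, read off from the vanishing gap at $Z^1_{q^*}$ that $j^*\in\cl_{\M}[\Gamma_0(Z^1_{q^*})]$, read off from the (necessarily) unit gap at the strictly larger $P_{q^*}$-set $Z^0_{q^*}=Z^1_{q^*}\cup Q_0$ that $j^*\notin\cl_{\M}[\Gamma_0(Z^0_{q^*})]$, and let monotonicity of $\Gamma_0$ and $\cl_{\M}$ finish the job. Both arguments rely on the same \ref{case:samemat} facts, but yours makes the role of the single non-tight edge and of Lemma~\ref{lem:Q0inZk} fully explicit, and it additionally identifies the distinguished index: $Q_0$ must sit inside the extension set that contains $i^*$. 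The paper's uncrossing is shorter; yours is a little longer but more transparent about where $\M_\tau=\M$ and $E=E_0\cup\{(i^*,j^*)\}$ are actually used. (One small remark: since your argument directly establishes $Q_0\subseteq Z^1_{q^*}$, you do not really need the blanket contradiction hypothesis ``$Q_0\not\subseteq Z^1_i$ for every $i$''; assuming only $Q_0\not\subseteq Z^1_{q^*}$ suffices, or one can phrase the whole thing constructively.)
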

\begin{proof}
Suppose $Q_0\cap Z^1_q \neq \emptyset$ for some $q\in [n]$.
By definition, $\rho_1(Z^1_q) = 0$. By Lemma~\ref{lemma:noQ1}, we have $\rho_1(Q_0) \ge 1$ and $\rho_1(Z^1_q \cap Q_0)\ge 1$. By~\ref{i:rho-three}, $\rho_1(Z^1_q \cup Q_0) \ge 0$ holds since $Z^1_q \cup Q_0$ is a $P_q$-set.
By submodularity,
\[
0+1 = \rho_1(Z^1_q) + \rho_1(Q_0) \ge \rho_1(Z^1_q \cap Q_0) + \rho_1(Z^1_q \cup Q_0) \ge 1  + 0\, ,
\]
implying $\rho_1(Z^1_q \cup Q_0) = 0$.
By maximality of $Z^1_q$, we have $Q_0 \subseteq Z^1_q$. The uniqueness of such a $q$ follows since the sets $Z^1_i$ are pairwise disjoint (Lemma~\ref{lem:Z1-disjoint}).
\end{proof}

\begin{lemma}\label{lemma:-10}
  We have $\rho_0(Z^0_1 \cup Z^0_2) = -1$ and  $\rho_0(Z^1_1 \cup Z^1_2) = 0$. Consequently, $Q_0\neq \emptyset$  and $q\notin \{1,2\}$ for $q$ as in Lemma~\ref{lem:Z-q}.
\end{lemma}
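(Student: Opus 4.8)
The plan is to separately compute the two $\rho_0$-values and then read off the consequences. For $\rho_0(Z^0_1\cup Z^0_2)$: since $\Xs = P_1\cup P_2\in\cH$ when we work with the matroid $\B_0$ (indeed $\Xs\notin\B_0$ and $\Xs$ is a circuit of $\B_0$ by~\ref{i:H-B}), the second bullet of Lemma~\ref{lemma:ZiZj} applied to the robust matroid $\B=\B_0$ (valid by Lemma~\ref{lem:B-0-robust}) immediately gives $\rho_0(Z^0_1\cup Z^0_2)=-1$. For $\rho_0(Z^1_1\cup Z^1_2)$: now $\Xs\in\B_1=\B_0\cup\{\Xs\}$, so with respect to the robust matroid $\B=\B_1$ the pair $\{1,2\}$ is a \emph{basis}, hence the first bullet of Lemma~\ref{lemma:ZiZj} gives $\rho_1(Z^1_1\cup Z^1_2)=0$. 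I still need the $\rho_0$-version. Here I would argue: $\rho_0(Z^1_1\cup Z^1_2)\ge 0$ because $Z^1_1\cup Z^1_2$ is a $(P_1\cup P_2)$-set and $P_1\cup P_2=\Xs$ is independent in $\B_0$? No---$\Xs\notin\B_0$, so that direction is not automatic. Instead, since $Z^1_i\subseteq Z^0_i$ (shown in Lemma~\ref{lem:Q0inZk}) we have $Z^1_1\cup Z^1_2\subseteq Z^0_1\cup Z^0_2$, and $\rho_0$ is submodular with $\rho_0$ minimized in the appropriate sense; more directly, $Z^1_1$ and $Z^1_2$ are $P_1$- resp.\ $P_2$-sets with $\rho_0(Z^1_i)=0$ (this is exactly the first paragraph of the proof of Lemma~\ref{lem:Q0inZk}), so by the first uncrossing Lemma~\ref{lem:uncrossing} applied in $\B_0$ with $X=P_1$, $Y=P_2$ (note $P_1\cup P_2$ is \emph{not} independent in $\B_0$, so Lemma~\ref{lem:uncrossing} does not apply directly).

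This is the subtle point and I expect it to be the main obstacle: the uncrossing lemmas behave differently depending on whether $P_1\cup P_2$ is independent. The fix I would use: combine the two facts $\rho_0(Z^1_1\cup Z^1_2)\le\rho_1(Z^1_1\cup Z^1_2)=0$ (monotonicity of $\rho_1\ge\rho_0$, using $\Gamma_0\subseteq\Gamma$ and $r_\tau\le r$ on neighbourhoods—more precisely $\rho_1(Z)\ge\rho_0(Z)$ for all $Z$ as noted right after the definition of $\rho_0,\rho_1$) with a lower bound $\rho_0(Z^1_1\cup Z^1_2)\ge -1$ coming from the fact that $Z^1_1\cup Z^1_2$ is an $\Xs$-set and $\Xs$ is a circuit in $\B_0$, so~\ref{i:rho-four} gives $\rho_0\ge -1$. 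That leaves the two possibilities $\rho_0(Z^1_1\cup Z^1_2)\in\{-1,0\}$. To exclude $-1$: if $\rho_0(Z^1_1\cup Z^1_2)=-1$, then since $\rho_0(Z^0_1\cup Z^0_2)=-1$ as well and $Z^1_1\cup Z^1_2\subseteq Z^0_1\cup Z^0_2$, monotonicity of $r_\tau(\Gamma_0(\cdot))$ forces $\Gamma_0(Z^1_1\cup Z^1_2)$ and $\Gamma_0(Z^0_1\cup Z^0_2)$ to have equal rank while $|Z^0_1\cup Z^0_2|-|Z^1_1\cup Z^1_2|=0$, i.e.\ $Q_0=Z^0_i\setminus Z^1_i=\emptyset$ by Lemma~\ref{lem:Q0inZk}; but then $Z^0_i=Z^1_i$ and $\rho_0(Z^1_1\cup Z^1_2)=\rho_0(Z^0_1\cup Z^0_2)=-1$ is consistent, so this does not yet give a contradiction. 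So I will instead derive $\rho_0(Z^1_1\cup Z^1_2)=0$ directly via uncrossing $Z^1_1$ with $Z^1_2$ \emph{in $\B_1$} to see $\rho_1(Z^1_1\cup Z^1_2)=0$ and then separately check $\rho_0(Z^1_1\cup Z^1_2)\ge 0$: the set $Z^1_1\cup Z^1_2$ is a basis-supporting $\Xs$-set in $\B_1$, and in case~\ref{case:samemat} we have $\M_\tau=\M$, $E=E_0\cup\{(i^*,j^*)\}$, so $\Gamma_0(Z)=\Gamma(Z)$ whenever $Z$ avoids the single non-tight edge; since $\Xs\notin\B_0$ but $\Xs\in\B_1$, the edge $(i^*,j^*)$ is precisely the one creating the matching for $\Xs$, hence it is incident to $Z^1_1\cup Z^1_2$, and removing it drops the rank of the neighbourhood by one---giving $\rho_0(Z^1_1\cup Z^1_2)=\rho_1(Z^1_1\cup Z^1_2)-0$ or $+1$... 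I will settle this by the cleanest route available: $\rho_0(Z^1_1\cup Z^1_2)\ge 0$ because otherwise an independent matching witnessing $\Xs\in\B_1$ using only $E_0$-edges and $\M_\tau$ would exist (contradicting $\Xs\notin\B_0$, via the Rado-minor representation of $\B_0$ by $(G_0,\M_\tau,W)$ from Lemma~\ref{lemma:LP2}). Combined with $\rho_0(Z^1_1\cup Z^1_2)\le 0$ this yields $\rho_0(Z^1_1\cup Z^1_2)=0$.

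Having established $\rho_0(Z^0_1\cup Z^0_2)=-1$ and $\rho_0(Z^1_1\cup Z^1_2)=0$, the consequences are quick. If $Q_0=\emptyset$, then by Lemma~\ref{lem:Q0inZk} we would have $Z^0_i=Z^1_i$ for all $i$, in particular $Z^0_1\cup Z^0_2=Z^1_1\cup Z^1_2$, forcing $-1=\rho_0(Z^0_1\cup Z^0_2)=\rho_0(Z^1_1\cup Z^1_2)=0$, a contradiction; hence $Q_0\neq\emptyset$. Finally, suppose for contradiction that the index $q$ of Lemma~\ref{lem:Z-q} lies in $\{1,2\}$, say $q=1$, so $Q_0\subseteq Z^1_1$. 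Then $Z^0_1=Z^1_1\cup Q_0=Z^1_1$ while $Z^0_2=Z^1_2\cup Q_0$ with $Q_0\subseteq Z^1_1$ disjoint from $Z^1_2$ (Lemma~\ref{lem:Z1-disjoint}). Thus $Z^0_1\cup Z^0_2=Z^1_1\cup Z^1_2\cup Q_0=Z^1_1\cup Z^1_2$ (since $Q_0\subseteq Z^1_1$), again giving $-1=\rho_0(Z^0_1\cup Z^0_2)=\rho_0(Z^1_1\cup Z^1_2)=0$, a contradiction. Hence $q\notin\{1,2\}$, completing the proof. The one place needing care in the write-up is the $\rho_0(Z^1_1\cup Z^1_2)\ge 0$ step, which I will pin down using the Rado-minor representation $(G_0,\M_\tau,W)$ of $\B_0$ together with the disjointness/maximality from Lemma~\ref{lemma:ZiZj} for $\B_1$.
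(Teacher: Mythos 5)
Your computation of $\rho_0(Z^0_1\cup Z^0_2)=-1$, the upper bound $\rho_0(Z^1_1\cup Z^1_2)\le\rho_1(Z^1_1\cup Z^1_2)=0$, and the derivation of the consequences from the two equalities are all correct and match the paper. But the key step --- showing $\rho_0(Z^1_1\cup Z^1_2)\ge 0$ --- has a genuine logical error in the route you finally settled on.

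You argue that $\rho_0(Z^1_1\cup Z^1_2)<0$ would imply ``an independent matching witnessing $\Xs\in\B_1$ using only $E_0$-edges and $\M_\tau$,'' contradicting $\Xs\notin\B_0$. The implication runs the wrong way: by Proposition~\ref{prop:RadoMinorRepresentation} (Rado's condition), $\rho_0(Z)<0$ for some $\Xs$-set $Z$ is precisely a \emph{witness} that $\Xs$ is dependent in $\B_0$ --- it is an obstruction to a matching, not the production of one. Indeed $\rho_0(Z^0_1\cup Z^0_2)=-1$ already holds and is perfectly consistent with $\Xs\notin\B_0$, so this route cannot exclude $\rho_0(Z^1_1\cup Z^1_2)=-1$. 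You actually started the correct argument in an earlier parenthetical (the single non-tight edge $(i^*,j^*)$ must be incident to $Z^1_1\cup Z^1_2$) but then abandoned it. The paper's proof completes exactly that line: assume $\rho_0(Z^1_1\cup Z^1_2)<0$; since in case~\ref{case:samemat} one has $\M_\tau=\M$ (so $r_\tau=r$) and $E\setminus E_0=\{(i^*,j^*)\}$, the strict inequality $r(\Gamma(Z^1_1\cup Z^1_2))>r_\tau(\Gamma_0(Z^1_1\cup Z^1_2))$ forces $j^*\notin\cl[\Gamma_0(Z^1_1\cup Z^1_2)]$ and $(i^*,j^*)$ incident to $Z^1_1\cup Z^1_2$. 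Writing $i^*\in Z^1_\ell$ for some $\ell\in\{1,2\}$, the same reasoning localizes to give $\rho_0(Z^1_\ell)<\rho_1(Z^1_\ell)=0$, contradicting $\rho_0(Z^1_\ell)\ge 0$, which holds because $Z^1_\ell$ is a $P_\ell$-set and $P_\ell$ is independent in $\B_0$ (Lemma~\ref{lem:rho-values}\ref{i:rho-three}). That localization to a single pair $P_\ell$ is the step your write-up is missing.
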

\begin{proof}
  Recall that $\rho_0(Z^0_1 \cup Z^0_2) = -1$ by Lemma~\ref{lemma:ZiZj} as $X^*\in\cH$.
  We claim that $\rho_0(Z^1_1 \cup Z^1_2) = 0$. 
  First, note  that $\rho_0(Z^1_1 \cup Z^1_2)\le 0$ since $\rho_0(Z^1_1 \cup Z^1_2)\leq\rho_1(Z^1_1 \cup Z^1_2)=0$.
  For a contradiction, assume that $\rho_0(Z^1_1 \cup Z^1_2) < 0$.  

Thus, $\rho_0(Z^1_1 \cup Z^1_2)<\rho_1(Z^1_1 \cup Z^1_2)=0$, implying 
$r(\Gamma(Z^1_1 \cup Z^1_2))>r_\tau(\Gamma_0(Z^1_1 \cup Z^1_2))$. Noting that  $\M = \M_{\tau}$ and therefore $r=r_{\tau}$, this means that $\Gamma(Z^1_1 \cup Z^1_2)\supsetneq\cl[\Gamma_0(Z^1_1 \cup Z^1_2)]$. Thus, the single edge 
$(i^*,j^*)\in E\setminus E_0$ is incident to $Z^1_1 \cup Z^1_2$.
  Let $\ell\in \{1,2\}$ such that $i^*\in Z^1_\ell$. 
  Now, we must have  $0\le \rho_0(Z^1_\ell)<\rho_1(Z^1_\ell)=0$, a contradiction.

  The last statements follow since if $Q_0=\emptyset$ or $q\in\{1,2\}$, then $Z^0_1\cup Z^0_2=Z^1_1\cup Z^1_2$ by Lemma~\ref{lem:Q0inZk}.
\end{proof}
For the rest of the proof, let us fix $q\in[n]$ such that $Q_0 \subseteq Z^1_q$, and
  let $Y \coloneqq  \cup_{i\in [n]\setminus \{q\}} Z^1_i$.
\begin{lemma}\label{lem:rho-0-Y}$\rho_0(Y) = 2-2n$. 
\end{lemma}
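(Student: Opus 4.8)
The plan is to compare the two ``nearly identical'' robust matroids $\B_0$ and $\B_1=\B_0\cup\{\Xs\}$ through their $Z$-sets, exploiting that $\M=\M_\tau$ (so $r=r_\tau$) in case~\ref{case:samemat} and the single-edge discrepancy $E\setminus E_0=\{(i^*,j^*)\}$. First I would record the relevant values from Lemma~\ref{lem:all-Z} applied to the robust matroid $\B_1$: since $Q_1=\emptyset$ we have $\rho_1(\cup_{i=1}^n Z^1_i)=\rho_1(V\cup W)=4-2n$ and $\rho_1(Y)=\rho_1(\cup_{i\in[n]\setminus\{q\}}Z^1_i)=2-2n$. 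Because $\rho_0\le\rho_1$ pointwise, this already gives $\rho_0(Y)\le 2-2n$. The work is the reverse inequality $\rho_0(Y)\ge 2-2n$.

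For the lower bound, the key observation is that the single non-tight edge $(i^*,j^*)$ can be ``charged'' to at most one of the disjoint sets $Z^1_i$. More precisely, $\Gamma(Z)=\Gamma_0(Z)$ for any $Z\subseteq V\cup W$ not incident to $i^*$, hence $\rho_1(Z)=\rho_0(Z)$ for such $Z$ (using $r=r_\tau$). By Lemma~\ref{lemma:-10} we have $i^*\in\Xs\cup W$ and in fact $\ell\in\{1,2\}$ with $i^*\in Z^1_\ell$; note $\ell\ne q$ since $q\notin\{1,2\}$. Therefore $Y=\cup_{i\in[n]\setminus\{q\}}Z^1_i$ does contain $i^*$, so we cannot directly conclude $\rho_0(Y)=\rho_1(Y)$; the argument must instead uncross $Y$ with the one set that captures the edge. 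I would write $Y=Z^1_\ell\cup Y'$ where $Y'=\cup_{i\in[n]\setminus\{q,\ell\}}Z^1_i$, note $i^*\notin Y'$ so $\rho_0(Y')=\rho_1(Y')$, and compute $\rho_1(Y')=4-2(n-2)=6-2n$ from Lemma~\ref{lem:all-Z} (this is the case $L=[n]\setminus\{q,\ell\}$ of Claim~\ref{claim:SK3}, valid since removing two indices still leaves $|L\cap K|\ge 3$ and two non-adjacent indices in $L\cap S$ for $n\ge 16$). Also $\rho_0(Z^1_\ell)=\rho_1(Z^1_\ell)=0$ since $Z^1_\ell$ is a $P_\ell$-set and $\rho_0\le\rho_1=0$ forces equality (as already used in Lemma~\ref{lemma:-10}). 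Then by submodularity of $\rho_0$,
\[
\rho_0(Y)=\rho_0(Z^1_\ell\cup Y')\ge \rho_0(Z^1_\ell)+\rho_0(Y')-\rho_0(Z^1_\ell\cap Y')=0+(6-2n)-\rho_0(Z^1_\ell\cap Y'),
\]
and since $Z^1_\ell\cap Y'$ is an $\emptyset$-set (the $Z^1_i$ are pairwise disjoint by Lemma~\ref{lem:Z1-disjoint}, so the $V$-parts are disjoint, and intersections lie in $W$), $\rho_0(Z^1_\ell\cap Y')\ge 0$ would give only $\rho_0(Y)\ge 6-2n$, which is too weak by $4$.

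So the main obstacle is squeezing out those four extra units, and this is where the true structure of case~\ref{case:samemat} must enter: one must show $Z^1_\ell$ and $Y'$ interact in $\rho_0$ essentially as badly as in $\rho_1$. The cleaner route is probably to uncross within $\rho_1$ first — from Lemma~\ref{lem:all-Z} and Claim~\ref{claim:threeIndices}-type computations one gets $\Delta_{\rho_1}(Z^1_\ell\mid Y')=\rho_1(Y)-\rho_1(Y')=(2-2n)-(6-2n)=-4$, and likewise $\Delta_{\rho_1}(Z^1_\ell\mid Y)=\rho_1(V\cup W)-\rho_1(Y)=(4-2n)-(2-2n)=2$, consistent with $Z^1_\ell$ meeting $Y'$ in a rank-drop of exactly $4$ — then transfer this to $\rho_0$ using that the only place $\rho_0$ and $\rho_1$ can differ is across the edge $(i^*,j^*)$, which is incident to $Z^1_\ell$ but whose other endpoint $j^*$ satisfies $j^*\notin\cl_{\M}[\Gamma_0(\Xs)]\supseteq$ (nothing forcing it into $\Gamma(Y')$). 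Concretely: $\Gamma_0(Y)=\Gamma_0(Z^1_\ell)\cup\Gamma_0(Y')$ and $\Gamma(Y)=\Gamma_0(Y)\cup(\{j^*\}$ if $j^*\in\Gamma(Z^1_\ell))$; since removing a single element changes rank by at most one, $\rho_1(Y)-\rho_0(Y)\le 1$, giving $\rho_0(Y)\ge \rho_1(Y)-1=1-2n$ — still not enough. The honest fix is to invoke Lemma~\ref{lem:rho-prop}: the chain of submodular inequalities that produced $\rho_1(Y)=2-2n$ in Lemma~\ref{lem:all-Z} holds with equality throughout, so by Lemma~\ref{lem:rho-prop} the relevant closures satisfy $\cl[\Gamma(Z^1_\ell)]\cap\cl[\Gamma(Y')]=\cl[\Gamma(Q_1)]=\cl[\Gamma(\emptyset)]$, i.e.\ they meet only in loops; combined with $\Gamma_0=\Gamma$ away from $i^*$ and the fact that a single extra element $j^*$ can inflate $r_\tau(\Gamma_0(Z^1_\ell))$ by at most one, one pins down $\rho_0(Z^1_\ell\cap Y')=0$ \emph{and} the deficiency $\rho_0(Z^1_\ell\cup Y')-\rho_0(Z^1_\ell)-\rho_0(Y')=-4$ to be inherited verbatim, yielding $\rho_0(Y)=2-2n$. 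I expect the bookkeeping of exactly how the lone edge $(i^*,j^*)$ is or is not absorbed into $\Gamma_0(Z^1_\ell)$ versus $\Gamma(Z^1_\ell)$ — and confirming it does not leak into $\Gamma_0(Y')$ — to be the delicate point, and the parity asymmetry between $\ell=1$ and $\ell=2$ noted after Lemma~\ref{lem:E-E-0} may need separate attention.
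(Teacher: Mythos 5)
There is a genuine gap in your argument, and in fact the central inequality is written with the wrong sign. You write ``by submodularity of $\rho_0$, $\rho_0(Y)=\rho_0(Z^1_\ell\cup Y')\ge \rho_0(Z^1_\ell)+\rho_0(Y')-\rho_0(Z^1_\ell\cap Y')$,'' but submodularity $\rho_0(A)+\rho_0(B)\ge\rho_0(A\cup B)+\rho_0(A\cap B)$ gives the \emph{upper} bound $\rho_0(A\cup B)\le\rho_0(A)+\rho_0(B)-\rho_0(A\cap B)$, not a lower bound. The decomposition $Y=Z^1_\ell\cup Y'$ — isolating the index $\ell$ with $i^*\in Z^1_\ell$ — therefore cannot produce the needed lower bound on $\rho_0(Y)$ through submodularity at all, which is why you found yourself ``too weak by 4'' (in fact by 6: $\rho_1(Y')=4-2(n-2)=8-2n$, not $6-2n$) and had to resort to the unjustified step of ``inheriting the deficiency verbatim.'' The appeal to Lemma~\ref{lem:rho-prop} at the end is not an argument; nothing you write pins down the ranks in $\M_\tau$ from the equality cases in $\M$, and you acknowledge this yourself by flagging it as ``the delicate point.''

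The missing idea is that Lemma~\ref{lem:all-Z} applies to $\rho_0$ as well as to $\rho_1$, because $\B_0$ is also a robust matroid with Rado-minor representation $(G_0,\M_\tau,W)$, giving $\rho_0\bigl(\cup_{i\ne q}Z^0_i\bigr)=2-2n$ directly. You never need to track the edge $(i^*,j^*)$. Instead, use Lemma~\ref{lem:Q0inZk} ($Z^0_i=Z^1_i\cup Q_0$) together with the disjointness $Z^1_i\cap Q_0=\emptyset$ for $i\ne q$ (since $Q_0\subseteq Z^1_q$ and the $Z^1_i$ are pairwise disjoint) to rewrite $Y=\cup_{i\ne q}Z^1_i=\bigl(\cup_{i\ne q}Z^0_i\bigr)\setminus Q_0$. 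Now uncross with $Q_0$: taking $A=\bigl(\cup_{i\ne q}Z^0_i\bigr)\setminus Q_0$ and $B=Q_0$, so $A\cup B=\cup_{i\ne q}Z^0_i$ and $A\cap B=\emptyset$, submodularity gives
\[
\rho_0(Y)=\rho_0(A)\ge\rho_0(A\cup B)+\rho_0(A\cap B)-\rho_0(B)=(2-2n)+0-0\,,
\]
using $\rho_0(Q_0)=0$ and $\rho_0(\emptyset)=0$. Combined with your correct upper bound $\rho_0(Y)\le\rho_1(Y)=2-2n$ this finishes the proof. Note the crucial point: here submodularity is used to lower-bound $\rho_0(A)$ given the values on $A\cup B$, $A\cap B$, $B$ — which is the legitimate direction — rather than to lower-bound $\rho_0(A\cup B)$, which submodularity can never do.
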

\begin{proof}
  By the second part of Lemma~\ref{lem:all-Z} for $\rho_1$, we have $\rho_1(Y) = 2-2n$.
  We show that the same holds for $\rho_0$.

  As all $Z^1_i$ are disjoint (Lemma~\ref{lem:Z1-disjoint}), we have  $Q_0 \cap Z^1_i =\emptyset$ for all $i \in[n]\setminus \{q\}$. 
  Then by Lemma~\ref{lem:Q0inZk} we have that $Z^1_i = Z^0_i \setminus Q_0$. Thus,
\begin{align}
  \rho_0(Y) &= \rho_0(\cup_{i\in [n]\setminus \{q\}} Z^1_i) \notag \\
      &= \rho_0(\cup_{i\in [n]\setminus \{q\}} (Z^0_i \setminus Q_0)) \notag\\
      &= \rho_0\left((\cup_{i\in [n]\setminus \{q\}} Z^0_i) \setminus Q_0\right) + \rho_0(Q_0) \tag{$\rho_0(Q_0) = 0$}\\
      &\ge \rho_0(\cup_{i\in [n]\setminus \{q\}} Z^0_i) + \rho_0(\emptyset) \tag{submodularity}\\
      & = 2- 2n \tag{Lemma~\ref{lem:all-Z} for $\rho_0$}\,.
\end{align}
Since $\rho_0(Y)\le \rho_1(Y)$, we conclude $\rho_0(Y) = 2-2n$.
\end{proof}

Let us now derive the final contradiction for~\ref{case:samemat}. 
As $Q_0\subseteq Z^1_q\subseteq (V\cup W)\setminus Y$, by submodularity we get
\[
\rho_0(Z^1_1 \cup Z^1_2 \cup Q_0) + \rho_0(Y)\ge 
\rho_0(Y \cup Q_0) + \rho_0(Z^1_1 \cup Z^1_2)  \,.
\]
Then, by Lemma~\ref{lemma:-10}, and using $Z^1_1\cup Z^1_2\cup Q_0=Z^0_1\cup Z^0_2$, we further have 
\begin{align*}
\rho_0(Y \cup Q_0) - \rho_0(Y) &\le \rho_0(Z^1_1 \cup Z^1_2 \cup Q_0) - \rho_0(Z^1_1 \cup Z^1_2) \\
&= \rho_0(Z^0_1 \cup Z^0_2) - \rho_0(Z^1_1 \cup Z^1_2)= -1\,.
\end{align*}
From Lemma~\ref{lem:rho-0-Y}, $\rho_0(Y\cup Q_0) \le 1-2n$.
On the other hand, $\displaystyle \rho_0(Y\cup Q_0) = \rho_0(\cup_{i\in [n]\setminus \{q\}} Z^1_i \cup Q_0) = 
\rho_0(\cup_{i\in [n]\setminus \{q\}} Z^0_i) = 2-2n$. A contradiction.


\subsection{The case~\ref{case:alltight}}
\label{sec:alltight}
In the remaining case~\ref{case:alltight}, we have $E=E_0=E^*$ but $\M_\tau\neq \M$. In Section~\ref{sec:robust-to-rep}, we have already showed some strong properties for this case: $Q_0=\emptyset$ (Lemma~\ref{lem:no-Q0-CII}),  $\B_1=\B_0\cup\{\Xs\}$ (Lemma~\ref{lem:B1-B0}), and $Z^0_i=Z^1_i$ for all $i\in [n]$ (Lemma~\ref{lem:Q0inZk}). In light of this, we can simplify the notation to $Z_i=Z_i^0=Z_i^1$. Recall also that the sets $Z_i$ are disjoint by Lemma~\ref{lem:Z1-disjoint}.

Let $D_i\coloneqq \cl[\Gamma_{E}(Z_i)]$; see Figure~\ref{fig:figureZi}.
By  Lemma~\ref{lemma:ZiZj}, there are no edges with one end point in $Z_i$ and the other in $D_j$ whenever $i\neq j$.

Let us additionally modify the bipartite graph in the representations:
we may assume that $E=E_0=E^*$ is a complete bipartite graph between $Z_k$ and $D_k$ for any $k\in [n]$.
For this, we need to check that after adding such new edges between $Z_k$ and $D_k$, $(G,\M,c,W)$ still represents a function in $\cF_n$ and is equally good on the selection criteria \ref{crit:minsup}, \ref{crit:minW}, and \ref{crit:nontight}. Since we add these new edges to be tight, the criteria \ref{crit:minW} and \ref{crit:nontight} remain the same. To show that we still represent a function in $\cF_n$ and that $\B_1$ does not increase, we use that $\rho_1(Z_i\cup Z_j)=-1$
by Lemma~\ref{lemma:ZiZj} for any $X=P_i\cup P_j\in\cH$, and this will hold after adding new edges between the $Z_k$ and $D_k$ sets. Similarly, $\rho_0(Z_1\cup Z_2)=-1$ will be maintained, and thus we still have $h(X^*)<0$, even though the value of $h(X^*)$ might increase.

\begin{figure}
\centering
\includegraphics[width=0.8\textwidth]{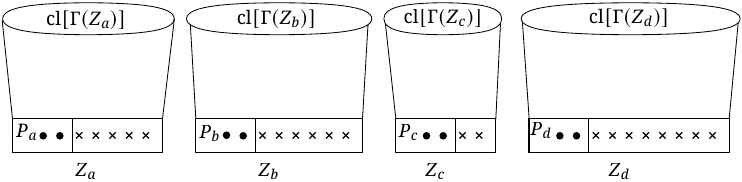} 
\caption{Case~\ref{case:alltight}: Schematic example of matroid $\B_1$ with its Rado-minor representation $(G, \M, W)$. Here, the neighbourhood is taken in the edge set $E$, and the closure in the matroid $\M$. 
The black dots represent set $V$ and the white dots represent $W$.
Similarly, a Rado-minor representation holds for $\B_0$ once we replace $\M$ (and closure) by $\M_{\tau}$.}
\label{fig:figureZi}
\end{figure}

Introducing these new edges allows us to describe the representations of $\B_0$ and $\B_1$ in purely set-theoretic and matroidal terms.
The following definition captures this representation.
\begin{definition} For a set $X\in \binom{V}{4}$, we say that a set $S\subseteq U$, $|S|=|W|+4$ \emph{conforms} $X$ if 
$|S\cap D_i|=|X\cap P_i|+|Z_i|-2$ for all $i\in [n]$. 
\end{definition}
 The requirements on our matroids $\M$ and $\M_\tau$ can be stated as follows:
\begin{itemize}
  \item For any $X\in \binom{V}{4}$, there exists a basis $S$ in $\M$ conforming $X$ if and only if $X\in \B_1$. 
  \item For any $X\in \binom{V}{4}$, there exists a basis $S$ in $\M_\tau$ conforming $X$  if and only if $X\in \B_0$.
\end{itemize}

The next lemma concludes the proof of Theorem~\ref{thm:non-r-minor-main}, by showing that $W=\emptyset$ is a minimal representation. Thus, the existence of an R-minor representation would imply the existence of an R-induced representation, which we have already shown cannot exist.

Recall from Lemma~\ref{lem:all-Z} (applied to both $\rho_0$ and $\rho_1$) that 
$\rho_0(\cup_{i=1}^n Z_i)=\rho_1(\cup_{i=1}^n Z_i)=4-2n$ and
and $\rho_0(\cup_{i\in[n]\setminus\{j\}} Z_i)=\rho_1(\cup_{i\in[n]\setminus\{j\}} Z_i)=2-2n$ for every $j\in [n]$.

\begin{lemma}
In a minimal representation we must have $W=\emptyset$.
\end{lemma}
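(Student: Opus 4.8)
The plan is to argue by contradiction: suppose the minimal representation has $W\neq\emptyset$, and produce a strictly smaller representation by deleting one node of $W$ while lowering the rank of $\M$ (and $\M_\tau$) by one, contradicting criterion \ref{crit:minW}. Since we are in case~\ref{case:alltight}, we have the very clean structure established in Section~\ref{sec:robust-to-rep}: $E=E_0=E^*$, $Q_0=\emptyset$, $\B_1=\B_0\cup\{\Xs\}$, the sets $Z_i=Z_i^0=Z_i^1$ are pairwise disjoint with $\cup_{i=1}^n Z_i=V\cup W$ (Lemma~\ref{lem:unionZ}), and after the edge-completion step the graph is a disjoint union over $i\in[n]$ of complete bipartite graphs between $Z_i$ and $D_i=\cl_\M[\Gamma(Z_i)]=\cl_{\M_\tau}[\Gamma_0(Z_i)]$ (the closures agree since $\M=\M_\tau$ on these neighbourhoods — or more precisely the $\rho$-computations force $r(D_i)=r_\tau(\Gamma_0(Z_i))=|Z_i|$ for each $i$, wait, one must be careful here, see below). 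The key numerical facts are $\rho_0(\cup_i Z_i)=\rho_1(\cup_i Z_i)=4-2n$ and $\rho_0(\cup_{i\neq j}Z_i)=\rho_1(\cup_{i\neq j}Z_i)=2-2n$ for every $j$.

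First I would pick a node $w\in W$; by $\cup_i Z_i=V\cup W$ it lies in a unique $Z_k$, and since $Z_k$ is a $P_k$-set, $|Z_k\cap V|=2$, so $|Z_k|\ge 3$. Using $\rho_0(Z_k)=\rho_1(Z_k)=0$ we get $r(D_k)=r_\tau(\cl_{\M_\tau}\Gamma_0(Z_k))=|Z_k|$, and the ``conforms'' description tells us that a basis $S$ of $\M$ (resp. $\M_\tau$) restricted to $D_k$ always has size exactly $|Z_k\cap X|+|Z_k|-2$ for the $X$ it conforms. The plan is to contract $w$ out: formally, pick any element $u\in D_k$ that is not a loop, contract $\M$ by $u$ (and $\M_\tau$ by $u$), delete $w$ from $W$, and delete $u$ from $U$; simultaneously one matches $w$ to $u$ and removes that matched pair from every relevant independent matching. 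One must check that (i) contracting the \emph{same} element $u$ works for both $\M$ and $\M_\tau$ — this should follow because $\M$ and $\M_\tau$ have the same rank on $D_k$ and agree ``enough'' there after the completion step; (ii) the resulting representation still computes a function in $\cF_n$ (on the ground set $V$, same pairs), because the ``conforms'' condition for the $i\neq k$ blocks is untouched, and for the $k$-th block the contraction lowers both $|Z_k|$ and $r(D_k)$ by one, so $|S\cap D_k|=|X\cap P_k|+|Z_k'|-2$ with $|Z_k'|=|Z_k|-1$ still matches up; (iii) $\B_1$ does not grow and $X^*$ still has negative value, which again is a $\rho$-computation that is preserved because we decremented $|Z_k|$ and $r(D_k)$ together.

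The main obstacle, as I see it, is handling the \emph{two different} matroids $\M$ and $\M_\tau$ simultaneously: one must contract the \emph{same} element $u\in D_k$ from both, and verify that the ``conforms'' characterisations of $\B_1$ (via $\M$) and $\B_0$ (via $\M_\tau$) are both correctly inherited. The danger is that a basis of $\M_\tau$ might use $D_k$ differently than a basis of $\M$ and that the element $u$ we contract is a coloop in one but not the other, or a loop in one. Here I would use the structural facts from Section~\ref{section:robustMatroids} more carefully: since $r_\tau(D_k)=r(D_k)=|Z_k|\ge 3$, $D_k$ has rank $\ge 3\ge 1$ in both matroids, so a non-loop element exists in both; and I would argue we can choose $u$ to be non-loop in both $\M$ and $\M_\tau$, or else split into subcases. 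One clean way: observe $w$ is matched, in \emph{every} independent matching realising any $X\in\B_0$ or $X\in\B_1$, to \emph{some} node of $D_k$, and the set of such ``partners of $w$'' spans $D_k$ (else we could already have removed $w$ as in Lemma~\ref{lemma:noQ1}); pick $u$ among these. After the contraction, the number of nodes drops, $|W|$ decreases by one, and we contradict \ref{crit:minW}. Combined with Section~\ref{section:Rinduced} (which rules out $W=\emptyset$ via Proposition~\ref{prop:h-not-R-induced}), this gives the final contradiction and completes the proof of Theorem~\ref{thm:non-r-minor-main}.
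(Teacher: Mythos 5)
Your overall strategy is exactly the paper's: remove one node of $W$, contract one element of $U$, and contradict \ref{crit:minW}. You also correctly anticipate that the crux is handling $\M$ and $\M_\tau$ simultaneously. But the specific resolutions you sketch are not strong enough, and this is the real content of the argument.

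Picking $u\in D_k$ merely as a \emph{non-loop} in both matroids does not suffice. After contracting $u$, a basis $B$ of $\M_\tau/u$ is of the form $S\setminus\{u\}$ where $S$ is a basis of $\M_\tau$ with $u\in S$. So you must show that for \emph{every} $X\in\B_0$ there exists a conforming basis of $\M_\tau$ containing $u$ (and similarly for $X^*$ in $\M$). For a conforming $S$ with $u\notin S$, the fundamental circuit $C$ of $u$ w.r.t.\ $S$ gives you candidates to swap out. But you must swap out an element of $S\cap D_k$ — otherwise the new basis has $|S'\cap D_k|=|S\cap D_k|+1$ and no longer conforms. If $C\setminus u\subseteq U\setminus D_k$, this swap is impossible, and a generic non-loop $u$ can indeed have this problem. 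Your alternative heuristic (``pick $u$ among the partners of $w$'') has the same defect: for a particular $X$ with matching $\mu^X$ there is a conforming basis containing $u$, but for a different $X'$ the partner of $w$ may differ and you again face an unconstrained exchange.

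What the paper does is choose $u\in D_k\setminus\cl_{\M_\tau}(U\setminus D_k)$ rather than just a non-loop. Such a $u$ exists because $|Z_k|>2$ forces every conforming basis (hence every basis, using $\M=\overline\M$) to intersect $D_k$, so $D_k\not\subseteq\cl_{\M_\tau}(U\setminus D_k)$. With this choice, $C\setminus u\not\subseteq U\setminus D_k$ for any basis $S$ not containing $u$, so the swap always lands in $S\cap D_k$ and preserves the ``conforms'' profile. Finally, the weak-map inclusion $\cl_{\M}(U\setminus D_k)\subseteq\cl_{\M_\tau}(U\setminus D_k)$ — a consequence of the $\M=\overline\M$ reduction making $\M_\tau$ a ``submatroid'' of $\M$ — ensures the \emph{same} $u$ also lies outside $\cl_{\M}(U\setminus D_k)$, so the identical exchange argument gives a conforming $\M$-basis for $X^*$ containing $u$. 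That is the single missing idea in your sketch.
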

\begin{proof}
For a contradiction, assume $W\neq\emptyset$; pick $i\in [n]$ such that $|Z_i|>2$. 
Now, every basis in $\M$ (and thus in $\M_\tau$) must intersect $D_i$ in at least $|W\cap Z_i|=|Z_i|-2>0$ elements. Here, we use that $\M=\overline \M$ as in Section~\ref{sec:robust-to-rep}: thus, every basis in $\M$ must have a matching to a set $W\cup X$, $X\in\B_0\cup\{X^*\}=\B_1$. 
Since $|Z_i|>2$, every basis in $\M_\tau$ has a non-empty intersection with $D_i$, implying  that $D_i\not\subseteq  \cl_{\M_\tau}[U\setminus D_i]$; let us pick $u\in D_i\setminus \cl_{\M_\tau}[U\setminus D_i]$.
We claim that a smaller representation can be obtained by contracting $u$ in the matroid $\M$ and deleting an arbitrary node from $W\cap Z_i$.

 To see this, it suffices to prove that for every $X\in \B_0$, there exists a basis $S$ in $\M_\tau$ conforming $X$ with $u\in S$, 
 and there exists a basis $S^*$ in $\M$ conforming $X^*$ with $u\in S^*$. 
 Then, the requirements listed above remain true in the smaller instance. 
 Note that we do not require that $S^*$ has the largest possible $\tau$-weight; 
 as long as we can guarantee the existence of a basis in $\M$ but not in $\M_\tau$ that conforms $X^*$, we get a function in $\cF_n$ that is the same on \ref{crit:minsup}, but better on \ref{crit:minW} (with possibly different negative value $h(\Xs)$).

 Consider any $X\in \B_0$ and a basis $S$ in $\M_\tau$ conforming $X$. We are done if $u\in S$. If  $u\notin S$, then 
 let $C \subseteq S\cup \{u\}$ be the fundamental circuit of $u$ with respect to $S$.
 Then, $(C\setminus u)\cap D_i\neq\emptyset$: 
 otherwise, $C\setminus u\subseteq U\setminus D_i$ would yield $u\in \cl_{\M_\tau}[U\setminus D_i]$, a contradiction to the choice of $u$. Hence, we can exchange $u$ with an element of $S\cap D_i$ and thereby obtain another basis $S'$ conforming $X$ with $u\in S'$.

 The same argument applies for the basis $S^*$ in $\M$ conforming $X^*$, noting that $\cl_{\M}[U\setminus D_i]\subseteq \cl_{\M_\tau}[U\setminus D_i]$.
\end{proof}

\section{Valuated generalized matroids}
\label{sec:valuated+generalized+matroids}
In this section, we build on Theorem~\ref{thm:non-r-minor-main} to refute the matroid based valuation conjecture.
To do this, we extend the class of R-minor valuated matroids to $\Rnat$-minor valuated generalized matroids, and show this contains matroid based valuations as a subclass.
Furthermore, we extend our main counterexample to a monotone valuated generalized matroid that is not $\Rnat$-minor and therefore not a matroid based valuation, refuting the MBV conjecture.

Recall from~\eqref{eq:Mnat-concave} and~\eqref{eq:M-concave} the properties of valuated generalized matroids.
In Appendix~\ref{sec:vgm-to-vm}, we demonstrate a construction which allows one to consider valuated generalized matroids as special cases of valuated matroids on a larger ground set. 
On the other hand, we already saw valuated matroids as a special class of valuated generalized matroids.

An important class are the trivially valuated generalized matroids, those taking only values $0$ and $-\infty$. 
This includes the characteristic functions of the family of independent sets of a matroid. Indeed, if $g(\emptyset)>-\infty$ for a valuated generalized matroid, then $\dom(g)$ is the family of independent sets of a matroid~\cite[Corollary~1.4]{MurotaShioura:2018}. 
\smallskip

Given a valuated generalized matroid $g$ on $V$, we define its \emph{$k$-th layer} $\layer{k}{g}$ to be the restriction of $g$ to $\binom{V}{k}$.
It is immediate from the definition that $\layer{k}{g}$ is a valuated matroid.
We defined several constructions for valuated matroids in Section~\ref{sec:preliminaries}.
It turns out that these operations extend essentially layer-wise to valuated generalized matroids. 

\begin{definition}
  Let $f \colon 2^V \to \Trop$ be a valuated generalized matroid and $Y\subset V$ some subset of $V$.
  The operations \emph{deletion (restriction), contraction, dualization, truncation, principal extension} are defined by the respective operations on the layers from Definition~\ref{def:M+operations}.
\end{definition}

Note that direct sum and valuated matroid union do not extend layerwise to valuated generalized matroids.
Intuitively, this is because the $k$-th layer of the union must take information from multiple layers of the constituent valuated generalized matroids, all $i$-th and $j$-th layers such that $k = i+j$.
The analogue of direct sum and valuated matroid union for valuated generalized matroids is the following operation.

\begin{definition}
  Let $f, g \colon 2^V \to \Trop$. 
  The \emph{merge} of $f$ and $g$ is the function $f*g \colon 2^V \to \Trop$ defined as
  \[
  (f*g)(X) = \max\SetOf{f(Y) + g (X \setminus Y)}{Y \subseteq X}, \quad \forall X\subseteq V .
  \]
\end{definition}

We now extend induction by network to valuated generalized matroids.

\begin{definition} \label{def:induction-by-networks-Mnat}
  Let $N=(T, A)$ be a directed network with a weight function $\co\in \R^A$. Let $V, U\subseteq T$ be two non-empty subsets of nodes of $N$.
  Let $g$ be a valuated generalized matroid on $U$.
  Then the \emph{induction of $g$ by $N$} is the function $\tempinducednew{N}{g}{\co} \colon 2^V \to \Trop$ such that
  \[
  \layer{k}{\tempinducednew{N}{g}{\co}} = \tempinducednew{N}{\layer{k}{g}}{\co} , 
  \]
  where $\tempinducednew{N}{g}{\co}(\emptyset) = g(\emptyset)$.

In the special case that the directed network is bipartite with the edges directed from $V$ to $U$, we can also consider this as an undirected weighted bipartite graph and call the corresponding operation \emph{induction by bipartite graphs}. 
\end{definition}
Note that the formula defining $\tempinducednew{N}{g}{\co}(X)$ remains identical to the one in Definition~\ref{def:induction-by-networks}, namely,
\[
\tempinducednew{N}{g}{\co}(X) = \max\SetOf{\sum_{a \in \mathcal{P}} \co(a) + g(Y)}{\begin{aligned}&\text{node-disjoint paths } \mathcal{P} \text{ in } N \colon\\ &\partial_V({\cP}) = X \wedge \partial_U({\cP}) = Y\end{aligned}} .
\]
Analogous to Theorem~\ref{thm:induction-network} this is just a special case of transformation by networks.

\begin{theorem}[{\cite[Theorem~9.27]{Murotabook}}]\label{thm:induction-network-Mnat}
  Let $N, g, \co$ as in Definition~\ref{def:induction-by-networks-Mnat}. Then if $\tempinducednew{N}{g}{c} \not\equiv -\infty$ the induced function is a valuated generalized matroid. 
\end{theorem}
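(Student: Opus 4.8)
The plan is to recognise $\tempinducednew{N}{g}{\co}$ as a special case of Murota's \emph{transformation by networks} and to invoke~\cite[Theorem~9.27]{Murotabook} directly, exactly as was done in the valuated matroid setting for Theorem~\ref{thm:induction-network}. Concretely, one checks that the node-disjoint path formula defining $\tempinducednew{N}{g}{\co}(X)$ is the specialisation of the general network-transformation formula to the situation where the transformed function is supported on the node subset $U$ and the paths terminate on the node subset $V$. Once the definitions are matched up there is nothing more to do: $\Mnat$-concavity (i.e.\ the valuated generalized matroid property) is preserved under that transformation, and the non-triviality hypothesis $\tempinducednew{N}{g}{\co}\not\equiv-\infty$ is precisely what excludes the degenerate output.

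For a self-contained argument using only results established above, I would instead reduce to Theorem~\ref{thm:induction-network}. By Definition~\ref{def:induction-by-networks-Mnat} we have $\layer{k}{\tempinducednew{N}{g}{\co}} = \tempinducednew{N}{\layer{k}{g}}{\co}$ for every $k$, and each $\layer{k}{g}$ is a valuated matroid; hence every non-empty layer of $f := \tempinducednew{N}{g}{\co}$ is a valuated matroid by Theorem~\ref{thm:induction-network}. This already yields the equal-cardinality axiom~\eqref{eq:M-concave} for $f$, since that axiom only ever refers to a single layer.

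The remaining point, and the genuine obstacle, is the cross-cardinality axiom~\eqref{eq:Mnat-concave}, which couples the layer of size $|X|$ with the layer of size $|Y|$ and so does not follow formally from the layers being valuated matroids. Here I would pass through the dictionary of Appendix~\ref{sec:vgm-to-vm} between valuated generalized matroids on $U$ and valuated matroids on a doubled ground set $U \dot{\cup} U'$: let $\tilde g$ be the valuated matroid on $U \dot{\cup} U'$ of rank $|U|$ associated to $g$, and build an enlarged network $\tilde N$ from $N$ by adjoining the copies $U'$ together with a disjoint copy $V'$ of $V$, joined by zero-weight arcs that let each copy be picked up ``for free''. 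The construction should be arranged so that $\tempinducednew{\tilde N}{\tilde g}{\co}$ is exactly the valuated matroid on $V \dot{\cup} V'$ associated to $f$; applying Theorem~\ref{thm:induction-network} to $\tilde N$ and $\tilde g$ then shows this is a valuated matroid, whence $f$ is a valuated generalized matroid. The delicate step is calibrating the free arcs of $\tilde N$ so that the numbers of ``unused'' elements of $U$ and of $V$, which need not be equal, are absorbed correctly; equivalently, one verifies~\eqref{eq:Mnat-concave} by hand through an augmenting-path argument on the multiset union of the two optimal node-disjoint path systems witnessing $f(X)$ and $f(Y)$, using the $\Mnat$-concavity of $g$ to transfer a single element across — which is precisely the portion of~\cite[Theorem~9.27]{Murotabook} being quoted.
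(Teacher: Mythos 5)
Your primary argument — identifying $\tempinducednew{N}{g}{\co}$ as a specialisation of Murota's transformation by networks and invoking \cite[Theorem~9.27]{Murotabook} — is exactly what the paper does; the theorem is stated with that citation and no further proof, and the surrounding text says ``this is just a special case of transformation by networks.'' So your first paragraph is the paper's own argument.

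Your sketched self-contained reduction to Theorem~\ref{thm:induction-network} via the doubling dictionary of Appendix~\ref{sec:vgm-to-vm} is a reasonable idea, and you correctly identify that the per-layer reduction already gives~\eqref{eq:M-concave} but cannot give~\eqref{eq:Mnat-concave}. However, the construction as written runs into a rank mismatch that you flag but do not close. The doubled valuated matroid $\tilde g$ on $U\dot\cup U'$ has rank $|U|$, and induction by a network preserves rank (the node-disjoint paths form a bijection between the endpoints in $V$ and those in $U$), so $\tempinducednew{\tilde N}{\tilde g}{\co}$ necessarily has rank $|U|$. But for it to equal the doubled valuated matroid on $V\dot\cup V'$ associated to $f$, it must have rank $|V|$. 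When $|U|\neq |V|$ these disagree, and no amount of adding zero-weight arcs fixes this, since rank is determined by the cardinality of the path systems, not by their weights. A repair is possible — pad the smaller of $U,V$ with additional free elements (coloops, as in Example~\ref{ex:extension-by-coloops}) before applying the doubling — but that step is missing, so the self-contained route is incomplete as written. Since the paper itself just cites Murota, and you offer that citation as your primary argument, the proposal is acceptable; the self-contained variant should not be relied on without filling in the rank-padding step.
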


With this operation, we get an analogue of Theorem~\ref{thm:VM+closed+operations}. 

\begin{theorem}
    The class of valuated generalized matroids is closed under the operations deletion, contraction, dualization, truncation, principal extension, merge. 
\end{theorem}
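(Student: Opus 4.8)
We separate the five layer-wise operations from \emph{merge}. For deletion, contraction, dualization, truncation and principal extension the output is defined layer by layer, so its $k$-th layer is obtained by applying the corresponding operation of Definition~\ref{def:M+operations} to the valuated matroid $\layer{k}{f}$, and is again a valuated matroid by Theorem~\ref{thm:VM+closed+operations}. The only thing left to verify is that these layers together satisfy the cross-layer axiom~\eqref{eq:Mnat-concave}. The plan is to obtain all of this --- except dualization --- from the fact, already recorded in Theorem~\ref{thm:induction-network-Mnat}, that induction by a network turns a valuated generalized matroid into a valuated generalized matroid, by exhibiting each operation as such an induction (possibly composed with others already handled).

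First I would treat deletion. Let $G$ be the bipartite graph on $(V\setminus Y)\cup V$ consisting of the edges $v\mapsto v$ for $v\in V\setminus Y$, each of weight $0$, and induce $f$ regarded as a valuated generalized matroid on the $U$-side. A system of node-disjoint paths covering $X\subseteq V\setminus Y$ maps $X$ onto itself, so $\tempinducednew{G}{f}{\mathbf{0}}(X)=f(X)$; on the $k$-th layer this is the restriction of $\layer{k}{f}$ to $\binom{V\setminus Y}{k}$, which coincides with $(\layer{k}{f})\setminus Y$ --- either directly, when $V\setminus Y$ has full rank in $\dom(\layer{k}{f})$, or because both are identically $-\infty$ otherwise, as then no size-$k$ member of $\dom(f)$ lies inside $V\setminus Y$. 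Hence $f\setminus Y=\tempinducednew{G}{f}{\mathbf{0}}$ is a valuated generalized matroid. Dualization is the one operation I would handle directly: on layers it produces exactly $f^{*}(X)=f(V\setminus X)$, and complementing every set in~\eqref{eq:M-concave} (resp.~\eqref{eq:Mnat-concave}) for $f$ turns it into~\eqref{eq:Mnat-concave} (resp.~\eqref{eq:M-concave}) for $f^{*}$, since the cardinality ordering reverses and the distinguished elements and the index sets of the maxima correspond under $A\mapsto V\setminus A$.

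With deletion and dualization in hand, contraction follows because $f/Y=(f^{*}\setminus Y)^{*}$, as one checks layer by layer. Principal extension $f^{w}$ is the induction of $f$ by the bipartite graph of Remark~\ref{rem:principle+extension} --- this matches the layer-wise definition through Definition~\ref{def:induction-by-networks-Mnat} --- and is therefore a valuated generalized matroid by Theorem~\ref{thm:induction-network-Mnat}; truncation then follows from $f^{(1)}=f^{\mathbf{0}}/\{p\}$, the contraction by the new element $p$ of the principal extension of $f$ with the zero weight vector, again checked on layers.

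For merge I would proceed as follows. Let $V_{1},V_{2}$ be disjoint copies of $V$ and let $h_{0}$ on $V_{1}\,\dot\cup\,V_{2}$ be given by $h_{0}(X_{1}\cup X_{2})=f(X_{1})+g(X_{2})$ for $X_{1}\subseteq V_{1}$, $X_{2}\subseteq V_{2}$. For any two sets and any element $i$ of their symmetric difference, $i$ lies in exactly one of $V_{1},V_{2}$; applying the exchange axioms for $f$ or $g$ in that coordinate while keeping the other coordinate fixed shows $h_{0}$ is a valuated generalized matroid. Let $G_{\mathrm{fold}}$ be the bipartite graph sending both copies of each $v\in V$ to $v$, with all weights $0$; a system of node-disjoint paths covering $X\subseteq V$ amounts to a choice, for each $v\in X$, of the copy $v_{1}$ or $v_{2}$, so $\tempinducednew{G_{\mathrm{fold}}}{h_{0}}{\mathbf{0}}(X)=\max_{A\subseteq X}\bigl(f(A)+g(X\setminus A)\bigr)=(f*g)(X)$, and $f*g$ is a valuated generalized matroid by Theorem~\ref{thm:induction-network-Mnat} (it is $\not\equiv-\infty$ whenever $f,g\not\equiv-\infty$). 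I expect the main obstacle to be two bookkeeping checks rather than a single hard step: matching each layer-wise operation with the correct induction so that Theorem~\ref{thm:induction-network-Mnat} applies --- in particular the rank-degeneracy case for deletion and contraction --- and, for merge, verifying the exchange axioms for the auxiliary function $h_{0}$, where the coordinate split changes cardinalities and so one must invoke the single local-exchange characterization of valuated generalized matroids rather than applying~\eqref{eq:Mnat-concave} and~\eqref{eq:M-concave} separately.
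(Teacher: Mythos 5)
Your proof is correct and reaches the theorem, but it takes a noticeably different route from the paper. The paper simply delegates deletion, dualization, and merge to \cite[Theorem 6.15]{Murotabook}, and only handles contraction, principal extension, and truncation via Lemmas~\ref{lem:contraction-dual+deletion}, the Remark~\ref{rem:principle+extension}/Theorem~\ref{thm:induction-network-Mnat} pair, and Lemma~\ref{lem:truncation-extension+contraction}. You instead give self-contained constructions: deletion is realised as induction by an identity-like bipartite graph (hence Theorem~\ref{thm:induction-network-Mnat} applies); dualization is verified directly by complementation; and merge is obtained by first building the direct-sum function $h_0$ on $V_1\,\dot\cup\,V_2$ from the exchange axioms and then folding it through a two-to-one bipartite graph, again invoking Theorem~\ref{thm:induction-network-Mnat}. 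Your treatment of contraction, principal extension, and truncation coincides with the paper's. The merit of your route is that it makes deletion and merge internal to the paper's own ``induction by networks'' framework, at the price of relying twice on Theorem~\ref{thm:induction-network-Mnat} and on the single local-exchange ($M^\natural$-exchange) characterization for the $h_0$ step, which the paper references only implicitly.

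Two small corrections. First, for dualization your intermediate claim is mis-stated: complementation $A\mapsto V\setminus A$ maps~\eqref{eq:Mnat-concave} to~\eqref{eq:Mnat-concave} (with the roles of $X$ and $Y$ swapped) and~\eqref{eq:M-concave} to~\eqref{eq:M-concave}; it does \emph{not} interchange the two axioms. The argument still goes through, but the sentence should be fixed. Second, you are right that proving $h_0$ is a valuated generalized matroid directly from~\eqref{eq:Mnat-concave} and~\eqref{eq:M-concave} is awkward when $|X|=|Y|$ but $|X\cap V_1|\neq|Y\cap V_1|$; it would be cleaner (and closer to airtight) to state explicitly that you are invoking the $M^\natural$-concavity exchange axiom (as in \cite{Murotabook,MurotaShioura:2018}), since the reduction to $f$ or $g$ in a single coordinate is immediate with that axiom but not with the paper's two-axiom formulation.
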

\begin{proof}
  Deletion, dualization and merge are covered by \cite[Theorem 6.15]{Murotabook}; the latter is integer infimal convolution restricted to the interval $[0,1]$, parts (8) and (5) respectively.
  Lemma~\ref{lem:contraction-dual+deletion} implies layerwise closure under contraction and therefore globally closed contraction.
  Remark~\ref{rem:principle+extension} shows principal extension are special cases of induction by networks, which valuated generalized matroids are closed under via Theorem~\ref{thm:induction-network-Mnat}.
  Finally, Lemma~\ref{lem:truncation-extension+contraction} implies layerwise closure under truncation and therefore globally closed under truncation.
\end{proof}

As with induction of valuated matroids, we shall often be most interested in the induction of trivially valuated generalized matroids.
A trivially valuated generalized matroid $g$ can be identified with its underlying domain $\I$, where $g(I) = 0$ if $I \in \I$ and $-\infty$ otherwise.
As stated previously, if $\emptyset \in \I$ then $\I$ forms the set of independent sets of a matroid; however this does not have to be the case, $\I$ only has to satisfy the independent set exchange axiom (the unvaluated equivalent of~\eqref{eq:Mnat-concave}).
We call such an $\I$ a \emph{generalized matroid}.
As working with $\I$ directly will be convenient in some situations, we extend the notation of Definition~\ref{def:induction-by-networks-Mnat} to define $\tempinducednew{N}{\I}{c} := \tempinducednew{N}{g}{c}$.

The following example demonstrates how weighted rank functions arise by this operation.

\begin{example}\label{ex:mbv}
  Let $\I$ be the independent sets of a matroid $\M$ on ground set $V$.
A \emph{weighted rank function} $r^{w} : 2^V \rightarrow \R_{\geq 0}$ with weight $w \in \R_{\geq 0}^n$ is
\begin{equation*}
r^w(X) = \max\SetOf{\sum_{i\in I} w_i}{I \subseteq X \, , \, I \in \I} \enspace .
\end{equation*}
Note that if $w$ is the vector of all ones, then $r^w$ is precisely the rank function of $\M$.

We now show how the function $r^w$ arises via induction by network from a trivially valuated generalized matroid.
Let $V'$ and $V''$ be copies of $V$ and let $\overline{\I}$ be the independent sets of the matroid $\overline{\M} = \M \oplus \free{V''}$ on $V' \cup V''$.
Furthermore, we define the bipartite graph $G = (V, V' \cup V'';E)$ where $E$ consists of the edges $(v,v')$ and $(v,v'')$ connected each node in $V$ its copies in $V'$ and $V''$.
We attach weights $\co \in \R^E$ where the edge $(v,v')$ gets the weight $w_v$ and the edge $(v,v'')$ gets the weight $0$.
This bipartite graph is depicted in Figure~\ref{fig:weighted+rank}.

Let $I \subseteq X$ be the max weight independent set contained in $X$.
The value of $\tempinducednew{G}{\overline{\I}}{\co}(X)$ is obtained by connecting elements of $I$ to $I' \subseteq V'$  via edges of weight $w_i$, and then connecting elements of $X\setminus I$ to their copy in $V''$ by edges of weight zero.
In this way $r^w =\tempinducednew{G}{\overline{\I}}{\co} $ arises from a trivially valuated generalized matroid by induction via a bipartite graph. 
\end{example}

\begin{figure}
\centering
\includegraphics[width=0.4\textwidth]{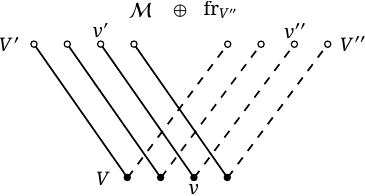}
\caption{The graph $G = (V, V'\cup V'';E)$ realising the weighted matroid rank function from Example~\ref{ex:mbv}.
Edges of weight $w_v$ are solid while edges of weight zero are dashed.}
\label{fig:weighted+rank}
\end{figure}

It was shown in~\cite{BalkanskiLeme:2020} that valuated generalized matroids are not covered by the cone of matroid rank functions; note that not even all non-negative combinations of matroid rank functions are valuated generalized matroids.
In particular, not every valuated generalized matroid can be represented as a weighted matroid rank function~\cite[Theorem~4]{Shioura:2012}.

However, it was conjectured that allowing two operations, merge and endowment, would suffice to construct all.
Here, the endowment by $T \subseteq V$ of a function $f \colon 2^V \to \Trop$ is the function $\Delta_T(f) \colon 2^{V \setminus T} \to \Trop$ with $\Delta_T(f)(X) = f(X \cup T) - f(T)$ for all $X \subseteq V \setminus T$.
Note that endowment is equivalent to the contraction of $f$ by $T$, along with a normalization to ensure $f(\emptyset) = 0$.

With this, the class of matroid based valuations are those functions arising from the class of weighted matroid rank functions by arbitrary application of merge and endowment. 

\begin{conjecture}[MBV conjecture~\cite{OstrovskyPaesLeme:2015}]\label{conj:mbv}
The class of matroid based valuations is equal to the class of monotone valuated generalized matroids taking value zero on the empty set and not attaining the value $-\infty$. 
\end{conjecture}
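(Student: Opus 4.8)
The plan is to refute Conjecture~\ref{conj:mbv}: the purpose here is not to establish the claimed equality but to disprove it, by producing a function that lies in the right-hand class yet is provably not a matroid based valuation. Everything is reduced to Theorem~\ref{thm:non-r-minor-main} by a layerwise argument. Call a valuated generalized matroid \emph{$\Rnat$-minor} if it has the form $\tempinducednew{G}{\I}{\co}/W$ for a bipartite graph $G=(V\cup W,U;E)$, a trivially valued generalized matroid $\I$ on $U$, weights $\co$, and a contracted set $W$ --- the $\natural$-analogue of the $R$-minor valuated matroids of Section~\ref{sec:R-induced}. The proof splits into: (i) every matroid based valuation is $\Rnat$-minor; (ii) the function $h^\natural$ built in the introduction from a fixed valuated matroid $h\in\cF_n$ with values in $(-1,0]$ is a monotone valuated generalized matroid with $h^\natural(\emptyset)=0$ that never attains $-\infty$; (iii) $h^\natural$ is not $\Rnat$-minor. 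Then (iii) with (i) shows $h^\natural$ is not a matroid based valuation, while (ii) places it in the class the conjecture claims to exhaust, so the equality fails; the same $h^\natural$ simultaneously disproves the conjecture of Garg et al.~\cite{garg2020approximatingNSW} that every valuated generalized matroid is $\Rnat$-minor.

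For (i), weighted matroid rank functions are already exhibited as bipartite inductions of trivially valued generalized matroids in Example~\ref{ex:mbv}, so it suffices to show the $\Rnat$-minor class is closed under \emph{merge} and \emph{endowment}, mirroring the complete-class proofs of Section~\ref{sec:R-induced}. For merge, copy the direct-sum construction: given $\Rnat$-minor representations of $f$ and $g$ on the common vertex set $V$, with ground sets $U_f,U_g$ and contracted sets $W_f,W_g$, take $\I_f\oplus\I_g$ on $U_f\dot{\cup}U_g$ and glue the two bipartite graphs along $V$; any matching covering $X\cup W_f\cup W_g$ decomposes into the part sent into $U_f$ and the part sent into $U_g$, and contracting $W_f\cup W_g$ reproduces exactly $\max_{Y\subseteq X}\{f(Y)+g(X\setminus Y)\}$. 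For endowment, $\Delta_T(f)$ is the contraction $f/T$ shifted by the constant $-f(T)$; the $\Rnat$-minor class is closed under contraction by definition, and closed under constant shifts through the correspondence between $\Rnat$-minor valuated generalized matroids and $R$-minor valuated matroids (Appendix~\ref{sec:vgm-to-vm}) together with the fact that adding a constant to a valuated matroid preserves $R$-minor (uniformly shift all edge weights of a representation --- its matchings all have the same size). Iterating these closures starting from Example~\ref{ex:mbv} yields the inclusion.

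For (ii), $h^\natural$ is monotone, satisfies $h^\natural(\emptyset)=0$, and takes values in $[0,4]$, so it never attains $-\infty$. For the valuated generalized matroid axioms \eqref{eq:Mnat-concave} and \eqref{eq:M-concave}: each layer $\layer{k}{h^\natural}$ is a valuated matroid --- constant on $\binom{V}{k}$ for $k\le 3$ or $k\ge 5$, and equal to $4+h$ on $\binom{V}{4}$, which is a valuated matroid by the first assertion of Theorem~\ref{thm:non-r-minor-main} (valid already for $n\ge 2$) --- so \eqref{eq:M-concave} holds; and the mixed-cardinality instances of \eqref{eq:Mnat-concave} reduce to a short case analysis in which the bound $h\in(-1,0]$ and the combinatorics of $\cH$ (a three-element set meets at most two pairs $P_i$) supply the needed slack. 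This verification is routine and can be relegated to an appendix.

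For (iii), suppose $h^\natural=\tempinducednew{G}{\I}{\co}/W$ is $\Rnat$-minor. Since induction and contraction act layerwise (Definition~\ref{def:induction-by-networks-Mnat}) and every layer of a generalized matroid is the set of bases of a matroid, $\layer{4}{h^\natural}$ is a contraction of an $R$-induced valuated matroid, i.e.\ an $R$-minor valuated matroid. But $\layer{4}{h^\natural}=4+h$, and $R$-minor is preserved under adding a constant, so $h$ itself would be $R$-minor --- contradicting Theorem~\ref{thm:non-r-minor-main} for $n\ge 16$. Hence $h^\natural$ is not $\Rnat$-minor, completing the refutation. The one genuinely deep ingredient is Theorem~\ref{thm:non-r-minor-main}, which is taken as given; the remaining obstacles are bookkeeping --- getting the merge and endowment closures right at the level of node-disjoint path systems, and carrying out the cross-layer exchange checks for $h^\natural$.
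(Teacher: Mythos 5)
Your decomposition into (i) matroid based valuations form a subclass of $\Rnat$-minor functions, (ii) $h^\natural$ lies in the class that Conjecture~\ref{conj:mbv} claims to exhaust, and (iii) $h^\natural$ is not $\Rnat$-minor, matches the paper's structure exactly (Corollary~\ref{cor:mbv-rnat-subclass}, Lemma~\ref{lem:hv+vgm}, Lemma~\ref{lem:non-rnat-minor}, Theorem~\ref{thm:mbv+counterexample}), and your merge-closure gluing and the layer-$4$ reduction in (iii) are the paper's arguments almost verbatim.

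The one place your justification does not go through is the endowment closure in (i). The identity $\Delta_T(f) = (f/T) - f(T)$ is fine, and closure of $\Rnat$-minor under contraction is immediate, but your argument that $\Rnat$-minor functions are closed under constant shifts has a gap. First, the ``uniformly shift all edge weights'' trick that works for R-minor \emph{valuated matroids} fails here, because in an $\Rnat$-minor representation the matchings across different layers have different sizes, so a uniform shift of the weights distorts the function by a layer-dependent amount rather than a constant. Second, the detour through Appendix~\ref{sec:vgm-to-vm} does not close the gap: that appendix proves only one direction of the correspondence (from an $\Rnat$-minor representation of $f$ to an R-minor representation of $g_f$), so knowing that $g_{f+\delta} = g_f + \delta$ is R-minor does not entitle you to conclude that $f + \delta$ is $\Rnat$-minor. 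The paper's Lemma~\ref{lem:Rnat-endowment} avoids both issues by shifting only the edges incident to $T\cup W$ by $-\tilde{f}(T\cup W)/|T\cup W|$: because the contraction by $T\cup W$ forces those nodes to be fully matched, every relevant matching weight drops by exactly $\tilde{f}(T\cup W)$, and the shift is well-defined since $T\neq\emptyset$. Substituting this direct edge-weight modification for your Appendix~\ref{sec:vgm-to-vm} step would repair the argument; the remainder of the proposal is correct.
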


We study a subclass of valuated generalized matroids which is an extension of the class of R-minor valuated matroids.
This allows us to use the results from Section~\ref{sec:R-minor+not+cover}. 

\begin{definition}
  The class of \emph{$\Rnat$-induced functions} are valuated generalized matroids arising from trivially valuated generalized matroids via induction by bipartite graph. 
  
  The class of \emph{$\Rnat$-minor functions} are valuated generalized matroids arising from contractions of $\Rnat$-induced functions.
\end{definition}

Throughout the proofs in this section, we use the same notation as introduced in Section~\ref{sec:R-induced}.
Let $f$ be an $\Rnat$-minor function on $V$; by definition, there exists an $\Rnat$-induced function $\tilde{f}$ on $V \cup W$ such that $f = \tilde{f}/W$.
By definition, there exists some bipartite graph $G = (V \cup W, U; E)$ with edge weights $\co \in \R^E$ and generalized matroid $\I$ on $U$ such that $\tilde{f} = \tempinducednew{G}{\I}{\co}$; we say $\tilde{f}$ has an \emph{$\Rnat$-induced representation} $(G, \I, \co)$.
As $f = \tempinducednew{G}{\I}{\co}/W$, we extend this notation to say that $f$ has an \emph{$\Rnat$-minor representation} $(G,\I, \co, W)$, where $W$ is the set to be contracted.

\begin{lemma}\label{lem:Rnat-endowment}
  The class of $\Rnat$-minor functions is closed under endowment. 
\end{lemma}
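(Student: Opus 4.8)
The plan is to decompose endowment as a contraction followed by an additive normalization, and to establish closure under each piece. Recall that $\Delta_T(f)(X) = f(X\cup T) - f(T) = (f/T)(X) - f(T)$, where $f(T)\in\R$ whenever the endowment is defined; thus $\Delta_T(f)$ is the contraction $f/T$ shifted down by the constant $f(T)$. Hence it suffices to prove that the class of $\Rnat$-minor functions is closed under (i) contraction by a subset $T\subseteq V$ and under (ii) subtracting a real constant.

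For (i): write $f = \tilde f/W$ with $\tilde f = \tempinducednew{G}{\I}{\co}$ an $\Rnat$-induced function, $G = (V\cup W,U;E)$. Since $T\subseteq V$ is disjoint from $W$ and layerwise contractions commute, $f/T = (\tilde f/W)/T = \tilde f/(W\cup T) = \tempinducednew{G}{\I}{\co}/(W\cup T)$, which by definition has the $\Rnat$-minor representation $(G,\I,\co,W\cup T)$. One only needs to observe that $W\cup T$ is contractible in $\tilde f$, which holds because $f(T) > -\infty$ puts $W\cup T$ into $\dom(\tilde f)$.

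For (ii): let $g$ be an $\Rnat$-minor function with representation $(G,\I,\co,W')$, $G = (V\cup W',U;E)$, and let $\alpha\in\R$. Form $G'$ by adding a node $p$ on the $V\cup W'$ side, a node $q$ to $U$, and the single edge $(p,q)$ of weight $-\alpha$, keeping all other weights. Let $\I'$ be the generalized matroid on $U\cup\{q\}$ obtained from $\I$ by adjoining $q$ as a coloop, i.e. $\I' = \I\cup\{I\cup\{q\} : I\in\I\}$. In any independent matching realizing a value of $\tempinducednew{G'}{\I'}{\co'}$ the vertex $p$ must be matched and its only neighbour is $q$; so the edge $(p,q)$ is always present, contributing $-\alpha$, while deleting it yields a matching valid for $(G,\I,\co)$, and conversely every matching valid for $(G,\I,\co)$ extends uniquely by $(p,q)$. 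Hence $\tempinducednew{G'}{\I'}{\co'}(Y\cup\{p\}) = \tempinducednew{G}{\I}{\co}(Y) - \alpha$ for all $Y\subseteq V\cup W'$, and therefore $\tempinducednew{G'}{\I'}{\co'}/(W'\cup\{p\}) = g - \alpha$; by Theorem~\ref{thm:induction-network-Mnat} this is again a valuated generalized matroid, and by construction it is $\Rnat$-minor. Applying (i) and then (ii) with $\alpha = f(T)$ exhibits $\Delta_T(f) = (f/T) - f(T)$ as an $\Rnat$-minor function.

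The genuinely delicate point is step (ii): one must check that adjoining the coloop $q$ together with the pendant edge $(p,q)$ realises exactly a uniform additive shift, altering neither the effective domain nor any finite value in any other way. This is precisely the forced-edge argument above, so I expect no real obstacle — the remaining verifications (commutation of contractions, that $\I'$ is a generalized matroid, the side conditions on contractibility) are immediate from the definitions.
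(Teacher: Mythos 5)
Your proof is correct, and it reaches the same conclusion by a mildly different route. Both you and the paper first reduce $\Delta_T(f) = f/T - f(T)$ to a contraction (absorbed into the contracted set $W \cup T$) plus a constant shift; the divergence is in how the constant $f(T)$ is realized. The paper folds the shift directly into the weight function: it replaces $\co(e)$ by $\co(e) - f(T)/|T\cup W|$ on every edge incident to $T\cup W$, and since each such node is covered exactly once in every matching contributing to the contracted function, this reduces all values uniformly by $f(T)$ without altering $G$ or $\I$. You instead append a pendant gadget: a forced node $p$ in the contracted set with a single edge $(p,q)$ of weight $-f(T)$, and the generalized matroid enlarged by a coloop $q$. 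Both work. The paper's variant is leaner, reusing $G$ and $\I$ and only touching $\co$, whereas yours isolates a self-contained ``closure under adding constants'' step that is conceptually cleaner and reusable. The one step you flag but do not actually carry out --- that $\I' = \I\cup\{I\cup\{q\}:I\in\I\}$ is again a generalized matroid --- does need a word: it follows from closure of valuated generalized matroids under merge, applied to the indicator of $\I$ (extended to $U\cup\{q\}$ with $q$ a loop) and the indicator of $\{\emptyset,\{q\}\}$, or by a short direct check of~\eqref{eq:Mnat-concave} and~\eqref{eq:M-concave}.
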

\begin{proof}
  Given $f$ as above, we show we can represent $\Delta_T(f)$ as an $\Rnat$-minor function for some $\emptyset\neq T \subseteq V$.
  As $f$ is a contraction of $\tilde{f}$ by $W$, the endowment by $T$ can be written as
  \[
  \Delta_T(f) = f(X \cup T) - f(T) = \tilde{f}(X \cup T \cup W) - \tilde{f}(T \cup W) = \Delta_{T \cup W}(\tilde{f}) \, .
  \]
  Let $\delta = \tilde{f}(T\cup W)/|T\cup W|$ and consider a new edge weight function $\co'(e)$ that takes the value $\co(e) - \delta$ on all edges adjacent to $T \cup W$, and $\co(e)$ otherwise.
  Then the induction of $\I$ through the graph $G$ with altered weight function $\co'$ is
  \[
  (\tempinducednew{G}{\I}{\co'})(Z) = \tilde{f}(Z) - \delta \cdot |Z \cap (T \cup W)| \, .
  \]
  Taking the contraction of $\tempinducednew{G}{\I}{\co'}$ by $T \cup W$ yields
  \[
  (\tempinducednew{G}{\I}{\co'}/(T\cup W))(X) = \tilde{f}(X \cup T \cup W) - \delta \cdot | T\cup W| = \Delta_T(f)(X) \, . \qedhere
  \]
  \end{proof}

\begin{lemma}\label{lem:Rnat-merge}
  The class of $\Rnat$-minor functions is closed under merge. 
\end{lemma}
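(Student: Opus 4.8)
The plan is to mimic the construction used for closure under endowment and for the matroid-union case of valuated matroids (Lemma~\ref{lem:R-sum}), adapted to the $\natural$-setting. Let $f_1$ and $f_2$ be $\Rnat$-minor functions on the same ground set $V$, with $\Rnat$-minor representations $(G_1,\I_1,\co_1,W_1)$ and $(G_2,\I_2,\co_2,W_2)$, where $G_k=(V_k\cup W_k,U_k;E_k)$ and $V_k$ is a copy of $V$ for $k\in\{1,2\}$. Recall that merge is $(f_1*f_2)(X)=\max\{f_1(Y)+f_2(X\setminus Y):Y\subseteq X\}$. First I would take a fresh copy $V'$ of $V$ together with a bipartite graph $G^{\mathrm{split}}=(V,V';E^{\mathrm{split}})$ as follows: for each $v\in V$ put an element $v_1'\in V_1$ and $v_2'\in V_2$ among the "$V$-side copies", and connect $v$ to both $v_1'$ and $v_2'$ by weight-$0$ edges; here $v_1',v_2'$ are the images of $v$ in the two ground sets of $G_1$ and $G_2$. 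Concatenating this splitting gadget with the disjoint union of $G_1$ and $G_2$ produces a network $N$ on node set $V\;\dot\cup\;(V_1\cup V_2)\;\dot\cup\;(W_1\cup W_2)\;\dot\cup\;(U_1\cup U_2)$, with $V$ as the new source layer, $U_1\cup U_2$ as the sink layer, and the generalized matroid $\I_1\oplus\I_2$ on $U_1\cup U_2$ (which is a generalized matroid since the direct sum of generalized matroids satisfies the exchange axiom~\eqref{eq:Mnat-concave}).

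The key step is to verify that the induction $\tempinducednew{N}{\I_1\oplus\I_2}{\co}$, contracted by $W_1\cup W_2$, equals $f_1*f_2$. Fix $X\in\binom{V}{k}$. A system of node-disjoint paths in $N$ from $X\cup W_1\cup W_2$ to a set $S\subseteq U_1\cup U_2$ decomposes uniquely: each $v\in X$ routes either through $V_1$ (entering $G_1$) or through $V_2$ (entering $G_2$); this partitions $X=Y\;\dot\cup\;(X\setminus Y)$; the paths through $G_1$, together with all of $W_1$, realize $\tilde f_1(Y\cup W_1)$, i.e.\ contribute $f_1(Y)$ after contraction of $W_1$; symmetrically the $G_2$-paths together with $W_2$ contribute $f_2(X\setminus Y)$; and the weight-$0$ splitting edges contribute nothing. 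Because $\I_1\oplus\I_2$ is a direct sum, independence of $S$ in it is equivalent to $S\cap U_1$ independent in $\I_1$ and $S\cap U_2$ independent in $\I_2$, so the two halves are genuinely decoupled. Maximizing over all such path systems therefore gives exactly $\max_{Y\subseteq X}\big(f_1(Y)+f_2(X\setminus Y)\big)=(f_1*f_2)(X)$. For $X=\emptyset$ both sides equal $f_1(\emptyset)+f_2(\emptyset)$, matching the convention in Definition~\ref{def:induction-by-networks-Mnat}.

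Finally, I would invoke the $\natural$-analogue of Lemma~\ref{lem:network-bipartite+contraction} (induction by a network equals induction by a bipartite graph followed by a contraction, which carries over layerwise to valuated generalized matroids) to re-express $\tempinducednew{N}{\I_1\oplus\I_2}{\co}/(W_1\cup W_2)$ as the contraction of an $\Rnat$-induced function, i.e.\ as an $\Rnat$-minor function; this shows $f_1*f_2$ is $\Rnat$-minor. The main obstacle is the bookkeeping in the decomposition argument: one must check that node-disjointness across the splitting gadget forces a clean bipartition of $X$ with no "crossing" paths and no paths that revisit the $V$-layer, and that the direct-sum structure of $\I_1\oplus\I_2$ exactly matches the independent set needed on each side — once this combinatorial correspondence is pinned down, the rest is the routine reduction to Theorem~\ref{thm:induction-network-Mnat} and the $\natural$-version of Lemma~\ref{lem:network-bipartite+contraction}.
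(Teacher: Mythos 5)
Your plan is conceptually sound and captures the same core idea as the paper's proof: take disjoint contracted sets $W_1,W_2$, observe that merge commutes with contraction so that $f_1*f_2 = (\tilde f_1 * \tilde f_2)/(W_1\cup W_2)$, and realize $\tilde f_1 * \tilde f_2$ via a construction whose independence constraint on $U_1\cup U_2$ is the direct sum $\I_1\oplus\I_2$ so that the decomposition $X = Y\,\dot\cup\,(X\setminus Y)$ falls out automatically. However, the route you take is noticeably heavier than the paper's, and it imports a dependency that the paper avoids. The paper does not build a three-layer network at all; it directly \emph{glues} $G_1$ and $G_2$ at the shared ground set $V$, forming the bipartite graph $G'=(V\cup W_1\cup W_2,\,U_1\cup U_2;\,E_1\cup E_2)$. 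Since a node $v\in V$ may be matched to at most one of $U_1$ or $U_2$, every independent matching on $X\cup W_1\cup W_2$ automatically bipartitions $X$ into the part going to $U_1$ and the part going to $U_2$, and $W_i$ can only match to $U_i$. This yields an $\Rnat$-induced representation of $\tilde f_1*\tilde f_2$ in one step, so the conclusion follows by contracting $W_1\cup W_2$ with no network machinery needed.

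By contrast, your splitting gadget with intermediate layer $V_1\cup V_2$ produces a genuine three-layer network, which then requires converting back to a bipartite-graph-plus-contraction via ``the $\natural$-analogue of Lemma~\ref{lem:network-bipartite+contraction}.'' This analogue is plausible, but the paper only states and proves that lemma for valuated matroids, and its proof there uses the operation $g\oplus\free{W'}$, where $\oplus$ is direct sum. In the generalized setting the paper explicitly remarks that direct sum does \emph{not} extend layerwise; the correct replacement is $g * \free{W'}$ (merge with the indicator function of $W'$), and you would need to set up and verify that this yields the right valuated generalized matroid. You would also want to state explicitly that the source layer of $N$ is $V\cup W_1\cup W_2$, not just $V$ (your later computation treats $W_1\cup W_2$ as part of the matched set, so this is merely a misstatement, but it should be corrected). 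None of these issues are fatal, but together they make your argument strictly more work than the paper's direct gluing, which sidesteps both the network-to-bipartite conversion and the layerwise direct-sum subtlety entirely.
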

\begin{proof}
  Let $f_1, f_2$ be $\Rnat$-minor functions on a common ground set $V$ with representation $(G_i, \I_i, \co_i, W_i)$ where $G_i = (V \cup W_i, U_i, E_i)$ for $i = 1,2$ such that $f_1=\tilde f_1/W$ and $f_2=\tilde f_2/W$.
  In particular, we can choose the contracted sets to be disjoint i.e., $W_1 \cap W_2 = \emptyset$.
  This last assertion is particularly important as it allows merge and contraction to commute.
  By extending $\tilde{f}_1$ and $\tilde{f}_2$ to the ground set $V \cup W_1 \cup W_2$, taking the value $-\infty$ where previously undefined, we see that for any $X \subseteq V$,
    \begin{align*}
  (f_1*f_2)(X) &= (\tilde{f}_1/W_1 * \tilde{f}_2/W_2)(X) \\
  &= \max\SetOf{\tilde{f}_1(Y \cup W_1) +\tilde{f}_2((X \setminus Y) \cup W_2)}{Y \subseteq X} \\
  &= \max\SetOf{\tilde{f}_1(Z) + \tilde{f}_2((X\cup W_1 \cup W_2) \setminus Z)}{Z \subseteq X\cup W_1 \cup W_2} \\
  &= (\tilde{f}_1 * \tilde{f}_2)(X \cup W_1 \cup W_2) \\
  &= ((\tilde{f}_1 * \tilde{f}_2)/(W_1\cup W_2))(X)\, .
  \end{align*}
  Therefore if we can represent $(\tilde{f}_1 * \tilde{f}_2)$ via induction by bipartite graph, contracting $W_1\cup W_2$ completes the proof.
  
  Let $G'$ be a graph obtained by ``gluing'' $G_1$ and $G_2$ along their common ground set.
  Explicitly, $G' = (V \cup W_1 \cup W_2, U_1 \cup U_2; E_1 \cup E_2)$ whose weight function $\co'$ inherits the same weights from $\co_1$ and $\co_2$.
  The graph is given in Figure~\ref{fig:merge}.
  We consider the trivially valuated generalized matroid $\I' = \I_1 \oplus \I_2$.
  Then the value of $\tempinducednew{G'}{\I'}{\co'}(Z)$ is the maximum over all matchings from $Y \subset Z$ to $U_1$ and matchings $Z \setminus Y$ to $U_2$, ranging over subsets $Y \subset Z$, precisely realizing $(\tilde{f}_1 * \tilde{f}_2)$ as an $\Rnat$-induced function.
\end{proof}
\begin{figure}
\centering
\includegraphics[width=0.45\textwidth]{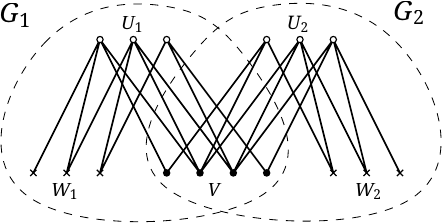}
\caption{The graph $G'$ constructed in the proof of Lemma~\ref{lem:Rnat-merge}, obtained by gluing $G_1$ and $G_2$ at their common node set $V$.}
\label{fig:merge}
\end{figure}

Example~\ref{ex:mbv} showed that weighted matroid rank functions are special cases of $\Rnat$-induced functions.
Combining this with Lemmas~\ref{lem:Rnat-endowment} and~\ref{lem:Rnat-merge}, we see that matroid based valuations are a subclass of $\Rnat$-minor functions.

\begin{corollary}\label{cor:mbv-rnat-subclass}
  Matroid based valuations form a subclass of $\Rnat$-minor functions with the properties that they are monotone, real-valued and have value $0$ on the empty set.  
\end{corollary}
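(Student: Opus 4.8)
The plan is to prove the containment by structural induction on the construction of a matroid based valuation, and then to check that the three extra properties are inherited at every step. Recall that, by definition, a matroid based valuation arises from weighted matroid rank functions by an arbitrary finite sequence of merge and endowment operations.

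For the base case I would invoke Example~\ref{ex:mbv}, which realizes every weighted matroid rank function $r^w$ as $\tempinducednew{G}{\overline{\I}}{\co}$ for a suitable bipartite graph $G$ and trivially valuated generalized matroid $\overline{\I}$; thus $r^w$ is $\Rnat$-induced. Since any $\Rnat$-induced function $f$ equals $f/\emptyset$, the class of $\Rnat$-induced functions is contained in the class of $\Rnat$-minor functions, so each $r^w$ is $\Rnat$-minor. For the inductive step, assume $f_1$ and $f_2$ are already known to be $\Rnat$-minor. Lemma~\ref{lem:Rnat-merge} gives that $f_1 * f_2$ is $\Rnat$-minor, and Lemma~\ref{lem:Rnat-endowment} gives that $\Delta_T(f_1)$ is $\Rnat$-minor for every nonempty $T$. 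Applying these facts along the construction sequence of an arbitrary matroid based valuation yields that it is $\Rnat$-minor.

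Next I would verify that monotonicity, real-valuedness, and vanishing on $\emptyset$ propagate. Each $r^w$ with $w \in \R_{\ge 0}^n$ is monotone (enlarging $X$ only enlarges the family of admissible $I \subseteq X$), is real-valued with $r^w(X) \ge 0$, and satisfies $r^w(\emptyset) = 0$. For merge: $(f * g)(\emptyset) = f(\emptyset) + g(\emptyset) = 0$; $(f*g)(X)$ is a finite maximum of finite numbers, hence real-valued; and for $X \subseteq X'$, $(f*g)(X) = \max_{Y \subseteq X}\{f(Y) + g(X \setminus Y)\} \le \max_{Y \subseteq X}\{f(Y) + g(X' \setminus Y)\} \le (f*g)(X')$, using monotonicity of $g$ together with $X \setminus Y \subseteq X' \setminus Y$. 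For endowment: $\Delta_T(f)(\emptyset) = f(T) - f(T) = 0$; if $f$ is real-valued then $f(T)$ is finite, so $\Delta_T(f)$ is real-valued (and in particular well-defined); and monotonicity of $f$ gives $\Delta_T(f)(X) = f(X \cup T) - f(T) \le f(X' \cup T) - f(T) = \Delta_T(f)(X')$ for $X \subseteq X'$. Hence all three properties survive every step of the construction.

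I expect the argument to be essentially bookkeeping; the only point that needs genuine care is keeping track of real-valuedness, since an endowment $\Delta_T(f)$ is only meaningful when $f(T) \neq -\infty$. Because weighted rank functions are everywhere finite and both merge and endowment preserve finiteness, this never fails, so every intermediate function — and in particular every matroid based valuation — is a bona fide real-valued $\Rnat$-minor function. I would make no claim about whether the inclusion is strict.
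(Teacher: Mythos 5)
Your proof follows exactly the paper's argument: Example~\ref{ex:mbv} shows weighted matroid rank functions are $\Rnat$-induced (hence $\Rnat$-minor), and Lemmas~\ref{lem:Rnat-endowment} and~\ref{lem:Rnat-merge} close the $\Rnat$-minor class under endowment and merge, so an induction along the construction sequence gives containment. Your additional verification that monotonicity, finiteness, and vanishing on $\emptyset$ propagate through merge and endowment is correct and spells out what the paper leaves implicit.
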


\subsection{A valuated generalized matroid extending a robust matroid}

Let $h$ be an arbitrary function in the class $\cF_n$ in Definition~\ref{def:class} which takes only values in $(-1,0]$.

We define a function $h^{\natural}\colon 2^V \to \R$ by
\begin{equation*}
  h^{\natural}(X) \coloneqq \begin{cases}
    |X| & \text{ for } |X| \leq 3 \\ 
    4 + h(X) & \text{ for } |X| = 4\\
    4 & \text{ for } |X| \geq 5 \\
  \end{cases}
\end{equation*}
Thus, $h^{\natural}$ arises as a `perturbation' of the rank function of the uniform matroid on $V$ of rank $4$.

\begin{lemma} \label{lem:hv+vgm}
  The function $h^{\natural}$ is a valuated generalized matroid. 
\end{lemma}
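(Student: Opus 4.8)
The plan is to verify the two defining axioms \eqref{eq:Mnat-concave} and \eqref{eq:M-concave} for $h^{\natural}$ directly, exploiting the fact that $h^{\natural}$ differs from the rank function $r_{U_{4,2n}}$ of the uniform matroid $U_{4,2n}$ only on the layer $\binom{V}{4}$, and there only by the perturbation $h$, which takes values in $(-1,0]$. First I would recall (from Appendix~\ref{section:functionH}, cited in the excerpt) that any function on $\binom{V}{4}$ which is $0$ on a sparse paving matroid's bases $\B_0$ and strictly negative elsewhere is a valuated matroid; since $h\in\cF_n$ is exactly such a function, $\layer{4}{h^{\natural}} = 4+h$ is a valuated matroid, so axiom \eqref{eq:M-concave} holds on the $4$-layer. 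On every other layer $\layer{k}{h^{\natural}}$ is a constant function ($k$ for $k\le 3$, $4$ for $k\ge 5$), hence trivially a valuated matroid (the underlying matroid is $U_{k,2n}$). So \eqref{eq:M-concave} is immediate on all layers, and the only real work is the cross-layer inequality \eqref{eq:Mnat-concave}.

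For \eqref{eq:Mnat-concave} I would fix $X,Y\subseteq V$ with $|X|<|Y|$ and split into cases according to where $|X|$ and $|Y|$ sit relative to the "critical" value $4$. The observation driving all cases is that for $|X|<|Y|$ the rank function $r = r_{U_{4,2n}}$ satisfies $r(X)+r(Y)\le \max_{j\in Y\setminus X}\{r(X+j)+r(Y-j)\}$ with a comfortable margin \emph{unless} both sides are forced to be close to the threshold; quantitatively, $h^{\natural}$ and $r$ agree except on $\binom{V}{4}$ where $|h^{\natural}-r|<1$, so any inequality that holds for $r$ with slack $\ge 2$ also holds for $h^{\natural}$. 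The cases where slack could be $<2$ are precisely those where one of $X,Y,X+j,Y-j$ has size exactly $4$: so I would enumerate $(|X|,|Y|)\in\{(3,4),(3,5),(4,5),(2,4),\dots\}$ — really only $|X|\in\{2,3,4\}$ and $|Y|\in\{4,5\}$ matter, everything else has $\ge 2$ slack from $r$. In the case $|X|=3,|Y|=5$: LHS $=3 + 4 = 7$; choosing any $j\in Y\setminus X$ gives $|X+j|=4,|Y-j|=4$, so RHS candidate $=(4+h(X+j))+(4+h(Y-j)) = 8 + h(X+j)+h(Y-j) > 8-2 = 6$; this is not quite enough, so here I would instead use that $h$ is a valuated matroid on the $4$-layer together with the fact that $X\cup\{j\}$ and $Y\setminus\{j\}$ need not both be in $\cH$ — in fact at most the "bad" pair can fail, and I would pick $j$ so that at least one of $X+j, Y-j$ lies in $\B_0$ (value $0$), making RHS $\ge 8 + 0 + (-1) = 7 = $ LHS. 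A cleaner route: since $\cH$ is "small" (only pairs $P_i\cup P_j$), for $|Y\setminus X|\ge 2$ one can always choose $j$ avoiding putting $X+j$ into $\cH$; I would make this combinatorial pigeonhole precise. In the case $|X|=3,|Y|=4$: LHS $=3 + 4 + h(Y) = 7 + h(Y) < 7$; take $j\in Y\setminus X$ (nonempty since $|Y|>|X|$), then $|X+j|=4$, $|Y-j|=3$, RHS candidate $= (4+h(X+j)) + 3 = 7 + h(X+j)$. So I need $h(X+j)\ge h(Y)$ for \emph{some} $j\in Y\setminus X$; since $Y\in\binom{V}{4}$ and $X+j$ ranges over the $4$-sets obtained by swapping one element, and $h$ is a valuated matroid, the exchange axiom \eqref{eq:M-concave} applied to $Y$ and a suitable $4$-set containing $X$ delivers exactly this — this is the one spot where I genuinely use that $h$ (not just $\B_0$) is a valuated matroid. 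The case $|X|=4,|Y|=5$: LHS $= 4 + h(X) + 4 = 8 + h(X) < 8$; pick $j\in Y\setminus X$, $|X+j|=5,|Y-j|=4$, RHS $= 4 + (4 + h(Y-j)) = 8 + h(Y-j)$, so I need $h(Y-j)\ge h(X)$ for some $j$; again this is an instance of the valuated-matroid exchange for the $4$-layer between $X$ and the $4$-subsets of $Y$. All remaining $(|X|,|Y|)$ pairs either avoid the $4$-layer entirely (reduce to rank function of $U_{4,2n}$) or have enough slack.

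The main obstacle I anticipate is the bookkeeping in the boundary cases $(3,4)$, $(4,5)$ and $(3,5)$, where the naive bound $|h^{\natural}-r|<1$ loses by a hair and one must invoke the genuine M$^{\natural}$-exchange structure of the $4$-layer $h$ rather than just monotonicity. The clean way to organize this is: for $|X|<|Y|$ with $|Y|\le 4$, note $h^{\natural}$ restricted to sets of size $\le 4$ is itself "$|\cdot|$ on small sets, $4+h$ on $4$-sets", and one checks that this truncated function inherits \eqref{eq:Mnat-concave} from $h$ being a valuated matroid plus $h\le 0$; for $|Y|\ge 5$ one uses that adding a $j$ to $X$ and removing from $Y$ keeps us in the "$h^{\natural}\in[3,4]$-ish" regime and the value $4 = h^{\natural}(\text{anything of size}\ge 4)$ absorbs the discrepancy. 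I would therefore structure the proof as: (i) reduce to \eqref{eq:Mnat-concave}; (ii) dispose of all $(|X|,|Y|)$ with $\min(|X|,|Y|)\ge 4$ or $\max(|X|,|Y|)\le 3$ or $|Y|-|X|\ge 2$ by direct computation using $0\ge h > -1$ and $h^{\natural}\equiv 4$ above size $4$; (iii) handle $(|X|,|Y|)=(3,4)$ and $(4,5)$ using the exchange axiom \eqref{eq:M-concave} for the valuated matroid $h$ on $\binom{V}{4}$.
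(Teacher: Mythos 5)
Your plan takes essentially the same route as the paper: verify \eqref{eq:M-concave} layer-by-layer (trivial except on the $4$-layer, where it reduces to $h$ being a valuated matroid), then do a case analysis on $(|X|,|Y|)$ for \eqref{eq:Mnat-concave}, with the only genuinely delicate cases being those where a $4$-set appears, and there invoke the sparse-paving property that two members of $\cH$ cannot meet in three elements. Two small points of comparison are worth noting. First, the paper's written proof enumerates only the four cases $|X|\geq 5$, $|X|=4$, $|Y|=4$, $|Y|\leq 3$, which omits the range $|X|\leq 3$, $|Y|\geq 5$; within that range the subcase $(|X|,|Y|)=(3,5)$ is the one that actually requires an argument (both $X+j$ and $Y-j$ land on the $4$-layer, and the naive bound $h>-1$ gives only $\text{RHS}>6$). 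You explicitly flag the $(3,5)$ case and resolve it with the same pigeonhole the paper uses elsewhere (at most one $j\in Y\setminus X$ can make $Y-j\in\cH$), so your plan is in this respect more complete than the paper's write-up. Second, in the $(3,4)$ and $(4,5)$ cases you propose to deduce ``$h(X+j)\geq h(Y)$ for some $j$'' by invoking the exchange axiom \eqref{eq:M-concave} for $h$ directly; as stated this is circular (the axiom is what you are trying to apply, and you would need to already know which $j$ to use). The clean argument --- which you also mention and which is what the paper uses --- is purely combinatorial: if $|Y\setminus X|\geq 2$ then among the $4$-sets $X+j$ at most one lies in $\cH$, so some $j$ gives $h(X+j)=0\geq h(Y)$, and if $|Y\setminus X|=1$ then $Y=X+j$ and equality holds. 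With that substitution your proof is correct and matches the paper's approach.
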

\begin{proof}
  We first show $h^{\natural}$ satisfies~\eqref{eq:M-concave}, where $|X| = |Y| = k$.
  When $k \neq 4$, all sets of that cardinality $k$ have the same value and so $h^{\natural}$ satisfies~\eqref{eq:M-concave}.
  The case when $k = 4$ follows from Lemma~\ref{lem:F-valmat} and all sets being shifted by the same value.
  
  We next show $h^{\natural}$ satisfies~\eqref{eq:Mnat-concave}, where without loss of generality $|X| < |Y|$.
  \begin{itemize}
  \item If $|X| \geq 5$, then all sets involved in \eqref{eq:Mnat-concave} take the value 4, and therefore~\eqref{eq:Mnat-concave} is trivially satisfied.
  \item If $|X| = 4$, then $h^{\natural}(X) + h^{\natural}(Y) \leq 8$.
  If we can pick $i \in Y\setminus X$ such that $Y- i \notin \cH$, then $h^{\natural}(X + i) + h^{\natural}(Y - i) = 8$ and thus \eqref{eq:Mnat-concave}  holds.
  If no such $i$ exists, then $|Y| = 5$.
  Furthermore, there cannot be two distinct elements $i, j \in Y\setminus X$, else $Y - i, Y - j \in \cH$ intersect in three elements, which no two sets in $\cH$ do.
  Therefore $Y = X + i$, and so~\eqref{eq:Mnat-concave} holds with equality.
  \item If $|Y| = 4$, then $h^{\natural}(X) + h^{\natural}(Y) \leq |X| + 4$.
  If we can pick $i \in Y\setminus X$ such that $X + i \notin \cH$, then $h^{\natural}(X + i) + h^{\natural}(Y - i) = |X| + 4$ and this case holds.
  By a similar argument as above, if no such $i$ exists then $Y = X + i$, and so~\eqref{eq:Mnat-concave} holds with equality.
  \item If $|Y| \leq 3$, then all sets take the value of their cardinality, and therefore trivially satisfy~\eqref{eq:Mnat-concave}.\qedhere
  \end{itemize}
\end{proof}

\begin{lemma} \label{lem:non-rnat-minor}
  For $n \geq 16$, the function $h^{\natural}$ is not an $\Rnat$-minor function. 
\end{lemma}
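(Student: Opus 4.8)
The plan is to reduce the statement to Theorem~\ref{thm:non-r-minor-main} by extracting the $4$-th layer of $h^\natural$ and arguing that an $\Rnat$-minor representation of $h^\natural$ would yield an R-minor representation of $h \in \cF_n$ (up to an additive shift), contradicting the already-established fact that no function in $\cF_n$ is R-minor. First I would note that $\layer{4}{h^\natural}(X) = 4 + h(X)$ for all $X \in \binom{V}{4}$, so $\layer{4}{h^\natural}$ equals $h$ translated by the constant $4$; in particular it is an R-minor valuated matroid if and only if $h$ is, since adding a constant to a valuated matroid does not affect whether it is R-minor (one can absorb the constant into any single edge weight of the representation). The domain of $\layer{4}{h^\natural}$ is all of $\binom{V}{4}$, since $h^\natural$ is finite everywhere; and by Lemma~\ref{lem:hv+vgm} together with Definition~\ref{def:class}, $h \in \cF_n$ has $h(\Xs)$ as its unique largest negative value, so $\layer{4}{h^\natural}$ is a genuine element of (a translate of) $\cF_n$.

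Next I would show that layer extraction commutes appropriately with the operations building an $\Rnat$-minor function. Suppose for contradiction $h^\natural = \tilde{f}/W$ where $\tilde{f} = \tempinducednew{G}{\I}{\co}$ for a bipartite graph $G = (V \cup W, U; E)$, edge weights $\co$, and a generalized matroid $\I$ on $U$. By Definition~\ref{def:induction-by-networks-Mnat}, $\layer{k}{\tilde f} = \tempinducednew{G}{\layer{k}{\I}}{\co}$ for each $k$; here $\layer{k}{\I}$ is the $k$-th layer of the trivially valuated generalized matroid associated with $\I$, i.e. the $\{0,-\infty\}$-valued valuated matroid supported on the $k$-element members of $\I$ — which is a trivially valuated matroid. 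Contraction by $W$ also acts layerwise (it is defined layerwise for valuated generalized matroids, and $|W|$ is fixed), so $\layer{4}{h^\natural} = \layer{4}{\tilde f/W} = (\layer{4+|W|}{\tilde f})/W = (\tempinducednew{G}{\layer{4+|W|}{\I}}{\co})/W$. The inner function $\tempinducednew{G}{\layer{4+|W|}{\I}}{\co}$ is induction of a trivially valuated matroid by a bipartite graph, hence an R-induced valuated matroid, so its contraction by $W$ is an R-minor valuated matroid. Therefore $\layer{4}{h^\natural}$ — and hence $h$ up to the additive shift — would be R-minor, contradicting Theorem~\ref{thm:non-r-minor-main} for $n \ge 16$.

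The main obstacle is the bookkeeping around the definition of contraction for valuated generalized matroids versus valuated matroids, together with the rank/coloop structure: one has to be careful that the matroid $\M := \layer{4+|W|}{\I}$, being the set of $(4+|W|)$-element members of a generalized matroid, is an honest matroid of the right rank so that the R-minor formalism of Section~\ref{sec:R-induced} applies verbatim, and that $W$ is independent in the relevant matroid so the contraction is nondegenerate — which it must be, since $h^\natural = \tilde f/W$ is finite-valued and in particular not identically $-\infty$. A secondary point to verify is that the additive-constant shift genuinely preserves the R-minor property: given an R-minor representation $(G,\M,\co,W)$ of $g$, the representation $(G,\M,\co',W)$ where $\co'$ increases the weight of one fixed edge incident to a vertex of $V$ by the constant $c$ — or more cleanly, adding a universal pendant structure — represents $g + c$; this is routine but should be stated. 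Once these points are settled the contradiction is immediate.
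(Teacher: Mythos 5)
Your proof is correct and follows essentially the same route as the paper: extract the $(|W|+4)$-th layer of the $\Rnat$-minor representation, observe that it is an R-minor representation of $h+4$, shift away the constant $4$, and contradict Theorem~\ref{thm:non-r-minor-main}. One small caution: your parenthetical suggestion that one can ``absorb the constant into any single edge weight'' is not quite right, since not every independent matching of size $|W|+4$ passes through any one fixed edge; the paper instead subtracts $4/(|W|+4)$ from \emph{every} edge weight (exploiting that every feasible matching has exactly $|W|+4$ edges), which accomplishes the same end as your alternative ``pendant'' suggestion.
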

\begin{proof}
  Suppose $h^{\natural}$ is $\Rnat$-minor, therefore it has representation $(G, \I,\co,W)$ for some graph $G = (V \cup W, U; E)$.
  From this, we derive an R-minor representation for $h$.
  
  First note that 
  \begin{align*}
  h(X) &= \layer{4}{h^{\natural}}(X) - 4 \\
  &= \layer{|W|+4}{\tempinducednew{G}{\I}{\co}}(X \cup W) - 4 \\
  &= \tempinducednew{G}{\layer{|W|+4}{\I}}{\co}(X \cup W) - 4 \, .
  \end{align*}
  By introducing the altered weight function $\co'(e) = \co(e) - 4/(|W|+4)$, we get
  \[
  \tempinducednew{G}{\layer{|W|+4}{\I}}{\co'}(X \cup W) = \tempinducednew{G}{\layer{|W|+4}{\I}}{\co}(X \cup W)) - \frac{4|X\cup W|}{|W|+ 4} = h(X) \, .
  \] 
  Therefore, $h$ has the R-minor representation $(G,\layer{|W|+4}{\I},\co',W)$, contradicting Theorem~\ref{thm:non-r-minor-main}.
\end{proof}

\begin{theorem} \label{thm:mbv+counterexample}
  The class of $\Rnat$-minor functions is not equal to the class of valuated generalized matroids.
  In particular, Conjecture~\ref{conj:mbv} is false. 
\end{theorem}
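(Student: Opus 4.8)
The plan is to assemble the final theorem from the pieces already developed. The statement has two parts: (i) that $\Rnat$-minor functions do not exhaust valuated generalized matroids, and (ii) that the MBV conjecture (Conjecture~\ref{conj:mbv}) fails. I would prove (i) first and then derive (ii) as a corollary.

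For part (i), fix $n \geq 16$ and fix an arbitrary $h \in \cF_n$ taking values only in $(-1,0]$ (such $h$ exist by Definition~\ref{def:class}, since one is free to choose the negative values small enough). Form $h^{\natural}$ as in the statement. By Lemma~\ref{lem:hv+vgm}, $h^{\natural}$ is a valuated generalized matroid. By Lemma~\ref{lem:non-rnat-minor}, $h^{\natural}$ is not an $\Rnat$-minor function, where the key input is Theorem~\ref{thm:non-r-minor-main} applied to the $4$-th layer. Hence $h^{\natural}$ witnesses that the class of $\Rnat$-minor valuated generalized matroids is strictly smaller than the class of all valuated generalized matroids. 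This establishes the first sentence.

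For part (ii), I would invoke Corollary~\ref{cor:mbv-rnat-subclass}: every matroid based valuation is an $\Rnat$-minor function that is moreover monotone, real-valued, and takes value $0$ on the empty set. Now observe that $h^{\natural}$ has exactly these extra properties: it is real-valued by construction; $h^{\natural}(\emptyset) = |\emptyset| = 0$; and it is monotone because the values $|X|$ for $|X| \leq 3$, then $4 + h(X) \in (3,4]$ for $|X| = 4$ (using $h(X) \in (-1,0]$, so in particular $h(X) > -1$ forces $4 + h(X) > 3$), then $4$ for $|X| \geq 5$ form a nondecreasing sequence as $X$ grows by one element, and within the layer $|X| = 4$ all values lie in $(3,4]$ so no monotonicity violation against a $3$-set (value $3$) or a $5$-set (value $4$) can occur. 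Thus $h^{\natural}$ is a monotone valuated generalized matroid with $h^{\natural}(\emptyset) = 0$ not attaining $-\infty$, yet it is not an $\Rnat$-minor function, hence by Corollary~\ref{cor:mbv-rnat-subclass} it is not a matroid based valuation. This contradicts the equality asserted in Conjecture~\ref{conj:mbv}, so the conjecture is false.

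The only genuinely delicate point is making sure the monotonicity of $h^{\natural}$ is airtight at the boundary between layers — specifically that $4 + h(X) > 3$ for all $4$-sets $X$, which is exactly why the hypothesis $h \in (-1,0]$ (strictly greater than $-1$) is imposed; and that $4 + h(X) \leq 4$, which is why $h \leq 0$ is imposed. Everything else is a direct citation of Lemmas~\ref{lem:hv+vgm} and~\ref{lem:non-rnat-minor} and Corollary~\ref{cor:mbv-rnat-subclass}, so the proof is essentially a two-line assembly once those monotonicity bookkeeping checks are in place. I do not anticipate any real obstacle here, since the heavy lifting (Theorem~\ref{thm:non-r-minor-main}) has already been carried out.
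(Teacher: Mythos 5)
Your proof is correct and follows essentially the same route as the paper: citing Lemmas~\ref{lem:hv+vgm} and~\ref{lem:non-rnat-minor} for the first claim, and then Corollary~\ref{cor:mbv-rnat-subclass} together with the monotonicity, real-valuedness, and normalization of $h^{\natural}$ for the second. The only difference is that you spell out the monotonicity check across layer boundaries (using $h(X)\in(-1,0]$), which the paper leaves as an ``observe''; this is sound and a fine addition.
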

\begin{proof}
  The first claim follows immediately from Lemmas~\ref{lem:hv+vgm} and~\ref{lem:non-rnat-minor}.
  For the second claim, we observe that $h^{\natural}$ is a monotone and only takes finite values.
  However, by Corollary~\ref{cor:mbv-rnat-subclass} it is not a matroid based valuation, providing a counterexample to Conjecture~\ref{conj:mbv}.
\end{proof}

\section{Lorentzian polynomials}
\label{sec:lorentzian+polynomials}
In this section, we recall basic concepts of Lorentzian polynomials and their connection to valuated matroids, and more generally M-concave functions, via tropicalization.
We strengthen this connection by reframing operations on Lorentzian polynomials as natural operations on valuated matroids.
In particular, we show the multiplicative action of non-negative matrices on Lorentzian polynomials translates to induction by networks for valuated matroids.
As an application of our main counterexample, we demonstrate the limitation of this operation. 
We show this operation does not suffice to generate the space of Lorentzian polynomials over Puiseux series from generating functions of matroids, and that other real closed fields require arbitrarily large matroids.

\subsection{Background}

We recall the basic properties of M-concave functions; see~\cite{Murotabook} for further details. 
A function $f \colon \Z^n \to \Trop$ is \emph{M-concave} if and only if
  \begin{equation} \label{eq:M-concave-general}
    \begin{aligned}
      &\forall x,y \in \Z^n \text{ and all } i \in \supp^+(x-y): \\
      &f(x) + f(y) \leq \max_{j \in \supp^-(x-y)} \{f(x - e_i + e_j) + f(y + e_i - e_j)\} ,
    \end{aligned}
\end{equation}
where $\supp^+(z) = \{i \in [n] \colon z_i  > 0\}$ and $\supp^-(z) = \{i \in V \colon z_i  < 0\}$ for $z \in \Z^n$, and $e_{\ell}$ is the $\ell$-th unit vector. 
This extends~\eqref{eq:M-concave} from points in $\{0,1\}^n$ to $\Z^n$. 
An M-concave function has $\sum_{i=1}^n z_i = d$ for some fixed $d \in \Z$ for all $z \in \dom(f)$; we call $d$ the \emph{rank} of $f$. 
Observe that an M-concave function with $\dom(f) \subseteq \{0,1\}^n$ is a valuated matroid.

A set $B \subset \Z^n$ is \emph{M-convex} if its characteristic function, taking value $0$ on elements of $B$ and $-\infty$ otherwise, is an M-concave function.

Let $\K$ be an arbitrary ordered field. 
Furthermore, let $\Delta^d_n$ be the set of lattice points $\{x \in \Z_{\geq 0}^n \colon \sum_{i=1}^{n}x_i = d\}$. 
Given a multivariate polynomial $p(w) = \sum_{\alpha \in \Delta^d_n} c_{\alpha} w^{\alpha} \in \K[w_1,\dots,w_n]$, its \emph{support} $\supp(p)$ is the set $\{\alpha \in \Delta^d_n \colon c_{\alpha} \neq 0\}$.

Several characterizations of Lorentzian polynomials were given in~\cite{BraendenHuh:2020}; we follow their exposition.
Let $\Mco{d}{n}(\K)$ denote the homogeneous polynomials over $\K$ of degree $d$ on $n$ variables with non-negative coefficients whose support is an M-convex set. 
The set of Lorentzian polynomials over $\K$ of degree $d$ on $n$ variables is denoted by $\Lor{d}{n}(\K)$ and is defined recursively.

\begin{definition}[{\cite[Definition~3.18]{BraendenHuh:2020}}]
  $\Lor{0}{n}(\K) = \Mco{0}{n}(\K)$ and $\Lor{1}{n}(\K) = \Mco{1}{n}(\K)$, 
  \[
  \Lor{2}{n}(\K) = \{p \in \Mco{2}{n}(\K) \colon \text{Hessian of $p$ has at most one eigenvalue in }\K_{>0} \}. 
  \]
  For $d \geq 3$
  \[
  \Lor{d}{n}(\K) = \{p \in \Mco{d}{n}(\K) \colon \partial^{\alpha} p \in \Lor{2}{n}(\K) \text{ for all } \alpha \in \Delta^{d-2}_n \} \, ,
  \]
  where $\partial^{\alpha} = \partial_1^{\alpha_1}\cdots\partial_n^{\alpha_n}$ denotes the composition of $\alpha_i$-th partial derivative with respect to $w_i$.
\end{definition}

Br\"{a}nd\'{e}n and Huh give several other characterizations of Lorentzian polynomials when $\K = \R$, 
see~\cite[Definitions~2.1 \&~2.6]{BraendenHuh:2020}.
These definitions require taking limits, while the Hessian of a polynomial can be defined independently of a limit process, hence we only need to require $\K$ to be ordered.

We also note that while this definition holds for arbitrary ordered fields, many key results concerning Lorentzian polynomials were only proved over the real numbers.
We can extend these results to the larger class of \emph{real closed fields} via Tarski's principle; 
this states that first-order sentences of ordered fields hold over a real closed field $\K$ if and only if they hold over $\R$.
We therefore will restrict to working with real closed fields from now, and construct explicit fields from Section~\ref{sec:tropicalization} onwards.
For further model theoretic details, see~\cite[Section 3.3]{Marker:2002}.

A polynomial is \emph{multi-affine} if it has degree at most one in each variable.
For a general polynomial $p$, its \emph{multi-affine part} $\MAP(p)$ is the polynomial obtained by taking only terms with degree at most one in each variable.
These polynomials are of particular interest to us as the support of a multi-affine polynomial in $\Mco{d}{n}$ forms the bases of a matroid, as~\eqref{eq:M-concave-general} becomes the basis exchange axiom.
This will be a key connection to results from previous sections.

\smallskip

Lorentzian polynomials are closed under several basic operations, see~\cite{ChoeOxleySokalWagner:2004,BraendenHuh:2020}

\begin{proposition} \label{propdef:operations+polynomials}
  Let $\K$ be a real closed field, and let $p \in \Lor{d}{n}(\K) , q \in \Lor{e}{m}(\K)$ and $A \in \K_{\geq 0}^{n \times k}$.
  Then the following polynomials are also Lorentzian:
  \begin{enumerate}[label=(\roman*)]
  \item the \emph{deletion} $p \setminus i \in \Lor{d-1}{n}(\K)$ obtained from $p(u)$ by setting $u_i = 0$, 
  \item the \emph{contraction} $p / i := \partial_i p \in \Lor{d-1}{n}(\K)$, 
  \item the \emph{multi-affine part} $\MAP(p) \in \Lor{d}{n}(\K)$, 
  \item the \emph{matrix action} $(A \actson p)(w) := p(Aw) \in \Lor{d}{k}(\K)$ where $w = (w_1,\dots, w_k)$. 
  \end{enumerate}
\end{proposition}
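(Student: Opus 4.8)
All four closure properties are known over $\R$: deletion, contraction, the multi-affine part and the matrix action all preserve Lorentzian polynomials over the real numbers by the work of Br\"anden and Huh (see also Choe--Oxley--Sokal--Wagner), i.e. \cite{BraendenHuh:2020,ChoeOxleySokalWagner:2004}. The plan is to transfer each of these statements from $\R$ to an arbitrary real closed field $\K$ via Tarski's principle, exactly as advertised after the definition of Lorentzian polynomials. To do so I first have to check that ``being Lorentzian'' is a first-order property, uniformly in $\K$, and that each operation acts polynomially on coefficient data.

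\emph{Step 1: $\Lor{d}{n}(\K)$ is first-order definable.} Fix $d$ and $n$. I would argue by induction on $d$ that membership in $\Lor{d}{n}(\K)$ is expressible by a first-order formula $\varphi_{d,n}$ in the language of ordered fields whose free variables are the coefficients $(c_\alpha)_{\alpha \in \Delta^d_n}$, with $\varphi_{d,n}$ independent of the particular real closed field. Membership in $\Mco{d}{n}(\K)$ unwinds to ``$c_\alpha \ge 0$ for all $\alpha$'' together with ``$\supp(p)$ is M-convex''; the latter is a finite Boolean combination of the atoms $c_\alpha = 0$, because M-convexity of a subset of $\Delta^d_n$ depends only on which lattice points lie in it (it is the disjunction, over the finitely many M-convex $B \subseteq \Delta^d_n$, of $\bigwedge_{\alpha \in B} c_\alpha \neq 0 \wedge \bigwedge_{\alpha \notin B} c_\alpha = 0$). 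For $d \le 1$ this is all; for $d = 2$ one adds the condition that the Hessian of $p$ has at most one eigenvalue in $\K_{>0}$, which is semialgebraic in the entries of the Hessian --- for instance via the signs of its principal minors, or because ``the characteristic polynomial has at most one positive root'' is a semialgebraic condition on its coefficients --- and the entries of the Hessian are integer-coefficient polynomials in the $c_\alpha$; hence this condition is again first-order in $(c_\alpha)$. For $d \ge 3$ one closes the induction by taking the finite conjunction over $\alpha \in \Delta^{d-2}_n$ of the formulas $\varphi_{2,n}$ applied to the coefficient vector of $\partial^\alpha p$, whose entries are integer-linear in the $c_\alpha$.

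\emph{Step 2: the operations are polynomial, and transfer.} Each operation in (i)--(iv) sends the coefficient vector of the input to that of the output by a map with integer-polynomial coordinates in the input data: for (i) one retains the coefficients supported on $\{\alpha : \alpha_i = 0\}$; for (ii) the coefficient of $w^\beta$ in $\partial_i p$ is $(\beta_i + 1)\,c_{\beta + e_i}$; for (iii) one retains the coefficients supported on $\{0,1\}^n$; and for (iv) the coefficient of $w^\gamma$ in $p(Aw)$ is a fixed polynomial in the $c_\alpha$ and the entries $A_{ij}$. Moreover $A \in \K_{\ge 0}^{n \times k}$ is the first-order condition $\bigwedge_{i,j} A_{ij} \ge 0$. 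Consequently, for each of (i)--(iv), the assertion ``for all coefficient vectors (and, in case (iv), all matrices $A$ with $A_{ij}\ge 0$), if the input is Lorentzian then so is the output'' is a single first-order sentence $\sigma$ over ordered fields --- the target degree and number of variables ($d-1$ and $n$ for (i), (ii); $d$ and $n$ for (iii); $d$ and $k$ for (iv)) are read off directly from the construction. By \cite{BraendenHuh:2020,ChoeOxleySokalWagner:2004} the sentence $\sigma$ holds in $\R$; since $\K$ is real closed, Tarski's principle yields that $\sigma$ holds in $\K$, which is the claim.

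\textbf{Main obstacle.} The only non-routine point is Step~1, and within it the assertion that the Hessian spectral condition in the definition of $\Lor{2}{n}$ is semialgebraic and transfers; one must phrase ``at most one positive eigenvalue'' without presupposing that eigenvalues exist in $\K$ (they do, as $\K$ is real closed, but the uniform sentence $\sigma$ should not rely on that a priori). Using the signs of the principal minors (Jacobi's signature criterion, itself a transfer consequence) or a direct semialgebraic count of positive roots of the characteristic polynomial settles this cleanly. Everything else is bookkeeping on how the four operations act on coefficients.
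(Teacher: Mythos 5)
Your proof is correct and follows the same Tarski-transfer route the paper uses. The only (inessential) difference is one of decomposition: the paper observes that deletion and contraction follow directly from the recursive definition of $\Lor{d}{n}$, invoking Tarski only for the multi-affine part and the matrix action, whereas you apply the transfer uniformly to all four; moreover your Step~1, verifying that membership in $\Lor{d}{n}(\K)$ is a single first-order formula in the coefficient tuple (handling the Hessian condition semialgebraically so as not to presuppose eigenvalues), supplies a justification the paper asserts without elaboration.
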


\begin{proof}
Deletion and contraction follow essentially from the definition of Lorentzian polynomials.
The multi-affine part and matrix action are shown in~\cite[Corollary~3.5, Theorem~2.10]{BraendenHuh:2020} for $\K = \R$ respectively.
As both are first-order sentence, Tarski's principle implies they hold over arbitrary real closed fields.
\end{proof}

\subsection{Tropicalization} \label{sec:tropicalization}
In this section, we focus on Lorentzian polynomials over $\K = \pseries{\R}{t}$, the field of \emph{(generalized) Puiseux series}, see~\cite{Markwig:2010} for further details.
The field $\pseries{\R}{t}$ consists of formal series of the form
\[
c(t) = \sum_{k \in A} a_k t^k \ , \ a_k \in \R
\]
where $A \subset \R$ has no accumulation point and a well defined maximal element.
The leading term of a Puiseux series is the term with largest exponent.
We say a Puiseux series is \emph{positive} if its leading term has a positive coefficient, and denote the semiring of positive Puiseux series (with zero) by $\pseries{\R}{t}_{\geq 0}$.
We can extend this to make $\pseries{\R}{t}$ an ordered field by defining $c > d$ if and only if $c - d$ is a positive Puiseux series.
Crucially, $\pseries{\R}{t}$ is also real closed and therefore we can invoke Tarski's principle.

This ordered field is equipped with a non-archimedean valuation $\deg$ (an extension of the degree map) which maps all non-zero elements to their leading exponent and zero to $-\infty$.
The valuation $\deg$ extends entry-wise to vectors and matrices. 
It is enough to think of Puiseux series as polynomials in $t$ with arbitrary exponents and coefficients in $\R$.

\begin{observation} \label{obs:semiring-morphism}
  For $x,y \in \pseries{\R}{t}_{\geq 0}$ the map $\deg$ is a semiring homomorphism, this means $\deg(x+y) = \max(\deg(x),\deg(y))$ and $\deg(x \cdot y) = \deg(x) + \deg(y)$.
  Note that this does not hold for general Puiseux series, as the sum of a positive and negative series may cause the leading terms to cancel.
\end{observation}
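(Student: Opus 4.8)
The plan is to verify the two identities separately, handling multiplication first since it is entirely routine, and then spending the bulk of the argument on addition, where the positivity hypothesis is genuinely used. Throughout, write $x = \sum_{k \in A} a_k t^k$ and $y = \sum_{\ell \in B} b_\ell t^\ell$ with $a_k, b_\ell \in \R$, where $A, B \subseteq \R$ have no accumulation point and each has a well-defined maximum; by the definition of $\pseries{\R}{t}_{\geq 0}$, the leading coefficients $a_{\max A}$ and $b_{\max B}$ are positive (or $x$, resp.\ $y$, is zero, a case one disposes of immediately since $\deg 0 = -\infty$ is the additive identity on $\Trop$ for $\max$ and absorbing for $+$). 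So assume $x, y \neq 0$ and set $\alpha := \deg x = \max A$, $\beta := \deg y = \max B$.

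For multiplication: the product $xy = \sum_m \bigl(\sum_{k + \ell = m} a_k b_\ell\bigr) t^m$ has its exponent set contained in $A + B$, which again has no accumulation point and has maximum $\alpha + \beta$; the coefficient of $t^{\alpha + \beta}$ is exactly $a_\alpha b_\beta$, a product of two nonzero reals, hence nonzero (and in fact positive, confirming $xy \in \pseries{\R}{t}_{\geq 0}$). Therefore $\deg(xy) = \alpha + \beta = \deg x + \deg y$. Note this step does not really need positivity — it only needs that leading coefficients do not vanish — but stating it for the positive semiring is cleanest.

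For addition: the sum $x + y$ has coefficient $a_m + b_m$ at $t^m$ (interpreting $a_m = 0$ for $m \notin A$, etc.), with exponent set a subset of $A \cup B$, which has no accumulation point and maximum $\max(\alpha, \beta)$. If $\alpha \neq \beta$, say $\alpha > \beta$, then the coefficient at $t^\alpha$ is $a_\alpha \neq 0$, so $\deg(x + y) = \alpha = \max(\alpha, \beta)$. The only delicate case is $\alpha = \beta$: here the coefficient at $t^\alpha$ is $a_\alpha + b_\alpha$, and \emph{this is where positivity enters} — both $a_\alpha$ and $b_\alpha$ are strictly positive, so $a_\alpha + b_\alpha > 0$ and in particular is nonzero, whence $\deg(x+y) = \alpha = \max(\alpha,\beta)$. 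This is the one place the whole statement could fail for general Puiseux series, and the parenthetical remark in the Observation is precisely flagging that if one allowed a negative leading coefficient for $y$ one could arrange $a_\alpha + b_\alpha = 0$ and drop the degree. So the ``main obstacle'' is really just isolating this cancellation case and observing it cannot occur under the positivity hypothesis; everything else is bookkeeping with leading terms. I would conclude by remarking that the same computation shows $x + y \in \pseries{\R}{t}_{\geq 0}$, so $\pseries{\R}{t}_{\geq 0}$ is closed under both operations and $\deg$ is a homomorphism of semirings onto $(\R \cup \{-\infty\}, \max, +)$.
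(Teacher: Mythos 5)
Your proof is correct and is precisely the elementary verification the paper has in mind; the Observation is stated without proof in the paper because it is immediate, and your argument — handling multiplication by noting the top coefficient is $a_\alpha b_\beta \neq 0$, and handling addition by isolating the $\alpha = \beta$ case where positivity forces $a_\alpha + b_\alpha > 0$ — is exactly the standard reasoning being invoked.
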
 

Recall that by definition, Lorentzian polynomials have non-negative coefficients.
As $\deg$ is a semiring homomorphism on these coefficients, this motivates the study of Lorentzian polynomials under the degree map.

\begin{definition}
  For a polynomial $p(w) = \sum_{\alpha \in \Delta^d_n} c_{\alpha}(t) w^{\alpha} \in \pseries{\R}{t}[w]$ over Puiseux series, its \emph{tropicalization} is the function $\trop(p) \colon \Delta^d_n \to \Trop$ with $\trop(p)(\alpha) = \deg(c_{\alpha}(t))$. 
\end{definition}

\begin{theorem}[{\cite[Theorem 3.20]{BraendenHuh:2020}}]\label{thm:M-concave-Lorentzian}
  For $f \colon \Delta^d_n \to \Trop$, the following are equivalent:
  \begin{enumerate}[label=(\roman*)]
  \item the function $f$ is M-concave, 
  \item there is a Lorentzian polynomial $p \in \pseries{\R}{t}[w_1,\dots,w_n]$ with $\trop(p) = f$. 
  \end{enumerate}
\end{theorem}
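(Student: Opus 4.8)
The plan is to prove the two implications separately; essentially all of the work lies in the direction (i)$\Rightarrow$(ii).

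\textbf{(ii)$\Rightarrow$(i).} Suppose $p=\sum_{\alpha\in\Delta^d_n}c_\alpha(t)\,w^\alpha$ is Lorentzian with $\trop(p)=f$. Since $\Lor{d}{n}\subseteq\Mco{d}{n}$, the set $\dom(f)=\supp(p)$ is M-convex, so it only remains to verify the exchange inequality for the values of $f$. For this I would reduce to the quadratic case via the recursive definition: for every $\beta\in\Delta^{d-2}_n$ the derivative $\partial^\beta p$ lies in $\Lor{2}{n}(\pseries{\R}{t})$, and a direct computation shows its Hessian $H^\beta$ has $(i,j)$-entry equal to $c_{\beta+e_i+e_j}(t)$ times a positive integer multinomial factor (with $e_i+e_i:=2e_i$). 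Because $H^\beta$ has nonnegative diagonal and at most one positive eigenvalue, each of its $2\times 2$ principal minors is $\le 0$, i.e. $c_{\beta+e_i+e_j}^2\gtrsim c_{\beta+2e_i}\,c_{\beta+2e_j}$ up to positive constants. Applying $\deg$, which by Observation~\ref{obs:semiring-morphism} is multiplicative and order-preserving on $\pseries{\R}{t}_{\ge0}$, yields
\[
2f(\beta+e_i+e_j)\ \ge\ f(\beta+2e_i)+f(\beta+2e_j)
\]
for all $\beta\in\Delta^{d-2}_n$ and $i\neq j$. This is exactly the local exchange inequality for $f$; together with M-convexity of $\dom(f)$ it implies global M-concavity by the local-to-global characterisation of M-concave functions (Murota's discrete convex analysis).

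\textbf{(i)$\Rightarrow$(ii).} Given an M-concave $f$ of rank $d$, the natural candidate is the ``naive lift''
\[
p(w)\ :=\ \sum_{\alpha\in\dom(f)}\frac{1}{\alpha!}\,t^{f(\alpha)}\,w^\alpha\ \in\ \pseries{\R}{t}[w_1,\dots,w_n],
\]
for which $\trop(p)=f$ by construction. Its support is $\dom(f)$, which is M-convex since $f$ is M-concave, so by the recursion it suffices to prove $\partial^\beta p\in\Lor{2}{n}$ for each $\beta\in\Delta^{d-2}_n$. As in the first part, the Hessian of $\partial^\beta p$ is — up to a positive diagonal congruence and harmless positive constants — the symmetric matrix $H^\beta$ with $H^\beta_{ij}=t^{f(\beta+e_i+e_j)}$ (and $0$ where $f=-\infty$), whose support is the set $\{\gamma:\ |\gamma|=2,\ \beta+\gamma\in\dom(f)\}$. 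The map $\gamma\mapsto f(\beta+\gamma)$ is a valuated matroid of rank $2$ (a contraction-type slice of $f$, and the excerpt's operations preserve M-concavity), so this support is the set of bases of a rank-$2$ matroid. Thus everything reduces to the following quadratic lemma: \emph{if $v$ is a valuated matroid of rank $2$ on $[n]$, then the symmetric matrix over $\pseries{\R}{t}$ with $(i,j)$-entry $t^{v(\{i,j\})}$ has at most one positive eigenvalue.}

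\textbf{The main obstacle.} I expect the quadratic lemma to be the hard part. The intended route is realisability of rank-$2$ valuated matroids: every such $v$ is the tropicalisation of the (symmetrised) $2\times 2$ minors of a $2\times n$ matrix $M$ over $\pseries{\R}{t}$ whose entries are positive Puiseux series, which is the classical statement that tropical lines are always realisable. For such $M$ the polynomial $\bigl(\sum_i M_{1i}w_i\bigr)\bigl(\sum_i M_{2i}w_i\bigr)$ is Lorentzian as a product of linear forms with nonnegative coefficients, its coefficients have the prescribed degrees (no cancellation, since all entries are positive), and its Hessian has signature $(1,1,0,\dots,0)$. The remaining difficulty is to transfer this to the specific matrix $H^\beta$ coming from the naive lift: one can either replace the real leading coefficient $1/\alpha!$ by a generic positive real $a_\alpha$ — which does not change $\trop(p)$ — and argue that the resulting coefficient vector lies in the (Zariski-dense, Tarski-transferable) realisable locus, or establish directly that the matroidal structure of the support forces $H^\beta$ into the ``at most one positive eigenvalue'' stratum. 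Controlling the Hessian's signature for a single polynomial $p$ whose \emph{every} $(d-2)$-fold derivative must simultaneously be quadratic-Lorentzian is the crux; once this is in place, the remainder is routine bookkeeping with the degree map and with the recursion defining $\Lor{d}{n}$.
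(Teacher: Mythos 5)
The paper cites this result directly as \cite[Theorem~3.20]{BraendenHuh:2020} and does not give its own proof, so the comparison here is against the soundness of your argument on its own terms.

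Your direction (ii)$\Rightarrow$(i) has a genuine gap. From the $2\times 2$ \emph{principal} minors of the Hessian of $\partial^\beta p$ you extract only the diagonal inequality $2f(\beta+e_i+e_j)\ge f(\beta+2e_i)+f(\beta+2e_j)$, and you then invoke a local-to-global principle. But this diagonal inequality is not the local exchange condition for M-concavity: the genuine local condition requires controlling four-index exchanges of the form $f(\alpha+e_i+e_j)+f(\alpha+e_k+e_l)\le\max\{f(\alpha+e_i+e_k)+f(\alpha+e_j+e_l),\,f(\alpha+e_i+e_l)+f(\alpha+e_j+e_k)\}$, which corresponds to non-principal $2\times2$ minors and to the signature of $3\times3$/$4\times4$ principal blocks, not merely to $2\times2$ principal minors. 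The failure is stark in the case this paper cares about: if $\dom(f)\subseteq\{0,1\}^n$ (so $f$ is a candidate valuated matroid), then $f(\beta+2e_i)=-\infty$ for all $\beta,i$, so your derived local inequality is vacuously true, while M-concavity of such $f$ is precisely the non-trivial tropical Pl\"ucker / valuated-matroid exchange axiom. So the local-to-global step as written does not go through.

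In direction (i)$\Rightarrow$(ii) you flag the quadratic lemma as the crux, which is fair, but the realizability route you sketch also does not deliver it. If $v$ is a rank-$2$ valuated matroid realized by a $2\times n$ matrix $M$ over $\pseries{\R}{t}_{\ge 0}$, the coefficient of $w_iw_j$ in $\bigl(\sum_i M_{1i}w_i\bigr)\bigl(\sum_i M_{2i}w_i\bigr)$ is $M_{1i}M_{2j}+M_{1j}M_{2i}$, whose degree is $\max\bigl(\deg M_{1i}+\deg M_{2j},\ \deg M_{1j}+\deg M_{2i}\bigr)$, whereas $v(ij)$ is the degree of the \emph{minor} $M_{1i}M_{2j}-M_{1j}M_{2i}$. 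These do not agree when the leading terms cancel, and realizability of tropical lines is a statement about minors over a valued field, not over the positive cone of Puiseux series; so the Lorentzian polynomial you build from $M$ does not tropicalize to $v$, and conversely not every rank-$2$ valuated matroid is of the ``$\max$ of products'' form the construction produces. Repairing either direction requires the full Lorentzian (signature) structure, not just $2\times2$ principal minors; this is where the actual argument of Br\"and\'en--Huh (built around their ultra-log-concavity limit theorem, {\cite[Theorem~3.14]{BraendenHuh:2020}}) does its work, and your outline as it stands does not reach it.
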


\begin{remark}
  Lorentzian polynomials are usually associated with M-convex functions, which are the negatives of M-concave functions. 
  However, this is merely a matter of how we choose the tropicalization as highest or lowest term, or actually its negative. 
  It translates to the choice of convention between $\max$ and $\min$ and one can easily switch between them via the relation $\max(x,y) = \min(-x,-y)$. 
\end{remark}

As a corollary of Theorem~\ref{thm:M-concave-Lorentzian}, if $p$ is a multi-affine Lorentzian polynomial then $\trop(p)$ is a valuated matroid.
This relation is strengthened in the following propositions, where many of the constructions in Section~\ref{sec:preliminaries} and the constructions in Proposition~\ref{propdef:operations+polynomials} are shown to commute. 

\begin{proposition} \label{prop:operations+tropical}
  Let $p$ be a multi-affine Lorentzian polynomial over $\pseries{\R}{t}$.
  \begin{enumerate}[label=(\roman*)]
  \item $\trop(p \setminus i) = \trop(p) \setminus i$,
  \item $\trop(p / i) = \trop(p) / i$.
  \end{enumerate}
\end{proposition}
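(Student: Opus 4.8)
The plan is to prove the two identities in Proposition~\ref{prop:operations+tropical} by working directly from the definitions, using that $\deg$ is a semiring homomorphism on positive Puiseux series (Observation~\ref{obs:semiring-morphism}) and that all coefficients of a Lorentzian polynomial are non-negative, so no cancellation occurs under the degree map. Write $p(w) = \sum_{\alpha} c_\alpha(t) w^\alpha$, a multi-affine polynomial whose support is the set of bases of a matroid of rank $d$ (by Theorem~\ref{thm:M-concave-Lorentzian} and the remark that the support of a multi-affine polynomial in $\Mco{d}{n}$ forms a matroid). Throughout, $\trop(p)(\alpha) = \deg(c_\alpha(t))$, with the convention that $\deg(0) = -\infty$.

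For part (i), deletion: $p \setminus i$ is obtained by setting $w_i = 0$, so its coefficients are $c_\alpha(t)$ for exactly those $\alpha$ with $\alpha_i = 0$, and all other monomials vanish. Hence $\trop(p\setminus i)(\alpha) = \deg(c_\alpha(t)) = \trop(p)(\alpha)$ when $\alpha_i = 0$, and $\trop(p\setminus i)$ is undefined (value $-\infty$) on the rest of $\Delta^{d}_{n}$ restricted to coordinate $i$ zero. I would then match this against Definition~\ref{def:M+operations}(i): the deletion $f \setminus i$ of the valuated matroid $f = \trop(p)$ is $f$ restricted to the sets avoiding $i$, provided $V \setminus \{i\}$ has full rank in $\dom(f)$, and otherwise identically $-\infty$. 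The multi-affine Lorentzian polynomial $p\setminus i$ is itself Lorentzian by Proposition~\ref{propdef:operations+polynomials}(i), so its tropicalization is M-concave of some rank; one checks the rank is either $d$ (when $i$ is not a coloop of the underlying matroid, which is exactly the full-rank condition) or else $p \setminus i = 0$ and both sides are identically $-\infty$. This makes the two definitions coincide on the nose.

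For part (ii), contraction: $p/i = \partial_i p$. Since $p$ is multi-affine, $\partial_i p = \sum_{\alpha\,:\,\alpha_i = 1} c_\alpha(t)\, w^{\alpha - e_i}$, and the map $\alpha \mapsto \alpha - e_i$ is a bijection between $\{\alpha \in \supp(p) : \alpha_i = 1\}$ and $\supp(\partial_i p)$. Therefore $\trop(\partial_i p)(\beta) = \deg(c_{\beta + e_i}(t)) = \trop(p)(\beta + e_i)$ for all $\beta$ with $\beta_i = 0$, while $\trop(\partial_i p)$ is $-\infty$ elsewhere. Comparing with Definition~\ref{def:M+operations}(ii), the contraction $f/i$ is given by $(f/i)(X) = f(X \cup i)$ when $i$ is independent in $\dom(f)$, i.e. when $i$ is not a loop of the underlying matroid, and is identically $-\infty$ otherwise. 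If $i$ is a loop then no basis contains $i$, so $\partial_i p = 0$ and both sides are $-\infty$; if $i$ is not a loop the formulas agree termwise. The only subtlety to address is that $\partial_i p$ is multi-affine again precisely because $p$ had degree at most one in each variable, so the identification of $\trop(\partial_i p)$ with a valuated matroid is legitimate.

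The proof is essentially bookkeeping; the one genuine point that needs care — and which I expect to be the main (mild) obstacle — is matching the paper's \emph{rank-restricted} definitions of deletion and contraction (Definition~\ref{def:M+operations}, which force the output to be identically $-\infty$ in the degenerate loop/coloop cases) against the naive polynomial operations, rather than the usual matroid-theoretic definitions. This amounts to verifying: $i$ is a coloop of $\supp(p)$ iff $p \setminus i = 0$, and $i$ is a loop of $\supp(p)$ iff $\partial_i p = 0$ — both immediate from the description of $\supp(p\setminus i)$ and $\supp(\partial_i p)$ above — together with the observation that the degree map introduces no cancellation on the non-negative coefficients, so the support of the tropicalization is exactly the support of the polynomial.
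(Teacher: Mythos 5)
Your proof is correct and follows essentially the same direct computation as the paper: in both cases one observes that $\supp(p\setminus i)$ consists of the $\alpha$ with $i\notin\alpha$ and $\supp(\partial_i p)$ is in bijection with the $\alpha\ni i$ via $\alpha\mapsto\alpha-e_i$, then reads off the coefficients and applies $\deg$ termwise, noting that non-negativity of the coefficients rules out cancellation. Your extra care in reconciling the rank-restricted Definition~\ref{def:M+operations} (which declares the result identically $-\infty$ when $i$ is a coloop, resp.\ a loop) with the polynomial operations $w_i\mapsto 0$, resp.\ $\partial_i$, is a genuine improvement in rigor over the paper's very terse argument, which silently elides these degenerate cases.
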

\begin{proof}
Let $p = \sum_{\alpha \in \Delta^d_n} c_{\alpha}(t) w^{\alpha}$ be multi-affine, we can view $\alpha$ as a subset of $[n]$.

For {\em (i)}, note that $\alpha \in \supp(p \setminus i)$ if and only if $i \notin \alpha$, therefore
\[
(\trop(p\setminus i))(\alpha) = \begin{cases} \deg(c_\alpha(t)) & i \notin \alpha \\ -\infty &  i \in \alpha \end{cases} \, ,
\]
precisely the value of $(\trop(p)\setminus i)(\alpha)$.

For {\em (ii)}, note that $\beta \in \supp(p / i)$ if and only if $\beta \cup i \in \supp(p)$, therefore
\[
(\trop(p/ i))(\beta) = \deg(c_{\beta \cup i}(t)) = (\trop(p)/i)(\beta) \, .
\]
\end{proof}

\begin{proposition}
Let $p$ be a Lorentzian polynomial over $\pseries{\R}{t}$.
The tropicalization of its multi-affine part $\trop(\MAP(p))$ is the restriction of $\trop(p)$ to $\{0,1\}^n$.
\end{proposition}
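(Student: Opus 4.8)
The plan is to unwind the two definitions and check that they agree index by index. Write $p(w) = \sum_{\alpha \in \Delta^d_n} c_\alpha(t)\, w^\alpha \in \pseries{\R}{t}[w_1,\dots,w_n]$. A monomial $w^\alpha$ with $\alpha \in \Delta^d_n$ has degree at most one in each variable precisely when $\alpha \in \{0,1\}^n$, so directly from the definition of the multi-affine part we have $\MAP(p) = \sum_{\alpha \in \Delta^d_n \cap \{0,1\}^n} c_\alpha(t)\, w^\alpha$: it keeps exactly the same coefficients $c_\alpha(t)$ on the squarefree indices and discards all other terms. In particular every surviving monomial still has total degree $d$, so $\MAP(p)$ is homogeneous of degree $d$ (and Lorentzian by Proposition~\ref{propdef:operations+polynomials}(iii)), and hence $\trop(\MAP(p))$ is a well-defined function on $\Delta^d_n$.

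Next I would apply the tropicalization map. For $\alpha \in \Delta^d_n$, the value $\trop(\MAP(p))(\alpha)$ equals $\deg(c_\alpha(t))$ when $\alpha \in \{0,1\}^n$ and $\alpha \in \supp(\MAP(p))$, and $-\infty$ otherwise. Since the coefficients of $\MAP(p)$ on $\{0,1\}^n$ are literally those of $p$, we get $\trop(\MAP(p))(\alpha) = \deg(c_\alpha(t)) = \trop(p)(\alpha)$ for every $\alpha \in \Delta^d_n \cap \{0,1\}^n$, while $\trop(\MAP(p))(\alpha) = -\infty$ for $\alpha \notin \{0,1\}^n$. This is exactly the restriction of $\trop(p) \colon \Delta^d_n \to \Trop$ to $\{0,1\}^n$, under the usual convention that such a restriction is recorded as a function on all of $\Delta^d_n$ taking value $-\infty$ off the chosen sublattice.

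There is essentially no real obstacle here beyond bookkeeping; the one point worth stating carefully is the convention about the common domain, i.e.\ that ``restriction of $\trop(p)$ to $\{0,1\}^n$'' and ``$\trop$ of a polynomial all of whose monomials are squarefree'' are being compared as functions on $\Delta^d_n \cap \{0,1\}^n$ (equivalently, as valuated matroids on $[n]$), and that the degree map reads each coefficient of $\MAP(p)$ off the corresponding coefficient of $p$ unchanged. I would optionally close by noting that, combined with Theorem~\ref{thm:M-concave-Lorentzian}, this recovers the fact that the restriction of an M-concave function to $\{0,1\}^n$ is a valuated matroid.
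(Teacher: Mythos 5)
Your proof is correct and follows essentially the same route as the paper: unwind the definition of $\MAP$, observe that it preserves the coefficients on squarefree multi-indices and zeroes out the rest, and then apply $\deg$ coefficientwise to obtain the case split. The only difference is that you spell out the domain convention and the edge case where a squarefree coefficient is zero (so both sides give $-\infty$) more explicitly; the paper leaves this implicit.
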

\begin{proof}
Note that the claim trivially holds if $p$ is multi-affine. 
If $p$ is not multi-affine, then its multi-affine part $\MAP(p)$ has zero as the coefficient for any terms containing squares.
Under the degree map, we get
\[
\trop(\MAP(p))(\alpha) = \begin{cases} \deg(c_\alpha(t)) & \alpha \in \{0,1\}^n \\ -\infty &  \alpha \in \{0,1\}^n \end{cases} \, .
\]
which is precisely $\trop(p)(\alpha)$ restricted to $\{0,1\}^n$.
\end{proof}

We give a more general version of induction by bipartite graph than introduced in Definition~\ref{def:induction-by-networks} and Lemma~\ref{lem:induction-principle+extension}, allowing for M-concave functions and more general subgraphs than matchings.
Note this is still a special case of transformation by networks derived from~\cite[Theorem~9.27]{Murotabook}. 

\begin{proposition} \label{propdef:transformation-by-networks}
  Let $G=(V,U; E)$ be a bipartite graph with weight function $\co \in \R^E$.
  Let $g$ be an M-concave function on $\Z_{\geq 0}^U$ of rank $d$.
  Then the \emph{transformation of $g$ by $G$} is the function $\temptrafonew{G}{g}{\co} \colon \Z_{\geq 0}^V \to \Trop$ with
  \[
  \temptrafonew{G}{g}{\co}(x) = \max\SetOf{\sum_{e \in \mu} \co(e) + g(y)}{\begin{aligned}&\mu \text{ subgraph of } G \text{ with }\\ &\delta_V(\mu) = x \text{ and } \delta_U(\mu) = y\end{aligned}} , 
  \]
  where $\delta_V(\mu)$ and $\delta_U(\mu)$ are the degree vectors of $\mu$ on $V$ and $U$. 
  Furthermore, $\temptrafonew{G}{g}{\co}(x)$ is an M-concave function.
\end{proposition}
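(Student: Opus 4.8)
The plan is to derive this as a special case of induction by networks (Theorem~\ref{thm:induction-network-Mnat}, i.e.\ \cite[Theorem~9.27]{Murotabook}). The only real content is to encode the data $(G, g, \co)$ — where now $g$ lives on $\Z_{\geq 0}^U$ and the subgraphs $\mu$ are arbitrary rather than matchings — as an instance of induction of an M-concave function by a directed network, and check that the defining formulas coincide. First I would build an auxiliary directed network $N$ from $G$: keep the node sets $V$ and $U$, and for every edge $e = (v,u) \in E$ introduce a fresh intermediate node $m_e$ together with arcs $v \to m_e$ (weight $\co(e)$) and $m_e \to u$ (weight $0$). A collection of node-disjoint $V$--$U$ paths in $N$ corresponds exactly to a choice of edges of $G$ no two of which share an endpoint in $V \cup U$ — i.e.\ a matching. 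This is \emph{not} yet what we want, since the proposition allows $\mu$ to be an arbitrary subgraph (with $\delta_V(\mu), \delta_U(\mu)$ possibly exceeding $1$).

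To handle arbitrary subgraphs, I would instead pass to a ``blown-up'' network. Replace each node $v \in V$ by $\deg_G(v)$ parallel copies $v^{(1)},\dots,v^{(\deg v)}$ all identified with $v$ in the final contraction/summation step, and likewise each $u \in U$ by $\deg_G(u)$ copies; route each edge $e=(v,u)$ through its own intermediate node as above, now joining a dedicated copy of $v$ to a dedicated copy of $u$. Then node-disjoint paths through this network biject with arbitrary subgraphs $\mu$ of $G$, and the endpoint degree vectors $\delta_V(\mu), \delta_U(\mu)$ are recovered by summing the indicator of ``copy of $v$ used'' over the copies. Formally this is the statement that a subgraph of $G$ is the same thing as a matching in the bipartite graph obtained by cloning each vertex of $G$ according to its degree; I would phrase the network $N$ on the node set (clones of $V$) $\cup$ (intermediate nodes) $\cup$ (clones of $U$), take the input set to be the clones of $V$, the output set the clones of $U$, put the M-concave function $\tilde g$ on the clones of $U$ defined by pulling back $g$ along the aggregation map $\Z_{\geq 0}^{U\text{-clones}} \to \Z_{\geq 0}^U$ (this pullback is M-concave — it is a coordinate ``splitting'', a standard operation preserving M-concavity, or can itself be realized as another trivial induction), and weight the arc out of each $V$-clone through $e$ by $\co(e)$. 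With $N$, $\tilde g$, and these weights in hand, Theorem~\ref{thm:induction-network-Mnat} gives that the induced function on $\Z_{\geq 0}^{V\text{-clones}}$ is M-concave; finally I aggregate the $V$-clone coordinates back down to $V$ (again a splitting/aggregation, M-concavity-preserving) to land on $\Z_{\geq 0}^V$, and check the resulting function is exactly $\temptrafonew{G}{g}{\co}$ by matching the two max-formulas term by term: node-disjoint paths $\leftrightarrow$ subgraphs $\mu$, path weights $\leftrightarrow$ $\sum_{e\in\mu}\co(e)$, and the value of $\tilde g$ on the output degree vector $\leftrightarrow$ $g(\delta_U(\mu))$.

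The main obstacle I expect is purely bookkeeping: making the cloning/aggregation precise enough that (a) node-disjoint paths in $N$ really are in bijection with \emph{all} subgraphs $\mu$ (not just matchings, and with the right multiplicities on the degree vectors), and (b) the aggregation maps on both the $U$-side (defining $\tilde g$) and the $V$-side (the final projection) are genuinely M-concavity-preserving operations expressible within the framework already set up — the cleanest route is to realize each aggregation as yet another trivial induction by a bipartite graph (a ``fan'' from each vertex to its clones, with zero weights) and invoke Theorem~\ref{thm:induction-network-Mnat} once more, so that the whole construction is a single composite induction by a network. There is no genuine mathematical difficulty beyond Theorem~\ref{thm:induction-network-Mnat}; the work is in verifying that the composite network's path-packing objective reproduces the stated subgraph-packing objective, for which one can also cite that subgraph packing in $G$ is equivalent to matching in the vertex-cloned graph and that degree-vector constraints transport correctly under the identification.
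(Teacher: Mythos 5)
The paper does not prove this proposition at all; it is stated as a direct special case of \cite[Theorem~9.27]{Murotabook}, i.e.\ Murota's transformation-by-networks theorem in its original generality, which already covers M-concave functions on $\Z_{\geq 0}^U$ and integral flows of multiplicity greater than one. Theorem~\ref{thm:induction-network} and Theorem~\ref{thm:induction-network-Mnat}, which you want to reduce to, are themselves the restricted, matching-based (node-disjoint-path) specializations of that same theorem; that is exactly why they cannot directly handle endpoints of degree $\geq 2$, and the paper sidesteps the issue by citing the general form.

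Your cloning reduction is a genuine attempt to bridge that gap, but the aggregation steps are circular. To pass from a valuated matroid $\tilde h$ on the clones of $V$ down to $h$ on $\Z_{\geq 0}^V$ you must, for each $x$, maximize $\tilde h$ over all $\{0,1\}$-vectors on the clones whose coordinatewise sum over the clones of each $v$ equals $x_v$. You propose to realize this as "yet another trivial induction by a bipartite graph (a fan from each vertex to its clones, with zero weights)", but induction in the sense of Definition~\ref{def:induction-by-networks}/\ref{def:induction-by-networks-Mnat} only sums over matchings: the node $v$ would be incident to at most one of its clones, whereas you need it incident to exactly $x_v$ of them. That is precisely a flow of multiplicity $x_v$, i.e.\ the general transformation you are trying to prove. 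The same problem arises on the $U$ side when you define $\tilde g$ from $g$ by splitting and call it "standard" — it is, but within this framework it is itself an instance of the general transformation-by-networks theorem, not of the matching-based inductions at your disposal. To close the gap you would need either a direct verification (via the exchange axiom) that splitting and aggregation preserve M-concavity, which is a real argument and not bookkeeping, or an appeal to the general form of Murota's theorem — at which point the cloning is superfluous and you might as well cite it for the proposition outright, as the paper does.
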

For consistency of notation with Lorentzian polynomials, we will use the node sets $V = [n]$ and $U = [k]$.

\begin{theorem}\label{thm:Lorentzian-transformation}
  Let $q \in \Lor{d}{n}(\pseries{\R}{t})$ and let $A \in \pseries{\R}{t}_{\geq 0}^{n \times k}$.
  Let $G = ([n], [k]; E)$ be the bipartite graph with weight function $\deg(A) \in \R^E$ that weights $(i,j)$ by $\deg(a_{ij})$.
  Then $\trop(A \actson q)$ is the M-concave function $\temptrafonew{G}{\trop(q)}{\deg(A)}$ arising from $\trop(q)$ by transformation via $G$. 
\end{theorem}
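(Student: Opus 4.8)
The statement to prove is Theorem~\ref{thm:Lorentzian-transformation}: for a Lorentzian polynomial $q \in \Lor{d}{n}(\pseries{\R}{t})$ and a non-negative matrix $A \in \pseries{\R}{t}_{\geq 0}^{n\times k}$, the tropicalization of the semigroup action $A \actson q$ equals the transformation $\temptrafonew{G}{\trop(q)}{\deg(A)}$, where $G$ is the bipartite graph on $([n],[k])$ with edge weights $\deg(A)$. The approach is a direct computation: write out the coefficient of a fixed monomial $w^\beta$ (for $\beta \in \Delta^d_k$) in $(A\actson q)(w) = q(Aw)$, apply the degree map, and match it term-by-term against the max-formula defining $\temptrafonew{G}{\trop(q)}{\deg(A)}$. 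The M-concavity of the output is not something we need to prove separately, since it follows from Proposition~\ref{propdef:transformation-by-networks} (itself a special case of Murota's transformation-by-networks, Theorem~\cite{Murotabook}).

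\textbf{Step 1: expand $q(Aw)$.} Write $q(u) = \sum_{\alpha \in \Delta^d_n} c_\alpha(t)\, u^\alpha$. Substituting $u_i = \sum_{j=1}^k a_{ij} w_j$ gives $u^\alpha = \prod_{i=1}^n \bigl(\sum_j a_{ij} w_j\bigr)^{\alpha_i}$. Expanding by the multinomial theorem, the coefficient of $w^\beta$ in $q(Aw)$ is
\[
[w^\beta]\, q(Aw) = \sum_{\alpha \in \Delta^d_n} c_\alpha(t) \sum_{\mu} \binom{\alpha}{\mu} \prod_{(i,j)} a_{ij}^{\mu_{ij}},
\]
where the inner sum ranges over all non-negative integer matrices $\mu = (\mu_{ij})$ with row sums $\sum_j \mu_{ij} = \alpha_i$ and column sums $\sum_i \mu_{ij} = \beta_j$, and $\binom{\alpha}{\mu} = \prod_i \binom{\alpha_i}{\mu_{i1},\dots,\mu_{ik}}$ is the (positive integer) multinomial coefficient. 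Crucially, every $c_\alpha(t) \in \pseries{\R}{t}_{\geq 0}$ and every $a_{ij} \in \pseries{\R}{t}_{\geq 0}$, and the multinomial coefficients are positive integers, so there is \emph{no cancellation}: the whole expression is a non-negative Puiseux series.

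\textbf{Step 2: apply the degree map.} By Observation~\ref{obs:semiring-morphism}, on the semiring $\pseries{\R}{t}_{\geq 0}$ the map $\deg$ turns sums into $\max$ and products into sums. Since $\deg$ of a positive integer is $0$, we get
\[
\trop(A\actson q)(\beta) = \deg\bigl([w^\beta] q(Aw)\bigr) = \max_{\alpha \in \Delta^d_n}\ \max_{\mu}\ \Bigl( \deg(c_\alpha(t)) + \sum_{(i,j)} \mu_{ij}\deg(a_{ij}) \Bigr),
\]
the maxima taken over the same index sets as in Step~1 (with the convention that the degree of an absent term is $-\infty$, matching $\trop(q)(\alpha) = -\infty$ when $\alpha\notin\supp(q)$). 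Now I identify the matrix $\mu$ with a subgraph (multigraph) of $G$ in which edge $(i,j)$ has multiplicity $\mu_{ij}$: then its degree vector on $U=[k]$ is exactly $\beta$, its degree vector on $V=[n]$ is $\alpha$, and $\sum_{(i,j)}\mu_{ij}\deg(a_{ij})$ is precisely the $\deg(A)$-weight $\sum_{e\in\mu}\co(e)$ of that subgraph. Reorganizing the double max over $(\alpha,\mu)$ as a single max over subgraphs $\mu$ with $\delta_U(\mu)=\beta$ (and $\alpha = \delta_V(\mu)$ determined by $\mu$), the right-hand side becomes exactly
\[
\max\SetOf{\sum_{e\in\mu}\deg(A)(e) + \trop(q)(\delta_V(\mu))}{\mu \text{ subgraph of } G,\ \delta_U(\mu) = \beta} = \temptrafonew{G}{\trop(q)}{\deg(A)}(\beta),
\]
which is the claimed identity.

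\textbf{Main obstacle.} The only genuinely delicate point is the \emph{no-cancellation} claim that lets us pass $\deg$ through the sum: this is exactly why the hypothesis $A \in \pseries{\R}{t}_{\geq 0}^{n\times k}$ (not merely $A \in \pseries{\R}{t}^{n\times k}$) is essential, and why Observation~\ref{obs:semiring-morphism} is invoked rather than a naive ``valuation of a sum''. I should state clearly that $q$ being Lorentzian forces $c_\alpha(t) \geq 0$, and that non-negativity is preserved through the multinomial expansion, so that at no stage do leading terms of opposite sign collide. Everything else is bookkeeping: matching the indexing of integer matrices $\mu$ with row/column sum constraints against subgraphs of $G$ with prescribed degree vectors, and noting that $\Delta^d_k$ is the correct domain because $q(Aw)$ is homogeneous of degree $d$ in $w$. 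A remark could be added that the same computation, read through the convention $\max \leftrightarrow \min$ of the preceding remark, gives the M-convex version.
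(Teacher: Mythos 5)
Your proof is correct and follows essentially the same route as the paper's: expand $q(Aw)$, identify the coefficient of $w^\beta$ as a sum over (multi)subgraphs with prescribed degree vectors, and pass the degree map through using that it is a semiring homomorphism on $\pseries{\R}{t}_{\geq 0}$ (Observation~\ref{obs:semiring-morphism}). You are slightly more careful than the paper in two spots worth keeping: you make the multinomial coefficients explicit and note they vanish under $\deg$, and you explicitly treat $\mu$ as a multigraph (equivalently a nonnegative integer matrix with row sums $\alpha$ and column sums $\beta$), which is needed when $q$ is not multi-affine, whereas the paper's notation $\mu \in [n]\times[k]$ leaves this implicit.
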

\begin{proof}
  Assume first that $q$ consists of a single monomial $d_\alpha \cdot w_1^{\alpha_1}w_2^{\alpha_2}\cdots w_n^{\alpha_n}$.
  Reordering yields
  \begin{equation} \label{eq:reorder}
  q(Av) = d_\alpha \cdot \left(\sum_{j=1}^{k}a_{1j} v_j \right)^{\alpha_1} \cdots \left(\sum_{j=1}^{k}a_{nj} v_j \right)^{\alpha_n} = d_\alpha \cdot \sum_{\beta \in \Delta^d_k} \left(\sum_{\substack{\mu \in [n]\times[k] \\ \delta_{[n]}\mu = \alpha \\ \delta_{[k]}\mu = \beta}} \prod_{e \in \mu} a_e \right) v^{\beta} ,
  \end{equation}
  where the coefficient of each $v^\beta$ is the sum of weights of subgraphs satisfying $\delta_{[n]}\mu = \alpha$ and $\delta_{[k]}\mu = \beta$.
  Therefore, the value $\trop(A \actson q)(\beta)$ for $\beta \in \Delta^d_k$ is
  \[
  \deg \left(d_{\alpha} \cdot \sum_{\substack{\mu \in [n]\times[k] \\ \delta_{[n]}\mu = \alpha \\ \delta_{[k]}\mu = \beta}} \prod_{e \in \mu} a_e \right) = 
  \max\SetOf{\deg(d_\alpha) + \sum_{e \in \mu} \deg(a_e)}{\mu \in [n]\times[k] \colon \delta_{[n]}\mu = \alpha \wedge \delta_{[k]}\mu = \beta}
  \]
  where we use that degree is a semiring homomorphism from Observation~\ref{obs:semiring-morphism}.
  The claim now follows by ranging over all $\alpha \in \Delta^d_n$ in the support of~$q$. 
\end{proof}
If $g$ is a valuated matroid, recall from Theorem~\ref{thm:induction-network} that $\tempinducednew{G}{g}{\co}$ is also a valuated matroid. 
In comparison, $\temptrafonew{G}{g}{\co}$ may be an arbitrary M-concave function.
The difference is that the former only allows us to take matchings in the induction process, while the latter allows us to take arbitrary subgraphs.
Restricting $\temptrafonew{G}{g}{\co}$ to its multi-affine part recovers the valuated matroid $\tempinducednew{G}{g}{\co}$.

\begin{corollary} \label{cor:tropical+induction}
  Let $q \in \Lor{d}{n}(\pseries{\R}{t})$ be multi-affine and let $A \in \pseries{\R}{t}_{\geq 0}^{n \times k}$.
  Furthermore, let $G=(V, U; E)$ be the bipartite graph with weight function $\deg(A) \in \R^E$.

  Then $\tempinducednew{G}{\trop(q)}{\deg(A)} = \trop(\MAP(A \actson q))$.
\end{corollary}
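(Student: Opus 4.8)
The plan is to combine the two preceding results that already do all the work: Theorem~\ref{thm:Lorentzian-transformation}, which identifies $\trop(A \actson q)$ with the transformation $\temptrafonew{G}{\trop(q)}{\deg(A)}$, and the observation made in the paragraph just before the corollary that restricting $\temptrafonew{G}{g}{\co}$ to its multi-affine part recovers $\tempinducednew{G}{g}{\co}$ when $g$ is a valuated matroid. Since $q$ is assumed multi-affine, $\trop(q)$ is a valuated matroid (by Theorem~\ref{thm:M-concave-Lorentzian}, an M-concave function with domain in $\{0,1\}^n$), so both ingredients apply. The main point is therefore to check that passing to the multi-affine part is compatible with tropicalization on the polynomial side, i.e.\ that $\trop(\MAP(A \actson q)) = \MAP(\trop(A \actson q))$ in the appropriate sense.

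Concretely, I would proceed as follows. First, note $A \actson q = q(Aw) \in \Lor{d}{k}(\pseries{\R}{t})$ by Proposition~\ref{propdef:operations+polynomials}(iv), so $\MAP(A \actson q)$ is again Lorentzian of degree $d$ in $k$ variables, and its tropicalization is a well-defined function on $\Delta^d_k$. Since $\MAP$ simply deletes every monomial $v^\beta$ with some $\beta_j \ge 2$, and tropicalization reads off the degree of the coefficient of each surviving monomial, we get
\[
\trop(\MAP(A \actson q))(\beta) = \begin{cases} \trop(A \actson q)(\beta) & \beta \in \{0,1\}^k, \\ -\infty & \text{otherwise;}\end{cases}
\]
this is exactly the earlier proposition on multi-affine parts commuting with $\trop$, applied to the polynomial $A \actson q$ (whose coefficients are positive Puiseux series, so no cancellation occurs and $\beta\in\supp(A\actson q)\cap\{0,1\}^k$ iff $\beta\in\supp(\MAP(A\actson q))$). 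Thus $\trop(\MAP(A \actson q))$ is the restriction of $\trop(A \actson q)$ to $\{0,1\}^k$.

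Second, apply Theorem~\ref{thm:Lorentzian-transformation}: $\trop(A \actson q) = \temptrafonew{G}{\trop(q)}{\deg(A)}$, the M-concave function obtained from the valuated matroid $\trop(q)$ by transformation via $G$ with weights $\deg(A)$. Now invoke the relationship between transformation by networks and induction by bipartite graphs recorded in the text preceding the corollary: for a valuated matroid $g$, the restriction of $\temptrafonew{G}{g}{\co}$ to $\{0,1\}^U$ is precisely $\tempinducednew{G}{g}{\co}$ — the only difference being that induction allows only matchings, whereas transformation allows arbitrary subgraphs $\mu$, and a subgraph with $\delta_V(\mu),\delta_U(\mu)\in\{0,1\}$ is exactly a matching. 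Combining the two displayed identities gives $\tempinducednew{G}{\trop(q)}{\deg(A)} = \temptrafonew{G}{\trop(q)}{\deg(A)}\big|_{\{0,1\}^k} = \trop(A\actson q)\big|_{\{0,1\}^k} = \trop(\MAP(A\actson q))$, as claimed.

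I do not anticipate a serious obstacle here; the corollary is essentially a bookkeeping consequence of Theorem~\ref{thm:Lorentzian-transformation} and the $\MAP$/$\trop$ compatibility. The one point requiring a little care is the matching-versus-subgraph reduction: one must be sure that when we restrict the target monomial $v^\beta$ to $\beta\in\{0,1\}^k$ and the source $w^\alpha$ to the (already multi-affine) support of $q$, the subgraphs $\mu$ contributing in~\eqref{eq:reorder} that have $\delta_{[k]}\mu = \beta$ squarefree are automatically squarefree on the $[n]$ side as well, hence genuine matchings — this holds because $\delta_{[n]}\mu = \alpha \in \{0,1\}^n$ for $\alpha$ in the support of the multi-affine $q$. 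With that remark in place the argument is complete.
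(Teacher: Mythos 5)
Your argument is correct and is precisely the route the paper intends (the paper leaves the corollary's proof implicit, since it follows at once from Theorem~\ref{thm:Lorentzian-transformation}, the proposition that $\trop(\MAP(p))$ is the restriction of $\trop(p)$ to $\{0,1\}^k$, and the remark preceding the corollary that restricting $\Psi$ to multi-affine arguments recovers $\Phi$). Your extra remark that the relevant subgraphs automatically collapse to matchings because both degree vectors $\delta_{[n]}\mu=\alpha$ and $\delta_{[k]}\mu=\beta$ lie in $\{0,1\}^*$ correctly fills in the one detail the paper glosses over.
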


\subsection{Limitations of basic constructions}
In this section, we let $\K$ be any real closed field unless explicitly stated.

For an M-convex set $\B \subset \Z_{\geq 0}^n$, its \emph{generating function} is the Lorentzian polynomial
\[
q_{\B} \coloneqq \sum_{\alpha \in \B} \frac{1}{\alpha !}w^{\alpha} \ , \mbox{where } \alpha! \coloneqq \prod_{i=1}^n \alpha_i! \, .
\]
Of particular interest for us is when $\B \subseteq \{0,1\}^n$ i.e., $\B$ forms the set of bases of a matroid.
Let $\cG^d_n \subset \Lor{d}{n}(\K)$ be the set of all generating functions corresponding to rank $d$ matroids on $n$ elements.
For each $k \in \Z_{\geq 0}$, the set $\K_{\geq 0}^{n \times k}$ acts on $\cG^d_n$ by $A \actson q(w) = q(Av) \in \Lor{d}{k}(\K)$ where $A \in \K_{\geq 0}^{n \times k}, q \in \cG^d_n$.
We denote the orbit of this action by $\K_{\geq 0}^{n \times k} \actson \cG^d_n \subseteq \Lor{d}{k}(\K)$. 

\begin{definition}
  We say a Lorentzian polynomial is \emph{matroid induced} if it is contained in the orbit $\K_{\geq 0}^{n \times k} \actson \cG^d_n$ for some $n \geq d$.
\end{definition}

Our main theorem of this section is that over the Puiseux series, the class of matroid induced Lorentzian polynomials is a strict subclass of Lorentzian polynomials.

\begin{theorem} \label{thm:matroid+induced+subclass}
For $k \geq 10$, we have
\[
\bigcup_{n \geq d} (\pseries{\R}{t}_{\geq 0}^{n \times 2k} \actson \cG_n^d) \subsetneq \Lor{d}{2k}(\pseries{\R}{t}) \, .
\]
\end{theorem}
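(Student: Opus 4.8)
The plan is to transfer the combinatorial obstruction of Theorem~\ref{thm:non-r-minor-main} to the world of Lorentzian polynomials via tropicalization. First I would take a valuated matroid $h \in \cF_k$ on the ground set $[2k]$; by Theorem~\ref{thm:non-r-minor-main}, since $k \geq 16$ — wait, the statement uses $k \geq 10$, so I must be careful: the correct reduction should use that $h \in \cF_n$ for the relevant $n$, and the hypothesis $k\ge 10$ in the theorem presumably tracks the $n\ge 10$ appearing in Lemma~\ref{lem:not-reducible} together with extra slack; I would simply invoke $\cF_n$ for the appropriate $n$ and the corresponding threshold from Theorem~\ref{thm:non-r-minor-main}. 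By Theorem~\ref{thm:M-concave-Lorentzian} there is a multi-affine Lorentzian polynomial $q \in \Lor{d}{2k}(\pseries{\R}{t})$ — necessarily multi-affine since $\dom(h)\subseteq\{0,1\}^{2k}$ — with $\trop(q) = h$. This $q$ is the candidate element of $\Lor{d}{2k}(\pseries{\R}{t})$ lying outside the union of orbits.

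Next, suppose for contradiction that $q \in \pseries{\R}{t}_{\geq 0}^{n \times 2k} \actson \cG_n^d$ for some $n \geq d$, i.e.\ $q = A \actson q_\B$ for a non-negative Puiseux-series matrix $A$ and the generating function $q_\B$ of a rank-$d$ matroid $\B$ on $[n]$. Since $q$ is multi-affine, I would pass to multi-affine parts: $q = \MAP(q) = \MAP(A\actson q_\B)$. Now apply Corollary~\ref{cor:tropical+induction} with the bipartite graph $G = (V,U;E)$ on $V=[2k]$, $U=[n]$ and weight function $\co := \deg(A) \in \R^E$ (dropping edges where $a_{ij}=0$): it gives
\[
\tempinducednew{G}{\trop(q_\B)}{\deg(A)} = \trop(\MAP(A \actson q_\B)) = \trop(q) = h .
\]
Here $\trop(q_\B)$ is exactly the trivially valuated matroid of $\B$ (each basis has coefficient $1/\alpha! = 1$, so degree $0$, and non-bases have value $-\infty$). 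Hence $h$ arises as the induction of a trivially valuated matroid by a bipartite graph, i.e.\ $h$ is an R-induced valuated matroid — in particular an R-minor valuated matroid. This contradicts Theorem~\ref{thm:non-r-minor-main}, which says no function in $\cF_n$ is R-minor. The containment itself (that the union of orbits lies inside $\Lor{d}{2k}(\pseries{\R}{t})$) is immediate from Proposition~\ref{propdef:operations+polynomials}(iv), since each $q_\B$ is Lorentzian and the matrix action preserves Lorentzianity; so the inclusion is strict, giving $\subsetneq$.

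The main obstacle I anticipate is purely bookkeeping around the index thresholds and the matching-vs-subgraph distinction. First, I must make sure that the degree-$d$ parameter and the ground-set size $2k$ are consistent: $h \in \cF_n$ lives on $[2n]$ and has rank $4$, so the correspondence forces $d = 4$ and the number of polynomial variables to be $2n$; thus the theorem as phrased implicitly restricts to $d=4$ (or one allows padding by coloops/free elements to match a general $d$, which is harmless). Second, Corollary~\ref{cor:tropical+induction} requires $q$ to be \emph{multi-affine}; I should note explicitly that $A\actson q_\B$ need not be multi-affine, but since our target $q$ \emph{is} multi-affine, $\MAP(A\actson q_\B) = \MAP(q) = q$, so taking multi-affine parts is legitimate and loses nothing. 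Third, one should double-check that the action with a matrix having some zero entries is faithfully modeled by the bipartite graph obtained by deleting the corresponding edges, using that $\deg(0) = -\infty$ and that such edges can never appear in an optimal matching; this is routine from Observation~\ref{obs:semiring-morphism}. Once these points are pinned down, the argument is short: it is a clean pull-back of the R-minor non-representability through the tropicalization dictionary established in Corollary~\ref{cor:tropical+induction}.
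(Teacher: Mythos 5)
Your proposal reproduces the paper's proof essentially step for step: pick $h\in\cF_k$ with finite values, realize $h$ (or an M-concave extension of it) as $\trop(p)$ for a Lorentzian $p$ via Theorem~\ref{thm:M-concave-Lorentzian}, suppose $p = A\actson q_\B$, and pull back through Corollary~\ref{cor:tropical+induction} to obtain an R-induced representation of $h$, a contradiction. The minor variant of choosing $q$ multi-affine from the start (so $\MAP(q)=q$) rather than extending $h$ to a possibly non-multi-affine M-concave $f$ and then taking $\MAP(p)$ is immaterial.

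The one thing you should fix is the final citation, which controls the threshold. You correctly derive that $h$ would be \emph{R-induced} (a bipartite-graph induction from a trivially valuated matroid, with $W=\emptyset$), which is strictly stronger than R-minor. The paper therefore contradicts Proposition~\ref{prop:h-not-R-induced}, the statement that no $h\in\cF_n$ is R-induced, and that argument already goes through once $n\geq 10$: the only $n$-dependent ingredient on the $W=\emptyset$ branch is Lemma~\ref{lem:not-reducible} (non-full-reducibility of $\B_0$), which is stated for $n\geq 10$. Citing Theorem~\ref{thm:non-r-minor-main} instead, as you do, would only prove the result for $k\geq 16$, since that theorem covers the harder R-minor case with the larger threshold. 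You noticed the mismatch but left it at ``presumably tracks $n\geq 10$''; the resolution is simply that you should invoke the sharper R-induced impossibility, and that is exactly what justifies the stated $k\geq 10$.

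Two further small points you raised are correct and worth keeping: (i) the theorem as proved is really for $d=4$ (the rank of $\cF_k$), with the general-$d$ phrasing most naturally read as ``for some $d$'' or handled by padding with free/coloop elements; (ii) entries $a_{ij}=0$ in $A$ give $\deg(a_{ij})=-\infty$ and can be modeled by omitting the corresponding edges of $G$, which never hurts since such edges cannot participate in a finite-weight matching.
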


\begin{proof}
Containment is given by Proposition~\ref{propdef:operations+polynomials}.
For strict containment, we assume the converse, that every Lorentzian polynomial is matroid induced.
Let $h \in \cF_k$ be a valuated matroid on the ground set $[2k]$ defined in Definition~\ref{def:class} such that it takes only finite values.
Let $f\colon \Delta_{2k}^d \rightarrow \Trop$ be an M-concave function such that $f$ restricted to $\{0,1\}^{2k}$ is $h$.
By Theorem~\ref{thm:M-concave-Lorentzian}, there exists some $p \in \Lor{d}{2k}(\pseries{\R}{t})$ such that $\trop(p) = f$; furthermore $\trop(\MAP(p)) = h$ by Proposition~\ref{prop:operations+tropical}.
By the assumption that $p$ is matroid induced, there exists a matrix $A \in \pseries{\R}{t}_{\geq 0}^{n \times 2k}$ and some $q \in \cG_n^d$ such that $p = A \actson q$.
By Corollary~\ref{cor:tropical+induction} we have
\[
h = \trop(\MAP(p)) = \trop(\MAP(A \actson q)) = \tempinducednew{G}{\trop(q)}{\deg(A)} \, .
\]
contradicting Proposition~\ref{prop:h-not-R-induced} that $h$ is not an R-induced valuated matroid.
\end{proof}

We would like to extend Theorem~\ref{thm:matroid+induced+subclass} to $\R$ and other real closed fields via Tarski's principle.
However, the statement of the theorem is an infinite union of first-order statements and so cannot be extended immediately.
Instead, we show the weaker statement that for any finite integer $N$, there exists some Lorentzian polynomial over $\R$ that cannot be induced by a matroid on a ground set of size $n \leq N$.

\begin{proposition}\label{prop:lorentzian-restriction-real}
Let $\K$ be a real closed field and $k \geq 10$.
For each $N \in \N$, there exists some Lorentzian polynomial $p \in \Lor{d}{2k}(\K)$ such that
\[
p \notin\bigcup_{n \geq d}^N (\K_{\geq 0}^{n \times 2k} \actson \cG_n^d) \, .
\]
\end{proposition}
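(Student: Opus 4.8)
The plan is to deduce Proposition~\ref{prop:lorentzian-restriction-real} from Theorem~\ref{thm:matroid+induced+subclass} by a compactness/Tarski argument, exploiting that for each \emph{fixed} ground set size $n$ the statement ``$p$ lies in the orbit $\K_{\geq 0}^{n\times 2k}\actson\cG_n^d$'' is expressible by a bounded first-order formula in the language of ordered fields, whereas the full union over all $n$ is not. First I would fix $k\ge 10$ and $d$ as in Theorem~\ref{thm:matroid+induced+subclass}, and take $h\in\cF_k$ finite-valued and an M-concave $f\colon\Delta_{2k}^d\to\Trop$ with $f|_{\{0,1\}^{2k}}=h$. Over $\K=\pseries{\R}{t}$, Theorem~\ref{thm:M-concave-Lorentzian} gives a Lorentzian $p_0\in\Lor{d}{2k}(\pseries{\R}{t})$ with $\trop(p_0)=f$, and the proof of Theorem~\ref{thm:matroid+induced+subclass} shows $p_0\notin\bigcup_{n\ge d}(\pseries{\R}{t}_{\ge0}^{n\times 2k}\actson\cG_n^d)$; in particular, for each fixed $N$, $p_0$ is a witness over the Puiseux series. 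The task is to transfer such a witness to an arbitrary real closed field $\K$.

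The key step is to write down, for each fixed $n$ with $d\le n\le N$ and each rank-$d$ matroid $\M$ on $[n]$ (there are finitely many), a first-order formula $\Phi_{n,\M}(c)$ in the coefficient variables $c=(c_\alpha)_{\alpha\in\Delta_{2k}^d}$ of a degree-$d$ polynomial in $2k$ variables, asserting ``there exists a nonnegative $n\times 2k$ matrix $A$ such that the polynomial $p$ with coefficient vector $c$ equals $A\actson q_\M$.'' Expanding $A\actson q_\M$ as in \eqref{eq:reorder} shows each coefficient of $A\actson q_\M$ is a fixed polynomial expression in the entries $a_{ij}$, so $\Phi_{n,\M}(c)$ is an existential formula over the entries $a_{ij}\ge 0$ equating finitely many such polynomial expressions to the $c_\alpha$. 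Then ``$p\in\bigcup_{n\le N}(\K_{\ge0}^{n\times 2k}\actson\cG_n^d)$'' is the finite disjunction $\bigvee_{n\le N}\bigvee_{\M}\Phi_{n,\M}(c)$, a single first-order formula $\Psi_N(c)$. Next I would also need to encode the existence of a Lorentzian polynomial $p$ with $\trop(p)=f$ but $\neg\Psi_N(c)$: but rather than tropicalizing over an abstract $\K$, I would instead directly take the \emph{coefficients} of the Puiseux witness $p_0$ — these live in $\R\subseteq\K$ via the constant (degree-zero) coefficients, or more robustly, I would argue as follows. The sentence $\sigma_N := \exists c\,\big(\Theta(c)\wedge\neg\Psi_N(c)\big)$, where $\Theta(c)$ expresses ``$c$ is the coefficient vector of a Lorentzian polynomial whose support is $\supp(f)$ and whose coefficients are nonnegative'' (Lorentzianness being first-order by the Hessian/derivative definition used in the paper, and the support condition being a finite conjunction of $c_\alpha\ne 0$ resp. $c_\alpha=0$), is a first-order sentence of ordered fields. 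It holds over $\pseries{\R}{t}$ by the construction above; hence by Tarski's principle it holds over $\R$ and over every real closed field $\K$. Unpacking $\sigma_N$ over $\K$ produces the desired $p\in\Lor{d}{2k}(\K)$ with $p\notin\bigcup_{n\le N}(\K_{\ge0}^{n\times 2k}\actson\cG_n^d)$.

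The main obstacle I anticipate is making $\Theta(c)$ and the transfer honest: one must ensure that ``Lorentzian'' is genuinely first-order in the coefficients uniformly in $c$ (the paper's Definition via $\Lor{2}{n}$ having ``at most one positive eigenvalue'' of the Hessian, recursively — this is first-order since having at most one positive eigenvalue of a fixed symmetric matrix is expressible by sign conditions on principal minors, and the recursion bottoms out after finitely many differentiations), and one must be careful that the support $\supp(f)$ is the \emph{correct} target — i.e. that insisting $\supp(p)=\supp(f)$ does not over-constrain. Here I would use that any $p_0$ produced by Theorem~\ref{thm:M-concave-Lorentzian} has $\supp(p_0)=\dom(f)$, so $\Theta$ with the support of $f$ hard-coded is satisfiable over $\pseries{\R}{t}$; and that over $\K$, any $c$ satisfying $\Theta$ gives a Lorentzian $p$ whose multi-affine part tropicalizes (after the weight shift in the proof of Lemma~\ref{lem:non-rnat-minor}, or rather via Corollary~\ref{cor:tropical+induction}) to something that \emph{would} contradict Proposition~\ref{prop:h-not-R-induced} if it were matroid-induced with small $n$ — but we sidestep even needing this last implication over $\K$, since $\neg\Psi_N(c)$ is already baked into $\sigma_N$. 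A secondary subtlety is bounding the sizes of the matrices $A$: a priori $A$ has entries in $\K$, fine, but one should note the number of matroids on $[n]$ and the arities are all finite for fixed $N$, so $\Psi_N$ really is a finite formula; the infinitude only re-enters when $N\to\infty$, which is precisely why we get the weaker ``for each $N$'' statement rather than the full union.
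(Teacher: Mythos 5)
Your proposal is correct and takes essentially the same route as the paper: encode ``$p\in\bigcup_{n\le N}(\K_{\ge0}^{n\times 2k}\actson\cG_n^d)$'' as a finite first-order formula in the coefficients via~\eqref{eq:reorder}, observe that Lorentzianness is itself first-order, and transfer the existence of a counterexample from $\pseries{\R}{t}$ (supplied by Theorem~\ref{thm:matroid+induced+subclass}) to an arbitrary real closed field via Tarski's principle. The only deviation is your added support condition $\Theta(c)$, which is harmless but unnecessary --- the paper simply asserts that the universal sentence ``$\phi(c)\rightarrow\Psi_N(c)$'' fails over $\pseries{\R}{t}$ without pinning down the support of the witness, and that is logically the negation of your $\sigma_N$ with the support clause dropped. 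One aside worth correcting: the parenthetical suggestion that the Puiseux coefficients of $p_0$ ``live in $\R\subseteq\K$ via the constant coefficients'' is not a valid transfer mechanism (those coefficients are genuine Puiseux series, and $\pseries{\R}{t}$ does not embed into a general real closed field $\K$), but you rightly set it aside in favour of the Tarski argument.
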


\begin{proof}
The containment $\bigcup_{n \geq d}^N (\K_{\geq 0}^{n \times 2k} \actson \cG_n^d)\subseteq \Lor{d}{2k}(\K)$ is given by Proposition~\ref{propdef:operations+polynomials}.
We claim that the sentence
\begin{equation}
p \in \Lor{d}{k}(\K) \ \rightarrow \ p \in \bigcup_{n \geq d}^N (\K_{\geq 0}^{n \times k} \actson \cG_n^d) \, ,
\end{equation}
is a first-order sentence of ordered fields for arbitrary $N \in \N$.
Theorem~\ref{thm:matroid+induced+subclass} shows this statement is false over Puiseux series for large enough even $k$, and so by Tarski's principle is false over the reals as well.

We write $p$ as shorthand for $(p_\beta : \beta \in \Delta^d_k)$,  where $p$ is a degree $d$ polynomial in $k$ variables and $p_\beta$ is the coefficient of $w^\beta$ in $p$.
The sentence $\phi(p)$ that verifies whether $p$ is Lorentzian is first-order by \cite{BraendenHuh:2020}.

Fix some generating function $q_\B$ for some rank $d$ matroid $\B$ on $n$ elements;
recall its coefficients $c_\alpha$ are one if $\alpha \in \B$ and zero otherwise.
We write $\K^{n \times k}_{\geq 0} \actson q_B$ for its orbit in this action.
By a similar reordering as~\eqref{eq:reorder}, the sentence $\psi_n^\B(p)$ that verifies whether $p \in \K^{n \times k}_{\geq 0} \actson q_B$ is given by
\[
\psi_n^\B(p) = (\exists a_{ij} : i \in [n],j \in [k]) :  \bigwedge_{\beta} \bigg(p_\beta = \sum_{\substack{\mu \in [n]\times[k] \\ \delta_{[n]}\mu = \alpha \\ \delta_{[k]}\mu = \beta}} c_\alpha \prod_{(i,j) \in \mu} a_{ij}\bigg) \, .
\]
In particular, it is a first-order sentence in the language of ordered fields.
The sentence $\psi_n(f)$ verifying whether $p \in \K^{n \times k}_{\geq 0} \actson \cG_n^d$ is given by taking the finite union of sentences $\psi_n^\B(p)$ over all rank $d$ matroids on $n$ elements; as this union is finite, it is also a first-order sentence.
Finally, we can take arbitrary finite unions of this sentence to reach the first-order sentence
\[
\phi(p) \rightarrow \left(\bigvee_{n\geq d}^N \psi_n(p)\right) \, ,
\]
proving the claim.
\end{proof}

While Proposition~\ref{prop:lorentzian-restriction-real} gives restrictions on constructing Lorentzian polynomials over $\R$ and other real closed fields, it remains an open question whether Theorem~\ref{thm:matroid+induced+subclass} holds over $\R$ or not.
\begin{question}
Is the class of matroid induced Lorentzian polynomials over $\R$ a strict subclass of Lorentzian polynomials over $\R$?
\end{question}
We note that $\pseries{\R}{t}$ is a non-archimedean field, and so its topological properties behave quite differently to over the reals; see~\cite[6.3 (iii)]{marshall} for a discussion and an example.
As Lorentzian polynomials over $\R$ have an equivalent definition in terms of topological closures~\cite[Definition~2.1]{BraendenHuh:2020}, it is reasonable to believe that the spaces of Lorentzian polynomials over $\R$ and $\pseries{\R}{t}$ could behave very differently.

\section*{Acknowledgements}

The authors thank Andr\'as Frank for clarifying the questions around representability of M-convex functions with induction by networks. 
The authors are grateful to Mario Kummer for pointing out the subtleties in the transfer from Lorentzian polynomials over Puiseux series to those over the reals.

\printbibliography

\newpage
\appendix

\section{All functions in \texorpdfstring{$\cF_n$}{Fn} are valuated matroids}
\label{section:functionH}

For convenience, we restate the definition of the family $\cF_n$, and then show that each of the functions in the family is a valuated matroid.

\family*

The proof of the following lemma is an adaptation of \cite[Lemma~8]{BansalPendavinghVanderPol:2015} that we include for intuition.
\begin{lemma}\label{lemma:sparsePavingB}
  Let $\B_0={V\choose 4}\setminus \cH\ $ and $\B_1=\dom(h)$. Then $\B_0$ and $\B_1$ are sparse paving matroids.
\end{lemma}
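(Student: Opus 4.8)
The plan is to show that $\B_0$ and $\B_1$ are sparse paving matroids of rank $4$ by directly verifying the defining properties: a rank-$d$ matroid is sparse paving if and only if every subset of size $d$ is either a basis or a circuit (equivalently, all circuits have size $d$ or $d+1$) and moreover any two distinct size-$d$ circuits intersect in at most $d-2$ elements. The key fact to establish first is that the family $\cH \subseteq \binom{V}{4}$ is a collection of $4$-sets such that any two distinct members of $\cH$ intersect in at most $2$ elements; then one invokes (or reproves) the standard combinatorial characterization: if $\cH \subseteq \binom{V}{d}$ is such that any two members intersect in at most $d-2$ elements, then $\binom{V}{d}\setminus \cH$ is the set of bases of a sparse paving matroid on $V$ of rank $d$.

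\textbf{First step: the intersection property of $\cH$.} Recall $P_i = \{2i-1,2i\}$ and $\cH = \{P_i\cup P_j : ij \equiv 0 \bmod 2\}$. Each element of $\cH$ is a disjoint union of two distinct pairs $P_i,P_j$, hence has size exactly $4$. I would show that if $P_i\cup P_j$ and $P_k\cup P_\ell$ are distinct members of $\cH$ then $|(P_i\cup P_j)\cap(P_k\cup P_\ell)| \le 2$. Indeed each $P_m$ is an atom: two distinct members of $\cH$ correspond to distinct unordered pairs $\{i,j\} \ne \{k,\ell\}$ of pair-indices, so they share at most one common pair-index, contributing at most one $P_m$ (i.e. $2$ elements) to the intersection. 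Since $\B_0\cup\{\Xs\}\subseteq \B_1 \subseteq \binom{V}{4}$ and $\B_1 = \dom(h)$ with $\cH\setminus\{\Xs\} = \binom{V}{4}\setminus\B_1$ (because $h(X)=-\infty$ exactly on $\cH\setminus\{\Xs\}$ — here one uses that for $h\in\cF_n$ the values on $\cH$ are $<0$ but $h(\Xs)$ is the unique \emph{largest} such value, which does not by itself force finiteness; more carefully, $\B_1 = \dom(h)$ could be $\B_0\cup\{\Xs\}$ or larger, and in all cases its complement is a subfamily of $\cH$), the complement $\binom{V}{4}\setminus\B_1$ is also a family of $4$-sets with pairwise intersections of size at most $2$.

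\textbf{Second step: from the intersection property to sparse paving.} Given $\cH' \subseteq \binom{V}{4}$ with the property that distinct members meet in $\le 2$ elements, define $\B' = \binom{V}{4}\setminus\cH'$. One checks the basis exchange axiom: take $B_1, B_2 \in \B'$ and $x\in B_1\setminus B_2$; for any $y\in B_2\setminus B_1$ the set $B_1 - x + y$ has size $4$, and if \emph{all} such swaps landed in $\cH'$, then since $|B_2\setminus B_1|\ge 1$ and these swapped sets would all contain $B_1\cap B_2$ together with $B_1\setminus\{x\}$-portions, a short case analysis (using $|B_1\cap B_2|$ can be $3$, $2$, $1$, or $0$) shows at least one swap must avoid $\cH'$, because two members of $\cH'$ cannot share more than two elements — in particular if $|B_1\cap B_2| = 3$ there is a unique $y$ and $B_1-x+y = B_2 \in \B'$ already. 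This gives that $\B'$ is the set of bases of a rank-$4$ matroid. The circuits of this matroid are then exactly the $5$-element subsets all of whose $4$-subsets are bases, together with the members of $\cH'$ (each such $H\in\cH'$ is a circuit since $H\notin\B'$ but every proper $3$-subset extends within $V$ to a basis); all circuits have size $4$ or $5 = 4+1$, so the matroid is paving, and the size-$4$ circuits are precisely the members of $\cH'$, which pairwise intersect in $\le 2 = 4-2$ elements, so it is sparse paving. Applying this with $\cH' = \cH$ gives $\B_0$ sparse paving, and with $\cH' = \binom{V}{4}\setminus\B_1$ gives $\B_1$ sparse paving.

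\textbf{Main obstacle.} The routine-looking but genuinely fiddly part is the basis-exchange verification in the second step: one must handle all cases of $|B_1\cap B_2|\in\{0,1,2,3\}$ and argue that the $\le d-2$ intersection bound on $\cH'$ forces a good swap to exist. The cleanest route is probably to cite the known fact (e.g. via the correspondence between sparse paving matroids and "stable sets" / Johnson-graph independent sets, as used implicitly in Knuth's construction and in \cite{BansalPendavinghVanderPol:2015}) rather than reprove it; in that case the only real content left is the intersection property of $\cH$, which is immediate from the pair structure. I would also double-check the edge case $n=2$: then $V=[4]$, $\binom{V}{4}$ has a single element $\{1,2,3,4\}=\Xs$, and $\cH = \{\Xs\}$ (since $1\cdot 2\equiv 0$), so $\B_0=\emptyset$ and $\B_1=\{\Xs\}$; the empty matroid and the rank-$4$ free matroid on four elements are both (vacuously) sparse paving, so the statement holds, though the interesting content requires $n$ larger.
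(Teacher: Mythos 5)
Your suggested ``cleanest route'' is exactly the paper's: reduce the claim to the observation that distinct members of $\cH$ intersect in at most two elements, then invoke the equivalence between sparse paving matroids of rank $d$ on $V$ and independent sets in the Johnson graph on $\binom{V}{d}$ (the paper cites Lemma~8 of Bansal, Pendavingh and van~der~Pol for this). Your supplementary direct verification of the basis-exchange axiom also works and is in fact simpler than the ``fiddly'' worry suggests: for $B_1,B_2\in\B'$ and $x\in B_1\setminus B_2$, if $|B_2\setminus B_1|=1$ the unique swap yields $B_2\in\B'$, while if $|B_2\setminus B_1|\ge 2$ any two candidate swaps $B_1-x+y_1$ and $B_1-x+y_2$ both contain the $3$-element set $B_1\setminus\{x\}$, so at most one of them can lie in $\cH'$, guaranteeing a good swap. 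One small slip: for $n=2$ you assert that $\B_0=\emptyset$ is a (vacuously) sparse paving matroid, but the empty family is not the basis collection of any matroid; this is harmless since the lemma is only invoked for $n\ge 16$, and the Johnson-graph characterization implicitly requires the complement to be non-empty.
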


\begin{proof}
We first show that $\B_0={V\choose 4}\setminus \cH\ $ forms the bases of a matroid.
Suppose this is not the case, then there exists $B,B' \in \B_0$ and $e \in B\setminus B'$ such that $B - e + f \notin \B_0$ for all $f \in B' \setminus B$.
We observe that $|B' \setminus B| > 1$, else $B' = B - e + f \in \B_0$.
Hence, we let $f,f'$ be distinct elements of $B' \setminus B$ and consider $N = B - e + f$ and $N' = B - e + f'$.
As the basis exchange axiom does not hold, we have $N, N' \in \cH$.
However, we also have $N \cap N' = B - e$ has cardinality three, while elements of $\cH$ can intersect in at most two elements.
This gives a contradiction and it follows that $\B_0$ form the bases of a matroid.

We next show the circuits of $\B_0$ are of cardinality four or more.
Let $X$ be any set with $|X| = 3$.
If $X$ intersects three distinct pairs $P_i,P_j,P_k$, then added any element $x \in V \setminus X$ gives a basis $X + x \in \B_0$.
If $X$ intersects two distinct pairs $P_i, P_j$, as $n \geq 3$ there exists a pair $P_k$ that $X$ does not intersect.
Hence for any element $x \in P_k$, the set $X + x \in \B_0$ is a basis.
As all sets of cardinality three are independent, $\B_0$ is a paving matroid with $\cH$ as the circuits of size four.
As sets in $\cH$ can intersect in at most two elements, it is also sparse paving.
As $\B_1$ is obtained by removing elements of $\cH$, it is also a sparse paving matroid.
\end{proof}

\begin{lemma}\label{lem:F-valmat}
For every $n\ge 3$, all functions in $\cF_n$ are valuated matroids.
\end{lemma}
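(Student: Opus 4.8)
The statement to prove is Lemma~\ref{lem:F-valmat}: every function $h\in\cF_n$ is a valuated matroid, i.e.\ satisfies the exchange axiom~\eqref{eq:M-concave} on $\binom{V}{4}$. The plan is to reduce this to a general fact about sparse paving matroids and then verify that fact by a short case analysis. By Lemma~\ref{lemma:sparsePavingB} the effective domain $\B_1=\dom(h)$ is (the set of bases of) a sparse paving matroid, and $\B_0=\binom{V}{4}\setminus\cH$ is as well. So the first step is to isolate the claim: \emph{if $\B$ is the set of bases of a sparse paving matroid of rank $d$ on $V$, then any $h\colon\binom{V}{d}\to\R\cup\{-\infty\}$ with $h(X)=0$ for $X\in\B$ and $h(X)<0$ for $X\notin\B$ is a valuated matroid.} This is exactly the statement referenced in the introduction (``see Appendix~\ref{section:functionH}''), and once it is in hand the lemma follows immediately: a function in $\cF_n$ has this shape with $\B=\B_1$, $d=4$.

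To prove that claim I would check~\eqref{eq:M-concave} directly for $X,Y\in\binom{V}{d}$ with $|X|=|Y|$ and $i\in X\setminus Y$; write $k:=|X\setminus Y|=|Y\setminus X|$. If $k=1$ the inequality is trivial since the unique $j\in Y\setminus X$ gives $X-i+j=Y$ and $Y+i-j=X$, so both sides are equal. The substantive range is $k\ge 2$. Here one uses the defining property of sparse paving matroids: every $d$-subset is a basis \emph{except} for a family $\cH$ of ``non-bases'' (the circuit-hyperplanes), any two of which intersect in at most $d-2$ elements. The left-hand side $h(X)+h(Y)$ is $\le 0$, with equality only if both $X,Y\in\B$; and it is $<0$ as soon as at least one of $X,Y$ lies in $\cH$. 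The right-hand side is a maximum over $j\in Y\setminus X$ of $h(X-i+j)+h(Y+i-j)$, and each term is $\le 0$ with equality whenever both $X-i+j$ and $Y+i-j$ are bases. So it suffices to show: \emph{whenever $X$ or $Y$ is a non-basis, there exists $j\in Y\setminus X$ with both $X-i+j\in\B$ and $Y+i-j\in\B$.} The sparse-paving condition is what makes this work: if, say, $X\in\cH$, then for $j\in Y\setminus X$ the set $X-i+j$ can fail to be a basis only if $X-i+j\in\cH$, but $|(X-i+j)\cap X|=d-1>d-2$, contradicting the intersection bound on distinct members of $\cH$ unless $X-i+j=X$, which is impossible since $j\notin X$; hence $X-i+j\in\B$ automatically for \emph{every} $j\in Y\setminus X$. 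Symmetrically, $Y+i-j$ fails to be a basis for at most one choice of $j$ (again because two non-bases through the $(d-1)$-set $Y-j$ would intersect in $\ge d-1$ elements). Since $k=|Y\setminus X|\ge 2$, we can pick a $j$ avoiding that one bad choice, and both $X-i+j,Y+i-j\in\B$, so the right-hand side is $0\ge h(X)+h(Y)$. When both $X$ and $Y$ are bases the left side is $0$ and the same counting shows the right side is also $0$: for at most one $j$ is $X-i+j$ a non-basis and for at most one $j$ is $Y+i-j$ a non-basis, and $k\ge 2$ lets us dodge both (when $k=2$ one notes the two ``bad'' $j$'s cannot coincide, since $X-i+j$ and $Y+i-j$ being simultaneously non-bases would force $X-i+j=Y+i-j'$ type collisions that again violate the intersection bound — a one-line check).

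The main obstacle is the bookkeeping in this last case, $k=2$, where one must argue that the at-most-one-bad-$j$ from the $X$ side and the at-most-one-bad-$j$ from the $Y$ side are not both present and equal, forcing the single available $j$ to be good on both sides; this is where the precise sparse-paving hypothesis ($d$-element circuits pairwise intersecting in $\le d-2$ elements, equivalently the non-bases form an independent set in the Johnson graph) is used in full, and it is the only place the argument is not completely routine. Everything else is a direct verification. I would state the general sparse-paving lemma as a standalone statement, prove it by the case split $k=1$ versus $k\ge2$ as above, and then conclude Lemma~\ref{lem:F-valmat} in one line by applying it with $\B=\B_1=\dom(h)$, which is sparse paving by Lemma~\ref{lemma:sparsePavingB}.
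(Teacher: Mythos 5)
Your plan is essentially the paper's: isolate the claim that ``if $\B$ is the set of bases of a sparse paving matroid of rank $d$, then $h$ with $h\equiv 0$ on $\B$ and $h<0$ off $\B$ is a valuated matroid,'' verify the exchange axiom by case analysis using the Johnson-graph intersection bound, and then apply it. (The paper proves this directly for $\B_0$ and states the generalization in the remark following the proof.) But your final instantiation is wrong. You apply the general claim with $\B=\B_1=\dom(h)$, yet a function $h\in\cF_n$ does \emph{not} have the required shape relative to $\B_1$: we have $h(\Xs)<0$ while $\Xs\in\B_1$, so $h$ is not identically zero on $\B_1$. The set where $h$ vanishes is $\B_0=\binom{V}{4}\setminus\cH$, and the set where $h<0$ (including $-\infty$) is exactly $\cH$. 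The correct instantiation is therefore $\B=\B_0$, which is sparse paving by Lemma~\ref{lemma:sparsePavingB}; the paper's three cases are indeed keyed on membership in $\B_0$ versus $\cH$, not on $\dom(h)$.

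There is also a slip in your $k=2$ subcase when both $X,Y\in\B$. You assert that the $X$-side bad $j$ and the $Y$-side bad $j$ ``cannot coincide,'' but that is backwards: what the intersection bound actually yields is that if both bad $j$'s exist they \emph{must} coincide. Indeed, if $j_X\neq j_Y$ are both bad, then $X-i+j_X$ and $Y+i-j_Y$ are distinct members of $\cH$ sharing $(X\cap Y)\cup\{j_X\}$, which has $d-1$ elements, contradicting sparse paving; whereas for a single $j$ the sets $X-i+j$ and $Y+i-j$ intersect only in $X\cap Y$, of size $d-2$, so there is no contradiction and they genuinely \emph{can} coincide as bad (and then the other $j$ is good). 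Your parenthetical ``one-line check'' as written would fail. The paper avoids this bookkeeping altogether by invoking symmetric basis exchange for the matroid $\B_0$ in the case $X,Y\in\B_0$; your explicit counting can be made to work, but the $k=2$ statement needs to be reversed.
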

\begin{proof}
  We need to show each $h \in \cF_n$ satisfies~\eqref{eq:M-concave}. We consider three cases:
  \begin{itemize}
    \item Let $X, Y \in \B_0$ and $i \in X \setminus Y$.
    By Lemma~\ref{lemma:sparsePavingB}, the basis exchange axiom holds within $\B_0$.
    Therefore we can find $j \in Y - i$ such that $X - i + j, Y + i - j$ are both in $\B_0$, taking the value zero and satisfying~\eqref{eq:M-concave}.
    \item Let $X \in \B_0, Y \in \cH$ without loss of generality.
    If there exists $j \in Y \setminus X$ such that $X - i + j \in \B_0$, then $Y + i - j$ is also in $\B_0$ and we satisfy~\eqref{eq:M-concave}.
    If such a $j$ does not exist, there cannot be distinct $j_1, j_2 \in Y \setminus X$, else $X - i + j_1, X - i + j_2$ are both elements of $\cH$ and have intersection of cardinality 3, something elements of $\cH$ cannot have.
    Therefore $Y = X - i + j$ and so~\eqref{eq:M-concave} is satisfied with equality.
    \item Let $X, Y \in \cH$ and $i \in X \setminus Y$.
    As elements of $\cH$ can intersect in at most two elements, picking any $j \in Y \setminus X$ to exchange yields two sets in $\B_0$ with value zero, satisfying~\eqref{eq:M-concave}.\qedhere
  \end{itemize}
\end{proof}

\begin{remark}
  We can extend the above construction of the valuated matroid $h$ to any sparse paving matroid $\B$, where $\cH = {V \choose 4} \setminus \B$ is the set of circuits of rank 4.
  The proof of Lemma~\ref{lem:F-valmat} generalizes as it only uses the property that elements of $\cH$ cannot intersect in three elements, as stated in \cite[Lemma 19]{PendavinghVanDerPol:2015}.
\end{remark}

\section{Valuated matroid operations}\label{sec:VM-operations}
We prove in this section that valuated matroids are closed under the operations introduced in Section~\ref{sec:preliminaries}.

\operations*

\begin{lemma} \label{lem:contraction-dual+deletion}
  $f/Y = (f^* \setminus Y)^*$
\end{lemma}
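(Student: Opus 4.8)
The plan is to prove the identity $f/Y = (f^*\setminus Y)^*$ by unfolding the definitions of the three operations involved, being careful about the rank/independence conditions that distinguish this paper's versions of deletion and contraction from the classical ones. Throughout let $f\colon\binom{V}{d}\to\Trop$ be a valuated matroid with underlying matroid $\dom(f)$ of rank $d$, and fix $Y\subseteq V$.

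First I would handle the non-degenerate case. Suppose $Y$ is independent in $\dom(f)$. Then $f^*$ is a valuated matroid on $\binom{V}{|V|-d}$ with $\dom(f^*)=\dom(f)^*$, and $Y$ being independent in $\dom(f)$ is equivalent to $V\setminus Y$ having full rank $|V|-d$ in $\dom(f)^*=\dom(f^*)$ (a standard matroid-duality fact: $Y$ independent in a matroid iff $E\setminus Y$ spanning in the dual). Hence the deletion $f^*\setminus Y$ is the ``genuine'' restriction $f^*|(V\setminus Y)\colon \binom{V\setminus Y}{|V|-d}\to\Trop$, given by $(f^*\setminus Y)(X)=f^*(X)=f(V\setminus X)$ for $X\in\binom{V\setminus Y}{|V|-d}$. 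Now dualize over the ground set $V\setminus Y$: since $|V\setminus Y|-(|V|-d)=d-|Y|$, the function $(f^*\setminus Y)^*$ lives on $\binom{V\setminus Y}{d-|Y|}$, and for $Z\in\binom{V\setminus Y}{d-|Y|}$ we compute
\[
(f^*\setminus Y)^*(Z)=(f^*\setminus Y)\bigl((V\setminus Y)\setminus Z\bigr)=f^*\bigl((V\setminus Y)\setminus Z\bigr)=f\bigl(V\setminus((V\setminus Y)\setminus Z)\bigr)=f(Z\cup Y),
\]
where the last equality uses $Z\cap Y=\emptyset$. This is exactly $(f/Y)(Z)$ by the definition of contraction, so the two functions agree on $\binom{V\setminus Y}{d-|Y|}$.

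Next I would dispatch the degenerate case $Y$ not independent in $\dom(f)$. Then by definition $f/Y\equiv-\infty$. On the other side, $Y$ not independent in $\dom(f)$ means $V\setminus Y$ is not spanning in $\dom(f^*)$, i.e.\ does not have full rank, so by the paper's definition $f^*\setminus Y$ is the function attaining only $-\infty$; its dual $(f^*\setminus Y)^*$ is then also identically $-\infty$ (the dual of the all-$(-\infty)$ function on $\binom{V}{|V|-d}$ restricted appropriately is all $-\infty$ — one should check the intended convention here, but in any case both sides are the trivial $-\infty$ function on $\binom{V\setminus Y}{d-|Y|}$). Hence equality holds trivially.

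The only real subtlety — and the step I expect to need the most care — is the bookkeeping of the rank conditions and the matroid-duality equivalence ``$Y$ independent in $\M$ $\iff$ $V\setminus Y$ spanning in $\M^*$'', together with making sure the codomains match ($\binom{V\setminus Y}{d-|Y|}$ on both sides); the actual value computation is a one-line substitution. I would also remark that this lemma is the valuated analogue of the classical matroid identity $\M/Y=(\M^*\setminus Y)^*$, and that the paper's more restrictive definitions of deletion/contraction are exactly what make the statement clean (as noted in the Remark following Definition~\ref{def:M+operations}).
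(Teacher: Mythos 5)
Your proof is correct and follows essentially the same approach as the paper: verify via matroid duality that $Y$ being independent in $\dom(f)$ makes $V\setminus Y$ spanning in $\dom(f^*)$ so the deletion is well-defined, then unfold the definitions to see both sides evaluate to $f(Z\cup Y)$. The only difference is that you also explicitly dispatch the degenerate case where $Y$ is dependent, which the paper leaves implicit.
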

\begin{proof}
  At first, observe that the independence of $Y$ in $\dom(f)$ implies that it is contained in a basis. 
  Hence, $V - Y$ has full rank in $\dom(f^*) = \dom(f)^*$ and we can actually apply the deletion operation. 
  
  Let $X \in \binom{V-Y}{d-|Y|}$.
  Then, as the codomain of $(f^* \setminus Y)$ is $V - Y$, we get $(f^* \setminus Y)^*(X) = (f^* \setminus Y)(V - (Y \cup X))$.
  Note that $X$ and $Y$ are disjoint by definition.
  Furthermore, from $V - (Y \cup X) \subseteq V - Y$ we obtain $(f^* \setminus Y)(V - (Y \cup X)) = f^*(V - (Y \cup X)). $
  Since the codomain of $f^*$ is $V$, this yields $f^*(V - (Y \cup X)) = f(X \cup Y)$. 
\end{proof}

\begin{lemma} \label{lem:truncation-extension+contraction}
  $f^{(1)} = f^{\mathbf{0}} / \{p\}$, where $\mathbf{0}$ is the zero vector and $p$ is the element added in the principal extension. 
\end{lemma}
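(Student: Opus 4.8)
The plan is to unwind the three definitions involved — truncation, principal extension, and contraction — and observe that they compose to give the required identity. There is essentially no content beyond careful bookkeeping of domains and one small well-definedness check.

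First I would record the domains. The new element $p$ does not belong to $V$, so the ground set of $f^{\mathbf{0}}/\{p\}$ is $(V \cup \{p\}) \setminus \{p\} = V$, and its rank is $\rank(f^{\mathbf{0}}) - |\{p\}| = d - 1$; this matches the ground set $V$ and rank $d-1$ of $f^{(1)}$. Here I use that $f^{\mathbf{0}}$, being a principal extension of $f$, is a valuated matroid of rank $d$ on $V\cup\{p\}$ (closure under principal extension, via Remark~\ref{rem:principle+extension} and Lemma~\ref{lem:induction-principle+extension}), and that a contraction of a valuated matroid is again a valuated matroid (Lemma~\ref{lem:contraction-dual+deletion}). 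Since $f$ has rank $d$ we have $|V| \ge d$, so for $X \in \binom{V}{d-1}$ the set $V \setminus X$ is nonempty and the maxima below are over nonempty index sets.

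Second, I would check that $\{p\}$ is independent in $\dom(f^{\mathbf{0}})$, so that the contraction is given by the substantive formula $(f^{\mathbf{0}}/\{p\})(X) = f^{\mathbf{0}}(X \cup \{p\})$ rather than the all-$(-\infty)$ function. As $f$ has rank $d$, $\dom(f)$ has a basis $B$; for any $X \in \binom{B}{d-1}$, the definition of principal extension with weight $\mathbf{0}$ gives
\[
f^{\mathbf{0}}(X \cup \{p\}) = \max_{v \in V \setminus X}\bigl(f(X \cup v) + 0\bigr) \ge f(B) > -\infty ,
\]
so $X \cup \{p\}$ is a basis of $\dom(f^{\mathbf{0}})$; hence $p$ lies in a basis, i.e. $\{p\}$ is independent in $\dom(f^{\mathbf{0}})$.

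Finally, for an arbitrary $X \in \binom{V}{d-1}$ I would chain the definitions of contraction, principal extension, and truncation:
\[
(f^{\mathbf{0}}/\{p\})(X) = f^{\mathbf{0}}(X \cup \{p\}) = \max_{v \in V \setminus X}\bigl(f(X \cup v) + 0\bigr) = \max_{v \in V \setminus X} f(X \cup v) = f^{(1)}(X) ,
\]
which is the claimed equality $f^{(1)} = f^{\mathbf{0}}/\{p\}$. The only step requiring any thought is the independence check in the previous paragraph, and even that is immediate once one notes $\dom(f)$ is nonempty, so I do not anticipate a genuine obstacle: the lemma is a definitional identity, and its point is simply to route truncation through the already-established closure under principal extension and contraction.
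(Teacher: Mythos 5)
Your proposal is correct and follows exactly the paper's route: verify that $p$ is not a loop of $\dom(f^{\mathbf{0}})$ (so the contraction is well defined), then chain the definitions of contraction, principal extension with weight $\mathbf{0}$, and truncation. You simply spell out the details (domain and rank bookkeeping, and an explicit witness showing $\{p\}$ is independent) that the paper's two-sentence proof leaves implicit.
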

\begin{proof}
  As $p$ is not a loop of $\dom(f^{\mathbf{0}})$ one can form this contraction and $\rank(\{p\}) = 1$.
  Now the claim follows directly from the definition of contraction and truncation. 
\end{proof}

The valuated truncation is further studied in~\cite{Murota:1997}.
It is shown that this actually gives rise to a valuation on all independent sets such that this forms a generalized valuated matroid. 

\begin{lemma} \label{lem:induction-principle+extension}
  Let $G = (V,U;E)$ be a bipartite graph with weight function $\co \in \R^E$ and $g$ be a valuated matroid on $U$.
  Then $\tempinducednew{G}{g}{{\co}} = ((\dots(g^{\co_1})\dots)^{\co_{|V|}})\setminus U$, the iterated principal extension of $g$ by $\{\co_1, \dots, \co_{|V|}\} \subset (\Trop)^{U}$, where $\co_i$ is the function $\co$ restricted to the edges incident with $i \in V$ extended with value $-\infty$ where it is not defined. 
  Furthermore, these principal extensions commute. 
\end{lemma}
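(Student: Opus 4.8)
The plan is to prove the displayed identity by reducing both sides to the same explicit maximum over injections from a $d$-subset of $V$ into $U$, and to deduce commutativity from the evident symmetry of that maximum. First I would set up notation: let $p_1,\dots,p_{|V|}$ be the elements added by the successive principal extensions, identify $p_i$ with $i\in V$, and — as in the statement — regard each $\co_i$ as a vector in $\Trop^{U}$ extended by the value $-\infty$ on $p_1,\dots,p_{|V|}$. Put $G_0:=g$ and $G_\ell:=(G_{\ell-1})^{\co_\ell}$, a valuated matroid of rank $d$ on $U\cup\{p_1,\dots,p_\ell\}$ by Definition~\ref{def:M+operations}. The key lemma I would prove, by induction on $\ell$, is that for all $X\subseteq U$ and $S\subseteq\{p_1,\dots,p_\ell\}$ with $|X|+|S|=d$,
\[
G_\ell(X\cup S)=\max\Big\{\,g\big(X\cup\sigma(S)\big)+\textstyle\sum_{p_i\in S}(\co_i)_{\sigma(p_i)}\ :\ \sigma\colon S\hookrightarrow U\setminus X\ \text{injective}\Big\},
\]
where $\sigma(S)$ denotes the image and the maximum over an empty index set (e.g.\ when $|U\setminus X|<|S|$) is $-\infty$. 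The base case $\ell=0$ is $G_0(X)=g(X)$. For the step: if $p_\ell\notin S$ then $G_\ell$ agrees with $G_{\ell-1}$ on the old ground set and the right-hand side does not involve $\co_\ell$ or $p_\ell$, so the claim follows from the inductive hypothesis; if $p_\ell\in S$, write $S=S'\cup\{p_\ell\}$, apply the definition of principal extension to $G_\ell\big((X\cup S')\cup p_\ell\big)=\max_v\big(G_{\ell-1}\big((X\cup S')\cup v\big)+(\co_\ell)_v\big)$ with $v$ ranging over $(U\setminus X)\cup(\{p_1,\dots,p_{\ell-1}\}\setminus S')$, note that the choices $v=p_j$ give $(\co_\ell)_{p_j}=-\infty$ and hence do not contribute, apply the inductive hypothesis to $G_{\ell-1}\big(X\cup\{t\}\cup S'\big)$ for $t\in U\setminus X$, and observe that a pair $\big(t,\sigma'\colon S'\hookrightarrow U\setminus(X\cup\{t\})\big)$ is exactly an injection $\sigma\colon S\hookrightarrow U\setminus X$ with $\sigma(p_\ell)=t$ and $\sigma|_{S'}=\sigma'$. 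This yields the asserted formula for $G_\ell$.

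Next I would specialise to $\ell=|V|$, $X=\emptyset$ and $S=X'$ for a given $X'\in\binom{V}{d}$, getting $G_{|V|}(X')=\max_{\sigma\colon X'\hookrightarrow U}\big(g(\sigma(X'))+\sum_{i\in X'}(\co_i)_{\sigma(i)}\big)$. A summand is finite only if $(i,\sigma(i))\in E$ for every $i\in X'$, i.e.\ exactly when $\mu:=\{(i,\sigma(i)):i\in X'\}$ is a matching of $G$ with $\partial_V(\mu)=X'$; for such $\mu$ one has $\partial_U(\mu)=\sigma(X')$ and $\co(\mu)=\sum_{i\in X'}(\co_i)_{\sigma(i)}$, and conversely every matching covering exactly $X'$ on the $V$-side arises from such a $\sigma$. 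Hence $G_{|V|}(X')=\max\{\co(\mu)+g(\partial_U(\mu)):\mu$ a matching of $G,\ \partial_V(\mu)=X'\}$, which is the definition of $\tempinducednew{G}{g}{\co}(X')$. It remains to address the deletion of $U$: if $\tempinducednew{G}{g}{\co}\not\equiv-\infty$ then some $X'\in\binom{V}{d}$ has $G_{|V|}(X')$ finite, so $X'$ is a basis of $\dom(G_{|V|})$ and $V$ has full rank $d$ there, whence $(G_{|V|})\setminus U$ is the naive restriction to $\binom{V}{d}$ and equals $\tempinducednew{G}{g}{\co}$ by the above; if $\tempinducednew{G}{g}{\co}\equiv-\infty$ then no $X'\subseteq V$ is a basis, $V$ fails to have full rank, and $(G_{|V|})\setminus U$ is the all-$-\infty$ function, again equal to $\tempinducednew{G}{g}{\co}$.

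Finally, commutativity is immediate from the key formula with $\ell=|V|$: its right-hand side depends on the family $\{(\co_i,p_i)\}_{i\in V}$ only through a maximum that is symmetric under permuting the index set $V$, and since each $\co_i$ was extended by $-\infty$ on \emph{all} of $p_1,\dots,p_{|V|}$ (not merely on the ones added before it), the inductive argument works verbatim for any ordering of the $|V|$ principal extensions and produces the same values; hence the principal extensions commute. The one genuinely delicate point is the inductive formula in the first step: it must be phrased so that the $-\infty$-extension of each $\co_i$ precisely annihilates the spurious option of a newly added element ``standing in'' for an earlier newly added element, leaving only systems of distinct representatives inside $U$. Everything else is bookkeeping, and the deletion caveat is dictated by the rank conventions of Definition~\ref{def:M+operations}.
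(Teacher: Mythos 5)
Your proof is correct. The inductive backbone is the same as the paper's (induction on the number $\ell$ of principal extensions performed), but the carrier of the induction is different: the paper builds an auxiliary sequence of bipartite graphs $G_0 = (U', U; E_0)$, $G_i = (U' \cup \{1,\dots,i\}, U; E_i)$, shows $\tempinducednew{G_i}{g}{d_i} = (\cdots(g^{\co_1})\cdots)^{\co_i}$, and then obtains the lemma by deleting $U'$; you instead prove a closed combinatorial formula for $G_\ell$ directly, namely $G_\ell(X\cup S)=\max_{\sigma}\bigl(g(X\cup\sigma(S))+\sum_{p_i\in S}(\co_i)_{\sigma(p_i)}\bigr)$ over injections $\sigma\colon S\hookrightarrow U\setminus X$, and then read off the matching interpretation by specializing to $X=\emptyset$. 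The two arguments are morally equivalent (your right-hand side is what $\tempinducednew{G_\ell}{g}{d_\ell}(X\cup S)$ unwinds to), but your version buys two things: (i) it gives the value of every layer of the iterated extension on \emph{arbitrary} subsets, not just those lying in $V$, which makes the restriction/deletion step purely a matter of checking the rank condition of Definition~\ref{def:M+operations}; and (ii) it makes commutativity a one-line observation (the formula is manifestly symmetric under permuting the index set $V$), whereas the paper derives commutativity from the structural fact that the $V$-nodes share no edges in the auxiliary graph. You also correctly flag and handle the one genuine subtlety both proofs must address — that each $\co_i\in(\Trop)^U$ must be understood as extended by $-\infty$ on the new elements $p_1,\dots,p_{|V|}$ so that a later principal extension cannot select an earlier $p_j$ — which the paper handles implicitly via the bipartite-graph construction.
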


\begin{proof}
  The claim follows by induction.
  We start with the bipartite graph $G_0 = (U',U;E_{0})$ where $U'$ is a copy of $U$, and $E_{0}$ consists of edges $(u',u)$ between copies of elements.
  Furthermore the weight function $d_{0}$ takes the value zero on all elements of $E_{0}$.
  
  We inductively define $G_{i} = (V_i, U; E_{i})$ where $V_i = U' \cup \{1,\dots,i\}$ by adding the node $i \in V$ to $G_{i-1}$ with edges $(i,u)$ for all $u \in U$.
  Furthermore the weight function $d_{i}$ takes the value of $d_{i-1}$ for all edges in $E_{i-1}$, and the value $\co_{iu}$ on the new edges $(i,u)$.
  These graphs are displayed in Figure~\ref{fig:induction-principle+extension}.
  \begin{figure}
\centering
\includegraphics[width=0.8\textwidth]{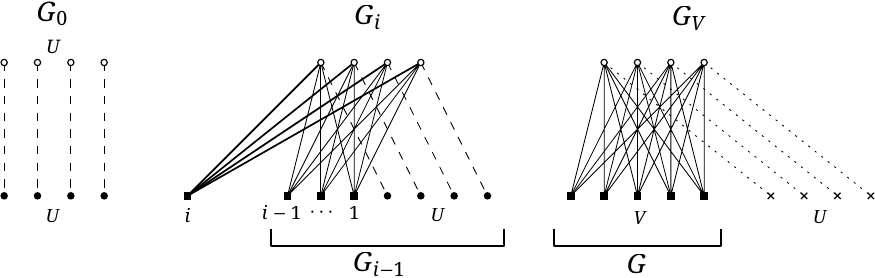}
\caption{The inductive construction of graphs corresponding to principal extension from Lemma~\ref{lem:induction-principle+extension}.}
\label{fig:induction-principle+extension}
\end{figure}
  We claim that $\tempinducednew{G_{i}}{g}{d_{i}} = (\dots(g^{\co_1})\dots)^{\co_i}$.
  
  Note that for the base case, we have that $\tempinducednew{G_0}{g}{d_0} = g$, as all edges in $G_0$ have weight zero.
  
  For the general case, consider $\tempinducednew{G_i}{g}{d_i}$ and let $X$ be a $d$-subset of $V_i = U \cup \{1,\dots,i\}$.
  If $i \notin X$, then $\tempinducednew{G_i}{g}{d_i} = \tempinducednew{G_{i-1}}{g}{d_{i-1}}$ as the graphs $G_i$ and $G_{i-1}$ are the same outside of node $i$.
  If $X = i \cup Y$, then 
  \begin{align*}
  \tempinducednew{G_i}{g}{d_i}(X) &= \max\left(\sum_{(k,v) \in \mathcal{P}} d_i(k,v) + g(Z) \; \middle| \; \partial_{V_i}(\mathcal{P}) = X \, , \, \partial_U(\mathcal{P}) = Z\right) \\
  &= \max\left(\co(i,u) + \sum_{(k,v) \in \mathcal{P'}} d_i(k,v) + g(Z' \cup u) \; \middle| \; \partial_{V_i}(\mathcal{P}') = Y \, , \, \partial_U(\mathcal{P}) = Z' \right) \\
  &= \max_{u \in V_i \setminus Y}\left(\co_{iu} + \tempinducednew{G_{i-1}}{g}{d_i}(Y \cup u)\right) \, .
  \end{align*}
  Note that for $u \notin U$, we define $c_{iu} = -\infty$, therefore this maximum will only be achieved for some $u \in U$ unless no matching $\mathcal{P}$ exists.
  This is precisely the principal extension of $\tempinducednew{G_{i-1}}{g}{d_{i-1}}$ with respect to $\co_i$.
  By the inductive hypothesis, this implies $\tempinducednew{G_{i}}{g}{d_i} = (\dots(g^{\co_1})\dots)^{\co_{i}}$.
  
  The final observation is that $G$ is obtained from the graph $G_V$ by deleting the copy of $U$ that shares no edges with $V$.
  As they share no edges, deleting these nodes is equivalent to deletion on the level of valuated matroids, therefore $\tempinducednew{G}{g}{\co} = \tempinducednew{G_V}{g}{d_V} \setminus U$.
  
  Finally, we note that as elements of $V$ share no edges, we can inductively build the graph $G_V$ by adding nodes in any order.
  On the level of valuated matroids, this implies the principal extensions commute.
\end{proof}

Let $V_1$ and $V_2$ be the respective (not necessarily disjoint) ground sets for the valuated matroids $f_1$ and $f_2$ with ranks $d_1$ and $d_2$ and let $V = V_1 \cup V_2$. 
We define a bipartite graph $G = (V, V_1 \dot{\cup} V_2;E)$ where one colour class is $V$ and the other colour class is the disjoint union of copies of $V_1$ and $V_2$.
The edge set $E$ consists of edges $(v,v)$ connecting a node to any of its copies, all weighted zero by weight function $\co$; in particular, a node of $V$ has degree two if and only if it represents an element in $V_1 \cap V_2$.
This graph is displayed in Figure~\ref{fig:union-induction+sum}.

\begin{figure}
\centering
\includegraphics[width=0.3\textwidth]{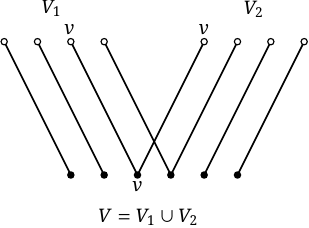}
\caption{The graph $G$ that induces the union $f_1 \vee f_2$, as described before Lemma~\ref{lem:union-induction+sum}.}
\label{fig:union-induction+sum}
\end{figure}

\begin{lemma} \label{lem:union-induction+sum}
  The union $f_1 \vee f_2$ can be written as an induction $\tempinducednew{G}{f_1 \oplus f_2}{\co}$.
\end{lemma}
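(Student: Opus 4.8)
The plan is to unwind both sides of the claimed identity on a fixed set $X \in \binom{V}{d_1+d_2}$ and observe that they enumerate exactly the same quantities. First I would record two structural facts about the graph $G = (V, V_1 \dot{\cup} V_2; E)$. Since $G$ is bipartite with every edge joining $V$ to $V_1 \dot{\cup} V_2$, every node-disjoint path in $G$ has length one, so a family of node-disjoint paths is simply a matching. Moreover, each copy $u$ in $V_1 \dot{\cup} V_2$ is adjacent to exactly one node of $V$, namely the element of which it is a copy, so distinct edges of a matching automatically have distinct endpoints on the copy side. Hence a matching covering $X$ is precisely a choice, for every $v \in X$, of one of its (at most two) copies; here an element of $V_1 \cap V_2$ has two copies to choose from, while an element of $V_1 \setminus V_2$ or $V_2 \setminus V_1$ has only one.

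Next I would set up the resulting bijection. Given a matching $\mu$ in $G$ with $\partial_V(\mu) = X$ and $\partial_{V_1 \dot{\cup} V_2}(\mu) = Z$, put $Y := \{\, v \in X : v \text{ is matched to its copy inside } V_1 \,\}$. Then $Y \subseteq X \cap V_1$, $X \setminus Y \subseteq X \cap V_2$, and identifying each copy with its original we get $Z \cap V_1 = Y$ and $Z \cap V_2 = X \setminus Y$. As all edge weights are zero, $\co(\mu) = 0$, so the contribution of $\mu$ to $\tempinducednew{G}{f_1 \oplus f_2}{\co}(X)$ equals $(f_1 \oplus f_2)(Z) = f_1(Y) + f_2(X \setminus Y)$, which is finite exactly when $Y \in \binom{V_1}{d_1}$ and $X \setminus Y \in \binom{V_2}{d_2}$. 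Conversely, every $Y \subseteq X$ with $Y \in \binom{V_1}{d_1}$ and $X \setminus Y \in \binom{V_2}{d_2}$ arises from a unique such matching. Taking the maximum over all $\mu$ therefore reproduces $\max\bigl\{ f_1(Y) + f_2(X \setminus Y) : Y \subseteq X,\ Y \in \binom{V_1}{d_1},\ X \setminus Y \in \binom{V_2}{d_2} \bigr\}$, which is $(f_1 \vee f_2)(X)$ by Definition~\ref{def:valuatedSumUnion}. Both functions have domain $\binom{V}{d_1+d_2}$, so this settles the equality on all relevant sets.

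Finally I would tidy up the degenerate cases: $f_1 \oplus f_2$ is a valuated matroid of rank $d_1 + d_2$ on the disjoint ground set $V_1 \dot{\cup} V_2$ by Definition~\ref{def:valuatedSumUnion}, so the induction is well defined, and by Theorem~\ref{thm:induction-network} it is a valuated matroid whenever it is not identically $-\infty$; if it is identically $-\infty$, then so is $f_1 \vee f_2$ by the pointwise equality just established, and there is nothing further to prove. I do not expect a genuine obstacle here. The only point requiring care is the treatment of elements of $V_1 \cap V_2$, which have degree two in $G$, together with the verification that ``node-disjoint paths'' collapse to matchings in this bipartite instance so that the induction formula specialises exactly to the union formula; this is the crux of the argument, and it follows immediately from the two structural facts above.
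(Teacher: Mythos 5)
Your proof is correct and follows essentially the same route as the paper's: both identify matchings in $G$ covering $X$ with decompositions $X = Y \sqcup (X\setminus Y)$ with $Y \subseteq V_1$, $X\setminus Y \subseteq V_2$, note that edge weights are all zero, and match the resulting maximum to the defining formula for $f_1 \vee f_2$. You spell out more of the routine verifications (collapse of node-disjoint paths to matchings, the role of $V_1 \cap V_2$, the identically $-\infty$ case) but the underlying argument is the same.
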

\begin{proof}
  Any matching $M$ such that $\partial_V(M) = X$ corresponds to a decomposition $X = X_1 \dot{\cup} X_2$ where $X_i \subseteq V_i$.
  Therefore
  \begin{align*}
  \tempinducednew{G}{f_1 \oplus f_2}{\co}(X) = \max\SetOf{(f_1 \oplus f_2)(X)}{X_1 \in \binom{V_1}{d_1} \, , \, X_2 = X \setminus X_1 \in \binom{V_2}{d_2} } \, ,
  \end{align*}
  which is precisely the definition of $f_1 \vee f_2$.
\end{proof}

\begin{proof}[Proof of Theorem~\ref{thm:VM+closed+operations}]
  Deletion, dualization and direct sum are all covered by \cite[Theorem 6.13]{Murotabook}, parts (6), (2) and (8) respectively.
  Lemma~\ref{lem:contraction-dual+deletion} implies closure under contraction.
  Lemma~\ref{lem:union-induction+sum} and Remark~\ref{rem:principle+extension} show matroid union and principal extension are special cases of induction by networks, which valuated matroids are closed under via Theorem~\ref{thm:induction-network}.
  Finally, Lemma~\ref{lem:truncation-extension+contraction} implies closure under truncation.
\end{proof}

Finally, we show that induction by networks is a special case of induction by bipartite graphs with contraction.

\networkbipartite*

\begin{proof}
  Let $N = (T,A)$ be the weighted directed network such that the valuated matroid $f$ on the subset $V$ of $T$ is the induction of the valuated matroid $g$ on the subset $U$ of $T$ through $N$.
  Let $W = T \setminus (V \cup U)$ and $W'$ a disjoint copy of $W$.
  We define the bipartite graph $G = (V \cup W, U \cup W';E)$ with weight function $\co \in \R^E$ where for each arc $(a,b) \in A$, we add the edge $(a,b)$ if $b \in U$ or $(a,b')$ if $b \in W$ to $E$ with weight $d(a,b)$.
  Furthermore, we add the zero weight edges $(w,w')$ for all $w \in W$ with copy $w'$.
  An example of this construction is displayed in Figure~\ref{fig:network-bipartite+contraction}.

  \begin{figure}
\centering
\includegraphics[width=0.7\textwidth]{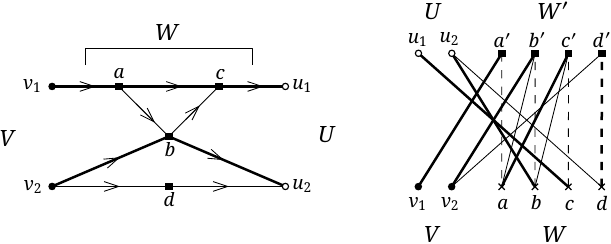}
\caption{An example of the construction from Lemma~\ref{lem:network-bipartite+contraction}: a network $N$ and the corresponding bipartite graph $G$.
A set of node-disjoint paths in $N$ correspond to a matching in $G$, both displayed in bold.}
\label{fig:network-bipartite+contraction}
\end{figure}
  
  Let $X \subseteq V$ and $Y \subseteq U$ be subsets of equal cardinality.
  We observe that node disjoint paths from $X$ to $Y$ in $N$ are in bijection with matchings from $X \cup W$ to $Y \cup W'$ in $G$, and furthermore preserve weights.
  Let $\mathcal{P}$ be a set of node disjoint paths in $N$, the edges of $G$ corresponding to arcs of $\mathcal{P}$ form a matching of equal weight on a subset of the nodes from $X \cup W$ to $Y \cup W'$.
  For any nodes $w \in W$ that are not used in $\mathcal{P}$, we add the corresponding zero weight edge $(w, w')$ to the matching: this gives a perfect matching from $X \cup W$ to $Y \cup W'$ of the same weight at $\mathcal{P}$. 
  Conversely, any perfect matching $\mu$ from $X \cup W$ to $Y \cup W'$ gives rise to a set of node disjoint paths by contracting the $(w,w')$ in $G$ for all $w \in W$.
  This precisely recovers the network $N$ from $G$, and the matching $\mu$ becomes a set of node disjoint paths from $X$ to $Y$ in $N$.
  
  We let $h$ be the valuated matroid $g \oplus \free{W'}$ as defined in Example~\ref{ex:extension-by-coloops}.
  Consider $f(X)$ for some $X \subseteq V$.
  As node disjoint paths from $X$ in $N$ are in bijection with matchings on $X \cup W$ in $G$, we can replace $N$ with $G$ in the definition of $f$:
  \[
  f(X) = \max\SetOf{\sum_{e \in \mu} \co(e) + g(Y) }{\text{ matching } \mu \text{ in } G \colon \partial_{V\cup W}(\mu) = X\cup W \wedge \partial_{U\cup W'}(\mu) = Y \cup W'} .
  \]
  Furthermore, by definition of $h$ we can replace $g(Y)$ with $h(Y \cup W')$ in the above equation.
  This implies that $f(X) = \tempinducednew{G}{h}{\co}(X \cup W)$; furthermore this holds for arbitrary $X$ and so $f = \tempinducednew{G}{h}{\co} / W$.
\end{proof}

\section{From valuated generalized matroids to valuated matroids}
\label{sec:vgm-to-vm}

By definition, valuated matroids are defined only on a layer of the ground set, but it is easy to check 
that each valuated matroid is also a valuated generalized matroid if we set the function value to $-\infty$ outside of the layer.
Another way to obtain a valuated generalized matroid from a valuated matroid is by truncation (introduced in Section~\ref{sec:preliminaries}) and elongation.
The interested reader is referred to~\cite{Murota:1997}, in particular Theorem~3.2.

\smallskip

Here, we demonstrate how to go in the other direction, i.e., how to represent a valuated generalized matroid as a valuated matroid.
This construction also appears in \cite[Lemma 8.5]{Leme2017} and \cite[Proposition 3]{MurotaShioura:2018-b}.  
Then we show an explicit construction for the case of R\nat-minor valuated generalized matroids.

\medskip
Let $f:2^{V_1} \to \Trop$ be a valuated generalized matroid. 
Denote with $n$ the size of $V_1$ and let $V_2$ be a copy of $V_1$. 
We define a function $g_f : {{V_1\cup V_2} \choose n} \to \Trop$ for $X\in {{V_1\cup V_2} \choose n}$ as
\[
g_f(X) := f(X\cap V_1)\,.
\]
Then, it is a straightforward check via the valuated (generalized) matroid axioms that the function $g_f$ is a valuated matroid. 
Note that given such a function $g_f$, we can recover $f$ as $f(X) = g_f(X\cup Y)$ for any $Y\subseteq V_2$ of size $n - |X|$.

\medskip
Starting with an R\nat-induced or an R\nat-minor valuated generalized matroid, a similar construction gives rise to an R-minor valuated matroid. 
Let $f:2^{V_1}\to \Trop$ be an R\nat-minor valuated generalized matroid represented by $(G_1, \M_1,\co, W)$ where $G_1=(V_1\cup W, U_1; E)$.
For $n = |V_1|$, let $V_2, U_2$ be two disjoint sets, each with $n$ elements, and disjoint from $V_1\cup W \cup U_1$.
Let $\M_2$ be the free matroid on $U_2$.
Consider the R-minor valuated matroid $g$ defined by the bipartite graph $G = \left(\, (V_1\cup V_2)\cup W,\, U_1\cup U_2 ;\, E'\right)$, 
matroid $\M$ on $U_1\cup U_2$, $\co'\in \R^{E'}$, and $W$; where
\begin{itemize}
  \item $\M$ is obtained by truncating $\M_1 \oplus \M_2$ to the size $|W| + n$,
  \item $E'$ is obtained from $E$ by adding all possible edges $(i,j)$, for $i \in V_2$, $j\in U_1\cup U_2$,
  \item $\co'$ extends $\co$ to $E'$ by weighting all edges in $E' \setminus E_0$ by zero.
\end{itemize}
Then, a maximal independent matching in $G$ on $X \cup W$ must come from a maximal independent matching in $G_1$ with additional zero weight edges adjacent to all nodes in $X \cap V_2$, verifying that $g$ is the same valuated matroid as $g_f$ defined in the previous paragraph.

\section{The size of R-induced representations}
\label{sec:size-bound-R-rep}
We show that any R-induced valuated matroid has an R-induced representation where the bipartite graph has size $O(|V|\cdot d)$, 
where $d$ is its rank.
A corollary is that not all valuated matroids are R-induced.

\begin{lemma}\label{lem:informationBound}
  Let $f:{V\choose d}\to \Trop$ be an R-induced valuated matroid with representation $G=(V, U; E)$, $\M = (U, r)$ and $\co\in \R^E$.
  Then, there is an R-induced representation of $f$ with $G'=(V, U'; E')$, $\M' = (U', r')$ and $\co'\in \R^{E'}$ 
  such that $|\Gamma_{G'}(v)|\le d$ for all $v\in V$.
  In particular, $|E'| + |U'|+|V| \in O(|V|\cdot d)$.
\end{lemma}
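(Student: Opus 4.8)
\textbf{Proof plan for Lemma~\ref{lem:informationBound}.}
The plan is to start from an arbitrary R-induced representation $(G,\M,\co)$ of $f$ and prune it so that each $v\in V$ has at most $d$ neighbours, while preserving all function values. The guiding principle is that $f(X)$ for $X\in\binom{V}{d}$ only depends on maximum weight independent matchings covering $X$; each such matching uses exactly one edge at each $v\in X$, and the set of endpoints in $U$ is a basis of $\M$, hence has size $d$. So for a fixed $v$, across all sets $X\ni v$ and all optimal matchings, only ``few'' edges at $v$ can ever be used optimally — but a priori ``few'' could still be as large as $|U|$. To get the bound $d$, I would argue as follows. Contract the rest of the graph: for a fixed $v\in V$, consider the valuated matroid $f_v$ on $U$ obtained by restricting attention to matchings that do \emph{not} use $v$ together with the matroid constraint, i.e. essentially $\Phi(G\setminus v,\M,\co)$ viewed appropriately, and then the principal-extension-type operation that re-adds $v$ with its weighted edges to $U$. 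By Remark~\ref{rem:principle+extension} and Lemma~\ref{lem:induction-principle+extension}, adding $v$ back is exactly a principal extension: $f$ restricted to sets containing $v$ is governed by $\max_{u}\bigl(g(\cdot\cup u)+w_{vu}\bigr)$ for the appropriate valuated matroid $g$ of rank $d-1$ on $U$ and weights $w_{vu}=\co_{vu}$. The point is that a principal extension of a rank-$(d-1)$ valuated matroid is determined by the values $\max_{u}\bigl(g(Y\cup u)+w_{vu}\bigr)$ over $(d-1)$-sets $Y$, and the maximiser $u$ ranges over elements not in $\cl_{\dom(g)}(Y)$; since $\dom(g)$ has rank $d-1$, only the value of $w_{vu}$ on one representative per parallel class / flat matters, and more carefully, there is a set of at most $d$ values of $u$ that already realises all these maxima.

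Concretely, the key step is: \emph{for each fixed $v$, there is a subset $N_v\subseteq \Gamma_G(v)$ with $|N_v|\le d$ such that deleting all edges $vu$ with $u\notin N_v$ does not change $f$.} I would prove this by a greedy/exchange argument. Order the basis-extension problem: for each $(d-1)$-subset $Y$ of $V\setminus v$ that is independent in $\dom(f)$ (equivalently has an independent matching to some $(d-1)$-subset of $U$), the contribution of $v$ is $\max_{u}\bigl(\Phi(G\setminus v,\M/\{\text{image of }Y\})\text{-type value}+\co_{vu}\bigr)$. The edge $vu$ that attains this can, by an augmenting-path / matroid-exchange argument, always be taken so that $u$ lies in a fixed spanning set of $\M$ of size $d$ — because $u$ must be completable to a basis together with the $d-1$ elements matched to $Y$, and any such completing element can be exchanged into a fixed basis. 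Hence choosing $N_v$ to be (the neighbours of $v$ among) one fixed basis $B$ of $\M$, together with at most… — this is where care is needed — I would instead argue that the relevant $u$'s live in $d$ distinct \emph{flats} and pick one optimal representative from each, giving $|N_v|\le d$. After doing this for every $v\in V$ independently (the prunings at different $v$ do not interfere, since an edge at $v$ being removed only affects matchings through $v$), we obtain $G'$ with $\deg_{G'}(v)\le d$ for all $v\in V$, and we restrict $U$ to $U'=\bigcup_v N_v\cup(\text{anything still needed})$, take $\M'=\M|U'$ or a minor thereof, and $\co'$ the restriction. Then $|E'|\le |V|\cdot d$, $|U'|\le |V|\cdot d$, so $|E'|+|U'|+|V|\in O(|V|\cdot d)$.

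The main obstacle I anticipate is making the bound exactly $d$ per vertex rather than something like $|U|$ or $d\cdot(\text{number of flats})$: the naive ``only used edges matter'' argument bounds $\deg(v)$ by the number of edges appearing in \emph{some} optimal matching, which need not be $\le d$. The clean way is to phrase it via principal extension (Lemma~\ref{lem:induction-principle+extension}): $f$ is obtained from a valuated matroid on $U$ by a sequence of principal extensions $f=(\dots(g^{\co_1})\dots)^{\co_{|V|}}\setminus U$, and the core claim reduces to: \emph{in a single principal extension $g^{w}$ of a valuated matroid $g$ of rank $d-1$ on ground set $U$, the value $g^w(X\cup p)=\max_{u\in U\setminus X}(g(X\cup u)+w_u)$ is unchanged if we replace $w$ by $-\infty$ outside a well-chosen set of size $\le d$.} This is a statement purely about valuated matroids and I would prove it by induction on $|U|$, removing an element $u$ that is never an exclusive maximiser (if $g(X\cup u)+w_u\le g(X\cup u')+w_{u'}$ for some $u'\ne u$ whenever $X\cup u$ is independent, then $u$ can be dropped); the exchange axiom~\eqref{eq:M-concave} guarantees that after removing such $u$ we can always find a replacement, and a counting argument on bases of $\dom(g)$ caps the number of ``essential'' elements at $d$. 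Finally, the corollary — that not all valuated matroids are R-induced — follows by a dimension count: the family $\cF_n$ (or any family with sufficiently many independent real parameters, cf.\ Definition~\ref{def:class}) has more degrees of freedom than the $O(|V|\cdot d)$ real parameters $\co'$ in such a bounded representation, so a generic member cannot be R-induced.
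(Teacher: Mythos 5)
The core step of the paper's argument is a concrete choice of the surviving neighbour set $N_v$: for each $v$, define $\omega(u)=\co_{vu}$ on $\Gamma_G(v)$ and take $N_v=S$ to be a \emph{maximum $\omega$-weight basis of the restriction $\M|\Gamma_G(v)$}; since $r(\M)=d$, $|S|\le d$. The exchange argument then uses the greedy structure of $S$: if a maximum weight independent matching $\mu^X$ uses an edge $(v,u)$ with $u\notin S$, there are $u_1,\dots,u_k\in S$ with $\co_{vu_t}\ge \co_{vu}$ and $u\in\cl(\{u_1,\dots,u_k\})$; if none of the $u_t$ can replace $u$ in $\mu^X$, then $\{u_1,\dots,u_k\}\subseteq\cl(T)$, forcing $u\in\cl(T)$, contradicting that $T\cup\{u\}$ is independent.

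Your proposal correctly identifies the right target (a per-vertex set of size $\le d$) and the right obstacle, but it does not supply a correct choice of $N_v$ nor a correct exchange argument. You first suggest that ``$u$ must be completable to a basis together with the $d-1$ elements matched to $Y$, and any such completing element can be exchanged into a \emph{fixed basis}'' of $\M$. This fails because the exchange must not \emph{decrease} the matching weight, and a fixed basis of $\M$ need not contain high-$\co_{v\cdot}$ elements, nor even neighbours of $v$ at all; weights are essential to the choice of $N_v$. You then retreat to ``the relevant $u$'s live in $d$ distinct flats and pick one optimal representative from each,'' and separately to ``a counting argument on bases of $\dom(g)$ caps the number of essential elements at $d$,'' both of which are left unproved and are not obviously repairable as stated — in particular, a principal extension $g^w$ of a rank-$(d{-}1)$ valuated matroid can genuinely depend on more than $d$ coordinates of $w$ if you do not tie the choice to a weighted-matroid structure, so the reduction to a single-extension lemma still has to face the same issue. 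The fix you are missing is precisely to take $N_v$ to be a \emph{weighted} max basis of $\M$ restricted to $\Gamma_G(v)$ (with weights $\co_{v\cdot}$) and run the greedy/closure exchange described above; that is what makes the degree bound exactly $d$ go through. The surrounding scaffolding in your write-up (pruning at different $v$ does not interfere; restrict $U$ to $\Gamma_{G'}(V)$; conclude the size bound; dimension count for the corollary) is fine and essentially matches the paper.
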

\begin{proof}
  Consider an arbitrary node $v\in V$, and the set of its neighbours $\Gamma_G(v)$ in $U$.
  Let us define a weight function $\omega$ over $\Gamma_G(v)$ as $\omega(u) = c_{vu}$ for $u \in \Gamma_G(v)$.
  Let $S$ be a maximum weight basis in the matroid $\M$ restricted to $\Gamma_G(v)$ with respect to the weights $\omega$.
  As $\M$ has rank $d$ it follows that $|S|:=s \le d$.

  To prove the lemma, it suffices to show that for any set $X\in \dom(f)$ with $v\in X$, 
  in any maximum weight independent matching $\mu^X$ defining $f(X)$
        the edge incident to $v$ can be switched to have the other end point in $S$. 

  Let $\mu^X$ be an independent matching covering $X$ where $X\in \dom(f)$ and $v\in X$.
  Denote with $u$ the node in $U$ matched to $v$ by $\mu^X$. 
  If $u\in S$, there is nothing to show. 
  So, assume $u\not \in S$.
  Let $T$ be the set of all other endpoints of $\mu^X$ in $U$. 
  That is, the set of endpoints of $\mu^X$ in $U$ is exactly $T\cup \{u\}$, where $u\not \in T$ and $|T|=|X|-1$.
  We show that we can swap $(v,u)$ by an edge $(v, u')$ for $u' \in S$ without decreasing the weight of the matching. 

  Denote the elements of the neighbourhood $\Gamma_G(v)$ by $u_1, \dots, u_s$ such that $\omega(u_1)\ge \dots \ge \omega(u_s)$.
  Since $S$ is a maximum weight basis, there is a $k\in [s]$ such that $\omega(u_1)=c_{v u_1}\ge  \dots \ge \omega(u_k)=c_{v u_k} \ge \omega(u)=c_{v u}$ and $u\in \cl[\{u_1,\ldots,u_k\}]$ (by the greedy algorithm for finding a maximum weight basis in a matroid).

  If we can replace $(v,u)$ by an edge $(v, u_t)$ for $t\in [k]$ in $\mu^X$, 
   we get a new independent matching with weight at least as much as the weight of $\mu^X$.
  On the other hand, suppose that for any $t\in [k]$ the set $\mu^X\cup\{(v, u_t)\}\setminus \{(v, u)\}$ is not an independent matching.
  Then, it must be the case that $\{u_1, \dots, u_k\} \subseteq \cl[T]$.
  Since, $u\in \cl[\{u_1, \dots, u_k\}]$ it follows that $u\in \cl[T]$.
  A contradiction. 
  It follows that we can always swap $(v, u)\in \mu^X$ for an edge $(v, u')$ where $u'\in S$, to obtain a matching with weight at least the weight of $\mu^X$.
  The lemma follows.
\end{proof}

\paragraph{Information-theoretic separation}
We use the above lemma to give an alternative proof that not all valuated matroids are R-induced. 
Note that this is also proved in Proposition~\ref{prop:h-not-R-induced}.

Let $f : {V\choose 4} \to \Trop$ be an R-induced valuated matroid and consider its R-induced representation $(G,\M,c)$ given by Lemma~\ref{lem:informationBound}; in particular, $G = (V, U; E)$ where $|E|\le |V|\cdot \rank(f) = 4|V|$.
Let $C = \{c_{ij} : (i,j) \in E\}$ be the set of weights appearing on the edges; note that we trivially have $|C|\le 4|V|$.
For any set $X\in {V\choose 4}$, the value $f(X)$ is either $-\infty$ or a sum of precisely four numbers in $C$.
This implies that the set of function values is contained in the $\Q$-vector space generated by~$C$.
In particular, the dimension of this vector space is bounded above by $|C|$.

We now exhibit a family of valuated matroids for which the $\Q$-vector space generated by its attained values has dimension greater than $4|V|$.
Recall from Definition~\ref{def:class} and Appendix~\ref{section:functionH} the sparse paving matroid with bases $\binom{V}{4} \setminus \cH$, where $\cH$ is the set of pairs $P_i \cup P_j$ where at least one of $i,j$ are even.
We define a valuated matroid by
\[
h(X) = \begin{cases} 0 & X \in \binom{V}{4} \setminus \cH \\ \alpha_X & X \in \cH \end{cases} \ , \ \alpha_X < 0 \, . 
\]
In particular, the values $\alpha_X$ for $X\in {\cH}$ can be assigned freely.
Consider such a function for which the set $A = \{\alpha_X : X\in \cH\}$ is a set of linearly independent real numbers over $\Q$.
Therefore the $\Q$-vector space generated by values of $h$ has dimension at least $|A|$.
By definition of $\cH$, we have $|A| = \binom{n}{2} - \binom{\lfloor n/2 \rfloor}{2}$; in particular, this grows quadratically as opposed to $|C|$ which grows linearly.
For $n \ge 23$, we have that $|A| > 4\cdot 2n = 4 |V|$. 
Hence, such a function $h$ is not an R-induced valuated matroid.

\medskip
Finally we mention that with a similar proof, it is easy to show an analogous lemma for R\nat-induced valuated generalized matroids. 
\begin{lemma}
  Let $f:2^V\to \Trop$ be an R\nat-induced valuated matroid with representation $G=(V, U; E)$, $\M = (U, r)$ and $c\in \R^E$.
  Then, there is an $\Rnat$-induced representation of $f$ with $G'=(V, U'; E')$, $\M' = (U', r')$ and $c'\in \R^{E'}$ 
  such that $|\Gamma_{G'}(v)|\le \min\{n, r(\M)\}$ for all $v\in V$.
  In particular, $|E'| + |U'|+|V| \in O(|V|^2)$.
\end{lemma}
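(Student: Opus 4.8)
The plan is to run, uniformly across all layers of $f$, the neighbourhood‑pruning argument of Lemma~\ref{lem:informationBound}, after first shrinking the rank of $\M$. Write $n=|V|$ and $t:=\min\{n,r(\M)\}$. Since $f$ is $\Rnat$‑induced via $(G,\M,c)$, each layer $\layer{k}{f}$ is obtained from $\M$ by induction via $G$ (Definition~\ref{def:induction-by-networks-Mnat}); hence every $X\in\dom(f)$ is covered by an independent matching of $G$ whose $U$‑endpoints form an independent set of $\M$ of size $|X|$, so $|X|\le t$. Replacing $\M$ by its truncation $\M_t$ to rank $t$ therefore leaves $f$ unchanged: every matching relevant to $f$ uses at most $t$ vertices of $U$, a set of size $\le t$ is independent in $\M_t$ iff it is independent in $\M$, and any $X$ with $|X|>t$ has $f(X)=-\infty$ under both matroids (if $t=r(\M)$ there is no independent set of that size, and if $t=n<r(\M)$ no such $X\subseteq V$ exists). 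So from now on assume $r(\M)=t$.

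Next, for each $v\in V$ fix a maximum‑weight basis $S_v$ of $\M|\Gamma_G(v)$ with respect to the weights $u\mapsto c_{vu}$, so $|S_v|=r_{\M}(\Gamma_G(v))\le r(\M)=t$. Put $E':=\{(v,u):v\in V,\ u\in S_v\}$, $U':=\bigcup_{v\in V}S_v$, $G':=(V,U';E')$, $\M':=\M|U'$, and let $c'$ be the restriction of $c$. I claim $(G',\M',c')$ still induces $f$. The inequality $\tempinducednew{G'}{\M'}{c'}\le f$ is immediate, since every matching in $G'$ is a matching in $G$ using only edges of $E'\subseteq E$ with $U$‑endpoints independent in $\M'$, hence in $\M$. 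For the reverse inequality, take $X\in\dom(f)$ and a maximum‑weight independent matching $\mu$ covering $X$ in $G$; process the vertices $v\in X\cap V$ one at a time, showing the edge $(v,u)\in\mu$ at $v$ can be rerouted to an edge $(v,u')$ with $u'\in S_v$ without losing weight and keeping the $U$‑endpoints independent in $\M$. This is verbatim the swap argument of Lemma~\ref{lem:informationBound}: listing $\Gamma_G(v)$ in decreasing $c_{v\cdot}$‑order and invoking the greedy characterisation of $S_v$, there are $u_1,\dots,u_k\in S_v$ with $c_{vu_i}\ge c_{vu}$ and $u\in\cl_{\M}(\{u_1,\dots,u_k\})$; if none of them could replace $u$ in $\mu$, then every $u_t$ lies in the closure of the set $T$ of the remaining $U$‑endpoints of $\mu$, whence $u\in\cl_{\M}(T)$, contradicting independence of $T\cup\{u\}$. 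Rerouting $v$'s edge does not disturb the matching at the other vertices, so after processing all of $X\cap V$ the matching lies in $E'$ and has weight at least $c(\mu)$, giving $\tempinducednew{G'}{\M'}{c'}(X)\ge f(X)$. Crucially $S_v$ depends only on $\Gamma_G(v)$, $c$ and $\M$, not on $k$, so this single graph $G'$ works for every layer simultaneously.

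Finally, the size bound is read off directly: $|\Gamma_{G'}(v)|=|S_v|\le t=\min\{n,r(\M)\}$ for every $v\in V$, so $|E'|\le n\cdot t\le n^2$ and $|U'|\le\sum_{v}|S_v|\le n\cdot t\le n^2$, hence $|E'|+|U'|+|V|\in O(|V|^2)$, as claimed. The only genuinely technical point is the edge‑swap step, and since it is a repetition of the argument already established in Lemma~\ref{lem:informationBound}, the real work here is just verifying that the truncation is harmless and that the same $S_v$ may be reused across all layers.
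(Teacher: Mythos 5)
Your proof is correct and executes exactly the approach the paper intends (the paper only says the $\Rnat$ lemma follows ``with a similar proof'' to Lemma~\ref{lem:informationBound} and gives no details). The one step that is genuinely new relative to Lemma~\ref{lem:informationBound} is the preliminary truncation of $\M$ to rank $t=\min\{n,r(\M)\}$, and you handle it cleanly: you correctly observe that every matching contributing a finite value uses at most $t$ nodes of $U$, so truncation is transparent to $f$, and after it one gets $|S_v|=r_{\M}(\Gamma_G(v))\le t$. (In the R-induced case of Lemma~\ref{lem:informationBound} this step is vacuous because $r(\M)=d\le n$ automatically, which is why the paper's $O(|V|\cdot d)$ bound needs no $\min$.) Your remaining points — that the greedy/closure swap at a vertex $v$ does not depend on $|X|$, that the swap preserves both the cover and independence, that it can be iterated over the elements of $X$ without interference, and that the single family $\{S_v\}_{v\in V}$ works simultaneously for all layers since it depends only on $\M$, $c$, and $\Gamma_G(v)$ — are all the right observations, and the size bound $|E'|+|U'|+|V|\le 2nt+n\in O(|V|^2)$ follows immediately.
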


\end{document}